\documentclass[11pt, reqno]{amsart} 
\usepackage{amssymb,amscd,amsfonts,amsbsy}
\usepackage{latexsym}
\usepackage{exscale}
\usepackage{amsmath,amsthm,amsfonts}
\usepackage{mathrsfs}
\usepackage{xcolor} 
\usepackage[colorlinks=true,linkcolor=blue,citecolor=red,urlcolor=red, 
]{hyperref} 
\usepackage{esint} 

\usepackage[utf8]{inputenc}
\usepackage{lineno}

\makeatletter
\@namedef{subjclassname@2020}{%
  \textup{2020} Mathematics Subject Classification}
\makeatother

\setlength{\oddsidemargin}{0in}
\setlength{\evensidemargin}{0in}
\setlength{\evensidemargin}{0in}
\setlength{\textwidth}{6.2in}
\setlength{\textheight}{9in}
\setlength{\topmargin}{-0.50in}
\calclayout

\allowdisplaybreaks[3] 

\newtheorem{theorem}{Theorem}[section]

\newtheorem{lemma}[theorem]{Lemma}
\newtheorem{corollary}[theorem]{Corollary}
\newtheorem{example}[theorem]{Example}

\theoremstyle{definition}
\newtheorem{definition}[theorem]{Definition}
\theoremstyle{remark}
\newtheorem{remark}[theorem]{Remark}

\numberwithin{equation}{section} 

\def\dist{\operatorname{dist}}
\def\Lip{\operatorname{Lip}}
\def\supp{\operatorname{supp}}
\def\BMO{\operatorname{BMO}}

\def\loc{\operatorname{loc}}
\def\Re{\operatorname{Re}}

\def\C{\mathbb{C}}
\def\D{\mathcal{D}}
\def\F{\mathcal{F}}
\def\M{\mathbb{M}}
\def\N{\mathbb{N}}
\def\A{\mathcal{A}}

\def\S{\mathcal{S}}
\def\T{\mathcal{T}}
\def\R{\mathbb{R}}
\def\Rn{\mathbb{R}^n}
\def\Sn{\mathbb{S}^{n-1}}
\def\X{\mathbb{X}}
\def\Y{\mathbb{Y}}
\def\Z{\mathbb{Z}} 
\def\V{\mathcal{V}}
\def\b{\mathbf{b}}
\def\w{\omega} 
\def\m{\mathfrak{m}} 

\newcommand{\norm}[1]{\left\lVert#1\right\rVert}

\DeclareMathOperator*{\esssup}{ess\,sup}

\begin{document}

\author{Mingming Cao}
\address{Mingming Cao\\
Instituto de Ciencias Matem\'aticas CSIC-UAM-UC3M-UCM\\
Con\-se\-jo Superior de Investigaciones Cient{\'\i}ficas\\
C/ Nicol\'as Cabrera, 13-15\\
E-28049 Ma\-drid, Spain} \email{mingming.cao@icmat.es}

\author{Andrea Olivo}
\address{Andrea Olivo\\
Departamento de Matem\'atica\\ 
Facultad de Ciencias Exactas y Naturales\\ 
Universidad de  Buenos Aires and IMAS-CONICET\\
Pabell\'on I (C1428EGA), Ciudad de Buenos Aires, Argentina} \email{aolivo@dm.uba.ar}

\thanks{The first author is supported by Spanish Ministry of Science and Innovation through the Ram\'{o}n y Cajal  2021 (RYC2021-032600-I), through the ``Severo Ochoa Programme for Centres of Excellence in R\&D'' (CEX2019-000904-S), and through PID2019-107914GB-I00, and by the Spanish National Research Council through the ``Ayuda extraordinaria a Centros de Excelencia Severo Ochoa'' (20205CEX001). The second author was supported by Grants UBACyT 20020170100430BA (University of Buenos Aires), PIP 11220150100355 (CONICET) and PICT 2018-03399}

\date{November 30, 2022}

\title{Two-weight extrapolation on function spaces and applications}

\subjclass[2020]{42B20, 42B25, 42B35, 46E30}


\keywords{Rubio de Francia extrapolation, 
Endpoint estimates, 
Banach function spaces, 
Local decay estimates, 
Muckenhoupt-Wheeden conjecture, 
Two-weight inequalities, 
Vector-valued inequalities}

\begin{abstract} 
This paper is devoted to studying the extrapolation theory of Rubio de Francia on general function spaces. We present endpoint extrapolation results including $A_1$, $A_p$, and $A_\infty$ extrapolation in the context of Banach function spaces, and also on modular spaces. We also include several applications that can be easily obtained using extrapolation: local decay estimates for various operators, Coifman--Fefferman inequalities that can be used to show some known sharp $A_1$ inequalities, Muckenhoupt--Wheeden and Sawyer's conjectures are also presented for many operators, which go beyond Calder\'{o}n--Zygmund operators. Finally, we obtain two-weight inequalities for Littlewood--Paley operators and Fourier integral operators on weighted Banach function spaces.
\end{abstract}

\maketitle

\section{Introduction}\label{sec:intro}
One of the most useful and powerful tools in harmonic analysis is the Rubio de Francia extrapolation theorem \cite{Rub}, which states that given an operator $T$, 
\begin{equation}
\begin{array}{c}
\text{if $T$ is bounded on $L^{p_0}(w_0)$ for some $p_0 \in [1, \infty)$ and for all $w_0 \in A_{p_0}$}, 
\\[4pt]
\text{then $T$ is bounded in $L^p(w)$ for all $p \in (1, \infty)$ and for all $w \in A_p$}. 
\end{array}
\end{equation} 
Over the years, Rubio de Francia's result has been extended and complemented in different manners, see \cite{CMP11} and the references therein. These results only involve the $L^p(w)$ boundedness of an operator $T$, which can be generally formulated by the pair of functions.  In addition to the extrapolation theorems aforementioned, there are several other kinds of extrapolation. For example, B\'enyi et al. \cite{BMMST} obtained comprehensive extrapolation results from $T$ to its commutator $[b,T]$ on weighted Lebesgue spaces. Recently, Hyt\"onen and Lappas \cite{HL-1, HL-2} established a ``compact version'' of Rubio de Francia's extrapolation theorem, which allows one to extrapolate the compactness of an operator from just one space to the full range of weighted spaces, provided that the operator is bounded. This result has been extended to the multilinear case in \cite{COY}. Another kind of extrapolation, related with second order elliptic operators, was presented by Shen \cite{Shen}, where it was proved that one can extrapolate the solvability of the $L^p$-Dirichlet problem on Lipschitz domains from on some exponent $p_0$ to a large range of $p$'s. Beyond that, Hofmann and Martell \cite{HM12, HM14} built the extrapolation of Carleson measure in order to investigate the uniform rectifiability and $A_{\infty}$ property of elliptic measure. To sum up, a common point in all extrapolation theorems is that one can obtain the global or local information from an estimate at certain single point. 

The Rubio de Francia extrapolation theorem and its variants have been proved to be extremely advantageous and the key to solving many problems in harmonic analysis. Indeed, extrapolation theorems allow us to reduce the general $L^p$ estimates for certain operators to a suitable case $p=p_0$, for example, see \cite{CXY} for the Coifman-Fefferman's inequality for $p_0 = 1$, \cite{Hy12} for the Calder\'{o}n-Zygmund operators for $p_0 = 2$, and \cite{L14} for square functions for $p_0=3$. Even more, the technique of extrapolation can refine some estimates, see \cite{CMP05} for the Sawyer conjecture, \cite{LOP, LOP1} for the weak Muckenhoupt-Wheeden conjecture and \cite{OPR} for the local decay estimates. Also, using extrapolation theorems, one can obtain sharp weighted inequalities for several operators, see \cite{BMMST} and \cite{LMPT}. Another interesting point is that by  means of extrapolation, the vector-valued inequalities immediately follow from the corresponding scalar-valued inequalities, see Theorems \ref{thm:BFSAi}, \ref {thm:wMw}, \ref{thm:FS-X} and \ref{thm:ModAi} below.

An important feature of the proof of Rubio de Francia extrapolation theorems in \cite{CMP11} is that it makes clear exactly what the essential ingredients are. They are three-fold: norm inequalities for the Hardy-Littlewood maximal operator $M$, duality, and the reverse factorization property of $A_p$ weights. More precisely, to extrapolate, we only need the following: 
\begin{enumerate}
\item[(a)] $M$ is bounded on $L^p(w)$, 

\item[(b)] $M'_w$ is bounded on $L^{p'}(w)$, 

\item[(c)] If $w_1, w_2 \in A_1$, then $w_1 w_2^{1-p} \in A_p$,  

\item[(d)] H\"{o}lder's inequality,
\end{enumerate}
where $M'_w f := M(fw)/w$. Observing these, recently, Mar\'{i}n, Martell and the first author \cite{CMM} generalized the Rubio de Francia extrapolation theory to the context of Banach Function spaces over measure spaces, which include Lorentz spaces, variable Lebesgue spaces, Orlicz spaces, their weighted version, etc. To do this, they formulated the weighted boundedness of Hardy-Littlewood maximal operators on weighted Banach function spaces, which is the natural substitute of Muckenhoupt weights in the general setting.

In this paper, we are interested in two kinds of weak type estimates. The first one is called local decay estimates, which concerns the following quantity: 
\begin{align}\label{def:local}
\psi_t(\mathfrak{T}, \mathfrak{M}) 
:= \sup_{Q:\text{ cube } \subset \Rn} \sup_{\substack{f \in L_c^{\infty}(\Rn) \\ \supp(f) \subset Q}} 
|Q|^{-1} |\{x \in Q: |\mathfrak{T}f(x)| > t\, \mathfrak{M}f(x)\}|, \quad t>0, 
\end{align} 
where $\mathfrak{T}$ is a singular operator and $\mathfrak{M}$ is an appropriate maximal operator. In general, one would like to obtain an exponential/sub-exponential decay with respect with $t$. Local decay estimates accurately reflect the extent that an operator is locally controlled by certain maximal operator, which refines the corresponding good-lambda inequality. We here mention that the latter was used to pursue the sharp dependence on  the weight's norm (cf. \cite{Buc} for the maximal singular integrals). These kinds of estimates in \eqref{def:local} appeared for the first time in the work of Karagulyan \cite{Kar} and was further investigated by Ortiz-Caraballo, P\'{e}rez and Rela \cite{OPR} using a novel strategy.

The second one is the Muckenhoupt-Wheeden conjecture, which states that for every standard Calder\'{o}n-Zygmund operator $T$, 
\begin{align}\label{eq:MW-11}
\|Tf\|_{L^{1, \infty}(w)} \lesssim \|f\|_{L^1(Mw)} \quad\text{ for any weight } w, 
\end{align}
where the implicit constant is independent of $w$. Evidently, \eqref{eq:MW-11} implies the weak  Muckenhoupt-Wheeden conjecture: 
\begin{align}\label{eq:MW-12}
\|Tf\|_{L^{1,\infty}(w)} \lesssim [w]_{A_1} \|f\|_{L^1(w)}, \quad\text{ for any } w \in A_1.  
\end{align}
Unfortunately, \eqref{eq:MW-11} and \eqref{eq:MW-12} have been disproved by Reguera and Thiele \cite{RT} for the Hilbert transform, and by Nazarov et al. \cite{NRVV} obtaining a lower bound $[w]_{A_1} \log^{\frac13}(e + [w]_{A_1})$, respectively.  This leads to the question whether one can determine the optimal exponents $\alpha$ and $\beta$ such that the following holds:
\begin{align}\label{eq:wwAA} 
\|Tf\|_{L^{1,\infty}(w)} 
\lesssim  [w]_{A_1}^{\alpha} \log^{\beta}(e+[w]_{A_1}) \|f\|_{L^1(w)}, \quad\text{ for any } w \in A_1, 
\end{align}
where the operator $T$ is not necessarily a Calder\'{o}n-Zygmund operator. For the Hilbert transform, in  \cite{LNO} was proved that that $\alpha=\beta=1$.  

On the other hand, a somehow ``dual" version of \eqref{eq:MW-11} can be formulated by 
\begin{align}\label{eq:mwdd}
\bigg\|\frac{Tf}{Mw}\bigg\|_{L^{1, \infty}(w)} 
\lesssim \|f\|_{L^1(\Rn)} \quad\text{for any weight } w. 
\end{align}
A substitute of \eqref{eq:mwdd} with $M^3w$ in place of $Mw$ was proved in \cite{LOP2} for the standard Calder\'{o}n-Zygmund operators. As argued above, it is interesting to establish \eqref{eq:mwdd} with $M^{1+\varepsilon}w$ instead of $Mw$. Additionally, the inequalities of the sort \eqref{eq:mwdd} are called sometimes in the literature mixed weak type estimates, for which a more general form is given by the following: 
\begin{align}\label{eq:mwsaw}
\norm{\frac{T(fv)}{v}}_{L^{1,\infty}(uv)} \lesssim \|f\|_{L^1(uv)}.  
\end{align}
Such estimates originated in the work of Muckenhoupt and Wheeden \cite{MW}, where \eqref{eq:mwsaw} for $v^{-1} \in A_1$ and $uv \equiv 1$ was established for the Hardy-Littlewood maximal operator $M$ and the Hilbert transform $H$ on the real line $\R$. Later on it was developed by Sawyer \cite{Saw} to the case $u, v \in A_1$ but only for $M$ on $\R$. Also, Sawyer conjectured that \eqref{eq:mwsaw} is true for $H$ and for M but in higher dimensions. A positive answer to both conjectures was given in \cite{CMP05} by using extrapolation arguments.  Recently, Sawyer's conjecture for $M$ was extended to the setting of $u \in A_1$ and $v \in A_{\infty}$ in \cite{LiOP} by means of some delicate techniques. 

Motivated by the work above, the current paper will build extrapolation theorems to study weak type estimates aforementioned. Our extrapolation will be given in the endpoint case, which means the weights $u$ and/or $v$ belong to $A_1$ and/or $A_{\infty}$ classes. In view of the general framework used in \cite{CMM}, our work is carried out on Banach function spaces. 
Considering the weights in $A_1$ or $A_{\infty}$, we construct only one Rubio de Francia iteration algorithm in each proof, which is quite different from that in \cite{CMM}. 

Hereafter, a family of extrapolation pairs $\F$ is a collection of pairs $(f, g)$ of nonnegative measurable functions. More definitions and notation are given in Section \ref{sec:weights}. We begin with two results related to the so-called $A_p$ extrapolation, which will be used to the aforementioned weak-type estimates obtain local decay estimates in \eqref{def:local} for plenty of operators.

\begin{theorem}\label{thm:Aq}
Let $\F$ be a family of extrapolation pairs. Suppose that $\X$ is a rearrangement invariant Banach function space over $(\Rn, dx)$ with $q_{\X}<\infty$. If for some $p_0 \in [1, \infty)$ and for every $w \in A_{p_0}$, 
\begin{align}\label{eq:Aq-1}
\|f\|_{L^1(w)} \leq \Psi \big([w]_{A_{p_0}}\big) \|g\|_{L^1(w)}, \quad (f, g) \in \F, 
\end{align}
where $\Psi:[1, \infty) \to [1, \infty)$ is an increasing function, then for every $u^{1-{p_0}'} \in A_1$ and for every $v \in A_1$, 
\begin{align}\label{eq:Aq-2}
\|fu\|_{\X_v} \leq 4 \Psi \big(2\|M_v\|_{\X'_v \to \X'_v} 
[u^{1-{p_0}'}]_{A_1}^{p_0-1} [v]_{A_1} \big) \|gu\|_{\X_v}, \quad (f, g) \in \F. 
\end{align}
Here, $u$ is understood as the constant $1$ in the case $p_0=1$.
\end{theorem}

\begin{corollary}\label{cor:Ap}
Let $\F$ be a family of extrapolation pairs. Suppose that $\X$ be a rearrangement invariant Banach function space over $(\Rn, dx)$ such that $q_{\X}<\infty$. If for some $p_0 \in (1, \infty)$ and for every $w \in A_{p_0}$, 
\begin{equation}\label{eq:Ap-1}
\|f\|_{L^{p_0}(w)} \leq C \|g\|_{L^{p_0}(w)}, \quad(f, g) \in \F.
\end{equation}
then for every $p \in (1, \infty)$, for every $u^{1-p'} \in A_1$, and for every $v \in A_1$,   
\begin{equation}\label{eq:Ap-2}  
\|f^p u\|_{\X_v} \leq C \|g^p u\|_{\X_v}, \quad (f,g) \in \F. 
\end{equation}
\end{corollary}

We present an $A_1$ extrapolation result in the language of the boundedness of the Hardy-Littlewood maximal operators, which should be compared with \cite[Theorems 3.1]{CMM} in the case $p_0=1$.

\begin{theorem}\label{thm:BFSA1}
Let $\F$ be a family of extrapolation pairs. Suppose that $u$ and $v$ are weights on $\Rn$ such that $v \in A_1$, and $\X_v$ is a Banach function space over $(\Rn, v\,dx)$. If for some $p_0 \in (0,\infty)$ and for every $w \in A_1$, 
\begin{equation}\label{eq:BFSA1-2}
\|f\|_{L^{p_0}(w)} \leq \Psi([w]_{A_1}) \|g\|_{L^{p_0}(w)}, \quad (f,g) \in \F, 
\end{equation} 
where $\Psi:[1,\infty) \to [1,\infty)$ is a non-decreasing function, then 
\begin{equation}\label{eq:BFSA1-3}
\|fu\|_{\X_v^{p_0}} \leq 4^{1/p_0} \Psi(2 K_0 [v]_{A_1}) \|gu\|_{\X_v^{p_0}},\quad (f,g) \in \F, 
\end{equation}
provided that 
\begin{align}\label{eq:BFSA1-1} 
\|(M_v f) u^{-p_0}\|_{\X'_v} \le K_0 \|f u^{-p_0}\|_{\X'_v},\quad\forall f \in \M. 
\end{align} 
\end{theorem}

Next, we formulate a series of $A_\infty$ extrapolation below.

\begin{theorem}\label{thm:BFSAi}
Let $\F$ be a family of extrapolation pairs. Suppose that $u$ and $v$ are weights on $\Rn$ such that $v \in A_{\infty}$, and $\X_v$ is a Banach function space over $(\Rn, v\,dx)$. If for some $p_0 \in (0,\infty)$ and for every $w \in A_{\infty}$, 
\begin{equation}\label{eq:BFSAi-2}
\|f\|_{L^{p_0}(w)} \leq C \|g\|_{L^{p_0}(w)}, \quad (f,g) \in \F, 
\end{equation} 
then for every $p \in (0, \infty)$, 
\begin{equation}\label{eq:BFSAi-3}
\|f^p u\|_{\X_v} \leq C \|g^p u\|_{\X_v},\quad (f,g) \in \F, 
\end{equation}
provided that 
\begin{align}\label{eq:BFSAi-1} 
\|(M_v f) u^{-1}\|_{\X'_v} \le K_0 \|f u^{-1}\|_{\X'_v},\quad\forall f \in \M. 
\end{align} 
Moreover, under the hypothesis \eqref{eq:BFSAi-1}, \eqref{eq:BFSAi-2} implies that for every $p, q \in (0, \infty)$, 
\begin{equation}\label{eq:BFSAi-vec}
\bigg\|\Big(\sum_j f_j^q\Big)^{\frac{p}{q}} u\bigg\|_{\X_v} 
\leq C \bigg\|\Big(\sum_j g_j^q\Big)^{\frac{p}{q}}\bigg\|_{\X_v}, \qquad \{(f_j, g_j)\}_j \subset \F. 
\end{equation}
\end{theorem}

Compared with Theorem \ref{thm:BFSAi}, the following result directly asserts which weight classes $u$ and $v$ belong to, instead of the boundedness of Hardy-Littlewood maximal operators in \eqref{eq:BFSAi-1}. 

\begin{theorem}\label{thm:AiAi} 
Let $\F$ be a family of extrapolation pairs. Suppose that $\X$ is a rearrangement invariant Banach function space over $(\Rn, dx)$ such that $q_{\X}<\infty$. If for some $p_0 \in(0, \infty)$ and for every $w \in A_{\infty}$, 
\begin{equation}\label{eq:AiAi-1}
\|f\|_{L^{p_0}(w)} \leq C \|g\|_{L^{p_0}(w)}, \quad(f, g) \in \F, 
\end{equation}
then for every $u \in RH_{\infty}$ and for every $v \in A_{\infty}$,  
\begin{equation}\label{eq:AiAi-2}  
\|fu\|_{\X_v} \leq C \|gu\|_{\X_v}, \quad (f,g) \in \F. 
\end{equation}
\end{theorem}

A variant of Theorem \ref{thm:AiAi} is the following extrapolation, which considers the type of estimates $\|f/u\|_{\X_v}$ in place of $\|fu\|_{\X_v}$. Correspondingly, the weight classes of $u$ and $v$ are given in terms of $A_1$ and $A_{\infty}$. It is worth pointing out that such estimates are key ingredients to establish Coifman-Fefferman inequalities, which together with duality in turn implies sharp $A_1$ inequalities similar to \eqref{eq:wwAA} for the Calder\'{o}n-Zygmund operator and its maximal truncation, rough singular integrals, Bochner-Riesz means, and so on (cf. Section \ref{sec:app}). 

\begin{theorem}\label{thm:AA} 
Let $\F$ be a family of extrapolation pairs. Suppose that $\X$ is a rearrangement invariant Banach function space over $(\Rn, dx)$ such that $q_{\X}<\infty$. If for some $p_0 \in(0, \infty)$ and for every $w \in A_{\infty}$, 
\begin{equation}\label{eq:AA-1}
\|f\|_{L^{p_0}(w)} \leq C \|g\|_{L^{p_0}(w)}, \quad(f, g) \in \F, 
\end{equation}
then for every $u \in A_1$ and for every $v \in A_{\infty}$,  or for every $u \in A_{\infty}$ and for every $v \in A_1$,  
\begin{equation}\label{eq:AA-2} 
\Big\|\frac{f}{u}\Big\|_{\X_v} \leq C \Big\|\frac{g}{u}\Big\|_{\X_v}, \quad (f,g) \in \F. 
\end{equation}
\end{theorem}

\begin{theorem}\label{thm:Mvr} 
Let $\F$ be a family of extrapolation pairs. Suppose that $u$ and $v$ are weights such that $v \in L^1_{\loc}(\Rn)$, and $\X_v$ is a Banach function space over $(\Rn, v\,dx)$. Assume that there exist $r>1$ and $s>0$ such that $u^{\frac1s} v^{-\frac{1}{sr'}} \in A_1$ and 
\begin{align}\label{eq:Mvr-1}
\|M'_{v^{1/r}} f\|_{\X'_v} \le K_0 \|f\|_{\X'_v},\quad \forall f \in \M.  
\end{align}
If for some $p_0 \in(0, \infty)$ and for every $w \in A_{\infty}$, 
\begin{equation*}
\|f\|_{L^{p_0}(w)} \leq C \|g\|_{L^{p_0}(w)}, \quad(f, g) \in \F, 
\end{equation*} 
then 
\begin{equation*}
\Big\|\frac{f}{u}\Big\|_{\X_v} \leq C \Big\|\frac{g}{u}\Big\|_{\X_v}, \quad (f,g) \in \F. 
\end{equation*}
\end{theorem}

Furthermore, considering the type of estimates in \eqref{eq:mwsaw} and Sawyer's conjecture for operators beyond the Calder\'{o}n-Zygmund theory, we present an extrapolation on weighted Banach function spaces $\X_{uv}$. In Section \ref{sec:app}, one can see that it is a refinement of \cite[Theorem 1.7]{CMP05} on Banach function spaces. 

\begin{theorem}\label{thm:Saw} 
Let $\F$ be a family of extrapolation pairs. Suppose that $u$ and $v$ are weights on $\Rn$ such that $v \in A_{\infty}$ and $uv \in L^1_{\loc}(\Rn)$, and $\X_{uv}$ is a Banach function space over $(\Rn, uv\,dx)$.  Assume that there exist $q_0>1$ and $K_0>0$ such that 
\begin{align}\label{eq:Saw-1}
\|M'_u f\|_{(\X^{q'}_{uv})'} \le K_0 \|f\|_{(\X^{q'}_{uv})'},\quad \forall q \ge q_0. 
\end{align}
If for some $p_0 \in(0, \infty)$ and for every $w \in A_{\infty}$, 
\begin{equation}\label{eq:Saw-2}
\|f\|_{L^{p_0}(w)} \leq C \|g\|_{L^{p_0}(w)}, \quad(f, g) \in \F.
\end{equation}
then 
\begin{equation}\label{eq:Saw-3}
\Big\|\frac{f}{v}\Big\|_{\X_{uv}} \leq C \Big\|\frac{g}{v}\Big\|_{\X_{uv}}, \quad (f,g) \in \F, 
\end{equation}
\end{theorem}

\begin{remark}
Under the same hypotheses, one can also obtain that there exists a small constant $\epsilon>0$ depending on only $q_0$ and $[v]_{A_{\infty}}$ such that 
\begin{equation}\label{eq:Saw-4}
\Big\|\frac{f}{v}\Big\|_{\X^{1+\epsilon}_{uv}} \leq C \Big\|\frac{g}{v}\Big\|_{\X^{1+\epsilon}_{uv}}, \quad (f,g) \in \F. 
\end{equation}
In fact, in the proof below, it suffices to pick $r>1$ and $\epsilon>0$ such that $0<\frac{1}{r'}<\min\{\varepsilon_0, \frac{1}{q_0}\}$ and $r(1+\epsilon)<q'_0$. On the other hand, if the assumption $v \in A_{\infty}$ is replaced by $v \in RH_{\infty}$, then the conclusion \eqref{eq:Saw-3} still holds provided  a weaker condition than \eqref{eq:Saw-1}: 
\begin{align*}
\|M'_u f\|_{(\X^r_{uv})'} \le K_0 \|f\|_{(\X^r_{uv})'} \quad\text{for some } r>1. 
\end{align*} 
\end{remark}

In view of  the dual version of Muckenhoupt-Wheeden conjecture in \eqref{eq:mwdd}, we establish an exotic extrapolation below, where the hypothesis is given in terms of $A_{\infty}$ weights but the conclusion  is asserted for any weight. It should be compared with Theorem \ref{thm:AA}, since Theorem \ref{thm:M3w} with $w \in A_1$ implies Theorem \ref{thm:AA} for $u=v \in A_1$. 

\begin{theorem}\label{thm:M3w}
Let $\F$ be a family of extrapolation pairs. Suppose that $\X$ is a rearrangement invariant Banach function space over $(\Rn, dx)$ with $q_{\X}<\infty$. If for some $p_0 \in (0, \infty)$ and every $w_0 \in A_{\infty}$, 
\begin{align}\label{eq:M3w-1}
\|f\|_{L^{p_0}(w_0)} \le C \|Mg\|_{L^{p_0}(w_0)},\quad (f, g) \in \F, 
\end{align}
then for every weight $w$, 
\begin{align}\label{eq:M3w-2}
\norm{\frac{f}{M^3w}}_{\X(Mw)} \leq C \norm{\frac{g}{Mw}}_{\X(Mw)}, \quad (f, g) \in \F. 
\end{align}
\end{theorem}

Beyond that, we study the extrapolation with arbitrary weights on Banach function spaces, which extends \cite[Theorem 1.3]{CP} to the weighted Banach function spaces.  
\begin{theorem}\label{thm:wMw}
Let $\F$ be a family of extrapolation pairs. Suppose that $\X$ is a rearrangement invariant Banach function space over $(\Rn, dx)$ with $q_{\X}<\infty$ such that $\X^{\frac{1}{p_0}}$ is a Banach function space for some $p_0 \in (0,\infty)$. If for every weight $w_0$,  
\begin{equation}\label{eq:hyp-M}
\norm{f}_{L^{p_0}(w_0)} \leq C \norm{g}_{L^{p_0}(Mw_0)}, \quad (f,g) \in \F, 
\end{equation} 
then for every weight $w$,  
\begin{align}\label{eq:con-M}
\|f\|_{\X_w} \leq C \|g (Mw/w)^{\frac{1}{p_0}}\|_{\X_w},\quad (f, g) \in \F. 
\end{align}
Moreover, if $\X^{\frac1q}$ is a BFS for some $q \in (0,\infty)$, then 
\begin{align}\label{eq:vec-M}
\bigg\|\bigg(\sum_{j=1}^{\infty}|f_j|^q\bigg)^{\frac1q}\bigg\|_{\X_w} 
\leq C \bigg\|\bigg(\sum_{j=1}^{\infty} |g_j|^q \bigg)^{\frac1q} \bigg(\frac{Mw}{w}\bigg)
^{\frac1q}\bigg\|_{\X_w}, \quad \{(f_j, g_j)\}_{j} \subset \F. 
\end{align}
\end{theorem}

\begin{theorem}\label{thm:uvw}
Let $u$, $v$, $w_1$ and $w_2$ be weights on $\Rn$. Let $\Phi$ be a Young function or $\Phi(t)=t$. Suppose that $\X_u$ and $\X_v$ are respectively Banach function spaces over $(\Rn, u\, dx)$ and $(\Rn, v\,dx)$ such that $\Y_u=\X_u^{\frac{1}{p_0}}$ and $\Y_v=\X_v^{\frac{1}{p_0}}$ are BFS for some $p_0 \in (0,\infty)$. Assume that 
\begin{align}\label{eq:uvw-1}
\|(M_{\Phi}f) w_2^{-p_0}v^{-1}\|_{\Y'_v} \le C \|fw_1^{-p_0}u^{-1}\|_{\Y_u'}, \quad\forall f \in \M.  
\end{align}
If for every weight $w_0$,  
\begin{equation}\label{eq:uvw-2}
\|f\|_{L^{p_0}(w_0)} \leq C \|g\|_{L^{p_0}(M_{\Phi} w_0)}, \quad (f,g) \in \F. 
\end{equation} 
then 
\begin{align}\label{eq:uvw-3}
\|fw_1\|_{\X_u} \leq C \|g w_2\|_{\X_v},\quad (f, g) \in \F. 
\end{align}
\end{theorem}

\begin{remark}
Theorem $\ref{thm:uvw}$ extends \cite[Theorem~8.2]{CMP11} to the weighted Banach function spaces. Indeed, let $A$, $B$ and $\Phi$ be Young functions such that $A \in B_p$ and $A^{-1}(t)B^{-1}(t) \lesssim \Phi^{-1}(t)$ for any $t>0$. Let $u=v$, $\X_u=\X_v=L^p(v)$, $r=p/p_0>1$ and $(w_1, w_2)=(U^{\frac1p}, V^{\frac1p})$. With these notation in hand, we see that $\Y'_u=\Y'_v=L^{r'}(v)$ and \eqref{eq:uvw-1} is equivalent to 
\begin{align}\label{eq:MUV}
\|M_{\Phi}f\|_{L^{r'}((vV)^{1-r'})} \lesssim \|f\|_{L^{r'}((vU)^{1-r'})}. 
\end{align}
Moreover, \eqref{eq:uvw-3} is equivalent to 
\begin{align*}
\|f\|_{L^p(vU)} \le C \|g\|_{L^p(vV)}, \quad (f, g) \in \F. 
\end{align*}
In view of Lemma $\ref{lem:M-uv}$, the inequality \eqref{eq:MUV} holds if 
\begin{align*}
\sup_{Q} \|(vU)^{\frac1r}\|_{A,Q} \|(vV)^{-\frac1r}\|_{r',Q} = 
\sup_{Q} \|((vV)^{1-r'})^{\frac{1}{r'}}\|_{r', Q} \|((vU)^{1-r'})^{-\frac{1}{r'}}\|_{A, Q} < \infty. 
\end{align*}
\end{remark}

\begin{theorem}\label{thm:ABC}
Let $\Phi$ be a Young function or $\Phi(t)=t$, $A$ and $B$ be Young functions such that $A^{-1}(t) B^{-1}(t) \lesssim \Phi^{-1}(t)$. Suppose that $u$ and $v$ are weights on $\Rn$, and $\X_v$ and $\X_{M_A v}$ are Banach function spaces over $(\Rn, v\,dx)$ and over $(\Rn, M_Av\,dx)$ respectively. Assume that for for some $p_0 \in (0,\infty)$ and for every weight $w_0$,  
\begin{equation}\label{eq:ABC-1}
\|f\|_{L^{p_0}(w_0)} \leq C \|g\|_{L^{p_0}(M_{\Phi} w_0)}, \quad (f,g) \in \F. 
\end{equation} 
\begin{enumerate}
\item[{\rm (i)}] If $\Y_v=\X_v^{\frac{1}{p_0}}$ is a BFS such that 
\begin{align}\label{eq:ABC-2}
\|M'_{B, v}f\|_{\Y'_v} \le C \|f\|_{\Y'_v}, \quad\forall f \in \M, 
\end{align}
then 
\begin{align}\label{eq:ABC-3}
\|f u\|_{\X_v} \leq C \|g M_A(u^{p_0})^{\frac{1}{p_0}}\|_{\X_v},\quad (f, g) \in \F. 
\end{align}
\item[{\rm (ii)}] If both $\Y_v=\X_v^{\frac{1}{p_0}}$ and $\Y_{M_A v}=\X_{M_A v}^{\frac{1}{p_0}}$ are BFS such that 
\begin{align}\label{eq:ABC-4}
\|M'_{B, u^{p_0}}f\|_{\Y'_{M_A v}} \le C \|f\|_{\Y'_v}, \quad\forall f \in \M, 
\end{align}  
then 
\begin{align}\label{eq:ABC-5} 
\|f u\|_{\X_v} \leq C \|gu\|_{\X_{M_A v}}, \quad (f, g) \in \F. 
\end{align}
\end{enumerate}
\end{theorem}

Finally, let us give vector-valued Fefferman-Stein inequalities on the general Banach function spaces with arbitrary weights.  We also use it to recover a sharp weighted inequality in \cite[Theorem 1.4]{CP} and \cite[Theorem 1.1]{P00}.

\begin{theorem}\label{thm:FS-X}
Let $1<q<\infty$ and $v$ be a weight on $\Rn$. Suppose that $\X_v$ is a Banach function space over $(\Rn, v\,dx)$ such that $\X_v^{\frac1q}$ is also a Banach function space. Let $A$ be a Young function. If $M'_{\bar{A}, v}$ is bounded on $(\X_v^{\frac1q})'$, then for every weight $u$,  
\begin{align}\label{eq:MMA-1}
\bigg\|\Big(\sum_{j=1}^{\infty} |\mathfrak{M}f_j|^q\Big)^{\frac1q} u\bigg\|_{\X_v} 
\leq C \bigg\|\Big(\sum_{j=1}^{\infty} |f_j|^q\Big)^{\frac1q} M_{A}(u^q)^{\frac1q}\bigg\|_{\X_v}, 
\end{align}
where the maximal operator $\mathfrak{M}$ is one of the following: 
\begin{align*}
& (1)\,\, \mathfrak{M}=M; \quad (2)\,\, \mathfrak{M}=M_{\Phi} \text{ with $\Phi \in B_p$ for some $p \in (1, \infty)$}; 
\vspace{0.3cm}\\
& (3)\,\, \mathfrak{M}=M_{r,s} \text{ with $1<r<\infty$ and $1 \le s<\infty$}.  
\end{align*}
\end{theorem}

\begin{corollary}\label{cor:FS-Lp}
Let $1<q<p<\infty$. There exists a constant $C=C(n, p, q)$ such that for every weight $w$,  
\begin{align}\label{eq:vec-FS}
\bigg\|\Big(\sum_{j=1}^{\infty} |Mf_j|^q\Big)^{\frac1q}\bigg\|_{L^p(w)} 
\leq C \bigg\|\Big(\sum_{j=1}^{\infty} |f_j|^q\Big)^{\frac1q}\bigg\|_{L^p(M^{[p/q]+1}w)}. 
\end{align}
Moreover, the inequality \eqref{eq:vec-FS} is sharp in the sense that the exponent $[p/q]+1$ cannot be replaced by $[p/q]$. 
\end{corollary}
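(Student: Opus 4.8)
The plan is to deduce the positive assertion directly from Theorem \ref{thm:FS-X} and the computations recorded in \eqref{eq:AABr}--\eqref{eq:MkMk}, and then to treat the sharpness claim by an explicit weight. First, fix $1<q<p<\infty$ and set $r=p/q$, $\varepsilon=[r]+1-r\in(0,1]$, $A(t)=t^r\log(e+t)^{r-1+\varepsilon}$, and $A_0(t)=A(t^{1/r})$. I would invoke \eqref{eq:AABr} to record that $A_0(t)\simeq t\log(e+t)^{[r]}$ and $\bar A(t)\simeq t^{r'}\log(e+t)^{-1-(r'-1)\varepsilon}\in B_{r'}$, so that by Lemma \ref{lem:MBp} the Orlicz maximal operator $M_{\bar A}$ is bounded on $L^{r'}(\Rn)$. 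Taking $v\equiv 1$ and $\X_v=L^p(\Rn)$, the associate space of $\X_v^{1/q}=L^r(\Rn)$ is $L^{r'}(\Rn)$, and $M'_{\bar A,1}=M_{\bar A}$; hence the hypothesis ``$M'_{\bar A,v}$ bounded on $(\X_v^{1/q})'$'' of Theorem \ref{thm:FS-X} holds. Applying that theorem with $\mathfrak M=M$ and the weight $u=w^{1/p}$ yields
\begin{align*}
\bigg\|\Big(\sum_j |Mf_j|^q\Big)^{\frac1q}\bigg\|_{L^p(w)}
\le C\bigg\|\Big(\sum_j |f_j|^q\Big)^{\frac1q} M_A(w^{q/p})^{\frac1q}\bigg\|_{L^p(w)}
= C\bigg\|\Big(\sum_j |f_j|^q\Big)^{\frac1q}\bigg\|_{L^p(M_A(w^{1/r})^r)},
\end{align*}
where the last equality just rewrites the weight since $M_A(w^{q/p})^{\frac1q}$ raised to the $p$-th power is $M_A(w^{1/r})^{rq\cdot p/(pq)}=M_A(w^{1/r})^r$. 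Finally I would identify the resulting weight: by the first relation in \eqref{eq:AABr} and \eqref{eq:MkMk}, $M_A(w^{1/r})(x)^r=M_{A_0}w(x)\simeq M^{[r]+1}w(x)=M^{[p/q]+1}w(x)$, and since $M^{[p/q]+1}w$ is a genuine weight the norm is unchanged up to the comparability constant, giving \eqref{eq:vec-FS}.

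For the sharpness statement the plan is classical: show that \eqref{eq:vec-FS} fails if $[p/q]+1$ is replaced by the smaller integer $k:=[p/q]$. I would reduce to the scalar case (one nonzero $f_1=f$) and test against power weights. Take $f=\mathbf 1_{B}$ for $B$ the unit ball, so that $Mf(x)\simeq \min\{1,|x|^{-n}\}$ and $\|Mf\|_{L^p(w)}^p\gtrsim \int_{|x|>2}|x|^{-np}w(x)\,dx$, while on the right one must estimate $\int_{\Rn} f^p\,M^{k}w\,dx=\int_B M^k w\,dx$. Choosing $w$ to be (a truncation of) a function of the form $w(x)=|x|^{-n}\big(\log\frac1{|x|}\big)^{-1}\cdots$ — more precisely a power-logarithmic weight engineered so that $M^{k}w$ stays comparable to $w$ near the origin but $M^{k+1}w$ genuinely picks up an extra logarithm — makes the left side infinite (or arbitrarily large along a truncation sequence) while the right side stays bounded. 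The cleanest route is to recall that for $w$ of the form $|x|^{-n}$ one has $M^{j}(|x|^{-n})\simeq |x|^{-n}\big(\log\frac e{|x|}\big)^{j-1}$ near $0$ for the appropriate truncation, so that replacing $M^{[p/q]+1}$ by $M^{[p/q]}$ loses exactly one power of the logarithm; plugging this discrepancy into the two-sided estimate above and letting the truncation parameter tend to its limit produces a sequence for which the ratio of the two sides diverges. This is precisely the counterexample of \cite[Theorem~1.4]{CP} and \cite[Theorem~1.1]{P00}, to which I would refer for the detailed bookkeeping.

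The main obstacle is not the positive direction — that is a routine unwinding of Theorem \ref{thm:FS-X} together with the Orlicz identities already assembled in \eqref{eq:AABr}--\eqref{eq:MkMk} — but rather making the sharpness example fully rigorous: one must exhibit an explicit weight (or a sequence of truncated weights $w_N$) together with test functions for which the left-hand side of \eqref{eq:vec-FS} with $M^{[p/q]}$ in place of $M^{[p/q]+1}$ grows strictly faster than the right-hand side, and the delicate point is controlling the iterated maximal functions $M^{j}w_N$ from above and below with matching logarithmic powers uniformly in $N$. Since this is exactly the construction carried out in \cite{CP} and \cite{P00}, in the write-up I would state the reduction, record the key estimate $M^{j}(|x|^{-n}\mathbf 1_{B})(x)\simeq |x|^{-n}(1+\log\tfrac1{|x|})^{j-1}$ on $B$, and cite those papers for the remaining verification rather than reproducing it.
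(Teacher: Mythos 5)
Your proof of the positive inequality is exactly the paper's argument: the same choice $r=p/q$, $\varepsilon=[r]+1-r$, $A(t)=t^r\log(e+t)^{r-1+\varepsilon}$, the same identification $M_A(w^{1/r})^r=M_{A_0}w\simeq M^{[r]+1}w$ via \eqref{eq:AABr}--\eqref{eq:MkMk}, the same verification that $\bar A\in B_{r'}$ makes $M_{\bar A}$ bounded on $L^{r'}=(\X_v^{1/q})'$, and the same specialization $u=w^{1/p}$, $v\equiv 1$, $\X_v=L^p$ in Theorem \ref{thm:FS-X}. For the sharpness, the paper does not give an independent proof either; it simply asserts the exponent is optimal and points to \cite{CP} and \cite{P00}, which is what you do as well, so your plan is consistent with the paper. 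Your write-up is correct.
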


The paper is organized as follows. In Section \ref{sec:weights}, we present some preliminaries, definitions, and some auxiliary results that will be used throughout the paper. We also introduce Banach function spaces and present some examples under the framework of the general function spaces.  Section \ref{sec:BFS} is devoted to proving our main results, Theorem \ref{thm:Aq}--Corollary \ref{cor:FS-Lp}. In Section \ref{sec:modular}, for a Young function $\Phi$ and a weight $w$, we build some extrapolation results on modular spaces $\rho_w^{\Phi}$. Theorem \ref{thm:ModAi} can be viewed as a two-weight version of \cite[Theorem 3.1]{CGMP}. Then Corollary \ref{cor:BFSMod} is a hybrid of Theorem \ref{thm:BFSAi} and Theorem \ref{thm:ModAi}. As a consequence, Corollary \ref{cor:TM} gives a control of an operator $T$ by the maximal operator $M_A$ locally.  In addition, compared to \cite[Theorem 4.36]{CMM}, Theorems \ref{thm:PhiA1} and \ref{thm:PhiAi} are respectively formulated by the boundedness of $M_v$ instead of $M'_v$. In Section \ref{sec:comm}, following the ideas in \cite{BMMST}, we establish sharp two-weight extrapolation for commutators in the context of weighted Banach function spaces (cf. Theorem \ref{thm:TTb}). Then Bloom type estimates for commutators originated in \cite{Blo85} are given in Theorems \ref{thm:TaTb} and \ref{thm:CbIa}. Finally, Section \ref{sec:app} contains a variety of applications of extrapolation theorems, such as local decay estimates,  Coifman-Fefferman type's inequalities, Muckenhoupt-Wheeden and Saywer's conjectures for many operators, and two-weight inequalities for certain Littlewood-Paley operators and Fourier integral operators.

\section{Preliminaries}\label{sec:weights}

\subsection{Muckenhoupt weights}
We briefly recall the notion of Muckenhoupt weights and the relevant properties in this section. Given a locally integrable function $f$ on $\Rn$, the Hardy-Littlewood maximal operator $M$ is defined by 
\begin{equation}\label{eq:HL}
Mf(x):=\sup_{Q \ni x} \fint_{Q} |f(y)| \, dy, \qquad x \in \Rn,
\end{equation}
where the supremum is taken over all cubes $Q \subset \Rn$ containing $x$.

We say that a measurable function $w$ defined on $\Rn$ is a {\tt weight} if $0<w(x)<\infty$ a.e. $x\in\Rn$. Given $p\in(1,\infty)$, we define the {\tt Muckenhoupt class} $A_p$ as the collection of all weights $w$ satisfying 
\begin{equation}\label{eq:Def:A_p}
[w]_{A_p}:=\sup_{Q \subset \Rn} \bigg(\fint_{Q}w(x)\,dx \bigg) 
\bigg(\fint_{Q} w(x)^{1-p\,'} dx \bigg)^{p-1}<\infty,
\end{equation} 
where the supremum is taken over all cubes $Q\subset\Rn$ and $p'$ is the H\"older conjugate exponent of $p$, i.e., $1/p+1/p'=1$.
In the case $p=1$, we say that $w\in A_1$ if
\begin{equation*}
[w]_{A_1}:=\|Mw/w\|_{L^{\infty}(\Rn)} <\infty.
\end{equation*}
Then, we define
\begin{equation*}
A_{\infty}=\bigcup_{p\geq 1}A_p.
\end{equation*} 
For every $p\in(1,\infty)$ and every weight $w$, we define the associated weighted Lebesgue space $L^p(w):=L^p(\Rn,\,w\,dx)$ as the set of measurable functions $f$ with $\int_{\Rn} |f|^p\,w\,dx<\infty$. 

Let $1\le p\le q<\infty$. We say that $w \in A_{p,q}$ if it satisfies 
\begin{align*}
[w]_{A_{p,q}} 
:= \sup_{Q} \bigg(\fint_{Q} w^q \, dx \bigg)  \bigg(\fint_{Q} w^{-p'} dx \bigg)^{\frac{q}{p'}}<\infty.  
\end{align*}
Observe that $w \in A_{p,q}$ if and only if $w^q \in A_{1+\frac{q}{p'}}$ if and only if $w^{-p'} \in A_{1+\frac{p'}{q}}$. Moreover, 
\begin{align}\label{eq:wApq}
[w]_{A_{p,q}} = [w^q]_{A_{1+\frac{q}{p'}}} = [w^{-p'}]_{A_{1+\frac{p'}{q}}}^{\frac{q}{p'}}. 
\end{align} 
Together with $p\le q$, the equation \eqref{eq:wApq} implies that 
\begin{align}\label{eq:ppqq}
w \in A_{p,q} \quad\Longleftrightarrow\quad w^p \in A_p \quad\text{and}\quad w^q \in A_q.  
\end{align}
 
For $s\in(1,\infty]$, we define the {\tt reverse H\"{o}lder class} $RH_s$ as the collection of all weights $w$ such that 
\begin{equation*}
[w]_{RH_s} := \sup_{Q} \bigg(\fint_Q w^s\,dx \bigg)^{\frac1s} \bigg(\fint_Q w\,dx \bigg)^{-1}<\infty. 
\end{equation*}
When $s=\infty$, $(\fint_Q w^s\,dx)^{1/s}$ is understood as $(\esssup_{Q}w)$. 

Given $p\in(1,\infty)$ and a weight $w$, the Muckenhoupt's theorem \cite{Muc} states that the Hardy-Littlewood maximal operator $M$ is bounded on $L^p(w)$ if and only if $w\in A_p$. More precisely, there is a constant $C=C(n,p)$ such that
\begin{equation}\label{eq:M-sharp}
\|M\|_{L^p(w) \to L^p(w)} \leq C\,[w]_{A_p}^{\frac{1}{p-1}}.
\end{equation}

The properties of Muckenhoupt weights  are well-known (cf., e.g \cite{GR}). We mention some of them here for completeness.
\begin{list}{\textup{(\theenumi)}}{\usecounter{enumi}\leftmargin=1cm \labelwidth=1cm \itemsep=0.2cm 
			\topsep=.2cm \renewcommand{\theenumi}{\alph{enumi}}}
\item For all $1\leq p\leq q\leq\infty$, $A_p \subseteq A_q$ with $[w]_{A_q} \leq [w]_{A_p}$, $\forall w\in A_p$.  
\item Let $w \in A_p$ with $p\in(1,\infty)$. Then there are $\varepsilon\in(0,p-1)$ and $\tau\in(1,\infty)$ such that $w \in A_{p-\varepsilon}$ and $w^{\tau} \in A_p$. 
\item For every $p\in(1,\infty)$, $w \in A_p$ if and only if $w^{1-p'}\in A_{p'}$, and $[w^{1-p'}]_{A_{p'}}=[w]_{A_p}^{p'-1}$. 
\item For every $p\in(1,\infty)$ and $w_1,w_2\in A_1$,  
\begin{equation}\label{eq:rev-fac}
w_1\,w_2^{1-p}\in A_p \quad\text{with}\quad [w_1\,w_2^{1-p}\,]_{A_p}\leq[w_1]_{A_1}\,[w_2]^{p-1}_{A_1}. 
\end{equation}
\item $w\in A_{\infty}$ if and only if $w \in RH_s$ for some $s\in(1,\infty)$.  
\item For any positive Borel measure $\mu$, 
\begin{align}\label{eq:CR}
[(M \mu)^{\delta}] \le \frac{c_n}{1-\delta}, \quad\forall \delta \in (0, 1). 
\end{align}
As a consequence, 
\begin{equation}\label{eq:MRH}
1 \le [(M\mu)^{-\lambda}]_{RH_{\infty}} \le c_{n,\lambda},\quad\forall \lambda>0. 
\end{equation}
\end{list}

By $w \in A_p(u)$, we mean that $w$ satisfies the $A_p$ condition defined with respect to the measure $u\,dx$. 
The properties below considering the endpoint case were given in \cite{CMP05}. 
\begin{lemma}\label{lem:A1Ap} 
The following statements hold: 
\begin{list}{\textup{(\theenumi)}}{\usecounter{enumi}\leftmargin=1cm \labelwidth=1cm \itemsep=0.2cm 
			\topsep=.2cm \renewcommand{\theenumi}{\arabic{enumi}}}
\item\label{eq:A1RH} If $u \in A_1$, then $u^{-1} \in RH_{\infty}$. 
\item\label{eq:AiRH} If $u \in A_{\infty}$ and $v \in RH_{\infty}$, then $uv \in A_{\infty}$. 
\item\label{eq:RHs} If $u \in RH_{\infty}$, then $u^s \in RH_{\infty}$ for any $s>0$.  
\item\label{eq:uu} $u \in A_{\infty}$ if and only if $u=u_1 u_2$, where $u_1 \in A_1$ and $u_2 \in RH_{\infty}$. 
\item\label{eq:Apu}If $u \in A_1$ and $v \in A_p(u)$ with $1 \leq p < \infty$, then $uv \in A_p$ with 
$[uv]_{A_p} \leq [u]_{A_1}^p [v]_{A_p(u)}$. 
\item\label{eq:vA1u} If $u \in A_p$ with $1\le p \le \infty$ and $v \in A_1(u)$, then $uv \in A_p$ with 
$[uv]_{A_p} \leq [u]_{A_p} [v]_{A_1(u)}$. 
\item\label{eq:A1A1} If $u \in A_1$ and $v \in A_p$ with $1 \leq p < \infty$, then there exists $0<\epsilon_0<1$ depending only 
on $[u]_{A_1}$ such that $uv^{\epsilon} \in A_p$ for all $0<\epsilon<\epsilon_0$. 
\end{list}
\end{lemma}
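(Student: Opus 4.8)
The plan is to establish the seven items one at a time, drawing on the already-recorded facts about Muckenhoupt weights (the reverse H\"older self-improvement, the $RH_s$-characterization of $A_\infty$, and the standard comparison $(|E|/|Q|)^p\le[w]_{A_p}\,w(E)/w(Q)$ for $w\in A_p$, all in \cite{GR}) together with the Jones factorization theorem. Only item (2) and the forward direction of (4) seem to require a genuine idea; the remaining items are direct manipulations of the defining averages.

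\emph{Items (1) and (3).} If $u\in A_1$ then $\fint_Q u\le Mu(x)\le[u]_{A_1}u(x)$ a.e.\ on $Q$, hence $\fint_Q u\le[u]_{A_1}\essinf_Q u$; combining this with the elementary $(\fint_Q u)^{-1}\le\fint_Q u^{-1}$ gives $\esssup_Q u^{-1}=(\essinf_Q u)^{-1}\le[u]_{A_1}\fint_Q u^{-1}$, i.e.\ $[u^{-1}]_{RH_\infty}\le[u]_{A_1}$, which is (1). For (3) with $s\ge1$, Jensen gives $\esssup_Q u^s=(\esssup_Q u)^s\le[u]_{RH_\infty}^s(\fint_Q u)^s\le[u]_{RH_\infty}^s\fint_Q u^s$. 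For $0<s<1$ I would use a level-set argument: with $M_Q:=\esssup_Q u\le[u]_{RH_\infty}\fint_Q u$ and $G:=\{x\in Q:u(x)>(2[u]_{RH_\infty})^{-1}\fint_Q u\}$, the crude bound $\fint_Q u\le(2[u]_{RH_\infty})^{-1}\fint_Q u+M_Q|G|/|Q|$ forces $|G|/|Q|\ge(2[u]_{RH_\infty})^{-1}$, whence $\fint_Q u^s\ge\big((2[u]_{RH_\infty})^{-1}\fint_Q u\big)^s|G|/|Q|\gtrsim M_Q^s$, i.e.\ $[u^s]_{RH_\infty}<\infty$.

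\emph{Items (2) and (4).} For (2), by the $RH_s$-characterization of $A_\infty$ it suffices to exhibit one $s>1$ with $uv\in RH_s$; take $s$ to be the reverse-H\"older exponent of $u\in A_\infty$. From $v\le[v]_{RH_\infty}\fint_Q v$ a.e.\ on $Q$ one gets $\big(\fint_Q(uv)^s\big)^{1/s}\le[v]_{RH_\infty}(\fint_Q v)\big(\fint_Q u^s\big)^{1/s}\le[v]_{RH_\infty}[u]_{RH_s}(\fint_Q u)(\fint_Q v)$, so the claim reduces to the opposite inequality $(\fint_Q u)(\fint_Q v)\lesssim\fint_Q uv$, and here both hypotheses enter: on $G:=\{x\in Q:v(x)>(2[v]_{RH_\infty})^{-1}\fint_Q v\}$, for which $|G|/|Q|\ge(2[v]_{RH_\infty})^{-1}$ exactly as in (3), one has $\int_Q uv\ge(2[v]_{RH_\infty})^{-1}(\fint_Q v)\,u(G)$, and since $u\in A_p$ for some $p$ the comparison $(|G|/|Q|)^p\le[u]_{A_p}\,u(G)/u(Q)$ gives $u(G)\gtrsim u(Q)$; dividing by $|Q|$ closes the loop. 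I expect this ``average of a product versus product of averages'' step to be the main obstacle. Item (4) then follows: ``$\Leftarrow$'' is immediate from (2) since $A_1\subset A_\infty$; for ``$\Rightarrow$'', given $u\in A_\infty$ choose $p\in[1,\infty)$ with $u\in A_p$ and apply the Jones factorization theorem to write $u=v_1v_2^{1-p}$ with $v_1,v_2\in A_1$, so that $u=v_1\cdot(v_2^{-1})^{p-1}$ with $v_1\in A_1$ and $(v_2^{-1})^{p-1}\in RH_\infty$ by (1) and (3).

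\emph{Items (5), (6), (7).} These are computations with the $A_p$ functional. For (5), write $\fint_Q uv=\tfrac{u(Q)}{|Q|}\langle v\rangle^u_Q$, where $\langle\cdot\rangle^u_Q$ is the average against $u\,dx$; using $\essinf_Q u\ge\tfrac{u(Q)}{[u]_{A_1}|Q|}$ to estimate $u^{1-p'}=u\cdot u^{-p'}\le u\cdot(\essinf_Q u)^{-p'}$ and invoking $v\in A_p(u)$, the powers of $u(Q)/|Q|$ cancel by the identities $p'(p-1)=p$ and $(p'-1)(p-1)=1$, leaving exactly $[uv]_{A_p}\le[u]_{A_1}^p[v]_{A_p(u)}$; the case $p=1$ is the same, read as an $L^\infty$ bound on $M(uv)/(uv)$. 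For (6) one runs the dual version: $v\in A_1(u)$ gives $\langle v\rangle^u_Q\le[v]_{A_1(u)}\essinf_Q v$, and together with $v^{1-p'}\le(\essinf_Q v)^{1-p'}$ the factor $\essinf_Q v$ disappears because $(1-p')(p-1)=-1$, producing $[uv]_{A_p}\le[u]_{A_p}[v]_{A_1(u)}$; the endpoints $p=1$ and $p=\infty$ follow by the same bookkeeping, the latter after passing to a finite $q$ with $u\in A_q$. For (7), since $u\in A_1$ it enjoys a reverse-H\"older inequality $u\in RH_r$ with $r-1$, hence $r'$, controlled in terms of $[u]_{A_1}$ only; set $\varepsilon_0:=1/r'$. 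Then H\"older's inequality with exponents $r,r'$ gives $\fint_Q uv^\varepsilon\le[u]_{RH_r}(\fint_Q u)(\fint_Q v)^\varepsilon$, using $(\fint_Q v^{\varepsilon r'})^{1/(\varepsilon r')}\le\fint_Q v$ for $\varepsilon r'\le1$; and since $u^{1-p'}=(u^{-1})^{p'-1}\in RH_\infty$ by (1) and (3), the same splitting applied to $\fint_Q(uv^\varepsilon)^{1-p'}=\fint_Q u^{1-p'}(v^{1-p'})^\varepsilon$ gives $\fint_Q(uv^\varepsilon)^{1-p'}\le[u^{1-p'}]_{RH_\infty}(\fint_Q u^{1-p'})(\fint_Q v^{1-p'})^\varepsilon$. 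Multiplying, one is left with $[uv^\varepsilon]_{A_p}\le C\,[u]_{A_p}[v]_{A_p}^\varepsilon<\infty$, where $C$ and $\varepsilon_0$ depend only on $[u]_{A_1}$; the case $p=1$ is identical, with $M(uv^\varepsilon)/(uv^\varepsilon)$ in place of the $A_p$ functional.
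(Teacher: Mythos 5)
The paper does not actually prove this lemma; it is attributed to \cite{CMP05}, and your reconstruction is correct and proceeds along the same standard lines as that reference: items (1), (3), (5), (6) and (7) are direct manipulations of the defining averages, and the genuine content in (2) and the forward direction of (4) is supplied by the level-set comparison $|G|/|Q|\gtrsim 1$ together with Jones factorization, exactly as you set it up. The averaging identities in (5) and (6) check out: in (5) the powers $p'(p-1)=p$ and $(p'-1)(p-1)=1$ cancel the $u(Q)/|Q|$ factors and produce $[u]_{A_1}^p[v]_{A_p(u)}$, and in (6) the exponent $(1-p')(p-1)=-1$ cancels $\essinf_Q v$ and produces $[u]_{A_p}[v]_{A_1(u)}$.

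One small point worth tightening: for the endpoint $p=\infty$ in item (6) you propose to ``pass to a finite $q$ with $u\in A_q$'', which yields $uv\in A_q\subset A_\infty$ but not, on its own, the quantitative estimate $[uv]_{A_\infty}\le[u]_{A_\infty}[v]_{A_1(u)}$ that the paper actually invokes (for instance in the proof of Theorem \ref{thm:BFSAi}, where the bound $[w]_{A_\infty}\le[\mathcal{R}(hu)]_{A_1(v)}[v]_{A_\infty}$ is used explicitly). This is easily repaired by running the same $\essinf_Q v$ cancellation directly against whatever $A_\infty$ functional is intended; for example, if $[w]_{A_\infty}:=\sup_Q(\fint_Q w)\exp(-\fint_Q\log w)$, then $\fint_Q uv\le[v]_{A_1(u)}(\fint_Q u)\essinf_Q v$ together with $\exp(-\fint_Q\log(uv))=\exp(-\fint_Q\log u)\exp(-\fint_Q\log v)\le\exp(-\fint_Q\log u)(\essinf_Q v)^{-1}$ gives the bound at once. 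Otherwise your argument is complete and correct.
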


\subsection{Orlicz maximal operators}  

A function $\Phi:[0,\infty) \to [0,\infty)$ is called a {\tt Young function} if it is continuous, convex, strictly increasing, and satisfies
\begin{equation*}
\lim_{t\to 0^{+}}\frac{\Phi(t)}{t}=0 \quad\text{and}\quad \lim_{t\to\infty}\frac{\Phi(t)}{t}=\infty.
\end{equation*}
Given $p \in[1, \infty)$, we say that a Young function $\Phi$ is a {\tt $p$-Young function}, if $\Psi(t)=\Phi(t^{1/p})$ is a Young function. 

If $A$ and $B$ are Young functions, we write $A(t) \simeq B(t)$ if there are constants $c_1, c_2>0$ such that 
$c_1 A(t) \leq B(t) \leq c_2 A(t)$ for all $t \geq t_0>0$. Also, we denote $A(t) \preceq B(t)$ if there exists $c>0$ such that $A(t) \leq B(ct)$ for all $t \geq t_0>0$. Note that for all Young functions $\phi$, $t \preceq \phi(t)$. Further, if $A(t)\leq cB(t)$ for some $c>1$, then by convexity, $A(t) \leq B(ct)$.

A function $\Phi$ is said to be {\tt doubling}, or $\Phi \in \Delta_2$, if there exists a constant $C>0$ such that $\Phi(2t) \leq C \Phi(t)$ for any $t>0$. Given a Young function $\Phi$, its complementary function $\bar{\Phi}:[0,\infty) \to [0,\infty)$ is defined by 
\[
\bar{\Phi}(t):=\sup_{s>0}\{st-\Phi(s)\}, \quad t>0, 
\]
which clearly implies that
\begin{align}\label{eq:stst}
st \leq \Phi(s) + \bar{\Phi}(t), \quad s, t > 0.
\end{align}
Moreover, one can check that $\bar{\Phi}$ is also a Young function and
\begin{equation}\label{eq:Young-1}
t \leq \Phi^{-1}(t) \bar{\Phi}^{-1}(t) \leq 2t, \qquad t>0.
\end{equation}
In turn, by replacing $t$ by $\Phi(t)$ in first inequality of \eqref{eq:Young-1}, we obtain
\begin{equation}\label{eq:Young-2}
\bar{\Phi} \Big(\frac{\Phi(t)}{t}\Big) \leq \Phi(t), \qquad t>0.
\end{equation}

Let us recall the lower and upper dilation indices of a positive increasing function $\Phi$ on $[0,\infty)$, which are respectively defined by
\begin{equation}\label{eq:Ii}
i_{\Phi}:=\lim_{t\to0^{+}} \frac{\log h_{\Phi}(t)}{\log t} \quad\text{and}\quad 
I_{\Phi}:=\lim_{t\to\infty} \frac{\log h_{\Phi}(t)}{\log t}, 
\end{equation}
where $h_{\Phi}$ is defined as 
\begin{equation*}
h_{\Phi}(t) := \sup_{s>0} \frac{\Phi(st)}{\Phi(s)}, \qquad t>0.
\end{equation*} 
From the definitions, one can show that 
\begin{equation*}
1\leq i_{\Phi} \leq I_{\Phi} \leq \infty,\quad 
(I_{\Phi})'=i_{\bar{\Phi}},\quad\text{and}\quad (i_{\Phi})'=I_{\bar{\Phi}}.
\end{equation*}
Furthermore, it turns out that $\Phi \in \Delta_2$ if and only if $I_{\Phi}<\infty$, and hence 
\begin{equation}\label{eq:Delta2equiv}
\Phi, \bar{\Phi} \in \Delta_2 \quad\text{ if and only if }\quad
1<i_{\Phi}\leq I_{\Phi}<\infty.
\end{equation}
We conclude by giving some examples of the lower and upper dilation indices. 
\begin{itemize}
\item Let $\Phi(t)=t^p$, $1<p<\infty$. Then $\bar{\Phi}(t)=p t^{p'}/p'$ and $i_{\Phi}=I_{\Phi}=p$. 
\vspace*{0.4em}
\item Let $\Phi(t) \simeq t^p \log(e+t)^{\alpha}$ with $1<p<\infty$ and $\alpha \in \R$. Then $\bar{\Phi}(t) \simeq t^{p'}\log(e+
t)^{\alpha(1-p')}$ and $i_{\Phi}=I_{\Phi}=p$. 
\vspace*{0.4em}
\item Given $1<p<\infty$, let $\Phi(t) \simeq t^p$, $0 \leq t \leq 1$, and $\Phi(t) \simeq e^t$, $t\geq 1$. Then $\bar{\Phi}(t) \simeq t^{p'}$, $0 \leq t \leq 1$, and $\bar{\Phi}(t) \simeq t\log(e+t)$, $t \geq 1$. In this case, $i_{\Phi}=p$ and $I_{\Phi} =\infty$, and $\Phi \not\in \Delta_2$. 
\end{itemize}

Given a Young function $\Phi$, we define the Orlicz space $L^{\Phi}(\Omega, \mu)$ to be the function space with Luxemburg norm
\begin{align}\label{eq:Orlicz}
\|f\|_{L^{\Phi}(\Omega, \mu)} := \inf\bigg\{\lambda>0: 
\int_{\Omega} \Phi \Big(\frac{|f(x)|}{\lambda}\Big) d\mu(x) \leq 1 \bigg\}. 
\end{align}
Now we define the Orlicz maximal operator 
\begin{align*}
M_{\Phi}f(x) := \sup_{Q \ni x} \|f\|_{\Phi, Q} := \sup_{Q \ni x} \|f\|_{L^{\Phi}(Q, \frac{dx}{|Q|})}, 
\end{align*}
where the supremum is taken over all cubes $Q$ in $\Rn$. When $\Phi(t)=t^p$, $1\leq p<\infty$, 
\begin{align*}
\|f\|_{\Phi, Q} = \bigg(\fint_{Q} |f(x)|^p dx \bigg)^{\frac1p}=:\|f\|_{p, Q}.  
\end{align*}
In this case, if $p=1$, $M_{\Phi}$ agrees with the classical maximal operator $M$ in \eqref{eq:HL}; if $p>1$, $M_{\Phi}f=M_pf:=M(|f|^p)^{1/p}$. If $\Phi(t) \preceq \Psi(t)$, then $M_{\Phi}f(x) \leq c M_{\Psi}f(x)$ for all $x \in \Rn$. 

The classical H\"{o}lder's inequality can be generalized to Orlicz spaces \cite{ON}. 
\begin{lemma}
Given a Young function $A$, then for all cubes $Q$, 
\begin{equation}\label{eq:Holder-AA}
\fint_{Q} |fg| dx \leq 2 \|f\|_{A, Q} \|g\|_{\bar{A}, Q}. 
\end{equation}
More generally, if $A$, $B$ and $C$ are Young functions such that $A^{-1}(t) B^{-1}(t) \leq c_1 C^{-1}(t), $ for all $t \geq t_0>0$, 
then 
\begin{align}\label{eq:Holder-ABC}
\|fg\|_{C, Q} \leq c_2 \|f\|_{A, Q} \|g\|_{B, Q}. 
\end{align}
\end{lemma}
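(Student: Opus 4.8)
The plan is to derive both inequalities from the pointwise Young-type estimate \eqref{eq:stst} combined with the homogeneity of the Luxemburg norm, reducing everything to the normalized situation. After discarding the trivial cases, I may assume that $\alpha:=\|f\|_{A,Q}$ and the relevant second norm (either $\|g\|_{\bar A,Q}$ or $\|g\|_{B,Q}$) are positive and finite. Since the Young functions involved are continuous and strictly increasing, the defining infimum of the Luxemburg norm is attained in the sense that $\fint_Q A(|f|/\alpha)\,dx\le 1$: indeed, for every $\lambda>\alpha$ one has $\fint_Q A(|f|/\lambda)\,dx\le 1$, and letting $\lambda\downarrow\alpha$ the integrands increase to $A(|f|/\alpha)$, so monotone convergence gives the bound. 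The same remark applies to $\bar A$, $B$ and $C$.

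For \eqref{eq:Holder-AA} I would apply \eqref{eq:stst} with $\Phi=A$, $s=|f(x)|/\alpha$ and $t=|g(x)|/\beta$ where $\beta=\|g\|_{\bar A,Q}$, obtaining
\[
\frac{|f(x)g(x)|}{\alpha\beta}\le A\!\Big(\frac{|f(x)|}{\alpha}\Big)+\bar A\!\Big(\frac{|g(x)|}{\beta}\Big).
\]
Averaging over $Q$ and invoking the two normalization bounds $\fint_Q A(|f|/\alpha)\,dx\le 1$ and $\fint_Q \bar A(|g|/\beta)\,dx\le 1$ yields $\fint_Q|fg|\,dx\le 2\alpha\beta$, which is exactly \eqref{eq:Holder-AA}.

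For \eqref{eq:Holder-ABC} the crux is the pointwise convexity inequality
\[
C\!\Big(\frac{uv}{c_1}\Big)\le A(u)+B(v)\qquad\text{whenever }A(u)+B(v)\ge t_0 .
\]
To obtain it, set $t:=A(u)+B(v)$; then $A^{-1}(t)\ge u$ and $B^{-1}(t)\ge v$, so $uv/c_1\le A^{-1}(t)B^{-1}(t)/c_1\le C^{-1}(t)$ by hypothesis, and applying the increasing map $C$ gives the claim. In the remaining regime $A(u)+B(v)<t_0$ one has $u<A^{-1}(t_0)$, $v<B^{-1}(t_0)$, hence $uv/c_1<A^{-1}(t_0)B^{-1}(t_0)/c_1\le C^{-1}(t_0)$ and therefore $C(uv/c_1)<t_0$. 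Taking $u=|f(x)|/\alpha$, $v=|g(x)|/\beta$ with $\alpha=\|f\|_{A,Q}$, $\beta=\|g\|_{B,Q}$, splitting $Q$ according to these two regimes, and averaging, I get
\[
\fint_Q C\!\Big(\frac{|fg|}{c_1\alpha\beta}\Big)\,dx
\le \fint_Q\Big[A\big(|f|/\alpha\big)+B\big(|g|/\beta\big)\Big]\,dx+t_0\le 2+t_0 .
\]
Finally, since $C$ is convex with $C(0)=0$, one has $C\big(x/(2+t_0)\big)\le C(x)/(2+t_0)$, so dividing through shows $\fint_Q C\big(|fg|/(c_1(2+t_0)\alpha\beta)\big)\,dx\le 1$, i.e. $\|fg\|_{C,Q}\le c_2\|f\|_{A,Q}\|g\|_{B,Q}$ with $c_2=c_1(2+t_0)$.

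I expect no serious obstacle here: the argument is essentially the classical O'Neil computation. The only mildly delicate points are the justification that the defining integral of the Luxemburg norm is $\le 1$ at the norm value — handled above via continuity of the Young functions and monotone convergence — and the bookkeeping around the threshold $t_0$ in \eqref{eq:Holder-ABC}, where one must separately control the set on which $A(|f|/\alpha)+B(|g|/\beta)<t_0$. The algebraic heart, namely converting the inverse-function hypothesis $A^{-1}(t)B^{-1}(t)\le c_1 C^{-1}(t)$ into the pointwise inequality for $C(uv/c_1)$, is immediate once $t$ is taken to be $A(u)+B(v)$.
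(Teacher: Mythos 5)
Your argument is correct. For the record, the paper itself does not prove this lemma but simply cites O'Neil's paper \cite{ON}, so there is no in-paper proof to compare against; your derivation is essentially the classical one from that source. The first inequality follows exactly as you say from Young's inequality \eqref{eq:stst} plus the normalization $\fint_Q A(|f|/\|f\|_{A,Q})\,dx\le 1$, which you correctly justify by letting $\lambda\downarrow\|f\|_{A,Q}$ and applying monotone convergence. For the second, the key move of taking $t=A(u)+B(v)$ to convert the inverse-function hypothesis into $C(uv/c_1)\le A(u)+B(v)$ is standard, and your careful treatment of the threshold $t_0$ (bounding $C(uv/c_1)<t_0$ when $A(u)+B(v)<t_0$, giving the extra additive contribution $t_0$) is exactly the bookkeeping one needs; the final step $C(x/(2+t_0))\le C(x)/(2+t_0)$ from convexity and $C(0)=0$ closes the argument with the explicit constant $c_2=c_1(2+t_0)$. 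No gaps.
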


 The following result is an extension of the well-known Coifman-Rochberg theorem \eqref{eq:CR}. The proof can be found in \cite[Lemma~4.2]{HP}. 
\begin{lemma}
Let $\Phi$ be a Young function and $w$ be a nonnegative function such that $M_{\Phi}w(x)<\infty$ a.e.. Then 
 \begin{align}
 \label{eq:CR-Phi} [(M_{\Phi}w)^{\delta}]_{A_1} &\le c_{n,\delta}, \quad\forall \delta \in (0, 1), 
 \\
\label{eq:MPhiRH} [(M_{\Phi} w)^{-\lambda}]_{RH_{\infty}} &\le c_{n,\lambda},\quad\forall \lambda>0. 
 \end{align}
 \end{lemma}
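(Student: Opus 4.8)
The plan is to prove the two inequalities \eqref{eq:CR-Phi} and \eqref{eq:MPhiRH} by reducing them to the classical Coifman–Rochberg estimate \eqref{eq:CR} via a pointwise comparison between the Orlicz maximal operator $M_\Phi$ and the ordinary Hardy–Littlewood maximal operator $M$ applied to an appropriate power of $w$. Concretely, I would first record the elementary self-improvement property of Orlicz averages: since $\Phi$ is a Young function, $t\preceq \Phi(t)$, and one always has $\|f\|_{1,Q}\le 2\|f\|_{\Phi,Q}$; more usefully, one can always dominate an $L^\Phi$ average by an $L^r$ average of a power, i.e. there is $r=r_\Phi>1$ (for instance any $r$ with $\Phi(t)\preceq t^r$ near infinity fails in general, so instead) — here one should instead observe the reverse: for \emph{every} Young function $\Phi$ there exists $\varepsilon>0$ such that $\Psi(t):=\Phi(t^{1/(1-\varepsilon)})$ is still a Young function is false without $\Delta_2$, so the robust route is different. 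I would therefore take the genuinely general approach below.

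\textbf{Main strategy.} Fix $\delta\in(0,1)$. The goal of \eqref{eq:CR-Phi} is to show $(M_\Phi w)^\delta\in A_1$ with a dimensional-plus-$\delta$ constant, i.e. $M\big((M_\Phi w)^\delta\big)(x)\le c_{n,\delta}\,(M_\Phi w)^\delta(x)$ a.e. Since $M_\Phi w<\infty$ a.e.\ by hypothesis, the function $\mu:=M_\Phi w$ is a well-defined weight, and I would like to say $M_\Phi w$ is itself comparable to $M\nu$ for a suitable measure $\nu$. This is exactly what one does: for each cube $Q$, $\|w\|_{\Phi,Q}$ is a nondecreasing, "averaging-type" functional, and one shows the key pointwise bound $M_\Phi w(x)\le C_n\, M_\Phi\big(\,\cdot\,\big)$-type submultiplicativity is not needed; instead one uses that $M_\Phi$ itself satisfies a Coifman–Rochberg phenomenon because $\Phi^{-1}(t)/t$ decreases. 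The cleanest path, and the one I expect the cited proof in \cite[Lemma~4.2]{HP} to follow, is: (i) show that for any nonnegative $w$ and any cube $Q$, $\big\|M_\Phi w\big\|_{\Phi',Q}$ — no. Let me state the plan I would actually execute.

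\textbf{Concrete steps.} (1) Reduce \eqref{eq:MPhiRH} to \eqref{eq:CR-Phi}: by Lemma~\ref{lem:A1Ap}\eqref{eq:A1RH}, if $u\in A_1$ then $u^{-1}\in RH_\infty$; applying this with $u=(M_\Phi w)^{\delta}$ for a fixed small $\delta$ (legitimate by \eqref{eq:CR-Phi}) gives $(M_\Phi w)^{-\delta}\in RH_\infty$, and then Lemma~\ref{lem:A1Ap}\eqref{eq:RHs} (that $RH_\infty$ is closed under positive powers) upgrades this to $(M_\Phi w)^{-\lambda}=\big((M_\Phi w)^{-\delta}\big)^{\lambda/\delta}\in RH_\infty$ for every $\lambda>0$, with a constant depending only on $n$ and $\lambda$ (the lower bound $[\,\cdot\,]_{RH_\infty}\ge1$ is trivial). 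So the entire weight of the argument rests on \eqref{eq:CR-Phi}.
(2) Prove \eqref{eq:CR-Phi}. Fix $\delta\in(0,1)$ and a cube $Q$; I must bound $\fint_Q (M_\Phi w)^\delta$ by $c_{n,\delta}\,\operatorname*{ess\,inf}_Q (M_\Phi w)^\delta$. Split $w=w\mathbf 1_{3Q}+w\mathbf 1_{(3Q)^c}=:w_1+w_2$. For the local part, note $M_\Phi w_1\le M_\Phi w_1$ and by the Orlicz Kolmogorov / weak-type inequality for $M_\Phi$ (which holds because $\Phi$ controls $L\log L$-type scales and $M_\Phi$ maps $L^\Phi\to L^{1,\infty}$ locally up to the $\Phi$-norm), one gets $\fint_Q (M_\Phi w_1)^\delta\,dx\le \frac{c_{n,\delta}}{|Q|}\, |3Q|\,\|w\|_{\Phi,3Q}^\delta\le c_{n,\delta}\,\|w\|_{\Phi,3Q}^\delta\le c_{n,\delta}\,\big(\operatorname*{ess\,inf}_{x\in Q} M_\Phi w(x)\big)^\delta$, the last step because $3Q\ni x$ for every $x\in Q$ so $\|w\|_{\Phi,3Q}\le M_\Phi w(x)$. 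For the far part, for any $x\in Q$ and any cube $R\ni x$, if $R\subset 3Q$ then $\|w_2\|_{\Phi,R}=0$, while if $R\not\subset 3Q$ then $R$ has comparable sidelength to a dilate of $Q$ and one checks $\|w_2\|_{\Phi,R}\le c_n\, M_\Phi w(y)$ for every $y\in Q$; hence $M_\Phi w_2(x)\le c_n\,\operatorname*{ess\,inf}_Q M_\Phi w$ pointwise on $Q$, which trivially controls its $\delta$-average. Adding the two pieces and using $(a+b)^\delta\le a^\delta+b^\delta$ finishes it.
(3) Assemble: combine the local and global bounds to get $\fint_Q (M_\Phi w)^\delta\le c_{n,\delta}\,\operatorname*{ess\,inf}_Q(M_\Phi w)^\delta$, which is exactly the $A_1$ condition for $(M_\Phi w)^\delta$, proving \eqref{eq:CR-Phi}; then invoke step (1).

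\textbf{Main obstacle.} The one nontrivial analytic input is the local weak-type $(1,1)$-with-Orlicz-norm bound for $M_\Phi$ used in step (2), namely $|\{x\in Q: M_\Phi(w\mathbf 1_{3Q})(x)>\lambda\}|\le \frac{c_n}{\lambda}\,|3Q|\,\|w\|_{\Phi,3Q}$, or more precisely its consequence $\fint_Q (M_\Phi(w\mathbf1_{3Q}))^\delta \le c_{n,\delta}\|w\|_{\Phi,3Q}^\delta$. This is where the structure of $\Phi$ as a Young function (convexity, $\Phi(t)/t\to0$) genuinely enters, through a Calderón–Zygmund stopping-time decomposition adapted to the Luxemburg norm; everything else is bookkeeping with $A_1$/$RH_\infty$ properties already recorded in Lemma~\ref{lem:A1Ap} and the classical estimate \eqref{eq:CR}. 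Since the paper cites \cite[Lemma~4.2]{HP} for precisely this, in the write-up I would simply reference that weak-type bound and spend the proof on steps (1)–(3).
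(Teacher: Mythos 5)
Your argument is correct and follows the standard Coifman--Rochberg scheme: split $w=w\mathbf 1_{3Q}+w\mathbf 1_{(3Q)^c}$, use a Kolmogorov/weak-type estimate for the local part (which indeed follows from a Vitali covering together with the convexity bound $\Phi(st)\le s\Phi(t)$ for $0\le s\le 1$) and a pointwise comparison of Orlicz averages over comparable cubes for the far part, then deduce \eqref{eq:MPhiRH} from \eqref{eq:CR-Phi} via Lemma~\ref{lem:A1Ap}\eqref{eq:A1RH} and \eqref{eq:RHs}. This is precisely the proof that the paper defers to by citing \cite[Lemma~4.2]{HP} (the paper gives no in-text proof), so the approach is the same; the exploratory false starts in your first paragraph should simply be deleted, as the ``Concrete steps'' paragraph is self-contained.
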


Given $p \in (1, \infty)$, a Young function $\Phi$ is said to satisfy the {\tt $B_p$ condition} (or, $\Phi \in B_p$) if for some $c>0$, 
\begin{align}\label{def:Bp}
\int_{c}^{\infty} \frac{\Phi(t)}{t^p} \frac{dt}{t} < \infty.  
\end{align}
Observe that if \eqref{def:Bp} is finite for some $c>0$, then it is finite for every $c>0$. Let $[\Phi]_{B_p}$ denote the value if $c=1$ in \eqref{def:Bp}. It was shown in \cite[Proposition~5.10]{CMP11} that if $\Phi$ and $\bar{\Phi}$ are doubling Young functions, then $\Phi \in B_p$ if and only if 
\begin{align*}
\int_{c}^{\infty} \bigg(\frac{t^{p'}}{\bar{\Phi}(t)}\bigg)^{p-1} \frac{dt}{t} < \infty.  
\end{align*}

Let us present two types of $B_p$ bump conditions. An important special case is the ``log-bumps" of the form 
\begin{align}\label{eq:log}
A(t) =t^p \log(e+t)^{p-1+\delta}, \quad  B(t) =t^{p'} \log(e+t)^{p'-1+\delta},\quad \delta>0. 
\end{align}
Another interesting example is the ``loglog-bumps" as follows: 
\begin{align}
\label{eq:loglog-1} &A(t)=t^p \log(e+t)^{p-1} \log\log(e^e+t)^{p-1+\delta}, \quad \delta>0\\  
\label{eq:loglog-2} &B(t)=t^{p'} \log(e+t)^{p'-1} \log\log(e^e+t)^{p'-1+\delta}, \quad \delta>0. 
\end{align}
Then one can verify that in both cases above, $\bar{A} \in B_{p'}$ and $\bar{B} \in B_p$ for any $1<p<\infty$.

The $B_p$ condition can be also characterized by the boundedness of the Orlicz maximal operator $M_{\Phi}$. Indeed, the following result was given in \cite[Theorem~5.13]{CMP11} and \cite[eq. (25)]{HP}.  
\begin{lemma}\label{lem:MBp}
Let $1<p<\infty$. Then $M_{\Phi}$ is bounded on $L^p(\Rn)$ if and only if $\Phi \in B_p$. Moreover, $\|M_{\Phi}\|_{L^p(\Rn) \to L^p(\Rn)} \le C_{n,p} [\Phi]_{B_p}^{\frac1p}$. In particular, if the Young function $A$ is the same as the first one in \eqref{eq:log} or \eqref{eq:loglog-1}, then 
\begin{equation}\label{eq:MAnorm}
\|M_{\bar{A}}\|_{L^{p'}(\Rn) \to L^{p'}(\Rn)} \le c_n p^2 \delta^{-\frac{1}{p'}},\quad\forall \delta \in (0, 1]. 
\end{equation}
\end{lemma}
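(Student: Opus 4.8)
The plan is to establish the two halves of the equivalence separately, keeping careful track of constants in the first one, and then to read off the ``in particular'' bound from an explicit evaluation of $[\bar A]_{B_{p'}}$; it suffices to argue for $f$ bounded with compact support (the general case following by the usual truncation), and after normalising $\Phi(1)=1$ we may assume $f\ge0$. For the \emph{sufficiency} $(\Phi\in B_p\Rightarrow M_\Phi$ bounded$)$ the crux is the level-set bound $|\{M_\Phi f>\lambda\}|\le c_n\int_{\{f>\lambda/2\}}\Phi(2f(x)/\lambda)\,dx$. To prove it I would note that $\|g\|_{\Phi,Q}\le\mu$ whenever $|g|\le\mu$, so $M_\Phi(f\chi_{\{f\le\lambda/2\}})\le\lambda/2$; combining this with the subadditivity of $M_\Phi$ (triangle inequality for the Luxemburg norm) gives $\{M_\Phi f>\lambda\}\subset\{M_\Phi(f\chi_{\{f>\lambda/2\}})>\lambda/2\}$, and every cube $Q$ realising the latter maximal function obeys $\int_Q\Phi(2f/\lambda)\,dx>|Q|$. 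These cubes have uniformly bounded measure, so Vitali's covering lemma extracts a disjoint subfamily whose $5$-dilates cover the level set, and summing $\int_{Q_j}\Phi(2f/\lambda)\ge|Q_j|$ yields the claim. Then the layer-cake formula, Tonelli's theorem and the substitution $t=2f(x)/\lambda$ give
\[
\int_{\Rn}(M_\Phi f)^p\,dx\le c_n\,p\int_{\Rn}\Bigl(\int_0^{2f(x)}\lambda^{p-1}\Phi\bigl(2f(x)/\lambda\bigr)\,d\lambda\Bigr)dx=c_n\,p\,2^p\,[\Phi]_{B_p}\int_{\Rn}f^p\,dx,
\]
and since $(c_n p)^{1/p}$ stays bounded for $p\ge1$, extracting $p$-th roots gives $\|M_\Phi\|_{L^p\to L^p}\le C_{n,p}[\Phi]_{B_p}^{1/p}$.

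For the \emph{necessity} $(M_\Phi$ bounded $\Rightarrow\Phi\in B_p)$ I would test the boundedness on $f=\chi_{Q_0}$ with $Q_0=[0,1]^n$. For $|x|\ge2$, choosing a cube $Q\ni x$ with $Q\supset Q_0$ and $|Q|\le c_n|x|^n$, a direct computation gives $\|\chi_{Q_0}\|_{\Phi,Q}=1/\Phi^{-1}(|Q|)$, hence $M_\Phi f(x)\ge1/\Phi^{-1}(c_n|x|^n)$. Using polar coordinates and the substitutions $s\simeq|x|^n$ and $s=\Phi(t)$,
\[
\infty>\|M_\Phi f\|_{L^p}^p\gtrsim\int_{|x|\ge2}\frac{dx}{\Phi^{-1}(c_n|x|^n)^p}\simeq\int_c^\infty\frac{ds}{\Phi^{-1}(s)^p}=\int_{\Phi^{-1}(c)}^\infty\frac{\Phi'(t)}{t^p}\,dt,
\]
and since $\Phi$ is convex with $\Phi(0)=0$ one has $\Phi(t)\le t\,\Phi'(t)$, so $\int^\infty\Phi(t)\,t^{-p-1}\,dt<\infty$, i.e.\ $\Phi\in B_p$ by \eqref{def:Bp}.

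For the \emph{quantitative consequence}, apply the first half with $p$ replaced by $p'$ and $\Phi=\bar A$, so $\|M_{\bar A}\|_{L^{p'}\to L^{p'}}\le C_{n,p'}[\bar A]_{B_{p'}}^{1/p'}$ with $C_{n,p'}$ bounded in $p'$. When $A(t)=t^p\log(e+t)^{p-1+\delta}$, the complementary function satisfies $\bar A(t)\simeq t^{p'}\log(e+t)^{(p-1+\delta)(1-p')}=t^{p'}\log(e+t)^{-1-\delta/(p-1)}$, so the substitution $u=\log(e+t)$ gives
\[
[\bar A]_{B_{p'}}=\int_1^\infty\frac{\bar A(t)}{t^{p'}}\frac{dt}{t}\simeq\int_1^\infty\log(e+t)^{-1-\frac{\delta}{p-1}}\frac{dt}{t}\lesssim\frac{p-1}{\delta}\lesssim\frac{p}{\delta},\qquad\delta\in(0,1],
\]
and the loglog-bump in \eqref{eq:loglog-1} is handled identically with $u=\log\log(e^e+t)$; hence $\|M_{\bar A}\|_{L^{p'}\to L^{p'}}\le C_n(p/\delta)^{1/p'}\le c_n p^2\delta^{-1/p'}$, as claimed. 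The one genuinely delicate point is the covering step in the sufficiency direction: since $M_\Phi$ is neither dyadic nor of weak type $(1,1)$, the level-set estimate must be produced by a Vitali/Calder\'on--Zygmund selection of cubes carried out with a purely dimensional constant, so that the parasitic factor $2^p$ disappears upon taking $p$-th roots and only the sharp $[\Phi]_{B_p}^{1/p}$ survives; everything else is one-variable calculus plus the formula for complementary Young functions recorded in the examples preceding the statement.
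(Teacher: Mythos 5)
The paper does not prove this lemma; it points to \cite[Theorem~5.13]{CMP11} and \cite[eq.~(25)]{HP}. Your reconstruction is the standard argument underlying those references and is correct in substance: the modular weak-type bound
\[
|\{M_\Phi f>\lambda\}|\le c_n\int_{\{f>\lambda/2\}}\Phi\bigl(2f/\lambda\bigr)\,dx
\]
obtained by subadditivity of the Luxemburg norm plus a $5r$-Vitali selection, the layer-cake formula, Tonelli, and the substitution $t=2f(x)/\lambda$ — producing $\|M_\Phi\|_{L^p\to L^p}\le(2^p c_n\,p)^{1/p}[\Phi]_{B_p}^{1/p}$ with $(2^p c_n\,p)^{1/p}$ uniformly bounded in $p$ — and, for the converse, testing on $\chi_{Q_0}$, computing $\|\chi_{Q_0}\|_{\Phi,Q}=1/\Phi^{-1}(|Q|)$, and using $\Phi(t)\le t\Phi'(t)$ (convexity with $\Phi(0)=0$) to pass from $\int^\infty\Phi'(t)t^{-p}\,dt<\infty$ to the $B_p$ integral. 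Both directions are fine as written.

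The one place that remains a genuine (if small) gap is the quantitative ``in particular'' step. Your estimate $[\bar A]_{B_{p'}}\lesssim (p-1)/\delta$ rests on the asymptotic $\bar A(t)\simeq t^{p'}\log(e+t)^{-1-\delta/(p-1)}$, but the implicit constants in that $\simeq$, as well as the threshold $t_0$ above which it holds, depend a priori on $p$ and $\delta$; you invoke it as recorded in the examples of Section~2 where constants are not tracked. The factor $p^2$ in the stated bound — rather than the bare $p$ your computation produces — is precisely the room reserved for these constants, and closing the argument requires either an explicit Legendre-transform estimate for $\bar A$ or the quantitative computation carried out in \cite{HP}. You correctly identify this as the delicate point but do not actually show that the implied constants remain purely dimensional. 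Since the overall structure and every other step are complete, this is a missing verification rather than a wrong approach; still, as the $p^2\delta^{-1/p'}$ form is the entire content of \eqref{eq:MAnorm}, it deserves to be filled in rather than waved at.
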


\begin{definition}\label{def:sepbum}
Given $p \in (1, \infty)$, let $A$ and $B$ be Young functions such that $\bar{A} \in B_{p'}$ and $\bar{B} \in B_p$. We say that the pair of weights $(u, v)$ satisfies the {\tt double bump condition} with respect to $A$ and $B$ if 
\begin{align}\label{eq:uvABp}
[u, v]_{A,B,p}:=\sup_{Q} \|u^{\frac1p}\|_{A,Q} \|v^{-\frac1p}\|_{B,Q} < \infty.  
\end{align}
where the supremum is taken over all cubes $Q$ in $\Rn$. Also, $(u, v)$ is said to satisfy the {\tt separated bump condition} if 
\begin{align}
\label{eq:uvAp} [u, v]_{A,p'} &:= \sup_{Q} \|u^{\frac1p}\|_{A,Q} \|v^{-\frac1p}\|_{p',Q} < \infty, 
\\
\label{eq:uvpB} [u, v]_{p,B} &:= \sup_{Q} \|u^{\frac1p}\|_{p,Q} \|v^{-\frac1p}\|_{B,Q} < \infty. 
\end{align}
\end{definition}

Note that if $A(t)=t^p$ in \eqref{eq:uvAp} or $B(t)=t^p$ in \eqref{eq:uvpB}, each of them actually is two-weight $A_p$ condition and we denote them by $[u, v]_{A_p}:=[u, v]_{p,p'}$.  Also, the separated bump condition is weaker than the double bump condition. Indeed, \eqref{eq:uvABp} implies \eqref{eq:uvAp} and \eqref{eq:uvpB}, but the reverse direction is incorrect. 
The first fact holds since $\bar{A} \in B_{p'}$ and $\bar{B} \in B_p$ respectively indicate $A$ is a $p$-Young function and $B$ is a $p'$-Young function. The second fact was shown in \cite[Section~7]{ACM} by constructing log-bumps.   

\begin{lemma}\label{lem:M-uv}
Let $1<p<\infty$, let $A$, $B$ and $\Phi$ be Young functions such that $A \in B_p$ and $A^{-1}(t)B^{-1}(t) \lesssim \Phi^{-1}(t)$ for any $t>t_0>0$. If a pair of weights $(u, v)$ satisfies $[u, v]_{p, B}<\infty$, then 
\begin{align}\label{eq:MPhi-uv}
\|M_{\Phi}f\|_{L^p(u)} \leq C [u, v]_{p, B} [A]_{B_p}^{\frac1p} \|f\|_{L^p(v)}. 
\end{align}
Moreover, \eqref{eq:MPhi-uv} holds for $\Phi(t)=t$ and $B=\bar{A}$ satisfying the same hypotheses.  In this case, $\bar{A} \in B_p$ is necessary. 
\end{lemma}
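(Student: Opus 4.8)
I would prove this by the classical Calder\'on--Zygmund stopping-time argument for two-weight maximal estimates, adapted to Orlicz averages. First, by the standard shifted-grid trick there are dyadic lattices $\D_1,\dots,\D_{3^n}$ with $M_\Phi f\lesssim\sum_{t}M_\Phi^{\D_t}f$ pointwise, where $M_\Phi^{\D}$ denotes $M_\Phi$ computed over cubes of $\D$ only; the comparison $\|f\|_{\Phi,Q}\lesssim\|f\|_{\Phi,Q'}$ for $Q\subset Q'$ with $|Q'|\le 3^n|Q|$ is an immediate consequence of convexity, $\Phi(t/c)\le \Phi(t)/c$ for $c\ge1$. So it suffices to estimate a single dyadic operator $M_\Phi^{\D}$, and by monotone convergence we may take $f$ bounded with compact support, so $M_\Phi^{\D}f$ is finite and the level sets below have finite measure. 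Throughout I write $G:=f\,v^{1/p}$, so that $\|G\|_{L^p(\Rn)}=\|f\|_{L^p(v)}$.

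\textbf{Stopping cubes and sparseness.} Fix $a\ge 2^{n+1}$, set $\Omega_k:=\{M_\Phi^{\D}f>a^k\}$, and let $\{Q_j^k\}_j$ be the maximal dyadic cubes with $\|f\|_{\Phi,Q_j^k}>a^k$, so that $\Omega_k=\bigcup_j Q_j^k$; put $E_j^k:=Q_j^k\setminus\Omega_{k+1}$, so the $E_j^k$ are pairwise disjoint. The one point that genuinely uses the Orlicz structure is the sparseness bound $|Q_j^k|\le 2|E_j^k|$. I would argue: for each $Q_i^{k+1}\subset Q_j^k$ one has $\int_{Q_i^{k+1}}\Phi(|f|/a^{k+1})\ge|Q_i^{k+1}|$, while maximality of $Q_j^k$ gives $\int_{Q_j^k}\Phi(|f|/a^k)\le\int_{\widehat{Q_j^k}}\Phi(|f|/a^k)\le|\widehat{Q_j^k}|=2^n|Q_j^k|$ for the dyadic parent $\widehat{Q_j^k}$; combining these with $\Phi(|f|/a^{k+1})\le a^{-1}\Phi(|f|/a^k)$ and summing over the disjoint $Q_i^{k+1}$ yields $|Q_j^k\cap\Omega_{k+1}|=\sum_i|Q_i^{k+1}|\le 2^n a^{-1}|Q_j^k|\le\tfrac12|Q_j^k|$.

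\textbf{The estimate.} Layering over $\Omega_k\setminus\Omega_{k+1}=\bigsqcup_j E_j^k$ and using $\|f\|_{\Phi,Q_j^k}>a^k$,
\begin{align*}
\int_{\Rn}(M_\Phi^{\D}f)^p\,u\,dx\ \le\ a^p\sum_{k,j}a^{kp}\,u(E_j^k)\ \le\ a^p\sum_{k,j}\|f\|_{\Phi,Q_j^k}^p\,u(E_j^k).
\end{align*}
Applying the generalized H\"older inequality \eqref{eq:Holder-ABC} to the triple $A,B,\Phi$ gives $\|f\|_{\Phi,Q}=\|G\cdot v^{-1/p}\|_{\Phi,Q}\lesssim\|G\|_{A,Q}\,\|v^{-1/p}\|_{B,Q}$, and the bump condition \eqref{eq:uvpB} gives $\|v^{-1/p}\|_{B,Q_j^k}^p\,u(E_j^k)\le\|v^{-1/p}\|_{B,Q_j^k}^p\,u(Q_j^k)=\big(\|u^{1/p}\|_{p,Q_j^k}\|v^{-1/p}\|_{B,Q_j^k}\big)^p|Q_j^k|\le[u,v]_{p,B}^p\,|Q_j^k|$. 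Combining these, and then invoking the sparseness bound, the disjointness of the $E_j^k$, and $\|G\|_{A,Q_j^k}\le M_A^{\D}G(x)$ for $x\in E_j^k$,
\begin{align*}
\int_{\Rn}(M_\Phi^{\D}f)^p\,u\,dx\ \lesssim\ [u,v]_{p,B}^p\sum_{k,j}\|G\|_{A,Q_j^k}^p\,|Q_j^k|\ \lesssim\ [u,v]_{p,B}^p\sum_{k,j}\int_{E_j^k}(M_A^{\D}G)^p\,dx\ \le\ [u,v]_{p,B}^p\,\|M_AG\|_{L^p(\Rn)}^p.
\end{align*}
Since $A\in B_p$, Lemma \ref{lem:MBp} gives $\|M_AG\|_{L^p(\Rn)}\le C_{n,p}[A]_{B_p}^{1/p}\|G\|_{L^p(\Rn)}$, and taking $p$-th roots and summing over the $3^n$ grids yields \eqref{eq:MPhi-uv}.

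\textbf{The ``moreover'', necessity, and the main difficulty.} For $\Phi(t)=t$ and $B=\bar A$ the product hypothesis $A^{-1}(t)B^{-1}(t)\lesssim\Phi^{-1}(t)=t$ holds automatically by \eqref{eq:Young-1} (equivalently, one may use \eqref{eq:Holder-AA} in place of \eqref{eq:Holder-ABC}), so the argument above applies under the single assumption $A\in B_p$; the necessity of this $B_p$-type hypothesis is classical and follows by testing the inequality against suitable (localized power-weight) pairs $(u,v)$ keeping $[u,v]_{p,\bar A}$ bounded and appealing to Lemma \ref{lem:MBp} (see, e.g., \cite{CMP11}). The only step that is not purely formal is the sparseness bound: one must observe that Orlicz averages still admit the usual Calder\'on--Zygmund stopping-time structure, which here rests solely on the convexity inequality $\Phi(t/a)\le\Phi(t)/a$ and needs no doubling assumption on $\Phi$. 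Moreover this step cannot be bypassed, since the crude estimate $u(E_j^k)\le u(Q_j^k)$ alone would leave $\sum_{k,j}\|G\|_{A,Q_j^k}^p|Q_j^k|$, which in general is not controlled by $\|G\|_{L^p(\Rn)}^p$.
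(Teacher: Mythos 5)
The paper does not prove this lemma --- it cites \cite[Theorem~5.14]{CMP11} and \cite[Theorem~3.1]{CP99} --- and your argument is exactly the Calder\'on--Zygmund stopping-time proof used in those sources. Every step checks out: the reduction to finitely many dyadic grids, the Orlicz sparseness bound $|Q_j^k|\le 2|E_j^k|$ obtained solely from convexity of $\Phi$ (no doubling needed), the insertion of $[u,v]_{p,B}$ against $u(Q_j^k)$, the passage to $M_A$ via \eqref{eq:Holder-ABC} and Lemma~\ref{lem:MBp}, and the specialization to $\Phi(t)=t$, $B=\bar{A}$ via \eqref{eq:Holder-AA}; the classical necessity claim you leave informal, which is consistent with the paper delegating the whole lemma to the literature.
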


The two-weight inequality above was established in \cite[Theorem~5.14]{CMP11} and \cite[Theorem~3.1]{CP99}.  The weak type inequality for $M_{\Phi}$ was also obtained in \cite[Proposition~5.16]{CMP11} as follows.

\begin{lemma}\label{lem:Muv-weak}
Let $1<p<\infty$, let $B$ and $\Phi$ be Young functions such that $t^{\frac1p} B^{-1}(t) \lesssim \Phi^{-1}(t)$ for any $t>t_0>0$. If a pair of weights $(u, v)$ satisfies $[u, v]_{p, B}<\infty$, then 
\begin{align}\label{eq:MPuv}
\|M_{\Phi}f\|_{L^{p,\infty}(u)} \leq C \|f\|_{L^p(v)}. 
\end{align}
Moreover, \eqref{eq:MPuv} holds for $M$ if and only if $[u, v]_{A_p}<\infty$.   
\end{lemma}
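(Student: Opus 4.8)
The plan is to establish the distributional bound
$$u\big(\{x\in\Rn: M_\Phi f(x)>\lambda\}\big)\;\le\;\frac{C}{\lambda^p}\,\|f\|_{L^p(v)}^p,\qquad\lambda>0,$$
which is exactly $\|M_\Phi f\|_{L^{p,\infty}(u)}\le C\|f\|_{L^p(v)}$. A routine truncation and monotone convergence argument (using that $M_\Phi$ is monotone) reduces matters to $f\in L^\infty(\Rn)$ with compact support. Then $\Omega_\lambda:=\{M_\Phi f>\lambda\}$ is open by lower semicontinuity of $M_\Phi$; fix any compact $K\subseteq\Omega_\lambda$. Every $x\in K$ lies in an open cube $Q_x$ with $\|f\|_{\Phi,Q_x}>\lambda$, so by compactness finitely many of these cover $K$, and the Vitali covering lemma extracts from them pairwise disjoint cubes $Q_1,\dots,Q_m$ with $K\subseteq\bigcup_j 3Q_j$ and $\|f\|_{\Phi,Q_j}>\lambda$ for each $j$.

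The main pointwise ingredient is the generalized H\"older inequality \eqref{eq:Holder-ABC} applied with $A(t)=t^p$, so that $A^{-1}(t)=t^{1/p}$ and the hypothesis $t^{1/p}B^{-1}(t)\lesssim\Phi^{-1}(t)$ becomes $A^{-1}(t)B^{-1}(t)\lesssim\Phi^{-1}(t)$; this gives, for every cube $Q$,
$$\|f\|_{\Phi,Q}=\|(fv^{1/p})\,v^{-1/p}\|_{\Phi,Q}\;\lesssim\;\|fv^{1/p}\|_{p,Q}\,\|v^{-1/p}\|_{B,Q}=\Big(\fint_Q|f|^pv\,dx\Big)^{1/p}\|v^{-1/p}\|_{B,Q}.$$
Applying this on $Q_j$ together with $\|f\|_{\Phi,Q_j}>\lambda$ yields $\|v^{-1/p}\|_{B,Q_j}^{-p}\lesssim\lambda^{-p}\fint_{Q_j}|f|^pv\,dx$. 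On the other hand, the bump condition on the dilate gives $\big(\fint_{3Q_j}u\big)^{1/p}\|v^{-1/p}\|_{B,3Q_j}\le[u,v]_{p,B}$, hence
$$u(3Q_j)=|3Q_j|\fint_{3Q_j}u\;\le\;3^n|Q_j|\,[u,v]_{p,B}^p\,\|v^{-1/p}\|_{B,3Q_j}^{-p},$$
and since $B$ is convex with $B(0)=0$ one has $\|g\|_{B,Q_j}\le 3^n\|g\|_{B,3Q_j}$, so $\|v^{-1/p}\|_{B,3Q_j}^{-p}\le 3^{np}\|v^{-1/p}\|_{B,Q_j}^{-p}\lesssim\lambda^{-p}\fint_{Q_j}|f|^pv\,dx$. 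Combining and summing over the disjoint $Q_j$,
$$u(K)\le\sum_j u(3Q_j)\;\lesssim\;\frac{[u,v]_{p,B}^p}{\lambda^p}\sum_j\int_{Q_j}|f|^pv\,dx\;\le\;\frac{[u,v]_{p,B}^p}{\lambda^p}\int_{\Rn}|f|^pv\,dx,$$
and taking the supremum over compact $K\subseteq\Omega_\lambda$ yields the claim with $C\lesssim[u,v]_{p,B}$.

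For the ``moreover'' part one takes $\Phi(t)=t$, so $M_\Phi=M$. The hypothesis $t^{1/p}B^{-1}(t)\lesssim t$ is met by $B(t)=t^{p'}$, for which $[u,v]_{p,B}=[u,v]_{p,p'}=[u,v]_{A_p}$; hence the first part gives the ``if'' direction. For the ``only if'', assume the weak inequality for $M$ and test it on $f=\min(v^{1-p'},N)\,\mathbf{1}_Q$: since $Mf(x)\ge\fint_Qf$ for $x\in Q$ and, because $(1-p')(p-1)=-1$, one has $f^pv\le f$ pointwise on $Q$, the weak inequality gives $\big(\fint_Qf\big)^pu(Q)\lesssim\int_Qf^pv\le\int_Qf=|Q|\fint_Qf$, i.e.\ $\big(\fint_Qf\big)^{p-1}\fint_Qu\lesssim 1$; letting $N\to\infty$ and invoking monotone convergence gives $\big(\fint_Qv^{1-p'}\big)^{p-1}\fint_Qu\lesssim 1$, which after raising to the power $1/p$ is $[u,v]_{A_p}<\infty$.

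The delicate point I expect is reconciling the dilated cubes $3Q_j$ produced by the covering lemma with the bump functional $[u,v]_{p,B}$, since $u$ is an arbitrary weight with no doubling assumed; the argument above handles this by invoking the bump on $3Q_j$ and then descending back to $Q_j$ via convexity of $B$. One could alternatively avoid dilations altogether by running the whole argument with the dyadic Orlicz maximal operators $M_\Phi^{\mathcal D}$ over finitely many adjacent dyadic systems (whose level sets decompose into \emph{disjoint} maximal cubes), and then using $M_\Phi\lesssim\sum_{\mathcal D}M_\Phi^{\mathcal D}$ together with the quasi-triangle inequality in $L^{p,\infty}(u)$.
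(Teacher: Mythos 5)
Your proof is correct. The paper does not actually prove this lemma---it is quoted verbatim from \cite[Proposition~5.16]{CMP11}---and your argument (Vitali covering of a compact subset of $\{M_\Phi f>\lambda\}$, the generalized H\"older inequality \eqref{eq:Holder-ABC} with $A(t)=t^p$ to pass from $\|f\|_{\Phi,Q_j}>\lambda$ to $\|v^{-1/p}\|_{B,Q_j}^{-p}\lesssim\lambda^{-p}\fint_{Q_j}|f|^pv\,dx$, the bump condition on the dilates $3Q_j$ with the convexity estimate $\|g\|_{B,Q}\le 3^n\|g\|_{B,3Q}$, and the standard test functions $\min(v^{1-p'},N)\mathbf{1}_Q$ for necessity) is exactly the standard route taken in that reference; all the steps, including the handling of the non-doubling weight $u$ via the dilated cubes, check out.
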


\subsection{Banach function spaces}\label{subsec:BFS}
Let $\mathbb{M}$ denote the collection of all (equivalence classes of) Lebesgue measurable functions $f:\Rn \rightarrow \C$. The characteristic function of the set $E$ will be denoted by $\mathbf{1}_E$. 

\begin{definition}\label{def:BFS}
  We say that a mapping 
$\norm{\cdot}:\mathbb{M}\to[0,\infty]$ is a {\tt function norm} if it satisfies the following: 
\begin{enumerate}
\item $\norm{f}=\norm{|f|}$ and $\norm{f}=0$ if and only if $f=0$ a.e.
\vspace*{0.4em}
\item $\norm{f+g} \leq\norm{f} + \norm{g}$.
\vspace*{0.4em}
\item $\norm{\lambda f}=|\lambda|\norm{f}$ for every $\lambda \in\mathbb{R}$.
\vspace*{0.4em}
\item If $|f|\leq|g|$ a.e., then $\norm{f} \leq\norm{g}$.
\vspace*{0.4em}
\item If $\{f_j\}_{j\in\mathbb{N}} \subseteq\mathbb{M}$ is a sequence such that $|f_j|$ increases 
to $|f|$ a.e.~as $j\to\infty$, then $\norm{f_j}$ increases to $\norm{f}$ as $j\to\infty$.
\vspace*{0.4em}
\item If $E\subseteq\Rn$ is a measurable set with $|E|<\infty$, then $\norm{\mathbf{1}_E}<\infty$ and there is a constant $C_E$ such that
$\int_E |f| dx \leq C_E\norm{f}$.  
\end{enumerate}
\end{definition}

Given a function norm $\|\cdot\|$, the set 
\begin{equation}\label{eq:BFS-def}
\X=\{f\in\mathbb{M}:\norm{f}<+\infty\} 
\end{equation} 
is called a {\tt Banach function space} (BFS, for short) over $(\Rn, dx)$.  In such a scenario, we shall write $\norm{\cdot}_{\X}$ in place of $\norm{\cdot}$ in order to emphasize the connection between the function norm $\norm{\cdot}$ and its associated function space $\X$. Then $(\X, \norm{\cdot}_{\X})$ is a Banach space.   

For a Banach function space $\X$ over $(\Rn, dx)$, one can define its {\tt associate space} $\X'$ according to
\begin{equation*}
\X'=\{f\in\mathbb{M}:\norm{f}_{\X'}<+\infty\}, 
\end{equation*}   
where 
\begin{equation*}
\norm{f}_{\X'}=\sup\bigg\{\int_{\Rn}|f(x)g(x)|\,dx: \,g\in\X,\,\, \norm{g}_{\X}\leq 1\bigg\},
\end{equation*}
and with this definition $\X'$ is also a Banach function space.  
 
It follows from the definition of $\X'$ that the following generalized H\"{o}lder's inequality holds:
\begin{equation}\label{eq:Holder}
\int_{\Rn}|f(x)g(x)|\,dx \leq \norm{f}_{\X} \norm{g}_{\mathbb{X^{'}}},
\quad f\in\X \, \text{ and }\, g\in\X^{'}.
\end{equation}
It turns out that $\X=(\X')'=:\X''$ (cf. \cite[Theorem 2.7, p.\,10]{BS}). Therefore, one has 
\begin{equation}\label{eq:ass-X}
\norm{f}_{\X}=\sup\bigg\{\int_{\Rn}|f(x)g(x)|\,dx:
\,g\in\mathbb{X'},\,\,\norm{g}_{\mathbb{X'}}\leq 1\bigg\}.
\end{equation}

\begin{remark}\label{rem:sup}
It is useful to note that the supremum in \eqref{eq:ass-X} does not change if it is taken only over functions $g\in\mathbb{X'}$ with $\norm{g}_{\X'}\leq 1$ which are non-negative and positive on a set of positive measure (that is, non-negative $g\in\X'$ with $0<\norm{g}_{\X'}\leq 1$). Indeed, the fact that we can consider only non-negative functions is direct from \eqref{eq:ass-X}. If $\norm{f}_{\X}>0$ then there is $g\in\X'$ with $\norm{g}_{\X'}\leq 1$ such that $0<\norm{f}_{\X} \leq 2 \int_{\Rn} |fg|\,dx$ and this forces $g$ to be non-zero on a set of positive measure. Finally, the case $\norm{f}_{\X}=0$ is trivial.
\end{remark}

Given $p\in(0,\infty)$ and a Banach function space $\X$, we define the scale of space $\X^p$ by
\begin{equation}\label{eq:Xp-def}
\X^p:=\{f\in\mathbb{M}:|f|^p\in\X\} \quad\text{with}\quad \|f\|_{\X^p}:=\norm{|f|^p}_{\X}^{1/p}, 
\end{equation}
which indicates that $\X^p$ is also a Banach function space whenever $p\in[1,\infty)$.

For every function $f\in\mathbb{M}$, define its distribution function as 
\begin{equation}\label{eq:RIBFS-distribution}
\mu_f(\lambda)= |\{x \in \Rn : |f(x)|>\lambda\}|, \qquad \lambda>0. 
\end{equation}

A Banach function space $\X$ is called {\tt rearrangement invariant} (RI, for short) if $\norm{f}_{\X}=\norm{g}_{\X}$ for every pair of functions $f, g\in\X$ such that $\mu_f=\mu_g$. In particular, if $\X$ is a RIBFS, one can check that its associate space $\X'$ is also a RIBFS. 

For each $f\in\mathbb{M}$, the {\tt decreasing rearrangement} of $f$ with respect to the Lebesgue measure in $\Rn$ is the function $f^*$ defined by 
\begin{equation}\label{eq:rearrangement}
f^{*}(t)=\inf\{\lambda\geq 0:\mu_f(\lambda)\leq t\},\quad t \in [0, \infty).
\end{equation} 
Note that the functions $f$ and $f^{*}$ have the same distribution function. One remarkable consequence of this is the Luxemburg representation theorem: if $\X$ is a RIBFS then there exists a RIBFS $\overline{\X}$ over $[0, \infty)$ such that $f\in\X$ if and only if $f^{*}\in\overline{\X}$ and $\norm{f^{*}}_{\overline{\X}}=\|f\|_{\X}$ (cf.  \cite[Theorem 4.10, p.\,62]{BS}). Using this representation we can define Boyd indices of a RIBFS $\X$. Given $f \in \overline{\X}$, consider the dilatation operator $D_t$, $0<t<\infty$, by setting $D_t f(s):= f(s/t)$ for each $s\ge 0$. Writing 
\begin{equation}\label{eq:h-function}
h_{\X}(t):=\sup \big\{\norm{D_t f}_{\overline{\X}}: f \in \overline{\X} \,\, \text{with} \,\, \norm{f}_{\overline{\X}} \leq 1\big\}, \quad t>0,
\end{equation} 
the lower and upper {\tt Boyd indices} may, respectively, defined as
\begin{equation}\label{eq:Boydindices_p}
p_{\X}:=\lim_{t\to\infty} 
\frac{\log t}{\log h_{\X}(t)} = \sup_{1<t<\infty} \frac{\log t}{\log h_{\X}(t)}
\end{equation}
and
\begin{equation}\label{eq:Boyindices_q}
q_{\X} := \lim_{t\to 0^{+}}\frac{\log t}{\log h_{\X}(t)} = \inf_{0<t<1} \frac{\log t}{\log h_{\X}(t)}.
\end{equation}
By design, 
\begin{align}\label{eq:pq}
1\leq p_{\X} \leq q_{\X} \leq \infty, \quad (p_{\X})'=q_{\X'} \quad\text{and}\quad (q_{\X})'=p_{\X'}.
\end{align}

Given a RIBFS $\X$ over $(\Rn, dx)$, we wish to introduce a weighted version $\X(w)$ of $\X$ via an analogous definition in which the underlying measure in $\Rn$ now is $d\mu:= w dx$. These spaces appeared in \cite{CGMP} as an abstract generalization
of a variety of weighted function spaces. Given $f \in \mathbb{M}$ and a weight $w$, let $w_f$ denote the distribution function of $f$ with respect to the measure $w dx$: 
\begin{equation*}
w_f(\lambda):= w(\{x \in \Rn: |f(x)|> \lambda\}), \quad\lambda\ge 0.
\end{equation*}

We also let $f^*_{w}$ denote the {\tt decreasing rearrangement} of $f$ with respect to the measure $w dx$,
\begin{equation*}
f^*_{w}(t):=\inf\{\lambda \ge 0 : w_f(t)\leq \lambda\}, \quad t>0.
\end{equation*}
Established these, define the weighted space $\X(w)$ by, 
\begin{equation*}\label{eq: Xwspaces}
\X(w) :=\{ f \in \mathbb{M}:  \norm{f^*_{w}}_{\overline{\X}} <\infty\}
\end{equation*} 
and the norm associated to it $\|f\|_{\X(w)}:= \|f^*_{w}\|_{\overline{\X}}$. By construction $\X(w)$ is a Banach function space over $(\Rn, w dx)$. By doing the same procedure with the associate spaces we can see that the associate space $\X(w)'$ coincides with the weighted space $\X'(w)$. 

On the other hand, one can define the general weighted Banach function spaces directly. Given a BFS $\X$ over $(\Rn, dx)$ and a weight $v$ on $\Rn$, we define $\X_v$ as the Banach function space over $(\Rn, v\,dx)$ instead of $(\Rn, dx)$ in Definition \ref{def:BFS}. Then, one has $\X_v=\X(v)$ whenever $\X$ is a RIBFS and $v$ is a weight. Thus, in what follows, given a RIBFS $\X$ over $(\Rn, dx)$, we always think of $\X_v$ as $\X(v)$ for every weight $v$ on $\Rn$.

The following result is a consequence of Boyd's interpolation theorem (cf. \cite[Theorem 5.16, p.\,153]{BS} or \cite[Theorem 8.44]{MMMMM}) in the space $(\Rn, w\,dx)$. 
\begin{lemma}\label{lem:inter}
Let $1<p<q<\infty$ and $w$ be a weight on $\Rn$. Suppose that $\X$ is a RIBFS over $(\Rn, dx)$ with $p<p_{\X} \le q_{\X}<q$. If a sublinear operator $T$ is bounded on $L^p(w)$ and $L^q(w)$, then $T$ is bounded on $\X_w$. 
\end{lemma}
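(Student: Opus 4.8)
The plan is to reduce the statement to a single application of Boyd's interpolation theorem on the measure space $(\Rn, w\,dx)$. The first step is to record the structural facts about $\X_w$. Since $\X$ is a RIBFS over $(\Rn, dx)$, the Luxemburg representation theorem provides a RIBFS $\overline{\X}$ over $([0,\infty), dt)$ with $\|f\|_{\X}=\|f^*\|_{\overline{\X}}$. By the construction of $\X_w=\X(w)$ recalled above, $f\in\X_w$ if and only if $f^*_w\in\overline{\X}$, with $\|f\|_{\X_w}=\|f^*_w\|_{\overline{\X}}$; hence $\X_w$ is a RIBFS over $(\Rn, w\,dx)$ whose Luxemburg representation space is again $\overline{\X}$. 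Because the dilation function $h_{\overline{\X}}$ entering \eqref{eq:h-function} depends only on $\overline{\X}$, the Boyd indices of $\X_w$ coincide with those of $\X$, i.e. $p_{\X_w}=p_{\X}$ and $q_{\X_w}=q_{\X}$, so the hypothesis $p<p_{\X}\le q_{\X}<q$ reads $p<p_{\X_w}\le q_{\X_w}<q$.

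The second step is to check that $(\Rn, w\,dx)$ is an admissible (resonant) measure space for the abstract theorem. It is nonatomic: if $w(A)>0$ then $|A|>0$, and since Lebesgue measure is nonatomic we may split $A$ into $A_1, A_2$ of positive Lebesgue measure, whence $w(A_1), w(A_2)>0$ and both are strictly smaller than $w(A)$. It is $\sigma$-finite: $\Rn=\bigcup_k \big(B(0,k)\cap\{1/k<w<k\}\big)$, and $w\,dx$ is finite on each of these sets. A $\sigma$-finite nonatomic measure space is resonant, so \cite[Theorem 5.16, p.\,153]{BS} applies over $(\Rn, w\,dx)$.

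The third step is to feed in the hypotheses on $T$. Boundedness of $T$ on $L^p(w)$ and $L^q(w)$ implies, a fortiori, that the sublinear operator $T$ is of weak types $(p,p)$ and $(q,q)$ with respect to the measure $w\,dx$. Since $p<p_{\X_w}\le q_{\X_w}<q$, Boyd's interpolation theorem then yields that $T$ is bounded on $\X_w$, with operator norm controlled in terms of $\|T\|_{L^p(w)\to L^p(w)}$, $\|T\|_{L^q(w)\to L^q(w)}$ and the Boyd indices $p_{\X}$, $q_{\X}$.

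The only point that genuinely requires care — the main obstacle — is the bookkeeping of the first two steps: namely, verifying that $\X_w$ really is a RIBFS over the resonant space $(\Rn, w\,dx)$ to which the abstract theorem applies, and that its Boyd indices are literally those of $\X$; once this identification is in place the conclusion is immediate. One should also note, if $T$ is only assumed a priori defined on a dense subclass (e.g. on $L^p(w)\cap L^q(w)$ or on simple functions with support of finite $w$-measure), that Boyd's theorem produces a bounded extension to all of $\X_w$, which is part of its standard statement.
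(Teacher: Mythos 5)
Your proposal is correct and follows exactly the route the paper intends (it simply cites Boyd's interpolation theorem over $(\Rn, w\,dx)$); you have supplied the routine but necessary details — that $\X_w$ is a RIBFS over the resonant space $(\Rn, w\,dx)$ with the same Luxemburg representation $\overline{\X}$ and hence the same Boyd indices, and that the strong $(p,p)$ and $(q,q)$ bounds furnish the weak-type hypotheses — which the paper leaves implicit.
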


Let us see the boundedness of the Hardy-Littlewood maximal operator on these weighted RI space. Let $M_w$ be the weighted  maximal operator defined by 
\begin{align}\label{eq:Mw-def}
M_w f(x) := \sup_{Q \ni x} \frac{1}{w(Q)} \int_{Q} |f(y)| w(y) dy, 
\end{align}
where the supremum is taken over all cubes $Q$ containing $x$. Moreover, we define $M_w^c$ as the weighted centered maximal operator if the supremum in \eqref{eq:Mw-def} is taken over all cubes $Q$ centered at $x$. 

\begin{lemma}\label{lem:MwcXw} 
Let $\X$ be a RIBFS over $(\Rn, dx)$ with $p_{\X}>1$. Then there exists a constant $C$ such that the following hold: 
\begin{list}{\textup{(\theenumi)}}{\usecounter{enumi}\leftmargin=1cm \labelwidth=1cm \itemsep=0.2cm 
			\topsep=.2cm \renewcommand{\theenumi}{\alph{enumi}}}
\item\label{list-1} $M$ is bounded on $\X$ if and only if $p_{\X}>1$. 
\item\label{list-2} $M$ is bounded on $\X(w)$ for every  $w \in A_{p_{\X}}$. 
\item\label{list-3} For every weight $w$,  $M_w^c$ is bounded on $\X(w)$ with $\|M_w^c\|_{\X(w) \to \X(w)} \leq C$. 
\item\label{list-4} For every $w \in A_{\infty}$, then $M_w$ is bounded on $\X(w)$ with 
\begin{align}\label{eq:3QQ}
\|M_w\|_{\X(w) \to \X(w)} \leq C \Big(1+\sup_{Q \subset \Rn} \frac{w(3Q)}{w(Q)} \Big). 
\end{align}
\end{list} 
\end{lemma}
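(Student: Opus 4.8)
The plan is to establish the four claims of Lemma~\ref{lem:MwcXw} in order, leaning on Boyd's interpolation theorem (Lemma~\ref{lem:inter}) and the classical weighted theory recalled in Section~\ref{sec:weights}.

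\textbf{Part (a).} For the ``if'' direction, recall that $M$ is of weak type $(1,1)$ and bounded on $L^q(\Rn)$ for every $q>1$. Since $p_\X>1$, I can pick $p,q$ with $1<p<p_\X\le q_\X<q<\infty$; then $M$ is bounded on $L^p(\Rn)$ and $L^q(\Rn)$, so Lemma~\ref{lem:inter} (with $w\equiv 1$) gives boundedness of $M$ on $\X$. For the ``only if'' direction, one argues by testing $M$ on (dilates of) the characteristic function of the unit cube: if $p_\X=1$ then $h_\X(t)$ grows like $t$ (up to lower order), and since $M\mathbf 1_{Q_0}(x)\gtrsim |x|^{-n}$ for large $|x|$, the function $M\mathbf 1_{Q_0}$ fails to lie in $\X$ by comparison with the dilation structure; more precisely one uses that $\|D_t\|_{\overline\X\to\overline\X}=h_\X(t)$ together with the fundamental function estimate to produce a contradiction. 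This is the standard Lorentz--Shimogaki type argument and I would cite \cite[Theorem~5.17, p.~154]{BS} rather than reprove it.

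\textbf{Parts (b) and (c).} For (b), fix $w\in A_{p_\X}$. By openness of the Muckenhoupt classes (property~(b) in Section~\ref{sec:weights}), there is $\varepsilon>0$ with $w\in A_{p_\X-\varepsilon}$; choose $p=p_\X-\varepsilon$ and any $q>q_\X$. Then $w\in A_p\subseteq A_q$, so by \eqref{eq:M-sharp} $M$ is bounded on $L^p(w)$ and $L^q(w)$, and since $p<p_\X\le q_\X<q$, Lemma~\ref{lem:inter} applied in $(\Rn,w\,dx)$ yields boundedness of $M$ on $\X_w=\X(w)$. For (c), the point is that the weighted centered maximal operator $M_w^c$ is, for \emph{every} weight $w$, of weak type $(1,1)$ with respect to $w\,dx$ with a dimensional constant (this is the Besicovitch covering lemma argument, valid for arbitrary locally finite measures), and trivially bounded on $L^\infty(w)$ with constant $1$. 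Interpolating these two endpoints via Lemma~\ref{lem:inter} in the space $(\Rn,w\,dx)$ — legitimate since $p_\X>1$ and $q_\X<\infty$ is \emph{not} needed because the $L^\infty$ endpoint lets us take $q$ as large as we wish, but to stay inside the stated hypothesis I take $1<p<p_\X$ and $q_\X<q<\infty$ — gives $\|M_w^c\|_{\X(w)\to\X(w)}\le C$ with $C$ depending only on $n$ and the Boyd indices of $\X$. (If $q_\X=\infty$ one must instead use the $L^\infty$ endpoint directly in a Calder\'on--Mityagin/Boyd interpolation with a weak-type $(1,1)$ endpoint, which Lemma~\ref{lem:inter} as stated does not cover; I would either strengthen the reference to the full Boyd theorem or simply note $q_\X<\infty$ is implicit. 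I expect this to be the main technical obstacle — matching the stated interpolation lemma to the $(1,1)$--$(\infty,\infty)$ endpoint pair.)

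\textbf{Part (d).} Here $w\in A_\infty$, and the comparison between $M_w$ (supremum over cubes \emph{containing} $x$) and $M_w^c$ (cubes \emph{centered} at $x$) is the standard one: for any cube $Q\ni x$ one has $Q\subseteq 3Q'$ where $Q'$ is the cube centered at $x$ with side length equal to that of $Q$, hence
\[
\frac{1}{w(Q)}\int_Q |f|\,w\,dx \le \frac{w(3Q')}{w(Q)}\cdot\frac{1}{w(3Q')}\int_{3Q'} |f|\,w\,dx \le \Big(\sup_{Q\subset\Rn}\frac{w(3Q)}{w(Q)}\Big)\, M_w^c(3^n\text{-dilated version})f(x),
\]
and the doubling supremum $\sup_Q w(3Q)/w(Q)$ is finite precisely because $w\in A_\infty$. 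Combining this pointwise bound $M_w f\le C(1+\sup_Q w(3Q)/w(Q))\,M_w^c f$ with part (c) gives \eqref{eq:3QQ}. The only care needed is that passing from $3Q'$-averages back to a centered maximal function costs a further dimensional dilation constant, which can be absorbed into $C$; I would spell this out with a one-line covering argument. Throughout, the ``$1+$'' in \eqref{eq:3QQ} is cosmetic, ensuring the bound is nontrivial when the doubling quotient is small.
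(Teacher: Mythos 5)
Your route for parts (c) and (d) — strong-type interpolation between $L^p(w)$ and $L^q(w)$ endpoints via Lemma~\ref{lem:inter} — is genuinely different from the paper's, and the obstacle you flag at the end of your treatment of (c) is a real gap, not a cosmetic one. Lemma~\ref{lem:inter} requires $p<p_{\X}\le q_{\X}<q<\infty$, but the lemma under discussion assumes only $p_{\X}>1$; spaces with $q_{\X}=\infty$ (e.g.\ $\X=L^2\cap L^\infty$) are in scope, and for them your interpolation argument for (a), (b) and (c) does not launch. You correctly diagnose that the right tool is a Calder\'on-type argument with a weak-$(1,1)$ and an $L^\infty$ endpoint, but you leave it unexecuted. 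The paper closes exactly this gap: it takes the dimensional weak-$(1,1)$ bound $\|M_w^c\|_{L^1(w)\to L^{1,\infty}(w)}\le c_n$ (valid for every weight, by Besicovitch, as you say), converts it via the characterization in \cite[Theorem~2]{AKMP} into the pointwise rearrangement estimate $(M_w^c f)_w^*(t)\le (1+c_n)\,f_w^{**}(t)$, and then uses the Montgomery--Smith/Shimogaki fact that the Hardy operator $f^*\mapsto f^{**}$ is bounded on $\overline{\X}$ if and only if $p_{\X}>1$. No upper Boyd index enters, and the constant depends only on $n$ and $\|f^*\mapsto f^{**}\|_{\overline{\X}\to\overline{\X}}$, uniformly in $w$. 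If you want to keep your structure, you must replace the appeal to Lemma~\ref{lem:inter} in (c) by this rearrangement argument (or by the full Boyd theorem with a weak-$(1,1)$/strong-$(\infty,\infty)$ endpoint pair); parts (a) and (b) the paper simply quotes from \cite{Mon} and \cite[Lemma~4.12]{CMP11}, which is also the safest course.

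For part (d) your pointwise comparison $M_wf\lesssim\bigl(\sup_Q w(3Q)/w(Q)\bigr)M_w^cf$ followed by an application of (c) is a legitimate alternative to the paper's proof (which instead re-runs the rearrangement argument using $\|M_w\|_{L^1(w)\to L^{1,\infty}(w)}\le\sup_Q w(3Q)/w(Q)$), and it is arguably cleaner. One small quantitative point: with your choice of the centered cube $Q'$ of the \emph{same} side length as $Q$, the quotient you actually produce is $w(3Q')/w(Q)$ with $Q'\ne Q$, which is only controlled by the \emph{square} of $\sup_Q w(3Q)/w(Q)$. Use instead the cube $Q(x,\ell(Q))$ centered at $x$ with side length $2\ell(Q)$, which satisfies $Q\subseteq Q(x,\ell(Q))\subseteq 3Q$ (this is the paper's choice in \eqref{eq:MMM}); then the quotient is exactly $w(3Q)/w(Q)$ and you recover the linear bound \eqref{eq:3QQ} as stated.
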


\begin{proof}
Note that \eqref{list-1} and \eqref{list-2} were respectively obtained in \cite{Mon} and  \cite[Lemma~4.12]{CMP11}. To show \eqref{list-3} and \eqref{list-4}, we will follow the strategy in \cite{CGMP} to pursue the accurate bound. It is shown in \cite[~p.509]{G1} that for every weight $w$, 
\begin{align}\label{eq:M11}
\|M_w^c\|_{L^1(w) \to L^{1,\infty}(w)} \leq c_n, 
\end{align}
where $c_n$ is independent of $w$. Invoking \cite[Theorem~2]{AKMP}, we obtain that $M_w^c$ is of weak-type $(1, 1)$ with 
respect to the measure $w$ if and only if $(M_w^c f)_w^{*}(t) \leq C f_w^{**}(t)$, $t>0$, where $f_w^{**}(t)=\frac{1}{t}\int_{0}^t f_w^*(s) ds$. Moreover, a careful checking of the proof yields that 
\begin{equation}\label{eq:Mw-Mwf}
\|M_w^c\|_{L^1(w) \to L^{1,\infty}(w)} f_w^{**}(t)
\leq (M_w^c f)_w^{*}(t) \leq (1+\|M_w^c\|_{L^1(w) \to L^{1,\infty}(w)}) f_w^{**}(t),   
\end{equation}
for all $t>0$. Combining \eqref{eq:M11} with \eqref{eq:Mw-Mwf}, one has 
\begin{align}\label{eq:Mwfw}
\|M_w^c f\|_{\X(w)} = \|(M_w^c f)^*_w\|_{\overline{\X}} 
\leq (1+c_n) \|f_w^{**}\|_{\overline{\X}}. 
\end{align}
Also, observe that $f_w^{**}$ is the Hardy operator acting over $f_w^*$.  A conclusion from 
\cite{Mon} asserts that the Hardy operator is bounded on $\X$ if and only if $p_{\X}>1$ whenever 
$\X$ is a RIBFS. Thus, together with \eqref{eq:Mwfw}, this implies 
\begin{align*}
\|M_w^c f\|_{\X(w)} \leq C(1+c_n) \|f_w^{*}\|_{\overline{\X}} =C_n \|f\|_{\X(w)}. 
\end{align*}
This shows \eqref{list-3}. The proof of \eqref{list-4} is almost the same, but \eqref{eq:M11} is replaced by 
\begin{align*}
\|M_w\|_{L^1(w) \to L^{1,\infty}(w)} 
\leq \sup_{Q \subset \Rn} \frac{w(3Q)}{w(Q)}, \quad \forall w \in A_{\infty}, 
\end{align*}
which can be found in \cite[Exercise~2.1.1]{G1} since $w \in A_{\infty}$ implies that $wdx$ is a doubling measure.  
\end{proof}

Finally, we present several examples of Banach function spaces. 
\begin{example}[Classical Lorentz spaces]\label{ex:Lpq}
Let $0<p,q\le \infty$. The Lorentz spaces $L^{p,q}(\Rn)$ consist of all measurable functions $f$ on $\Rn$ with the quasi-norm $\|f\|_{L^{p,q}(\Rn)}<\infty$, where 
\begin{equation*}
\|f\|_{L^{p,q}(\Rn)} := 
\begin{cases}
{\displaystyle \bigg(\int_{0}^{\infty} \big(t^{\frac1p} f^*(t) \big)^q \frac{dt}{t}\bigg)^{\frac1q} }, &q<\infty, \\ 
\sup\limits_{t>0} t^{\frac1p} f^{*}(t), &q=\infty.
\end{cases}
\end{equation*}
Observe that $L^{p,p}(\Rn)$ is the Lebesgue space $L^p(\Rn)$ for any $0<p\le \infty$. 

Let $\X=L^{p,q}(\Rn)$. Then $\X$ is RIBFS and $p_{\X}=q_{\X}=p$ for all $1<p<\infty$ and $1\le q\le \infty$, or $p=q=\infty$ (see \cite[Theorem~4.6, p.\,219]{BS}). The associated space $\X'$ is given in \cite[Example~2.4.43]{CRS} by 
\begin{list}{$\bullet$}{\leftmargin=0.8cm  \itemsep=0.2cm}
\item $\X'=L^{p',q'}(\Rn)$, if $1<p,q<\infty$. 
\item $\X'=L^{\infty}(\Rn)$, if $0<q\le p=1$. 
\item $\X'=L^{p',\infty}(\Rn)$, if $0<q\le 1<p<\infty$. 
\item $\X'=\{0\}$, if $0<p<1$ and $0<q\le \infty$, or $1=p<q<\infty$. 
\end{list}
\end{example}

\begin{example}[Lorentz spaces $\Lambda^{p,q}(w)$] 
Let $w$ be a weight on $\R_{+}$, and let $0<p,q \le \infty$. The Lorentz space $\Lambda^{p,q}(w)$ is defined by 
\begin{align*}
\Lambda^{p,q}(w) :=\{f \in \mathbb{M}: \|f\|_{\Lambda^{p,q}(w)}:=\|f^*\|_{L^{p,q}(w)}<\infty\}. 
\end{align*}
In this sequel, we write $\Lambda^p(w):=\Lambda^{p,p}(w)$ for $0<p\le \infty$, and $\widehat{w}(t):=\int_{0}^t w(s) ds$ for every weight $w$ on $\R_{+}$. 

\begin{list}{$\bullet$}{\leftmargin=0.8cm  \itemsep=0.2cm}
\item If $w \equiv 1$, then $\Lambda^{p,q}(w)=L^{p,q}(\Rn)$ for $0<p,q \le \infty$. 
\item If $w(t)=t^{\frac{q}{p}-1}$, then $\Lambda^{q}(w)=L^{p,q}(\Rn)$ for $0<p,q< \infty$. 
\item If $w(t)=t^{\frac{q}{p}-1}(1+\log^{+}\frac1t)^{\alpha}$, then $\Lambda^{q}(w)=L^{p,q}(\log L)^{\alpha}$ is the Lorentz-Zygmund space, where $0<p,q< \infty$ and $\alpha \in \R$, see \cite{BR}. 
\item If $w(t)=t^{\frac{q}{p}-1}(1+\log^{+}\frac1t)^{\alpha}(1+\log^{+}\log^{+}\frac1t)^{\beta}$, then $\Lambda^{q}(w)=L^{p,q}(\log L)^{\alpha}(\log\log L)^{\beta}$ is the generalized Lorentz-Zygmund space, where $0<p,q< \infty$ and $\alpha, \beta \in \R$. In this case, $\Lambda^p(w)$ is a BFS and $\Lambda^p(w)'=L^{p'}(\log L)^{-\alpha}(\log\log L)^{-\beta}$ for $1<p<\infty$ and $\alpha, \beta \in \R$, see \cite{EOP}. 
\item If $w=\mathbf{1}_{(0,1)}$, the space $\Lambda^1(w)$ contains $L^{\infty}(\Rn)$. The functional $\|\cdot\|_{\Lambda^1(w)}$ is a norm and the space is a BFS. 
\end{list}

We collect some facts from \cite[Section~2]{CRS}. If $w$ is decreasing or $w \in B_{p,\infty}$, then $\Lambda^p(w)$ is a BFS for $1 \le p<\infty$. In addition, if $\widehat{w} \in \Delta_2$, then $\Lambda^{p,q}(w)'$ is RIBFS for $0<p<\infty$ and $0<q\le \infty$.  For the associated spaces, it was proved that $\Lambda^p(w)'=\Lambda^{p'}(w^{1-p'})$ for $1<p<\infty$ if $w$ is increasing or $w$ satisfies $\frac1t \int_{0}^t w(s) ds \le Cw(t)$ for all $t>0$. 

Next let us see the equivalence between Boyd indices and the boundedness of $M$. The the Lorentz-Shimogaki theorem \cite{Mon} says that for any $0<p<\infty$, 
\begin{align*}
M: \Lambda^p(w) \to \Lambda^p(w) \quad\Longleftrightarrow\quad I_{\widehat{w}}<p, 
\end{align*}
where $I_{\widehat{w}}$ is given in \eqref{eq:Ii}. If we set $\X=\Lambda^p(w)$ with $0<p<\infty$, then the weighted version of $\X$ is $\X(u)=\Lambda^p_u(w)$ for any weight $u$ on $\Rn$, where 
\begin{align*}
\Lambda^p_u(w) :=\{f \in \mathbb{M}: \|f\|_{\Lambda^p_u(w)}:=\|f_u^*\|_{L^p(w)}<\infty\}. 
\end{align*}
Although $\X(u)$ may not be a BFS, Lerner and P\'{e}rez \cite{LP} obtained that 
\begin{align}\label{eq:MXu}
M: \X(u) \to \X(u) \quad\Longleftrightarrow\quad p_{\X(u)}>1. 
\end{align}
In particular, if we take $w \equiv 1$, then $\X(u)=\Lambda^p_u(w)=L^p(w)$. Then by \eqref{eq:M-sharp} and \eqref{eq:MXu}, we deduce that for every $1<p<\infty$, 
\begin{align*}
M: L^p(u) \to L^p(u) \quad\Longleftrightarrow\quad u \in A_p \quad\Longleftrightarrow\quad p_{L^p(u)}>1. 
\end{align*}
\end{example}

\begin{example}[Grand Lebesgue spaces]\label{ex:gLp}
Let $\mathbb{T}=(0, 1)$ and $w$ be a weight on $\mathbb{T}$. Let $\mathbb{M}_0$ be the set of all Lebesgue measurable real valued functions on $\mathbb{T}$. Suppose that $0<\delta(\cdot) \in L^{\infty}(\mathbb{T})$ with $\|\delta\|_{L^{\infty}(\mathbb{T})} \le 1$, and $p(\cdot) \in \mathbb{M}_0$ with $p(\cdot) \ge 1$ a.e.. Denote 
\begin{align*}
L^{p[\cdot],\delta(\cdot)}(\mathbb{T}, w) :=\{f \in \mathbb{M}_0: \rho_{p[\cdot],\delta(\cdot),w}(f)<\infty\}, 
\end{align*}
where 
\begin{align*}
\rho_{p[\cdot],\delta(\cdot),w}(f)=\esssup_{x \in \mathbb{T}} 
\bigg(\int_{\mathbb{T}} |\delta(x)f(t)|^{p(x)} w(t) dt \bigg)^{\frac{1}{p(x)}}. 
\end{align*}
The collection of functions $L^{p[\cdot],\delta(\cdot)}(\mathbb{T}, w)$ is called the {\tt weighted fully measurable grand Lebesgue space}. It was proved in \cite[Proposition~2]{AF} that 
\begin{align}\label{eq:LpTw}
L^{p[\cdot],\delta(\cdot)}(\mathbb{T}, w) \text{ is a BFS for every weight w on }\mathbb{T}. 
\end{align}
Moreover, 
\begin{align}\label{eq:MTTw}
M_{\mathbb{T}}:L^{p[\cdot],\delta(\cdot)}(\mathbb{T}, w) \to L^{p[\cdot],\delta(\cdot)}(\mathbb{T}, w) 
\text{ if and only if } w \in A_{p_{+}}(\mathbb{T}), 
\end{align}
where $M_{\mathbb{T}}$ is the Hardy-Littlewood maximal operator restricted on $\mathbb{T}$. 

If $p(\cdot) \equiv p$ with $1 \le p<\infty$ and $\delta(\cdot) \equiv 1$, the space $L^{p[\cdot],\delta(\cdot)}(\mathbb{T}, w)$ coincides with the weighted Lebesgue space $L^p(\mathbb{T}, w)$. Let $L^{p),\delta(\cdot)}(\mathbb{T})$ denote $L^{p[\cdot],\delta(\cdot)}(\mathbb{T}, w)$ if $p(x)=p-x$, $\delta(x)=\eta(x)^{\frac{1}{p-x}}$ is increasing and $w \equiv 1$. Let $L^{p)}(\mathbb{T}, w)$ denote $L^{p[\cdot],\delta(\cdot)}(\mathbb{T}, w)$ if $p(x)=p-x$, $\delta(x)=x^{\frac{1}{p-x}}$. The results \eqref{eq:LpTw} and \eqref{eq:MTTw} contains the particular cases in \cite{FGJ} and \cite{FG}. Beyond that, Formica and Giova \cite{FG} obtained the Boyd indices:  
\begin{align*}
\X:=L^{p),\delta(\cdot)}(\mathbb{T}) \text{ is a RIBFS}\quad\text{and}\quad p_{\X}=q_{\X}=p.  
\end{align*} 
\end{example}

\begin{example}[Musielak-Orlicz space $L^{\varphi(\cdot)}(\Rn)$] 
A convex, left-continuous function $\phi: [0, \infty) \to [0, \infty]$ with $\lim\limits_{t \to 0^+} \phi(t)=\phi(0)=0$, and $\lim\limits_{t \to \infty}\phi(t)=\infty$ is called a $\Phi$-function. Let $\Xi(\Rn)$ be the collection of functions $\varphi: \Rn \times [0, \infty) \to [0, \infty]$ such that $\varphi(y, \cdot)$ is a $\Phi$-function for every $y \in \Rn$, and $\varphi(\cdot, t) \in \mathbb{M}$ for every $t \ge 0$. 

Given a function $\varphi \in \Xi(\Rn)$, the {\tt Musielak-Orlicz space} is defined as the set 
\[
L^{\varphi(\cdot)}(\Rn) := \big\{f \in \mathbb{M}: \lim_{\lambda \to 0} \rho_{\varphi(\cdot)}(\lambda f)=0 \big\}
\]
equipped with the (Luxemburg) norm 
\begin{align*}
\|f\|_{L^{\varphi(\cdot)}(\Rn)} := \inf\{\lambda>0: \rho_{\varphi(\cdot)}(x/\lambda) \le 1\}, 
\text{ where }
\rho_{\varphi(\cdot)}(f) = \int_{\Rn} \varphi(x, |f(x)|) dx. 
\end{align*}
Then $L^{\varphi(\cdot)}(\Rn)$ is a Banach space. Many of the classical function spaces can also be viewed as Musielak-Orlicz spaces.  
\begin{list}{$\bullet$}{\leftmargin=0.6cm  \itemsep=0.2cm}
\item If $\varphi(x,t)=t^p$ with $1<p<\infty$, we get the classical Lebesgue space $L^p(\Rn)$. 
\item If $\varphi(x,t)=\infty \mathbf{1}_{(1,\infty)}(t)$, then we get the Lebesgue space $L^{\infty}(\Rn)$. 
\item If $\varphi(x,t)=t^p w(x)$ with $1<p<\infty$ and a weight $w$, then we get the weighted Lebesgue space $L^p(w)$. 
\item If $\varphi(x,t)=t^{p(x)}$, then we get the variable Lebesgue space $L^{p(\cdot)}(\Rn)$ as in \cite[Example~2.10]{CMM}. 
\item If $\varphi(x,t)=\psi(t)$, then we get the Orlicz spaces $L^{\psi}(\Rn)$ defined in \eqref{eq:Orlicz}.  
\item If $\varphi(x,t)=t^p(1+\log^{+}t)^{\alpha}$ with $0<p<\infty$ and $\alpha \in \R$, then we get the Zygmund space $L^p(\log L)^{\alpha}$ see \cite{BR}.  If we write $\X=L^p(\log L)^{\alpha}$, then $p_{\X}=q_{\X}=p$ and $\X^r=L^{pr}(\log  L)^{\alpha}$ for any $0<r<\infty$. Note that $\X^r$ is a Banach space provided $r$ is large enough (say,  $r>1/p$).
\item If $\varphi(x,t)=t^{p(x)}(1+\log^{+}t)^{q(x)}$, we get the generalization of the Zygmund spaces $L^{p(\cdot)}(\log L)^{q(\cdot)}$ see \cite{CF09}. 
\end{list}

Let us see when $L^{\varphi(\cdot)}(\Rn)$ is a BFS. A function $\varphi \in \Xi(\Rn)$ is called proper if the set of simple functions $L_0(\Rn)$ satisfies $L_0(\Rn) \subset L^{\varphi(\cdot)}(\Rn) \cap L^{\varphi(\cdot)}(\Rn)'$. Diening et al. in \cite[Section~2]{DHHR} proved that 
\begin{align*}
L^{\varphi(\cdot)}(\Rn)\text{ is a BFS if and only if } \varphi \text{ is proper}. 
\end{align*}
Additionally, if $\varphi \in \Xi(\Rn)$ is proper, then 
\begin{align*}
L^{\varphi(\cdot)}(\Rn)' = L^{\varphi^{*}(\cdot)}(\Rn), \quad 
L^{\varphi^{*}(\cdot)}(\Rn)'=L^{\varphi(\cdot)}(\Rn)\quad\text{and}\quad 
L^{\varphi(\cdot)}(\Rn)''=L^{\varphi(\cdot)}(\Rn). 
\end{align*} 
For the boundedness of the maximal operator $M$, H\"{a}st\"{o} \cite{Ha} showed that 
\begin{align*}
M: L^{\varphi(\cdot)}(\Rn) \to L^{\varphi(\cdot)}(\Rn), 
\end{align*} 
for all $\varphi \in \Xi(\Rn)$ with some extra assumptions.  
\end{example}

\section{Extrapolation on Banach function spaces}\label{sec:BFS}

This section is devoted to showing extrapolation theorems on Banach function spaces advertised in Section \ref{sec:intro}. Let us first recall the $A_p$ and $A_{\infty}$ extrapolation from Theorem~3.9 and Corollary~3.15 in \cite{CMP11}. 

\noindent{\bf Theorem A.} 
If for some $p_0 \in[1, \infty)$ and for every $w_0 \in A_{p_0}$, 
\begin{equation*}
\|f\|_{L^{p_0}(w_0)} \leq C \|g\|_{L^{p_0}(w_0)}, \quad (f,g) \in \F, 
\end{equation*}
then for every $p \in(1, \infty)$ and for every $w \in A_p$, 
\begin{equation*}
\|f\|_{L^p(w)} \leq C \|g\|_{L^p(w)}, \quad (f,g) \in \F.
\end{equation*}

\noindent{\bf Theorem B.}
If for some $p_0 \in(0, \infty)$ and for every $w_0 \in A_{\infty}$, 
\begin{equation*}
\|f\|_{L^{p_0}(w_0)} \leq C \|g\|_{L^{p_0}(w_0)}, \quad (f,g) \in \F, 
\end{equation*}
then for every $p \in(0, \infty)$ and for every $w \in A_{\infty}$, 
\begin{equation*}
\|f\|_{L^p(w)} \leq C \|g\|_{L^p(w)}, \quad(f, g) \in \F.
\end{equation*}

Given a weight $v$, we denote by $M'_v$ the dual of the Hardy-Littlewood maximal operator $M$. That is, $M'_vf(x)=M(fv)(x)/v(x)$ if $v(x) \neq 0$, $M'_vf(x)=0$ otherwise. Similarly, one can define $M'_{\Phi, v}$ as the dual of the Orlicz maximal operator $M_{\Phi}$.

\begin{proof}[\bf Proof of Theorem \ref{thm:Aq}]
Fix $u^{1-p'_0} \in A_1$ and $v \in A_1$. We claim that for every pair $(f, g) \in \F$ with $\|fu\|_{\X_v}<\infty$ and $\|gu\|_{\X_v}<\infty$, there exists $w=w(f, g) \in A_{p_0}$ with $[w]_{A_{p_0}} \le 2 \|M_v\|_{\X'_v \to \X'_v} [u^{1-{p_0}'}]_{A_1}^{p_0-1} [v]_{A_1}$ such that 
\begin{align}\label{eq:Aq-3}
\|fu\|_{\X_v} \le 2 \|f\|_{L^1(w)} \quad\text{and}\quad \|g\|_{L^1(w)} \le 2\|gu\|_{\X_v}.  
\end{align}
Assuming that our claim holds momentarily, let us see how \eqref{eq:Aq-2} follows from \eqref{eq:Aq-3}. Given $(f, g) \in \F$, we may assume that $\|gu\|_{\X_v}<\infty$, otherwise, there is nothing to prove. We would like to observe that $f<\infty$ a.e.. Otherwise, there exists a measurable set $E \subset \Rn$ with $|E|>0$ such that $f=\infty$ on $E$. In view of \eqref{eq:Aq-1}, this gives that 
\begin{align}\label{eq:ginf}
\|g\|_{L^1(w)} = \infty \quad\text{for every } w \in A_{p_0}. 
\end{align}
On the other hand, applying \eqref{eq:Aq-3} to $f=g$, we find a weight $w_0=w_0(g) \in A_{p_0}$ such that $\|g\|_{L^1(w_0)} \le 2\|gu\|_{\X_v}<\infty$. This contradicts \eqref{eq:ginf}. 

For each $N \ge 1$, we define $f_N:=f\mathbf{1}_{\{B(0, N): f(x) \le N, u(x) \le N\}}$. The fact $v \in A_1$ implies $v(B(0, N))<\infty$. Thanks to the property (6) in Definition \ref{def:BFS}, this gives that $\|f_N u\|_{\X_v} \le N^2 \|\mathbf{1}_{B(0, N)}\|_{\X_v} < \infty$.  Then applying \eqref{eq:Aq-3}, we get a weight $w=w(f_N, g) \in A_{p_0}$ with $[w]_{A_{p_0}} \le 2 \|M_v\|_{\X'_v \to \X'_v} [u^{1-{p_0}'}]_{A_1}^{p_0-1} [v]_{A_1}$ such that 
\begin{align}\label{eq:Aq-4}
\|f_N u\|_{\X_v} \le 2 \|f_N\|_{L^1(w)} \quad\text{and}\quad \|g\|_{L^1(w)} \le 2\|gu\|_{\X_v}.  
\end{align}
Together with \eqref{eq:Aq-1} and \eqref{eq:Aq-4}, it yields 
\begin{align}\label{eq:Aq-5}
\|f_N u\|_{\X_v} &\le 2 \|f_N\|_{L^1(w)} \le 2 \|f\|_{L^1(w)} 
\le 2 \Psi([w]_{A_{p_0}}) \|g\|_{L^1(w)}  
\nonumber \\
&\le 4 \Psi \big(2 \|M_v\|_{\X'_v \to \X'_v} [u^{1-{p_0}'}]_{A_1}^{p_0-1} [v]_{A_1}\big) \|gu\|_{\X_v}.  
\end{align}
Recall that $f<\infty$ a.e.. Consequently, $f_N \nearrow f$ as $N \to \infty$. Therefore, \eqref{eq:Aq-5} and property (5) in Definition \ref{def:BFS} immediately give \eqref{eq:Aq-2} as desired. 

It remains to prove \eqref{eq:Aq-3}. Note that $p_{\X'}=(q_{\X})'>1$. Then Lemma \ref{lem:MwcXw} \eqref{list-4} gives that $M_v$ is bounded on $\X'_v$. Let $h \in \X'_v$ be a nonnegative function such that $h$ is non-zero on a set of positive measure. For the function $h$, we define the Rubio de Francia iteration algorithm as 
\begin{align}\label{eq:RdF}
\mathcal{R}_{v}h := \sum_{j=0}^{\infty} \frac{M^j_{v} h}{2^j \|M_v\|_{\X'_v \to \X'_v}^j}.
\end{align}
Then the following properties are fulfilled: 
\begin{align}\label{eq:RdF-Aq} 
h \le \mathcal{R}_vh,\quad \|\mathcal{R}_vh\|_{\X'_v} \leq 2 \|h\|_{\X'_v} 
\quad\text{and}\quad [\mathcal{R}_vh]_{A_1(v)} \le 2 \|M_v\|_{\X'_v \to \X'_v}. 
\end{align}
Indeed, the first two inequalities are straightforward. Since $h$ is nonnegative and non-zero on a set of positive measure, for every $x \in \Rn$ there exists a cube $Q_x$ containing $x$ such that $\int_{Q_x} hv\, dy>0$. By the fact that $v$ is a weight with $v \in L^1_{\loc}(\Rn)$, one has 
\begin{align}\label{eq:Rvpo}
\mathcal{R}_v h(x) \gtrsim M_vh(x)=\sup_{Q \ni x} \frac{1}{v(Q)} \int_{Q} hv\, dy 
\ge \frac{1}{v(Q_x)} \int_{Q_x} hv\, dy >0. 
\end{align}
Let $r>0$. Then $v(B(0, r))<\infty$, which together with Item (6) in Definition \ref{def:BFS} implies $\|\mathbf{1}_{B(0, r)}\|_{\X_v}<\infty$.  Hence, 
\begin{align*}
\int_{B(0, r)} \mathcal{R}_vh \, v\, dx 
\le \|\mathbf{1}_{B(0, r)}\|_{\X_v} \|\mathcal{R}_v h\|_{\X'_v} 
\le 2 \|\mathbf{1}_{B(0, r)}\|_{\X_v} \|h\|_{\X'_v} < \infty. 
\end{align*}
Thus, $\mathcal{R}_vh \, v<\infty$ a.e. in $B(0, r)$. By the arbitrariness of $r$, $0<v<\infty$ a.e. and \eqref{eq:Rvpo}, this in turn gives that $0<\mathcal{R}_vh <\infty$ a.e., that is, $\mathcal{R}_v h$ is a weight. Moreover, the last one in \eqref{eq:RdF-Aq} follows at once from the definition of $\mathcal{R}_v$. 

To proceed, fix $(f, g) \in \F$ with $\|fu\|_{\X_v}<\infty$. By \eqref{eq:ass-X} and Remark \ref{rem:sup}, there exists a nonnegative function $h \in \X'_v$ with $\|h\|_{\X'_v} \le 1$ such that $h$ is non-zero on a set of positive measure and 
\begin{align}\label{eq:fuh}
\|fu\|_{\X_v} \le 2 \int_{\Rn} f(x) u(x) h(x) v(x) dx.
\end{align}
Pick $w:=u (\mathcal{R}_vh)v = (u^{1-{p_0}'})^{1-p_0} (\mathcal{R}_vh) v$. Since $u^{1-{p_0}'} \in A_1$, $v \in A_1$ and $\mathcal{R}_vh \in A_1(v)$ , by \eqref{eq:rev-fac}, \eqref{eq:RdF-Aq} and Lemma \ref{lem:A1Ap} \eqref{eq:Apu} we have 
\begin{align*}
[w]_{A_{p_0}} \le [u^{1-{p_0}'}]_{A_1}^{p_0-1} [(\mathcal{R}_vh)v]_{A_1} 
&\le [u^{1-{p_0}'}]_{A_1}^{p_0-1} [\mathcal{R}_vh]_{A_1(v)} [v]_{A_1} 
\nonumber \\
&\le 2 \|M_v\|_{\X'_v \to \X'_v} [u^{1-{p_0}'}]_{A_1}^{p_0-1} [v]_{A_1}. 
\end{align*}
In addition, the first one in \eqref{eq:RdF-Aq} and \eqref{eq:fuh} imply 
\begin{align*}
\|fu\|_{\X_v} \le 2 \int_{\Rn} f(x) u(x) \mathcal{R}_vh(x) v(x) dx = 2\|f\|_{L^1(w)}. 
\end{align*}
On the other hand, combining H\"{o}lder's inequality \eqref{eq:Holder} and \eqref{eq:RdF-Aq}, we obtain
\begin{align*}
\|g\|_{L^1(w)} = \int_{\Rn} gu \mathcal{R}_v h\, v\, dx 
\le \|gu\|_{\X_v} \|\mathcal{R}_vh\|_{\X'_v} 
\le 2 \|gu\|_{\X_v} \|h\|_{\X'_v} 
\le 2 \|gu\|_{\X_v}.
\end{align*}
This shows \eqref{eq:Aq-3} and completes the proof. 
\end{proof}


\begin{proof}[\bf Proof of Corollary \ref{cor:Ap}]
Fix $p\in (1,\infty)$. We first observe that the hypothesis \eqref{eq:Ap-1} and Theorem {\bf A} give that for every $w \in A_p$, 
\begin{equation*}
\|f^p\|_{L^1(w)}=\|f\|_{L^p(w)}^p \leq C \|g\|_{L^p(w)}^p=\|g^p\|_{L^1(w)}, \quad (f, g) \in \F. 
\end{equation*}
This says that the hypothesis \eqref{eq:Aq-1} is satisfied for the exponent $p_0=p$ and the pair $(f^p, g^p)$. Accordingly, \eqref{eq:Aq-2} immediately implies \eqref{eq:Ap-2} as desired. 
\end{proof} 

\begin{proof}[\bf Proof of Theorem \ref{thm:BFSA1}]
As we did in the proof of Theorem \ref{thm:Aq}, it only needs to show that if \eqref{eq:BFSA1-1} holds, then for every pair $(f, g)$ with $\|fu\|_{\X_v^{p_0}}<\infty$ and $\|gu\|_{\X_v^{p_0}}<\infty$, there exists a weight $w=w(f, g) \in A_1$ such that $[w]_{A_1} \le 2 K_0 [v]_{A_1}$, 
\begin{align}\label{eq:BFSA1-4}
\|fu\|_{\X_v^{p_0}}  \le 2^{1/p_0} \|f\|_{L^{p_0}(w)} \quad\text{and}\quad 
\|g\|_{L^{p_0}(w)} \le 2^{1/p_0} \|gu\|_{\X_v^{p_0}}. 
\end{align} 

To this end, fix a pair of functions $(f, g) \in \F$ with $\|fu\|_{\X_v^{p_0}}<\infty$ and $\|gu\|_{\X_v^{p_0}}<\infty$. By \eqref{eq:ass-X}, there exists a non-negative function $h \in \X'_v$ with $\|h\|_{\X'_v} \le 1$ such that $h$ is non-zero on a set of positive measure, and 
\begin{align}\label{eq:fpup}
\|fu\|_{\X_v^{p_0}}^{p_0} = \|f^{p_0}u^{p_0}\|_{\X_v} \le 2 \int_{\Rn} f^{p_0}u^{p_0} hv\, dx.
\end{align}
For this function $h$, we define 
\begin{equation*}
\mathcal{R} h :=\sum_{j=0}^{\infty}\frac{M_v^j h}{2^j K_0^j} \quad\text{and}\quad 
H := \mathcal{R}(hu^{p_0}) u^{-p_0}.
\end{equation*} 
Then using \eqref{eq:BFSA1-1}, one can verify that 
\begin{align}\label{eq:RdFA1}
h \leq H, \quad  \|H\|_{\X'_v} \leq 2\|h\|_{\X'_v} \quad\text{and}\quad
[\mathcal{R}h]_{A_1(v)} \leq 2K_0.
\end{align}

If we set $w:=u^{p_0}Hv$, then $v \in A_1$ and Lemma \ref{lem:A1Ap} \eqref{eq:vA1u} imply $[w]_{A_1}=[\mathcal{R}(hu^{p_0}) v]_{A_1} \le [\mathcal{R}(hu^{p_0})]_{A_1(v)} [v]_{A_1} \le 2K_0 [v]_{A_1}$. It follows from \eqref{eq:fpup} and \eqref{eq:RdFA1} that 
\begin{align*}
\|fu\|_{\X_v^{p_0}}^{p_0} \le 2 \int_{\Rn} f^{p_0}u^{p_0} Hv\, dx = 2\|f\|_{L^{p_0}(w)}^{p_0}. 
\end{align*}
Also, invoking \eqref{eq:Holder} and \eqref{eq:RdFA1}, we conclude that 
\begin{align*}
\|g\|_{L^{p_0}(w)}^{p_0} &= \int_{\Rn} g^{p_0} u^{p_0} Hv\,dx 
\le \|g^{p_0} u^{p_0}\|_{\X_v} \|H\|_{\X'_v} \le 2 \|gu\|_{\X_v^{p_0}}^{p_0}
\end{align*}
This shows \eqref{eq:BFSA1-4}. 
\end{proof}

\begin{proof}[\bf Proof of Theorem \ref{thm:BFSAi}]
By Theorem \textbf{B}, we have that for every $p \in (0,\infty)$ and for every $w \in A_{\infty}$, 
\begin{align}\label{eq:BFSAi-2p}
\|f\|_{L^p(w)} \leq C \|g\|_{L^p(w)}, \quad (f,g) \in \F.  
\end{align} 
This can be rewritten as 
\begin{align}\label{eq:BFSAi-p2}
\|f^p\|_{L^1(w)} \le C \|g^p\|_{L^1(w)}, \quad (f,g) \in \F.  
\end{align} 
As before, it is enough to prove that if \eqref{eq:BFSAi-1} holds, then for every pair $(f, g) \in \F$ with $\|fu\|_{\X_v}<\infty$ and $\|gu\|_{\X_v}<\infty$, there exists a weight $w=w(f, g) \in A_{\infty}$ such that $[w]_{A_{\infty}} \le 2 K_0[v]_{A_{\infty}}$, 
\begin{align}\label{eq:BFSAi-4}
\|f u\|_{\X_v}  \le 2 \|f\|_{L^1(w)} \quad\text{and}\quad \|g\|_{L^1(w)} \le 2 \|gu\|_{\X_v}. 
\end{align}
We here point out that \eqref{eq:BFSAi-3} will follow from \eqref{eq:BFSAi-p2} and \eqref{eq:BFSAi-4}. 

Let us turn to the proof of \eqref{eq:BFSAi-4} and it is similar to that of \eqref{eq:BFSA1-4}. In the current setting, the Rubio de Francia iteration algorithm is replaced by 
\begin{equation*}
\mathcal{R} h :=\sum_{j=0}^{\infty}\frac{M_v^j h}{2^j K_0^j} \quad\text{and}\quad 
H := \mathcal{R}(hu) u^{-1}.
\end{equation*} 
Then it follows from \eqref{eq:BFSAi-1} that
\begin{align}\label{eq:BFSAi-5}
h \leq H,\quad  \|H\|_{\X'_v} \leq 2 \|h\|_{\X'_v} \quad\text{and}\quad
[\mathcal{R}h]_{A_1(v)} \leq 2K_0.
\end{align}
Pick $w:=uHv$. Then by the last estimate in \eqref{eq:BFSAi-5}, $v \in A_{\infty}$ and Lemma \ref{lem:A1Ap} \eqref{eq:vA1u}, we obtain that $w=\mathcal{R}(hu)v \in A_{\infty}$ with $[w]_{A_{\infty}} \le [\mathcal{R}(hu)]_{A_1(v)} [v]_{A_{\infty}} \le 2 K_0 [v]_{A_{\infty}}$. The rest of argument is almost the same as in Theorem \ref{thm:BFSA1}. 
 
Additionally, \eqref{eq:BFSAi-vec} is a consequence of \eqref{eq:BFSAi-3} and \eqref{eq:BFSAi-2p}. 
\end{proof}

\begin{proof}[\bf Proof of Theorem \ref{thm:AiAi}]
By Theorem ${\bf B}$, the hypothesis \eqref{eq:AiAi-1} implies that for every $p \in(0, \infty)$ and for every $w \in A_{\infty}$, 
\begin{equation}\label{eq:AiAi-3}
\|f\|_{L^p(w)} \leq C \|g\|_{L^p(w)}, \quad(f, g) \in \F.
\end{equation}
It suffices to show that for every $r \in (1, \infty)$, for every pair $(f, g) \in \F$ with $\|fu\|_{\X_v}<\infty$ and $\|gu\|_{\X_v}<\infty$, there exists a weight $w=w(f, g) \in A_{\infty}$ such that  
\begin{align}\label{eq:AiAi-4}
\|f u\|_{\X_v}  \le 2^r \|f\|_{L^{\frac1r}(w)} \quad\text{and}\quad \|g\|_{L^{\frac1r}(w)} \le 2^r \|gu\|_{\X_v}. 
\end{align}

Let $u \in RH_{\infty}$ and $v \in A_{\infty}$. Denote $\Y=\X^r$. Then $p_{\Y'}=(q_{\Y})'=(r q_{\X})'>1$ provided $q_{\X}<\infty$. Then, it follows from Lemma \ref{lem:MwcXw} \eqref{list-4} that $M_v$ is bounded on $\Y'_v$. For any non-negative function $h \in\Y'_v$ such that $h$ is non-zero on a set of positive measure, we define the Rubio de Francia iteration algorithm as: 
\begin{equation}
\mathcal{R}_{v} h :=\sum_{j=0}^{\infty}\frac{M_v^k h}{2^k \|M_v\|_{\Y'_v \to \Y'_v}^k}.
\end{equation}
Then it is easy to verify that 
\begin{align}\label{eq:RdF-AiAi}
h \leq\mathcal{R}_{v} h, \quad \norm{\mathcal{R}_{v}h}_{\Y'_v}\leq 2\|h\|_{\Y'_v} 
\quad\text{and}\quad [\mathcal{R}_vh]_{A_1(v)} \leq 2 \|M_v\|_{\Y'_v}. 
\end{align}

From \eqref{eq:ass-X}, there exists a nonnegative function $h \in \Y'_v$ with $\|h\|_{\Y'_v} \le 1$ such that $h$ is non-zero on a set of positive measure, and 
\begin{align}\label{eq:fu-AiAi}
\norm{fu}_{\X_v}^{\frac1r} &= \big\|(fu)^{\frac{1}{r}}\big\|_{\Y_v} 
\le 2 \int_{\Rn} f^{\frac{1}{r}} u^{\frac{1}{r}} h v \, dx. 
\end{align} 
Since $v \in A_{\infty}$ and $\mathcal{R}_v h \in A_1(v)$, Lemma \ref{lem:A1Ap} \eqref{eq:vA1u} implies $(\mathcal{R}_v h) v \in A_{\infty}$. Also, by Lemma \ref{lem:A1Ap} \eqref{eq:RHs}, one has $u^{\frac{1}{r}} \in RH_{\infty}$. Together with Lemma \ref{lem:A1Ap} \eqref{eq:AiRH}, these facts give that 
\begin{equation}\label{eq:Rvhv}
w := u^{\frac{1}{r}} (\mathcal{R}_v h) v \in A_{\infty}.
\end{equation}
Moreover, by \eqref{eq:RdF-AiAi} and \eqref{eq:fu-AiAi},  
\begin{align*}
\|fu\|_{\X_v} \le 2^r \|f\|_{L^{\frac1r}(w)},  
\end{align*}
and 
\begin{align*}
\|g\|_{L^{\frac1r}(w)}^{\frac1r}
&=\int_{\Rn} g^{\frac{1}{r}} u^{\frac{1}{r}} \mathcal{R}_{v} h\, v\, dx 
\leq \|(gu)^{\frac{1}{r}}\|_{\Y_v} \|\mathcal{R}_{v}h\|_{\Y'_v} 
\le 2 \|gu\|_{\X_v}^{\frac{1}{r}}. 
\end{align*} 
The proof is complete. 
\end{proof} 

\begin{proof}[\bf Proof of Theorem \ref{thm:AA}]
Let $u \in A_1$ and $v \in A_{\infty}$. Then Lemma \ref{lem:A1Ap} \eqref{eq:A1RH} implies $u^{-1} \in RH_{\infty}$. Thus,  \eqref{eq:AA-2} follows from Theorem \ref{thm:AiAi}. In what follows, we assume that $u \in A_{\infty}$ and $v \in A_1$.  

By Theorem ${\bf B}$, the hypothesis \eqref{eq:AiAi-1} implies that for every $p \in(0, \infty)$ and for every $w \in A_{\infty}$, 
\begin{equation}\label{eq:AA-3}
\|f\|_{L^p(w)} \leq C \|g\|_{L^p(w)}, \quad(f, g) \in \F.
\end{equation}
It suffices to show that for every pair $(f, g) \in \F$ with $\|fu\|_{\X_v}<\infty$ and $\|gu\|_{\X_v}<\infty$, there exist some $r \in (1, \infty)$ and a weight $w=w(f, g) \in A_{\infty}$ such that  
\begin{align}\label{eq:AA-4}
\|f u\|_{\X_v}  \le 2^r \|f\|_{L^{\frac1r}(w)} \quad\text{and}\quad \|g\|_{L^{\frac1r}(w)} \le 2^r \|gu\|_{\X_v}. 
\end{align}

Denote $\Y=\X^r$ for some $r \in (1, \infty)$ chosen later. For any non-negative function $h \in \Y'_v$ such that $h$ is non-zero on a set of positive measure, we define the Rubio de Francia iteration algorithm as: 
\begin{equation}
\mathcal{R}_{v} h :=\sum_{j=0}^{\infty}\frac{M_v^k h}{2^k \|M_v\|_{\Y'_v \to \Y'_v}^k}.
\end{equation}
Then we have 
\begin{align}\label{eq:RdF-AA}
h \leq\mathcal{R}_{v} h, \quad \norm{\mathcal{R}_{v}h}_{\Y'_v}\leq 2\|h\|_{\Y'_v} 
\quad\text{and}\quad [\mathcal{R}_vh]_{A_1(v)} \leq 2 \|M_v\|_{\Y'_v}
\end{align}
By \eqref{eq:ass-X}, there exists a nonnegative function $h \in \Y'_v$ with $\|h\|_{\Y'_v} \le 1$ such that $h$ is non-zero on a set of positive measure, and 
\begin{align*}
\Big\|\frac{f}{u}\Big\|_{\X_v}^{\frac1r} &= \Big\|\Big(\frac{f}{u}\Big)^{\frac1r}\Big\|_{\Y_v}
\le 2 \int_{\Rn} f^{\frac1r} u^{-\frac{1}{r}} h v  \, dx.  
\end{align*}
Recall that $u \in A_{\infty}$ and $v \in A_1$. It follows from \cite[Theorem~1.3]{HaP} that there exists a dimensional constant $c_n>0$ such that 
\begin{align}\label{eq:eecn}
[u]_{A_p} \leq e^{e^{c_n[u]_{A_{\infty}}}}, \quad\forall  p>e^{c_n[u]_{A_{\infty}}}. 
\end{align} 
Pick $p=1+e^{c_n[u]_{A_{\infty}}}$ such that $u \in A_p$. Then by the $A_p$ factorization theorem, one has $u=u_1 u_2^{1-p}$ for some $u_1, u_2 \in A_1$. We claim that there exists $C_n>0$ large enough such that 
\begin{align}\label{eq:Ruv-uvr}
w := u^{-\frac{1}{r}} \mathcal{R}_{v} h \cdot v 
=\Big[(\mathcal{R}_{v} h \cdot v)   u_2^{\frac{p-1}{r}} \Big] u_1^{-\frac{1}{r}}  \in  A_{\infty}, 
\quad\forall r>C_n [v]_{A_1}^2 e^{C_n [u]_{A_{\infty}}}. 
\end{align}
Indeed, a straightforward calculation gives that $\sigma:=\mathcal{R}_{v}h \cdot v \in A_1$ with 
\begin{align}\label{eq:w-A1-uv-A1}
[\sigma]_{A_1} & \leq [v]_{A_1} [\mathcal{R}_{v}h]_{A_1(v)} 
\leq 2[v]_{A_1} \|M_{v}\|_{\Y'_v \to \Y'_v}  \nonumber\\ 
&\leq 2C [v]_{A_1} \sup_{Q \subset \Rn} \frac{v(3Q)}{v(Q)}
\leq 2C \cdot 3^n [v]_{A_1}^2, 
\end{align}
where we used Lemma \ref{lem:A1Ap} \eqref{eq:Apu}, \eqref{eq:RdF-AA}, \eqref{eq:3QQ} and the doubling property: for any $\omega \in A_s$, $1 \leq s < \infty$, 
\begin{align*}
\omega(\lambda Q) \leq \lambda^{ns} [\omega]_{A_s} \omega(Q),\quad\forall \lambda>1 \text{ and }\forall Q \subset \Rn.
\end{align*}
It follows from \eqref{eq:RHp} below that for every $\sigma \in A_1$ and for every cube $Q \subset \Rn$, 
\begin{align*}
\bigg(\fint_{Q} \sigma(x)^{r_\sigma} dx \bigg)^{\frac{1}{r_{\sigma}}} 
\leq 2\fint_{Q} \sigma(x) dx, 
\quad\text{where } 
r_{\sigma} :=1+\frac{1}{2^{n+1}[\sigma]_{A_1}}. 
\end{align*}
Then by \eqref{eq:w-A1-uv-A1}, we have 
\begin{align*}
r'_{\sigma}(p-1)=(1+2^{n+1}[\sigma]_{A_1}) e^{c_n[u]_{A_{\infty}}}
\leq \widetilde{c}_n [v]_{A_1}^2 e^{\widetilde{c}_n[u]_{A_{\infty}}}. 
\end{align*}
Let $r>C_n [v]_{A_1}^2 e^{C_n [u]_{A_{\infty}}}$ with $C_n \geq \widetilde{c}_n$. Then $r'_{\sigma}(p-1) \leq r$ and
\begin{align*}
\fint_{Q} \sigma u_2^{\frac{p-1}{r}} dx 
&\leq \bigg(\fint_{Q} \sigma^{r_{\sigma}} dx\bigg)^{\frac{1}{r_{\sigma}}} 
\bigg(\fint_{Q} u_2^{\frac{r'_{\sigma}(p-1)}{r}} dx\bigg)^{\frac{1}{r'_{\sigma}}}
\leq 2 \bigg(\fint_{Q} \sigma \, dx\bigg) \bigg(\fint_{Q} u_2 \ dx\bigg)^{\frac{p-1}{r}}
\\
&\leq 2\Big([\sigma]_{A_1} \inf_{Q} \sigma\Big) \Big([u_2]_{A_1} \inf_{Q} u_2 \Big)^{\frac{p-1}{r}}
\leq 2 [\sigma]_{A_1} [u_2]_{A_1}^{\frac{p-1}{r}} \inf_{Q} \Big(\sigma u_2^{\frac{p-1}{r}}\Big). 
\end{align*}
That is, $\sigma u_2^{\frac{p-1}{r}} \in A_1$. 
On the other hand, $u_1 \in A_1$ implies that $u_1^{\frac1r} \in A_1$ with $[u_1^{\frac1r}]_{A_1} \leq [u_1]_{A_1}^{\frac1r}$. In view of Lemma \ref{lem:A1Ap} \eqref{eq:A1RH}, one has $u_1^{-\frac1r} \in RH_{\infty}$. Therefore, these and Lemma \ref{lem:A1Ap} \eqref{eq:uu} immediately yield \eqref{eq:Ruv-uvr}.  

The remaining argument is almost the same in the proof of Theorem \ref{thm:AiAi}. We omit the details. 
\end{proof}

\begin{proof}[\bf Proof of Theorem \ref{thm:Mvr}] 
Let $h$ be a non-negative function such that $ h \in \X'_v$ and $h$ is non-zero on a set of positive measure. In view of \eqref{eq:Mvr-1}, we define the Rubio de Francia algortihm as:
\begin{align*}
\mathcal{R}h := \sum_{j=0}^{\infty} \frac{(M'_{v^{1/r}})^k h}{2^k \|M'_{v^{1/r}}\|_{\X'_v \to \X'_v}^k}. 
\end{align*}
Then we see that $\mathcal{R}h \cdot v^{\frac1r} \in A_1$. Combining this with $u^{\frac1s} v^{-\frac{1}{sr'}} \in A_1$ and  \eqref{eq:rev-fac}, we obtain 
\begin{align}
w:= u^{-1} (\mathcal{R}h)v 
= (u^{\frac1s} v^{-\frac{1}{sr'}})^{1-(s+1)} (\mathcal{R}h \cdot v^{\frac1r}) \in A_{s+1} \subset A_{\infty}. 
\end{align}
The remaining argument is the same as we did before. The only difference is that we do not use the rescaling argument in this case. 
\end{proof}

\begin{proof}[\bf Proof of Theorem \ref{thm:Saw}]
For every $U \in A_1$ with $[U] \le 2K_0$, by Lemma \ref{lem:A1Ap} \eqref{eq:A1A1}, there exists $\varepsilon_0 := \varepsilon_0 (K_0) \in (0, 1)$ such that 
\begin{equation}\label{eq:UV-A1}
UV^{\varepsilon} \in A_1 \quad\text{for every }V \in A_1 \text{ and } \varepsilon \in (0, \varepsilon_0).
\end{equation}
Pick $r>1$ such that $0<\frac{1}{r'}<\min\{\varepsilon_0, \frac{1}{q_0}\}$. Denote $\Y_{uv}=\X^r_{uv}$. Since $r'>q_0$, the hypothesis \eqref{eq:Saw-1} implies that $M'_u$ is bounded on $\Y'_{uv}$ with bound $K_0$. 

For any non-negative function $h \in\Y'_{uv}$ with $\|h\|_{\Y'_{uv}} \leq 1$ such that $h$ is non-zero on a set of positive measure, we define the Rubio de Francia iteration algorithm as: 
\begin{equation*}
\mathcal{R} h:=\sum_{j=0}^{\infty}\frac{(M'_u)^j h}{2^j K_0^j}.
\end{equation*}
Then one can check that 
\begin{align}\label{eq:RdF-Saw}
h \leq\mathcal{R} h, \quad  
\|\mathcal{R}h\|_{\Y'_{uv}}\leq 2\|h\|_{\Y'_{uv}} \quad \text{and} \quad 
[\mathcal{R}h \cdot u]_{A_1} \leq 2 K_0.  
\end{align} 
On the other hand, since $v \in A_{\infty}$, it follows from Lemma \ref{lem:A1Ap} \eqref{eq:uu} that $v=v_1v_2$ for some  $v_1 \in A_1$ and $v_2 \in RH_{\infty}$.  Invoking \eqref{eq:UV-A1} and \eqref{eq:RdF-Saw}, we get 
\begin{equation}\label{eq:Rhuvr}
(\mathcal{R} h \cdot u) v_1^{\frac{1}{r'}} \in A_1.
\end{equation} 
Note that $v_2^{\frac{1}{r'}} \in RH_{\infty}$ by Lemma \ref{lem:A1Ap} \eqref{eq:RHs}. Accordingly, combining \eqref{eq:Rhuvr} with  Lemma \ref{lem:A1Ap} \eqref{eq:uu}, this gives that 
\begin{align}\label{eq:Rhuvv}
w:=\mathcal{R} h \cdot u v^{\frac{1}{r'}}
=[(\mathcal{R} h \cdot u) v_1^{\frac{1}{r'}}] v_2^{\frac{1}{r'}}  \in A_{\infty}. 
\end{align}
The rest of the proof is similar to the argument in the proof of Theorem \ref{thm:AA}.
\end{proof}

\begin{proof}[\bf Proof of Theorem \ref{thm:M3w}]
We first note that the hypothesis \eqref{eq:M3w-1} and Theorem $\bold{B}$ give that for every $p \in (0, \infty)$ and every  $w \in A_{\infty}$, 
\begin{align}\label{eq:fMg}
\|f\|_{L^p(w)} \le C \|Mg\|_{L^p(w)},\quad (f, g) \in \F. 
\end{align}

It suffices to show that for every (or for some) $s>1$ and for every pair $(f, g) \in \F$ with $\norm{\frac{f}{M^3w}}_{\X(Mw)}<\infty$ and $\norm{\frac{g}{Mw}}_{\X(Mw)}<\infty$, there exists a weight $w_0=w_0(f, g) \in A_{\infty}$ such that 
\begin{align}\label{eq:M3w-3}
\norm{\frac{f}{M^3w}}_{\X(Mw)} \lesssim \|f\|_{L^{\frac1s}(w_0)} \quad\text{and}\quad 
\|Mg\|_{L^{\frac1s}(w_0)} \lesssim \norm{\frac{g}{Mw}}_{\X(Mw)}.  
\end{align}
Fix $s>1$ and set $\Y=\X^s$. For each $r>1$, $(Mw)^{1-r(1-\frac{1}{2s})} = [(Mw)^{1-\frac{1}{2s}}]^{1-r} (Mw)^{\frac{1}{2s}} \in A_r$. If we denote $v:=(Mw)^{1-\frac{1}{2s}}$, then 
\begin{align}\label{eq:SfMr}
\|M'_v f\|_{L^r(Mw)} &= \big\|M(fv)\big\|_{L^r((Mw)^{1-r(1-\frac{1}{2s})})} 
\lesssim \big\|fv\big\|_{L^r((Mw)^{1-r(1-\frac{1}{2s})})} = \|f\|_{L^r(Mw)}. 
\end{align}
Note that 
\begin{align*}
p_{\Y'}=(q_{\Y})'=(sq_{\X})'>1 \quad\text{and}\quad q_{\Y'}=(p_{\Y})'=(sp_{\X})'<\infty.
\end{align*} 
Taking $r_1, r_2 \in (1, \infty)$ such that $1<r_1<p_{\Y'} \le q_{\Y'}<r_2<\infty$, we obtain from \eqref{eq:SfMr} that $M'_v$ is bounded on $L^{r_1}(Mw)$ and $L^{r_2}(Mw)$. Hence, Lemma \ref{lem:inter} gives that  
\begin{align*}
M'_v: \Y'(Mw) \to \Y'(Mw) \quad\text{boundedly}. 
\end{align*}
Let $h \in \Y'(Mw)$ be a nonnegative function such that $h$ is non-zero on a set of positive measure. Now we define the Rubio de Francia iteration algorithm as 
\begin{align*}
\mathcal{R}h := \sum_{j=0}^{\infty} \frac{(M'_v)^k h}{2^k \|M'_v\|_{\Y'(Mw)}}. 
\end{align*}
As before, it is easy to verify that 
\begin{align}\label{eq:RdF-YMw}
h \le \mathcal{R}h, \quad \|\mathcal{R}h\|_{\Y'(Mw)} \leq 2 \|h\|_{\Y'(Mw)} \quad\text{and}\quad 
[\mathcal{R}h \cdot v]_{A_1} \le 2 \|M'_v\|_{\Y'(Mw) \to \Y'(Mw)}. 
\end{align}

To proceed, we present several facts for the maximal operator $M$. First, we have 
\begin{align}\label{eq:MMM}
Mf(x) &= \sup_{Q \ni x} \fint_{Q} |f(y)| dy
\le \sup_{Q \ni x} 2^n \fint_{Q(x, \ell(Q))} |f(y)| dy 
\nonumber \\
&=\sup_{Q \ni x} 2^n \fint_{Q(x, \ell(Q))} |f| dw \fint_{Q(x, \ell(Q))} w\, dy
\lesssim M_w^c(f w^{-1})(x) Mw(x), 
\end{align}
where $Q(x,\ell(Q))$ denotes the cube centered at $x$ with side-length $2\ell(Q)$. Gathering  \eqref{eq:MMM} and Lemma \ref{lem:MwcXw} \eqref{list-3}, we obtain 
\begin{equation}\label{eq:MfMw}
\bigg\|\frac{Mf}{Mw} \bigg\|_{\X(w)} 
\lesssim \|M_w^c(fw^{-1})\|_{\X(w)} 
\lesssim \|fw^{-1}\|_{\X(w)}. 
\end{equation}
On the other hand, as we see in \cite[eq.(2.2)]{LOP2}, 
\begin{align}\label{eq:M3M2}
\bigg(\frac{Mf}{M^3f} \bigg)^{\frac12} \le C_n \frac{Mf}{M^2f}.
\end{align} 

By \eqref{eq:ass-X}, there exists a nonnegative function $h \in \Y'(Mw)$ with $\|h\|_{\Y'(Mw)} \le 1$ such that $h$ is non-zero on a set of positive measure, and 
\begin{align}\label{eq:fM3w}
\bigg\|\frac{f}{M^3w}\bigg\|_{\X(Mw)}^{\frac1s} 
=\bigg\|\bigg(\frac{f}{M^3w} \bigg)^{\frac1s}\bigg\|_{\Y(Mw)}
\le 2\int_{\Rn} f^{\frac1s} \frac{h \ Mw}{(M^3w)^{\frac1s}} dx. 
\end{align}
Since $(M^3w)^{\frac{1}{2s}} \in A_1$ by \eqref{eq:CR}, it follows from \eqref{eq:rev-fac} and \eqref{eq:RdF-YMw} that 
\begin{align}\label{eq:M3wA2}
w_0 := (\mathcal{R}h \cdot v) (M^3w)^{-\frac{1}{2s}} 
=(\mathcal{R}h \cdot v) [(M^3w)^{\frac{1}{2s}}]^{1-2} \in A_2. 
\end{align} 
Thanks to \eqref{eq:fM3w} and \eqref{eq:RdF-YMw}, one has 
\begin{align*}
\norm{\frac{f}{M^3w}}_{\X(Mw)}^{\frac1s} 
&\lesssim \int_{\Rn} f^{\frac1s} \frac{(\mathcal{R}h) Mw}{(M^3w)^{\frac1s}} dx
\le \int_{\Rn} f^{\frac1s} \frac{(\mathcal{R}h) (Mw)^{1-\frac{1}{2s}}}{(M^3w)^{\frac{1}{2s}}} dx
=\|f\|_{L^{\frac1s}(w_0)}^{\frac1s}. 
\end{align*} 
Sequently,  applying \eqref{eq:M3M2} and the H\"{o}lder's inequality \eqref{eq:Holder} for $\Y(Mw)$, we deduce that 
\begin{align*}
&\|Mg\|_{L^{\frac1s}(w_0)}^{\frac1s}
=\int_{\Rn} (Mg)^{\frac1s} \frac{(\mathcal{R}h) (Mw)^{1-\frac{1}{2s}}}{(M^3w)^{\frac{1}{2s}}} dx 
\lesssim \int_{\Rn} \bigg(\frac{Mg}{M^2w}\bigg)^{\frac1s} (\mathcal{R}h) Mw \, dx
\\
&\leq \bigg\|\bigg(\frac{Mg}{M^2w}\bigg)^{\frac1s}\bigg\|_{\Y(Mw)} \|\mathcal{R}h\|_{\Y'(Mw)} 
\lesssim \bigg\|\frac{Mg}{M^2w}\bigg\|_{\X(Mw)}^{\frac1s} \|h\|_{\Y'(Mw)} 
\lesssim \bigg\|\frac{g}{Mw}\bigg\|_{\X(Mw)}^{\frac1s}, 
\end{align*}
where \eqref{eq:MfMw} was used in the last step. This proves \eqref{eq:M3w-3}.  
\end{proof}

\begin{proof}[\bf Proof of Theorem \ref{thm:wMw}]
Let $\Y=\X^{\frac{1}{p_0}}$. Then $\Y$ is a BFS and 
\begin{align}\label{eq:p01}
\|f\|_{\X_w}^{p_0} = \|f^{p_0}\|_{\Y_w} 
=\sup_{\substack{0 \leq h \in \Y'_w \\ \|h\|_{\Y'_w \leq 1}}} \int_{\Rn} f^{p_0} h w \ dx. 
\end{align}
Fix a nonnegative function $h \in \Y'_w$ with $\|h\|_{\Y'_w \leq 1}$. Note that $q_{\X}<\infty$ implies that $p_{\Y'}=(q_{\Y})'=(q_{\X}/p_0)'>1$. Invoking \eqref{eq:hyp-M}, \eqref{eq:MMM} and Lemma \ref{lem:MwcXw} \eqref{list-3}, we conclude that 
\begin{align}\label{eq:p02}
\int_{\Rn} f^{p_0} h w\ dx 
\lesssim \int_{\Rn} g^{p_0} M(h w) dx 
&\leq \int_{\Rn} g^{p_0} Mw \ M^c_w h \ dx 
\nonumber \\
= \int_{\Rn} g^{p_0} (Mw/w) \ M^c_w h \ w \ dx 
&\leq \|g^{p_0} (Mw/w)\|_{\Y_w} \|M^c_w h\|_{\Y'_w}
\nonumber \\
&\lesssim \|g (Mw/w)^{\frac{1}{p_0}}\|_{\X_w}^{p_0} \|h\|_{\Y'_w}. 
\end{align}
Therefore, \eqref{eq:con-M} as desired follows from \eqref{eq:p01} and \eqref{eq:p02}. The inequality \eqref{eq:vec-M} is a consequence of \eqref{eq:con-M}. Indeed, given $q \in (0,\infty)$, we define a new family $\F_q$ by 
\begin{align*}
\F_q=\bigg\{(F, G):= \bigg(\Big(\sum_{j=1}^{\infty} |f_j|^q \Big)^{\frac1q},  
\Big(\sum_{j=1}^{\infty} |g_j|^q \Big)^{\frac1q} \bigg): \{(f_j, g_j)\}_{j} \subset \F \bigg\}.
\end{align*} 
Then by \eqref{eq:hyp-M}, one has for every weight $w_0$ and every pair $(F, G) \in \F_q$,  
\begin{align}\label{eq:FG}
\|F\|_{L^q(w_0)} = \bigg(\sum_{j=0}^{\infty}\|f_j\|_{L^q(w_0)}^q\bigg)^{\frac1q}
\leq C \bigg(\sum_{j=0}^{\infty}\|g_j\|_{L^q(Mw_0)}^{q}\bigg)^{\frac1q} = \|G\|_{L^q(Mw_0)}. 
\end{align}
This shows \eqref{eq:hyp-M} holds for the exponent $p_0=q$ and the pair $(f,g)=(F, G)$. Thus,  \eqref{eq:con-M} immediately implies \eqref{eq:vec-M}. 
\end{proof}

\begin{proof}[\bf Proof of Theorem \ref{thm:uvw}]
Recall that $\Y_u$ and $\Y_v$ are Banach function spaces. Then by \eqref{eq:ass-X},  
\begin{align}\label{eq:uvw-4}
\|f w_1\|_{\X_u}^{p_0} = \|f^{p_0} w_1^{p_0}\|_{\Y_u} 
=\sup_{\substack{0 \leq h \in \Y'_u \\ \|h\|_{\Y'_u \leq 1}}} \int_{\Rn} f^{p_0} w_1^{p_0} h u\ dx. 
\end{align}
Now fix a nonnegative function $h \in \Y'_u$ such that $\|h\|_{\Y'_u} \leq 1$. By the hypothesis \eqref{eq:uvw-1}, \eqref{eq:uvw-2} and H\"{o}lder's inequality \eqref{eq:Holder}, we have 
\begin{align*}
\int_{\Rn} f^{p_0} w_1^{p_0} h u \ dx 
&\lesssim \int_{\Rn} g^{p_0} M_{\Phi}(w_1^{p_0} h u) dx 
= \int_{\Rn} g^{p_0} w_2^{p_0} \cdot M_{\Phi}(w_1^{p_0} hu) w_2^{-p_0} dx 
\nonumber \\
&\leq \|g^{p_0} w_2^{p_0}\|_{\Y_v} \|M_{\Phi}(w_1^{p_0} hu) w_2^{-p_0} v^{-1}\|_{\Y'_v}
\lesssim \|g w_2\|_{\X_v}^{p_0} \|h\|_{\Y'_v}. 
\end{align*}
This and \eqref{eq:uvw-4} immediately give \eqref{eq:uvw-3}.  
\end{proof}

\begin{proof}[\bf Proof of Theorem \ref{thm:ABC}]
Note that $\Y_v=\X_v^{\frac{1}{p_0}}$ and $\Y_{M_A v}=\X_{M_A v}^{\frac{1}{p_0}}$ are BFS. Thanks to \eqref{eq:ass-X}, we see that 
\begin{align}\label{eq:ABC-5}
\|f u\|_{\X_v}^{p_0} = \|f^{p_0} u^{p_0}\|_{\Y_v} 
=\sup_{\substack{0 \leq h \in \Y'_v \\ \|h\|_{\Y'_v \leq 1}}} \int_{\Rn} f^{p_0} u^{p_0} hv \ dx. 
\end{align}
Let $0 \le h \in \Y'_v$ with $\|h\|_{\Y'_v} \leq 1$. By our assumption on $\Phi$ and \eqref{eq:Holder-ABC}, one has 
\begin{align}\label{eq:MAMB}
M_{\Phi}(f g)(x) \lesssim M_A f(x) M_Bg(x),\quad\forall x \in \Rn. 
\end{align} 
Thus, applying the hypothesis \eqref{eq:ABC-1}, \eqref{eq:MAMB} and H\"{o}lder's inequality \eqref{eq:Holder},  we obtain 
\begin{align}\label{eq:hYY}
\int_{\Rn} f^{p_0} u^{p_0} h v \ dx 
&\lesssim \int_{\Rn} g^{p_0} M_{\Phi}(u^{p_0} hv) dx 
\lesssim \int_{\Rn} g^{p_0} M_A(u^{p_0}) M_B(hv) \ dx 
\nonumber \\
&\leq \|g^{p_0} M_A(u^{p_0})\|_{\Y_v} \|M_B(hv)v^{-1}\|_{\Y'_v}
\lesssim \|g M_A(u^{p_0})^{\frac{1}{p_0}}\|_{\X_v}^{p_0} \|h\|_{\Y'_v}, 
\end{align}
where \eqref{eq:ABC-2} was used in the last step. On the other hand, by \eqref{eq:ABC-4},  
\begin{align}\label{eq:YvYv}
\int_{\Rn} f^{p_0} u^{p_0} h v \ dx 
&\lesssim \int_{\Rn} g^{p_0} M_{\Phi}(u^{p_0} hv) dx 
\lesssim \int_{\Rn} g^{p_0} M_B(u^{p_0}h) M_Av \ dx 
\nonumber \\
&\leq \|g^{p_0}u^{p_0}\|_{\Y_{M_Av}} \|M_B(u^{p_0}h)u^{-p_0}\|_{\Y'_{M_Av}} 
\lesssim \|gu\|_{\X_{M_Av}}^{p_0} \|h\|_{\Y'_v}. 
\end{align}
Consequently, \eqref{eq:ABC-3} and \eqref{eq:ABC-4} follow at once from \eqref{eq:ABC-5}, \eqref{eq:hYY} and \eqref{eq:YvYv}.  
\end{proof}

The classical Fefferman-Stein inequality in \cite{FS} asserts that for each $p \in (1, \infty)$,   
\begin{equation}\label{eq:FS}
\|Mf\|_{L^p(w)} \leq C_p \|f\|_{L^p(Mw)}\quad\text{ for every weight $w$}. 
\end{equation}
In \cite[Theorem~1.7]{P95}, P\'{e}rez proved that for any $1<p<\infty$, $\Phi \in B_p$ if and only if 
\begin{align}\label{eq:FS-1}
\|M_{\Phi}f\|_{L^p(w)} \leq C \|f\|_{L^p(Mw)} \quad\text{ for every weight $w$}.  
\end{align}
Another result in \cite[Theorem~6.1]{P95} states that $r<p$ if and only if 
\begin{align}\label{eq:FS-2} 
\|M_{r,s}f\|_{L^p(w)} \leq C \|f\|_{L^p(Mw)} \quad\text{ for every weight $w$},  
\end{align}
whenever $1<p,r<\infty$ and $1 \le s < \infty$, where $M_{r, s}$ is the maximal operator associated to Lorentz space $L^{r,s}(\Rn)$ defined by 
\begin{align*}
M_{r,s}f(x) := \sup_{Q \ni x} |Q|^{-\frac1r} \|f\mathbf{1}_{Q}\|_{L^{r,s}(\Rn)}. 
\end{align*}
Moreover, note that for any Young function $A$, 
\begin{align}\label{eq:MAA}
M(fg)(x) \leq 2 M_{A}f(x) M_{\bar{A}}g(x),\quad\forall x \in \Rn. 
\end{align}
As a consequence, by \eqref{eq:FS}--\eqref{eq:MAA} and the approach used in \eqref{eq:FG}, Theorem  \ref{thm:ABC}  implies Theorem \ref{thm:FS-X}.

Now let us see how Theorem \ref{thm:FS-X} recovers the sharp vector-valued inequality obtained in \cite[Theorem~1.4]{CP} and \cite[Theorem~1.1]{P00}. Denote 
\begin{align*}
r=p/q, \quad \varepsilon=[r]+1-r,\quad A_0(t)=A(t^{1/r}) \quad\text{and}\quad A(t)=t^r \log(e+t)^{r-1+\varepsilon}. 
\end{align*}
Then, one has 
\begin{align}\label{eq:AABr}
A_0(t) \simeq t\log(e+t)^{[r]} \quad\text{and}\quad 
\bar{A}(t) \simeq t^{r'} \log(e+t)^{-1-(r'-1)\varepsilon} \in B_{r'}.
\end{align} 
Note that for $\Phi_k(t)=t\log(e+t)^k$, 
\begin{align}\label{eq:MkMk}
M^{k+1} f(x) \simeq M_{\Phi_k}f(x),\quad x \in \Rn \text{ and } k \in \N_{+}. 
\end{align} 
Let $w$ be an arbitrary weight. Thus, the former in \eqref{eq:AABr} and \eqref{eq:MkMk} imply
\begin{align*}
M_A(w^{1/r})(x)^r = M_{A_0}w(x) \simeq M^{[r]+1}w(x).  
\end{align*}
Also, it follows from \eqref{eq:AABr} and Lemma \ref{lem:MBp} that $M_{\bar{A}}$ is bounded on $L^{r'}(\Rn)$. Accordingly, the inequality \eqref{eq:MMA-1} applied to $u=w^{\frac1p}$, $v \equiv 1$ and $\X_v=L^p(\Rn)$ gives the Corollary \ref{cor:FS-Lp}.

\section{Extrapolation on modular spaces}\label{sec:modular}

We turn now our attention to the extrapolation on modular spaces. In this context, for a weight $w$ and a Young function $\Phi$, we define the modular $\rho_w^{\Phi}$ of $f \in \M$ by
\begin{equation*}
\rho_w^{\Phi}(f):=\int_{\Rn} \Phi(|f(x)|)w(x)\, dx. 
\end{equation*}
When $w \equiv 1$, we denote $\rho^{\Phi}$ instead of $\rho_w^{\Phi}$. The collection of functions, 
\begin{align*}
\mathcal{M}_{w}^{\Phi} :=\{f \in \M: \rho_{w}^{\Phi}(f)<\infty\}. 
\end{align*}
is referred to as a {\tt modular space}. 

\begin{theorem}\label{thm:ModAi}
Let $\Phi$ be a Young function with $1<i_{\Phi} \le I_{\Phi}<\infty$. If for some $p_0 \in (0, \infty)$ and for every $w \in A_{\infty}$, 
\begin{equation}\label{eq:ModAi-1}
\|f\|_{L^{p_0}(w)} \leq C \|g\|_{L^{p_0}(w)}, \quad (f,g) \in \F, 
\end{equation}
then for every $u \in RH_{\infty}$ and $v \in A_{\infty}$,   
\begin{align}\label{eq:ModAi-2}
\int_{\Rn} \Phi(fu)v\, dx \le C \int_{\Rn} \Phi(gu)v\, dx, \quad (f,g) \in \F.
\end{align}
Moreover, the following vector-valued inequality holds for every $q \in (0, \infty)$, 
\begin{equation}\label{eq:ModAi-vec}
\int_{\Rn}\Phi \bigg(\Big(\sum_j f_j^q \Big)^{\frac1q}u \bigg)v\,dx 
\leq C\, \int_{\Rn}\Phi \bigg(\Big(\sum_j g_j^q \Big)^{\frac1q}u \bigg)v\,dx,\quad \{(f_j, g_j)\} \subset \F. 
\end{equation}
\end{theorem}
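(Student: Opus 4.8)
The plan is to obtain both \eqref{eq:ModAi-2} and \eqref{eq:ModAi-vec} from the one‑weight $A_\infty$ extrapolation (Theorem~\textbf{B}) by means of a Rubio de Francia iteration carried out in the weighted Orlicz space $L^{\bar\Phi}(v)$, where $\bar\Phi$ is the complementary function of $\Phi$; the hypothesis $1<i_\Phi\le I_\Phi<\infty$ is precisely what makes this space a RIBFS on which $M_v$ is bounded. I begin with two reductions. First, since $u\in RH_\infty$, Lemma~\ref{lem:A1Ap}~\eqref{eq:RHs} gives $u^{p_0}\in RH_\infty$, hence by Lemma~\ref{lem:A1Ap}~\eqref{eq:AiRH} $u^{p_0}w\in A_\infty$ for every $w\in A_\infty$; therefore \eqref{eq:ModAi-1} for $\F$ is equivalent to the same hypothesis for the family $\F_u:=\{(fu,gu):(f,g)\in\F\}$, and it suffices to prove \eqref{eq:ModAi-2} with $u\equiv1$. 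Second, by Theorem~\textbf{B} the hypothesis self‑improves to $\|f\|_{L^s(w)}\le C\|g\|_{L^s(w)}$ for all $s\in(0,\infty)$ and all $w\in A_\infty$; in particular
\begin{equation*}
\int_{\Rn}f\,w\,dx\le C\int_{\Rn}g\,w\,dx,\qquad w\in A_\infty,\ (f,g)\in\F.
\end{equation*}
Finally, $1<i_\Phi\le I_\Phi<\infty$ together with $(I_\Phi)'=i_{\bar\Phi}$, $(i_\Phi)'=I_{\bar\Phi}$ forces $1<i_{\bar\Phi}\le I_{\bar\Phi}<\infty$; thus $L^{\bar\Phi}$ is a RIBFS with $p_{L^{\bar\Phi}}=i_{\bar\Phi}>1$, so $M_v$ is bounded on $L^{\bar\Phi}(v)=(L^\Phi)_v'$ by Lemma~\ref{lem:MwcXw}~\eqref{list-4}, and, since $v\,dx$ is doubling and $\bar\Phi\in\Delta_2\cap\nabla_2$, the operator $M_v$ also satisfies the modular estimate $\int_{\Rn}\bar\Phi(M_v\phi)\,v\,dx\le\Lambda\int_{\Rn}\bar\Phi(\phi)\,v\,dx$. (We note that Theorem~\ref{thm:AiAi} with $\X=L^\Phi$ already yields the \emph{norm} inequality $\|fu\|_{L^\Phi(v)}\le C\|gu\|_{L^\Phi(v)}$; this is not enough, since the Orlicz modular is not homogeneous and is not comparable to any Banach function norm, so \eqref{eq:ModAi-2} is genuinely stronger and must be proved directly.)

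Fix $(f,g)\in\F$. We may assume $\int_{\Rn}\Phi(g)v\,dx<\infty$, and — arguing exactly as in the proof of Theorem~\ref{thm:Aq} — that $f<\infty$ a.e. and, by truncation, that $f$ is bounded with compact support, so that $A:=\int_{\Rn}\Phi(f)v\,dx<\infty$ (because $v\in L^1_{\loc}$); the general case follows by monotone convergence once the bound is uniform in the truncation. Set $\psi:=\Phi(f)/f$ on $\{f>0\}$ and $\psi:=0$ elsewhere, so that $\int_{\Rn}\Phi(f)v\,dx=\int_{\Rn}f\psi\,v\,dx$, while \eqref{eq:Young-2} gives $\bar\Phi(\psi)\le\Phi(f)$, whence $\psi\in L^{\bar\Phi}(v)$. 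With $\Theta:=\max\{\|M_v\|_{L^{\bar\Phi}(v)\to L^{\bar\Phi}(v)},\Lambda\}$ define
\begin{equation*}
\mathcal{R}\psi:=\sum_{j=0}^{\infty}\frac{M_v^j\psi}{2^j\,\Theta^j},
\end{equation*}
which, just as in the proof of Theorem~\ref{thm:Aq}, is a weight satisfying $\psi\le\mathcal{R}\psi$, $\|\mathcal{R}\psi\|_{L^{\bar\Phi}(v)}\le2\|\psi\|_{L^{\bar\Phi}(v)}$ and $[\mathcal{R}\psi]_{A_1(v)}\le2\Theta$; moreover, by convexity of $\bar\Phi$ (Jensen) and the modular estimate for $M_v$, $\int_{\Rn}\bar\Phi(\mathcal{R}\psi)\,v\,dx\le\Lambda'\int_{\Rn}\bar\Phi(\psi)\,v\,dx\le\Lambda' A$ for a constant $\Lambda'$. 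Since $v\in A_\infty$ and $\mathcal{R}\psi\in A_1(v)$, Lemma~\ref{lem:A1Ap}~\eqref{eq:vA1u} gives $w:=v\,\mathcal{R}\psi\in A_\infty$, and hence
\begin{equation*}
\int_{\Rn}\Phi(f)v\,dx=\int_{\Rn}f\psi\,v\,dx\le\int_{\Rn}f\,\mathcal{R}\psi\,v\,dx=\int_{\Rn}f\,w\,dx\le C\int_{\Rn}g\,w\,dx=C\int_{\Rn}g\,\mathcal{R}\psi\,v\,dx.
\end{equation*}

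The crucial step is to control $\int_{\Rn}g\,\mathcal{R}\psi\,v\,dx$ by $\int_{\Rn}\Phi(g)v\,dx$. Here I would use Young's inequality \eqref{eq:stst} in the parametrized form
\begin{equation*}
g\,\mathcal{R}\psi=(Kg)\,(\mathcal{R}\psi/K)\le\Phi(Kg)+\bar\Phi(\mathcal{R}\psi/K)\le h_\Phi(K)\,\Phi(g)+h_{\bar\Phi}(1/K)\,\bar\Phi(\mathcal{R}\psi),
\end{equation*}
valid for every $K>0$, where $h_\Phi(K)<\infty$ because $\Phi\in\Delta_2$ and $h_{\bar\Phi}(1/K)\le1/K$ by convexity of $\bar\Phi$. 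Integrating and inserting the bound on $\int\bar\Phi(\mathcal{R}\psi)\,v\,dx$ from the previous paragraph gives
\begin{equation*}
\int_{\Rn}\Phi(f)v\,dx\le C\,h_\Phi(K)\int_{\Rn}\Phi(g)v\,dx+\frac{C\,\Lambda'}{K}\int_{\Rn}\Phi(f)v\,dx.
\end{equation*}
Choosing $K$ so large that $C\Lambda'/K\le\tfrac12$ and absorbing the last term — legitimate because $\int_{\Rn}\Phi(f)v\,dx<\infty$ — yields $\int_{\Rn}\Phi(f)v\,dx\le2C\,h_\Phi(K)\int_{\Rn}\Phi(g)v\,dx$, and removing the truncation proves \eqref{eq:ModAi-2}. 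The vector‑valued inequality \eqref{eq:ModAi-vec} then follows by applying \eqref{eq:ModAi-2} to the family $\F_q:=\{\big((\sum_jf_j^q)^{1/q},(\sum_jg_j^q)^{1/q}\big):\{(f_j,g_j)\}_j\subset\F\}$, which satisfies \eqref{eq:ModAi-1} at the exponent $q$ (available via Theorem~\textbf{B}) by the $\ell^q$‑summation argument already used in \eqref{eq:FG}.

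I expect the main obstacle to be precisely this last passage — from one‑weight $L^s$ bounds to the two‑weight modular conclusion — where the non‑homogeneity of the modular is felt. It forces the Rubio de Francia iterate $\mathcal{R}\psi$ to be controlled not only in the \emph{norm} of $L^{\bar\Phi}(v)$ but also in its \emph{modular}, which is why the modular boundedness of $M_v$ on $L^{\bar\Phi}(v)$ and the index bound $i_{\bar\Phi}>1$ (so that $h_{\bar\Phi}(1/K)\to0$, making the absorption possible) are both essential. The preliminary reductions — the removal of $u$, the verification that $f<\infty$ a.e., and the truncation — are routine and proceed exactly as in the proof of Theorem~\ref{thm:Aq}.
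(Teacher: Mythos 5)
Your proof is correct and follows essentially the same route as the paper's: you pick the dual-type function $\psi=\Phi(f)/f$, run a Rubio de Francia iteration in $L^{\bar\Phi}(v)$ using the modular boundedness of $M_v$ (the paper's \eqref{eq:ModAi-w}), and close the argument via the parametrized Young's inequality together with absorption, which is enabled by $\Phi\in\Delta_2$ on one side and convexity of $\bar\Phi$ on the other. The only cosmetic differences are your upfront reduction to $u\equiv1$ (the paper carries $u$ through the whole argument), your parametrization by $K$ rather than by $\varepsilon$, and your direct appeal to $h_{\bar\Phi}(1/K)\le 1/K$ and to $h_\Phi(K)<\infty$ in place of the paper's explicit bounds $\rho_v^{\bar\Phi}(\varepsilon\mathcal R_v h)\le\varepsilon\rho_v^{\bar\Phi}(\mathcal R_v h)$ and $\Phi(ts)\le C_\Phi t^r\Phi(s)$ for $r>I_\Phi$.
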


\begin{proof}
We begin with a claim: for every pair $(f, g) \in \F$ with $\rho_v^{\Phi}(fu)<\infty$ and $\rho_v^{\Phi}(gu)<\infty$, there exists a weight $w=w(f, g) \in A_{\infty}$ such that  
\begin{align}\label{eq:ModAi-3} 
\rho_v^{\Phi}(fu) \le \|f\|_{L^1(w)} \quad\text{and}\quad 
\|g\|_{L^1(w)} \le C_0 \big(\varepsilon \rho_v^{\Phi}(fu) + \varepsilon^{-r} \rho_v^{\Phi}(gu) \big),
\end{align}
where $\varepsilon \in (0, 1)$ is an arbitrary number, $C_0>1$ and $r>1$ are independent of $\varepsilon$. Assuming this momentarily, let us see how \eqref{eq:ModAi-2} follows from \eqref{eq:ModAi-3}. Observe that by Theorem \textbf{B}, \eqref{eq:ModAi-1} implies that 
 for every $p \in (0, \infty)$ and for every $w \in A_{\infty}$, 
\begin{equation}\label{eq:ModAi-4}
\|f\|_{L^p(w)} \leq C_1 \|g\|_{L^p(w)}, \quad (f,g) \in \F. 
\end{equation}
Given $(f, g) \in \F$, we may assume that $\rho_v^{\Phi}(gu)<\infty$. This in turn implies $f<\infty$ a.e.. Otherwise, there exists a measurable set $E\subset \Rn$ with $|E|>0$ such that $f=\infty$ on $E$. By means of \eqref{eq:ModAi-4}, it holds 
\begin{align}\label{eq:gi}
\|g\|_{L^1(w)}= \infty \quad\text{ for every } w \in A_{\infty}.  
\end{align}
On the other hand, applying our claim to $f=g$ and $\varepsilon=1/2$, we get a weight $w_0=w_0(g) \in A_{\infty}$ such that 
$\|g\|_{L^1(w_0)} \le C_0 2^{r+1} \rho_v^{\Phi}(gu) < \infty$, which contradicts \eqref{eq:gi}. 

For every $N \ge 1$, we define $f_N := f\, {\bf 1}_{\{x\in B(0, N) : f(x)\leq N, u(x) \le N, v(x) \le N\}}$. This gives that 
\begin{align*}
\rho_v^{\Phi}(f_Nu) \le N \Phi(N^2) |B(0, N)| < \infty. 
\end{align*}
Then \eqref{eq:ModAi-3} applied to the pair $(f_N, g)$ give that $w=w(f_N, g) \in A_{\infty}$ 
\begin{align}
\label{eq:PfNu-1} \rho_v^{\Phi}(f_Nu) \le \|f_N\|_{L^1(w)}\quad\text{and}\quad 
\|g\|_{L^1(w)} \le C_0 \big(\varepsilon \rho_v^{\Phi}(f_Nu) + \varepsilon^{-r} \rho_v^{\Phi}(gu)\big),
\end{align}
where $\varepsilon \in (0, 1)$ is an arbitrary number, $C_0>1$ and $r>1$ are independent of $\varepsilon$ and $N$. Gathering \eqref{eq:ModAi-4} and  \eqref{eq:PfNu-1}, we conclude that 
\begin{align}\label{eq:PfPg}
\rho_v^{\Phi}(f_Nu) & \le \|f_N\|_{L^1(w)} \le \|f\|_{L^1(w)} \le C_1 \|g\|_{L^1(w)} 
\nonumber\\
&\le C_0 C_1 \big(\varepsilon \rho_v^{\Phi}(f_Nu) + \varepsilon^{-r} \rho_v^{\Phi}(gu)\big) 
\le \frac12 \rho_v^{\Phi}(f_Nu) + C_0 C_1 \varepsilon^{-r} \rho_v^{\Phi}(gu), 
\end{align}
provided $0<\varepsilon<\frac{1}{2C_0 C_1}$. Therefore, \eqref{eq:PfPg} gives that 
\begin{align}\label{eq:Nugu}
\rho_v^{\Phi}(f_Nu) &\le 2C_0 C_1 \varepsilon^{-r} \rho_v^{\Phi}(gu). 
\end{align}
Recall that $f<\infty$ a.e., and hence $f_N \nearrow f$ as $N \to \infty$. Consequently, \eqref{eq:ModAi-2} follows at once from \eqref{eq:Nugu} and the monotone convergence theorem. The vector-valued inequality \eqref{eq:ModAi-vec} can be shown as before. 

In order to show \eqref{eq:ModAi-3}, we invoke a result contained in \cite[Proposition~5.1]{CGMP}. That is, for every $\phi \in \Delta_2$ and for every $w \in A_{\infty}$, 
\begin{align}\label{eq:ModAi-w}
\int_{\Rn} \phi(M_wf(x))w(x)\, dx \le K_0 \int_{\Rn} \phi(|f(x)|) w(x)\, dx, 
\end{align}
where $K_0:=K_0(\phi, w) \ge 1$ is independent of $f$. For a non-negative function $h$ with $\rho_v^{\bar{\Phi}}(h)<\infty$ such that $h$ is non-zero on a set of positive measure, we define the Rubio de Francia iteration algorithm  as: 
\begin{equation*}
\mathcal{R}_v h :=\sum_{j=0}^{\infty} \frac{M_v^j h}{2^j K_0^j},
\end{equation*} 
where $K_0=K_0(\bar{\Phi}, v)$ is defined in \eqref{eq:ModAi-w}. The following properties are verified:  
\begin{align}\label{eq:ModAi-RdF}
h \leq \mathcal{R}_v h,\quad \rho_v^{\bar{\Phi}}(\mathcal{R}_v h) \leq 2 \rho_v^{\Phi}(2h)   
\quad\text{and}\quad \left[\mathcal{R}h\right]_{A_1(v)} \leq 2K_0.
\end{align}
The proof of the first and the last estimates in \eqref{eq:ModAi-RdF} is the same as for the analogue properties in the proof of Theorem~\ref{thm:Aq}. Now write $\lambda:=\frac{1}{2K_0} \leq \frac12$ (see \cite[Lemma~4.2]{CMM}). From this and the formula for the sum of a geometric series, we obtain that
\begin{equation*}
\sum_{k=0}^{\infty}(1-\lambda)\lambda^k = 1. 
\end{equation*}
In addition, by the convexity of $\Phi$ and \eqref{eq:ModAi-w} for $\phi=\bar{\Phi}$, one has 
\begin{align*}
\rho_v^{{\bar{\Phi}}}(\mathcal{R}_v h) 
&=\int_{\Rn} \bar{\Phi}\bigg(\sum_{k=0}^{\infty}(1-\lambda)\,\lambda^k\, 
M_v^k \Big(\frac{h}{1-\lambda}\Big)\bigg)v\,dx 
\nonumber\\[4pt]
&\leq (1-\lambda) \sum_{k=0}^{\infty} \lambda^k \int_{\Rn} 
\bar{\Phi}\bigg(M_v^k \Big(\frac{h}{1-\lambda}\Big) \bigg)v\,dx
\nonumber\\[4pt]
&\leq(1-\lambda) \sum_{k=0}^{\infty} \lambda^k K_0^k \int_{\Rn} 
\bar{\Phi}\Big(\frac{h}{1-\lambda}\Big)v\,dx
\nonumber\\[4pt]
&=2(1-\lambda) \int_{\Rn}{\bar{\Phi}}\Big(\frac{h}{1-\lambda}\Big)v\,dx
\leq 2\,\rho_v^{\bar{\Phi}}(2h),
\end{align*}
which proves the second one in \eqref{eq:ModAi-RdF}. To proceed, fix a pair $(f, g) \in \F$ with $\rho_v^{\Phi}(fu)<\infty$ and $\rho_v^{\Phi}(gu)<\infty$. We define 
\begin{equation*}
h(x):=
\left\lbrace
\begin{array}{ll}
\displaystyle\frac{\Phi(f(x)u(x))}{f(x)u(x)}, &\text{ if }f(x)u(x)\neq 0,
\\[8pt]
0, &\text{ if }f(x)u(x)=0,
\end{array}
\right.
\quad x\in\Rn. 
\end{equation*}
We may assume that $h$ is non-zero on a set of positive measure. Otherwise, $h=0$ a.e., and hence, $fu=0$ a.e.. The later implies that $f=0$ a.e. since $0<u<\infty$ a.e.. Thus, \eqref{eq:ModAi-2} holds. From \eqref{eq:ModAi-RdF} and Lemma \ref{lem:A1Ap} \eqref{eq:AiRH} and \eqref{eq:vA1u}, we deduce that $w:=u(\mathcal{R}_vh)v \in A_{\infty}$. Fix $\varepsilon \in (0,1)$ and pick $r>I_{\Phi}$. It follows from \eqref{eq:Ii} that there is a constant $C_{\Phi}>0$ such that 
\begin{equation}\label{eq:Phits} 
\Phi(ts) \leq C_{\Phi}\,t^r\Phi(s) \quad\text{ for every } s>0 \text{ and } t>1.
\end{equation} 
This gives that 
\begin{align}\label{eq:egu-1}
\rho_v^{\Phi}(\varepsilon^{-1} gu) 
\le  C_{\Phi} \varepsilon^{-r} \rho_v^{\Phi}(gu) . 
\end{align}
Additionally, from the convexity of $\bar{\Phi}$, \eqref{eq:ModAi-RdF}, \eqref{eq:Young-2} and the definition of $h$, we obtain that 
\begin{equation}\label{eq:egu-2}
\rho_v^{\bar{\Phi}}(\varepsilon \mathcal{R}_v h) 
\le \varepsilon \rho_v^{\bar{\Phi}}(\mathcal{R}_v h) 
\le 2\varepsilon \rho_v^{\bar{\Phi}}(2h) 
\leq \varepsilon C \rho_v^{\bar{\Phi}}(2h) 
\le \varepsilon C \rho_v^{\Phi}(fu).
\end{equation}
By \eqref{eq:ModAi-RdF} again, one has 
\begin{align*}
\rho_v^{\Phi}(fu) &=\int_{\Rn} f\,u\, h\,v\,dx
\leq\int_{\Rn} f \,u\, (\mathcal{R}_v h)v\,dx = \|f\|_{L^1(w)}. 
\end{align*} 
On the other hand, gathering \eqref{eq:stst}, \eqref{eq:egu-1} and \eqref{eq:egu-2}, we deduce that
\begin{align*}
\|g\|_{L^1(w)} &= \int_{\Rn} gu(\mathcal{R}_v h)v\,dx 
\le \rho_v^{\bar{\Phi}}(\varepsilon \mathcal{R}_v h) + \rho_v^{\Phi}(\varepsilon^{-1} gu) 
\le C_0 \big(\varepsilon\rho_v^{\Phi}(fu) + \varepsilon^{-r} \rho_v^{\Phi}(gu) \big).
\end{align*}
This proves \eqref{eq:ModAi-3} and completes the proof. 
\end{proof}

Theorems \ref{thm:AiAi} and \ref{thm:ModAi} immediately imply the following result. 

\begin{corollary}\label{cor:BFSMod}
Let $\Phi(t)=t$ or $\Phi$ be a Young function with $1<i_{\Phi} \le I_{\Phi}<\infty$. Suppose that $\X$ is a RIBFS over $(\Rn, dx)$ such that $q_{\X}<\infty$. If for some $p_0 \in (0, \infty)$ and for every $w \in A_{\infty}$, 
\begin{equation}
\|f\|_{L^{p_0}(w)} \leq C \|g\|_{L^{p_0}(w)}, \quad (f,g) \in \F,  
\end{equation}
then for every $u \in RH_{\infty}$ and for every $v \in A_{\infty}$,    
\begin{align}
\|\Phi(fu)\|_{\X_v} \le C \|\Phi(gu)\|_{\X_v}, \quad (f,g) \in \F.
\end{align}
\end{corollary}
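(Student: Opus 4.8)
The plan is to treat separately the two cases allowed in the statement: the linear case $\Phi(t)=t$, where Theorem~\ref{thm:AiAi} applies verbatim, and the case of a Young function $\Phi$ with $1<i_{\Phi}\le I_{\Phi}<\infty$, where we combine the \emph{modular} conclusion of Theorem~\ref{thm:ModAi} with a single Rubio de Francia iteration carried out in the associate space $\X'_v$.

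If $\Phi(t)=t$, then $\norm{\Phi(fu)}_{\X_v}=\norm{fu}_{\X_v}$ and $\norm{\Phi(gu)}_{\X_v}=\norm{gu}_{\X_v}$, so the asserted inequality is exactly \eqref{eq:AiAi-2} from Theorem~\ref{thm:AiAi}: indeed its hypotheses — $\X$ a RIBFS with $q_{\X}<\infty$, $u\in RH_{\infty}$, $v\in A_{\infty}\cap L^1_{\loc}(\Rn)$, and the single estimate \eqref{eq:AiAi-1} on $L^{p_0}(w)$ for all $w\in A_{\infty}$ — coincide with ours, so nothing further is needed. Assume now that $\Phi$ is a Young function with $1<i_{\Phi}\le I_{\Phi}<\infty$, and fix $(f,g)\in\F$; we may assume $\norm{\Phi(gu)}_{\X_v}<\infty$, otherwise there is nothing to prove. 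Since $\X$ is a RIBFS and $v$ a weight, $\X_v=\X(v)$ is a Banach function space, so by \eqref{eq:ass-X} together with Remark~\ref{rem:sup},
\begin{equation*}
\norm{\Phi(fu)}_{\X_v}=\sup\Big\{\int_{\Rn}\Phi(fu)\,h\,v\,dx:\ 0\le h\in\X'_v,\ 0<\norm{h}_{\X'_v}\le 1\Big\}.
\end{equation*}
Fix such an $h$; in particular $h$ is non-zero on a set of positive measure. Because $q_{\X}<\infty$ we have $p_{\X'}=(q_{\X})'>1$, and since $v\in A_{\infty}\subset L^1_{\loc}(\Rn)$, Lemma~\ref{lem:MwcXw}~\eqref{list-4} applied to the RIBFS $\X'$ gives that $M_v$ is bounded on $\X'_v$. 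Exactly as in the proof of Theorem~\ref{thm:Aq}, the Rubio de Francia iterate
\begin{equation*}
\mathcal{R}_v h:=\sum_{j=0}^{\infty}\frac{M_v^j h}{2^j\norm{M_v}_{\X'_v\to\X'_v}^j}
\end{equation*}
is a genuine weight satisfying $h\le\mathcal{R}_v h$, $\norm{\mathcal{R}_v h}_{\X'_v}\le 2\norm{h}_{\X'_v}\le 2$, and $[\mathcal{R}_v h]_{A_1(v)}\le 2\norm{M_v}_{\X'_v\to\X'_v}$; by Lemma~\ref{lem:A1Ap}~\eqref{eq:vA1u} the weight $\widetilde{w}:=(\mathcal{R}_v h)\,v$ lies in $A_{\infty}$, and clearly $\widetilde{w}\in L^1_{\loc}(\Rn)$.

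The key step is to feed $\widetilde{w}$ into Theorem~\ref{thm:ModAi}: with the same $\Phi$, the weight $u\in RH_{\infty}$, and $\widetilde{w}\in A_{\infty}\cap L^1_{\loc}(\Rn)$ playing the role of the weight $v$ there, the hypothesis \eqref{eq:ModAi-1} is precisely our assumption, so \eqref{eq:ModAi-2} yields $\int_{\Rn}\Phi(fu)\,\widetilde{w}\,dx\le C\int_{\Rn}\Phi(gu)\,\widetilde{w}\,dx$. Combining this with $h\le\mathcal{R}_v h$ and the generalized Hölder inequality \eqref{eq:Holder} for the pair $\X_v,\X'_v$,
\begin{equation*}
\int_{\Rn}\Phi(fu)\,h\,v\,dx\le\int_{\Rn}\Phi(fu)\,\widetilde{w}\,dx\le C\int_{\Rn}\Phi(gu)\,(\mathcal{R}_v h)\,v\,dx\le C\,\norm{\Phi(gu)}_{\X_v}\norm{\mathcal{R}_v h}_{\X'_v}\le 2C\,\norm{\Phi(gu)}_{\X_v}.
\end{equation*}
Taking the supremum over the admissible $h$ gives $\norm{\Phi(fu)}_{\X_v}\le 2C\norm{\Phi(gu)}_{\X_v}$, as desired; the vector-valued version follows in the same way by replacing the pairs with suitable $\ell^q$-valued pairs, exactly as in the proofs above.

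I do not expect any serious obstacle, since this is a genuine corollary: the only points that require care are (i) checking that $M_v$ is bounded on the \emph{associate} space $\X'_v$, which is exactly where the hypothesis $q_{\X}<\infty$ (equivalently $p_{\X'}>1$) enters; (ii) verifying that $\widetilde{w}=(\mathcal{R}_v h)\,v\in A_{\infty}\cap L^1_{\loc}(\Rn)$, so that Theorem~\ref{thm:ModAi} is legitimately applied to it; and (iii) the bookkeeping at the endpoint $\Phi(t)=t$, where $i_{\Phi}=1$ makes Theorem~\ref{thm:ModAi} inapplicable and one must instead invoke Theorem~\ref{thm:AiAi} directly. All three are handled exactly as in the proofs of Theorems~\ref{thm:Aq}, \ref{thm:AiAi} and \ref{thm:ModAi}.
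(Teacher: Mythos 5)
Your proposal is correct and follows the paper's route: the paper proves this corollary simply by remarking that Theorems \ref{thm:AiAi} and \ref{thm:ModAi} immediately imply it, and your argument is precisely that combination spelled out — Theorem \ref{thm:AiAi} verbatim when $\Phi(t)=t$, and for a Young function $\Phi$ the Rubio de Francia iteration in $\X'_v$ (as inside Theorem \ref{thm:AiAi}) with Theorem \ref{thm:ModAi} supplying the $L^1(\widetilde{w})$ inequality for the constructed weight $\widetilde{w}=(\mathcal{R}_v h)v\in A_\infty$. The three points of care you flag are exactly the ones that matter, and each is handled as in the cited proofs.
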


Recalling \eqref{eq:MRH} and \eqref{eq:MPhiRH}, we have that $u:=(M_A f)^{-\lambda} \in RH_{\infty}$ for any $\lambda>0$, where $A(t)=t$ or $A$ is a Young function. Note that for any invertible function $\Phi$, 
\begin{align*}
\|\Phi(f)\|_{L^{1,\infty}(w)} 
= \sup_{\lambda>0} \lambda w(\{x: \Phi(f(x))>\lambda\}) 
=\sup_{\lambda>0} \Phi(\lambda) w(\{x: f(x)>\lambda\}). 
\end{align*}
From these and Corollary \ref{cor:BFSMod}, we conclude the estimates below. 

\begin{corollary}\label{cor:TM}
Let $T$ be an operator, $A(t)=t$ or  $A$ be a Young function, and $\Phi(t)=t$ or $\Phi$ be Young function such that $1<i_{\Phi} \le I_{\Phi}<\infty$. Let $E \subset \Rn$ be a measurable set. Suppose that $\X$ is a RIBFS over $(\Rn, dx)$ such that $q_{\X}<\infty$. If for some $p_0 \in (0, \infty)$ and for every $w \in A_{\infty}$, 
\begin{equation}
\|Tf\|_{L^{p_0}(E, w)} \leq C \|M_A f\|_{L^{p_0}(E, w)}. 
\end{equation}
then for every $v \in A_{\infty}$, 
\begin{align}
\bigg\|\Phi\bigg(\frac{|Tf|}{M_A f} \bigg) \mathbf{1}_E\bigg\|_{\X_v} 
\le C \|\mathbf{1}_E\|_{\X_v}.
\end{align}
In particular, 
\begin{align*}
\sup_{\lambda>0} \Phi(\lambda) \, v(\{x \in E: |Tf(x)|>\lambda M_Af(x)\}) \le C v(E). 
\end{align*}
\end{corollary}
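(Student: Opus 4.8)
The plan is to read off Corollary~\ref{cor:TM} directly from Corollary~\ref{cor:BFSMod} by making the right choice of family $\F$ and outer weight $u$, and then to obtain the ``in particular'' clause by a Chebyshev argument.

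Fix $f$, for which we tacitly assume $0<M_Af<\infty$ a.e.\ (this is the standing requirement behind \eqref{eq:MPhiRH}), so that $M_Af$ is a genuine weight. First I would take the one-element family $\F:=\{(|Tf|\,\mathbf{1}_E,\,M_Af\,\mathbf{1}_E)\}$; with this choice the assumption $\|Tf\|_{L^{p_0}(E,w)}\le C\|M_Af\|_{L^{p_0}(E,w)}$ for all $w\in A_\infty$ is exactly the hypothesis $\|F\|_{L^{p_0}(w)}\le C\|G\|_{L^{p_0}(w)}$, $(F,G)\in\F$, of Corollary~\ref{cor:BFSMod}. Next I set $u:=(M_Af)^{-1}$; by \eqref{eq:MRH} when $A(t)=t$, or by \eqref{eq:MPhiRH} when $A$ is a Young function, one has $u\in RH_\infty$ with an $RH_\infty$-constant that is dimensional (in particular uniform in $f$). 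Applying Corollary~\ref{cor:BFSMod} with this $\F$, this $u$, the given $v\in A_\infty\cap L^1_{\loc}(\Rn)$, the given $\X$ and the given $\Phi$ then yields
\[
\big\|\Phi(|Tf|\,u\,\mathbf{1}_E)\big\|_{\X_v}\le C\,\big\|\Phi(M_Af\,u\,\mathbf{1}_E)\big\|_{\X_v}.
\]

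Now I would unwind both sides pointwise, using $\Phi(0)=0$. Since $|Tf|\,u\,\mathbf{1}_E=(|Tf|/M_Af)\,\mathbf{1}_E$ we get $\Phi(|Tf|\,u\,\mathbf{1}_E)=\Phi(|Tf|/M_Af)\,\mathbf{1}_E$, and since $M_Af\,u\,\mathbf{1}_E=\mathbf{1}_E$ we get $\Phi(M_Af\,u\,\mathbf{1}_E)=\Phi(\mathbf{1}_E)=\Phi(1)\,\mathbf{1}_E$, with $0<\Phi(1)<\infty$. Hence the display above reads $\|\Phi(|Tf|/M_Af)\mathbf{1}_E\|_{\X_v}\le C\Phi(1)\|\mathbf{1}_E\|_{\X_v}$, which is the first assertion once $\Phi(1)$ is absorbed into $C$. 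For the ``in particular'', I would run the same argument with the specific choice $\X=L^1(\Rn)$ (a RIBFS with $q_{L^1}=1<\infty$), so that $\X_v=L^1(v)$ and $\|\mathbf{1}_E\|_{\X_v}=v(E)$, obtaining $\int_E\Phi(|Tf|/M_Af)\,v\,dx\le C\,v(E)$; then by Chebyshev, for each $\lambda>0$,
\begin{align*}
\Phi(\lambda)\,v\big(\{x\in E:|Tf(x)|>\lambda M_Af(x)\}\big)
&=\Phi(\lambda)\,v\big(\{x\in E:\Phi(|Tf(x)|/M_Af(x))>\Phi(\lambda)\}\big)\\
&\le\int_E\Phi\big(|Tf|/M_Af\big)\,v\,dx\le C\,v(E),
\end{align*}
and taking the supremum over $\lambda>0$ finishes the proof. (One could instead combine the first assertion with the embedding $L^1(v)\hookrightarrow L^{1,\infty}(v)$ and the identity for $\|\Phi(\cdot)\|_{L^{1,\infty}(v)}$ recorded just before the statement.)

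There is no genuine analytic difficulty here: the statement is a corollary of Corollary~\ref{cor:BFSMod}. The points that need care are (i) checking that $u=(M_Af)^{-1}$ is an honest weight in $RH_\infty$, which is where the standing hypothesis $M_Af<\infty$ a.e.\ enters; (ii) pushing the cut-off $\mathbf{1}_E$ through $\Phi$ correctly via $\Phi(0)=0$ so that the right-hand side collapses to a multiple of $\|\mathbf{1}_E\|_{\X_v}$; and (iii) for the last display, observing that $L^1$ is a permissible choice of RIBFS so the Chebyshev step legitimately applies, and that the constant produced by Corollary~\ref{cor:BFSMod} does not blow up in $f$ (indeed it is controlled through the dimensional $RH_\infty$-constant of $(M_Af)^{-1}$).
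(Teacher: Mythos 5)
Your proof is correct and takes the same route the paper intends: apply Corollary~\ref{cor:BFSMod} to the one–element family $\F=\{(|Tf|\mathbf{1}_E,\,M_Af\,\mathbf{1}_E)\}$ with the choice $u=(M_Af)^{-1}\in RH_\infty$ (via \eqref{eq:MRH} or \eqref{eq:MPhiRH}), collapse the right–hand side to $\Phi(1)\,\mathbf{1}_E$ using $\Phi(0)=0$, and for the final display specialize to $\X=L^1(\Rn)$ and apply Chebyshev. Your version is in fact slightly more careful than the paper's terse indication (which phrases the last step through the $L^{1,\infty}$ identity even though $L^{1,\infty}$ is not itself a Banach function space, so one should really go through $L^1(v)$ together with the trivial embedding $L^1(v)\hookrightarrow L^{1,\infty}(v)$, exactly as you do); the extrapolation constant is uniform in $f$ because the $RH_\infty$ constant of $(M_Af)^{-1}$ is dimensional.
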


\begin{theorem}\label{thm:PhiA1}
Let $\Phi$ be a Young function with $1<i_{\Phi} \le I_{\Phi}<\infty$. Let $u$ and $v$ be weights on $\Rn$ such that $v \in A_1$. If for some $p_0 \in (0,\infty)$ and for every $w \in A_1$,  
\begin{equation}\label{eq:PhiA1-2}
\|f\|_{L^{p_0}(w)} \leq C \|g\|_{L^{p_0}(w)}, \quad (f,g) \in \F, 
\end{equation} 
then 
\begin{equation}\label{eq:PhiA1-3}
\int_{\Rn}\Phi(f^{p_0} u^{p_0})v\,dx 
\leq C \int_{\Rn}\Phi(g^{p_0} u^{p_0})v\,dx,\quad (f,g) \in \F, 
\end{equation}
provided that 
\begin{align}\label{eq:PhiA1-1} 
\int_{\Rn} \bar{\Phi}((M_v f) u^{-p_0}) v\, dx 
\le K_0 \int_{\Rn} \bar{\Phi}(f u^{-p_0}) v\, dx, \quad\forall f \in \M. 
\end{align}
\end{theorem}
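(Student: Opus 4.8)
The plan is to mimic the structure of the proof of Theorem~\ref{thm:ModAi}, but now replacing the role of $A_\infty$ weights with $A_1$ weights and using the hypothesis \eqref{eq:PhiA1-1} in place of the Orlicz modular inequality \eqref{eq:ModAi-w}. First I would apply Theorem~\textbf{A} together with the hypothesis \eqref{eq:PhiA1-2}: rewriting \eqref{eq:PhiA1-2} for the pair $(f^{p_0}, g^{p_0})$ and exponent $1$ shows that for every $w\in A_1$ one has $\|f^{p_0}\|_{L^1(w)}\le C\|g^{p_0}\|_{L^1(w)}$; but in fact I only need the one-weight $A_1$ estimate at the single exponent $p_0$ as given. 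The real content, exactly as in Theorem~\ref{thm:ModAi}, is the following claim: for every pair $(f,g)\in\F$ with $\rho_v^\Phi(f^{p_0}u^{p_0})<\infty$ and $\rho_v^\Phi(g^{p_0}u^{p_0})<\infty$, there is a weight $w=w(f,g)\in A_1$ with $[w]_{A_1}\le 2K_0[v]_{A_1}$ such that
\begin{equation*}
\rho_v^\Phi(f^{p_0}u^{p_0}) \le \|f^{p_0}\|_{L^1(w)} \quad\text{and}\quad \|g^{p_0}\|_{L^1(w)} \le C_0\big(\varepsilon\,\rho_v^\Phi(f^{p_0}u^{p_0}) + \varepsilon^{-r}\rho_v^\Phi(g^{p_0}u^{p_0})\big),
\end{equation*}
for arbitrary $\varepsilon\in(0,1)$, with $C_0>1$ and $r>I_\Phi$ independent of $\varepsilon$. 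Granting the claim, the absorption argument of \eqref{eq:PfPg}--\eqref{eq:Nugu} (choosing $\varepsilon$ small, truncating $f$ by $f_N:=f\mathbf 1_{\{x\in B(0,N):f(x)\le N,\,u(x)\le N,\,v(x)\le N\}}$ to guarantee finiteness, then letting $N\to\infty$ by monotone convergence and using that $f<\infty$ a.e., which follows from \eqref{eq:PhiA1-2} applied to $f=g$) yields \eqref{eq:PhiA1-3}. The reduction to $f<\infty$ a.e.\ and the truncation are verbatim the same as in Theorem~\ref{thm:ModAi}, so I would only sketch them.

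To prove the claim I would build a Rubio de Francia iteration adapted to $\bar\Phi$ and the hypothesis \eqref{eq:PhiA1-1}. Let $h$ be nonnegative, non-zero on a set of positive measure, with $\int_{\Rn}\bar\Phi(hu^{-p_0})v\,dx<\infty$. Set
\begin{equation*}
\mathcal R h := \sum_{j=0}^\infty \frac{M_v^j h}{2^j K_0^j}, \qquad H := \mathcal R(h u^{p_0})\,u^{-p_0}.
\end{equation*}
Then $h\le H$ and $[\mathcal R(hu^{p_0})]_{A_1(v)}\le 2K_0$ exactly as before; the point where \eqref{eq:PhiA1-1} enters is the modular bound on $H$. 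Exactly as in the computation for \eqref{eq:ModAi-RdF}, writing $\lambda=\tfrac1{2K_0}\le\tfrac12$, using $\sum_{k\ge0}(1-\lambda)\lambda^k=1$, the convexity of $\bar\Phi$, and iterating \eqref{eq:PhiA1-1}, one gets
\begin{equation*}
\int_{\Rn}\bar\Phi(Hu^{p_0}\cdot u^{-p_0})v\,dx = \int_{\Rn}\bar\Phi\big(\mathcal R(hu^{p_0})\big)v\,dx \le 2\int_{\Rn}\bar\Phi(2h u^{p_0}\cdot u^{-p_0}\cdot u^{p_0})\,\cdots
\end{equation*}
--- more precisely, the natural object is $\int\bar\Phi(\mathcal R(hu^{p_0})u^{-p_0})v\,dx \le 2\int\bar\Phi(2hu^{p_0}u^{-p_0})v\,dx = 2\rho_v^{\bar\Phi}(2hu^{-p_0}\cdot\text{(suitable normalization)})$, which I will organize so that the left side is controlled by $C\,\rho_v^{\bar\Phi}(2\,hu^{-p_0})$. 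The correct bookkeeping of the powers of $u^{p_0}$ inside the iteration is the one routine but delicate point; the structure is that $M_v$ acts on $hu^{p_0}$, and \eqref{eq:PhiA1-1} is precisely stated to control $\bar\Phi$ of $(M_v(\cdot))u^{-p_0}$ by $\bar\Phi$ of $(\cdot)u^{-p_0}$, so the iteration closes.

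With $H$ in hand, set $w := u^{p_0}Hv = \mathcal R(hu^{p_0})\,v$. Since $\mathcal R(hu^{p_0})\in A_1(v)$ and $v\in A_1$, Lemma~\ref{lem:A1Ap}\eqref{eq:vA1u} gives $w\in A_1$ with $[w]_{A_1}\le 2K_0[v]_{A_1}$. Now choose $h$ tied to $f$: taking $h = \Phi(f^{p_0}u^{p_0})/(f^{p_0}u^{p_0})$ where $f^{p_0}u^{p_0}\ne0$ and $h=0$ otherwise (WLOG $h$ non-zero on a positive-measure set), the definition of $h$ and \eqref{eq:Young-2} give $\bar\Phi(hu^{p_0}\cdot u^{-p_0})=\bar\Phi(\Phi(f^{p_0}u^{p_0})/(f^{p_0}u^{p_0}))\le\Phi(f^{p_0}u^{p_0})$, hence $\rho_v^{\bar\Phi}(2hu^{-p_0})\le C\rho_v^\Phi(f^{p_0}u^{p_0})<\infty$, which is what legitimizes running the algorithm. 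Then $\rho_v^\Phi(f^{p_0}u^{p_0})=\int f^{p_0}u^{p_0}hv\,dx\le\int f^{p_0}u^{p_0}Hv\,dx=\|f^{p_0}\|_{L^1(w)}$, giving the first inequality of the claim. For the second, split via Young's inequality \eqref{eq:stst}: $\|g^{p_0}\|_{L^1(w)}=\int g^{p_0}u^{p_0}Hv\,dx=\int(g^{p_0}u^{p_0}\varepsilon^{-1})(\varepsilon Hu^{p_0}\cdot u^{-p_0})v\,dx\le\rho_v^\Phi(\varepsilon^{-1}g^{p_0}u^{p_0})+\rho_v^{\bar\Phi}(\varepsilon H u^{-p_0})$; the first term is $\le C_\Phi\varepsilon^{-r}\rho_v^\Phi(g^{p_0}u^{p_0})$ by \eqref{eq:Phits} (valid since $r>I_\Phi$), and the second is $\le\varepsilon\,\rho_v^{\bar\Phi}(Hu^{-p_0})\le C\varepsilon\,\rho_v^\Phi(f^{p_0}u^{p_0})$ by the modular bound on $H$ just established. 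This proves the claim and hence the theorem.

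\emph{Main obstacle.} The only genuinely new ingredient compared with Theorem~\ref{thm:ModAi} is replacing the abstract modular inequality \eqref{eq:ModAi-w} for $M_w$ (which there came from $w\in A_\infty$ via \cite[Proposition~5.1]{CGMP}) by the \emph{assumed} weighted modular bound \eqref{eq:PhiA1-1} for $\bar\Phi$, and correctly threading the weight $u^{p_0}$ through the Rubio de Francia iterate so that $[w]_{A_1}\le 2K_0[v]_{A_1}$ and the $\bar\Phi$-modular of $Hu^{-p_0}$ stays controlled by that of $hu^{-p_0}$. The bookkeeping of the $u^{p_0}$ factors inside the iteration — ensuring the convexity/geometric-series estimate still applies term by term — is where I would be most careful, though it is entirely analogous to the treatment of $u$ in Theorems~\ref{thm:BFSA1} and~\ref{thm:ModAi}.
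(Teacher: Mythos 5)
Your proposal is correct and follows essentially the same route as the paper's proof: the same reduction to a two-sided claim with an $\varepsilon$-absorption, the same iteration $\mathcal{R}h=\sum_j M_v^jh/(2^jK_0^j)$ with $H=\mathcal{R}(hu^{p_0})u^{-p_0}$ and $w:=u^{p_0}Hv=\mathcal{R}(hu^{p_0})v\in A_1$, the same choice of $h=\Phi(f^{p_0}u^{p_0})/(f^{p_0}u^{p_0})$, and the same Young's-inequality splitting combined with \eqref{eq:Phits}. The only blemish is a notational slip in the bookkeeping of the $u^{-p_0}$ factors (the correct bounds are $\rho_v^{\bar\Phi}(H)\le 2\rho_v^{\bar\Phi}(2h)$ and $\bar\Phi(h)\le\Phi(f^{p_0}u^{p_0})$, with no extra $u^{-p_0}$ inside $\bar\Phi$), which you flag yourself and which does not affect the validity of the argument.
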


\begin{proof} 
As before, it suffices to show that: if \eqref{eq:PhiA1-1} holds, then for every pair $(f, g) \in \F$ with $\rho_v^{\Phi}(f^{p_0}u^{p_0})<\infty$ and $\rho_v^{\Phi}(g^{p_0}u^{p_0})<\infty$, there exists a weight $w=w(f, g) \in A_1$ such that 
\begin{align}\label{eq:PhiA1-4}
\rho_v^{\Phi}(f^{p_0}u^{p_0}) \le \|f\|_{L^{p_0}(w)}^{p_0} \quad\text{and}\quad 
\|g\|_{L^{p_0}(w)}^{p_0} \le C_0 \big(\varepsilon \rho_v^{\Phi}(f^{p_0}u^{p_0}) + \varepsilon^{-r} \rho_v^{\Phi}(g^{p_0}u^{p_0}) \big), 
\end{align}
where $\varepsilon \in (0, 1)$ is an arbitrary number, $C_0>1$ and $r>1$ are independent of $\varepsilon$. 

To demonstrate \eqref{eq:PhiA1-4}, we fix a pair $(f, g) \in \F$ with $\rho_v^{\Phi}(f^{p_0}u^{p_0})<\infty$ and $\rho_v^{\Phi}(g^{p_0}u^{p_0})<\infty$. Define 
\begin{equation*}
h(x):=
\left\lbrace
\begin{array}{ll}
\displaystyle\frac{\Phi(f(x)^{p_0}u(x)^{p_0})}{f(x)^{p_0}u(x)^{p_0}}, &\text{ if }f(x)u(x)\neq 0,
\\[8pt]
0, &\text{ if }f(x)u(x)=0,
\end{array}
\right.
\quad x\in\Rn. 
\end{equation*}
For the function $h$, we define 
\begin{equation*}
\mathcal{R} h :=\sum_{j=0}^{\infty}\frac{M_v^j h}{2^j K_0^j} \quad\text{and}\quad 
H := \mathcal{R}(hu^{p_0}) u^{-p_0}.
\end{equation*} 
Together with \eqref{eq:PhiA1-1}, a straigtforward calculation gives that  
\begin{align}\label{eq:PhiA1-RdF}
h \leq H,\quad  
\rho_v^{\bar{\Phi}}(H) \leq 2 \rho_v^{\bar{\Phi}}(2h) \quad\text{and}\quad
[\mathcal{R}h]_{A_1(v)} \leq 2K_0.
\end{align}
In view of Lemma \ref{lem:A1Ap} \eqref{eq:vA1u} and $v \in A_1$, we have $w:=u^{p_0}Hv=\mathcal{R}(hu^{p_0})v \in A_1$. Then the first estimate in \eqref{eq:PhiA1-RdF} gives that 
\begin{align*}
\rho_v^{\Phi}(f^{p_0}u^{p_0}) = \int_{\Rn} f^{p_0}u^{p_0} hv\, dx
\le \int_{\Rn} f^{p_0}u^{p_0} Hv\, d\mu = \|f\|_{L^{p_0}(w)}^{p_0}. 
\end{align*}
In addition, by \eqref{eq:stst}, the convexity of $\Phi$ and \eqref{eq:Phits}, we have  
\begin{align*}
\|g\|_{L^{p_0}(w)}^{p_0} &= \int_{\Rn} g^{p_0} u^{p_0} Hv\,dx 
\le \rho_v^{\bar{\Phi}}(\varepsilon H) + \rho_v^{\Phi}(\varepsilon^{-1} g^{p_0} u^{p_0}) 
\\
&\le \varepsilon \rho_v^{\bar{\Phi}}(H) + \rho_v^{\Phi}(\varepsilon^{-1} g^{p_0} u^{p_0})  
\le 2\varepsilon \rho_v^{\bar{\Phi}}(2h) + \varepsilon^{-r} C_{\Phi} \rho_v^{\Phi}(g^{p_0} u^{p_0}) 
\\
&\le C_0 \big(\varepsilon \rho_v^{\Phi}(f^{p_0} u^{p_0})  + \varepsilon^{-r} \rho_v^{\Phi}(g^{p_0} u^{p_0}) \big). 
\end{align*}
This proves \eqref{eq:PhiA1-4}.  
\end{proof}

\begin{theorem}\label{thm:PhiAi}
Let $\Phi$ be a Young function with $1<i_{\Phi} \le I_{\Phi}<\infty$. Let $u$ and $v$ be weights on $\Rn$ such that $v \in A_{\infty}$. If for some $p_0 \in (0,\infty)$ and for every $w \in A_{\infty}$,  
\begin{equation}\label{eq:PhiAi-2}
\|f\|_{L^{p_0}(w)} \leq C \|g\|_{L^{p_0}(w)}, \quad (f,g) \in \F, 
\end{equation} 
then for every $p \in (0, \infty)$, 
\begin{equation}\label{eq:PhiAi-3}
\int_{\Rn}\Phi(f^p u)v\,dx \leq C \int_{\Rn}\Phi(g^p u)v\,dx,\quad (f,g) \in \F, 
\end{equation}
provided that 
\begin{align}\label{eq:PhiAi-1} 
\int_{\Rn} \bar{\Phi}((M_v f) u^{-1}) v\, dx \le K_0 \int_{\Rn} \bar{\Phi}(f u^{-1}) v\, dx,\quad\forall f \in \M. 
\end{align}
Moreover, under the hypothesis \eqref{eq:PhiAi-1}, \eqref{eq:PhiAi-2} implies that for every $p, q\in (0, \infty)$, 
\begin{equation}\label{eq:PhiAi-4}
\int_{\Rn}\Phi \bigg(\Big(\sum_j f_j^q \Big)^{\frac1q}u \bigg)v\,dx 
\leq C\, \int_{\Rn}\Phi \bigg(\Big(\sum_j g_j^q \Big)^{\frac1q}u \bigg)v\,dx,\quad \{(f_j, g_j)\} \subset \F. 
\end{equation}
\end{theorem}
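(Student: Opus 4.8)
The plan is to adapt the scheme used for Theorems~\ref{thm:ModAi} and~\ref{thm:PhiA1}, now with $A_1$ replaced by $A_\infty$ and with the free exponent $p$ handled by first rescaling the hypothesis. First I would apply Theorem~\textbf{B} to \eqref{eq:PhiAi-2}: for every $p\in(0,\infty)$ and every $w\in A_\infty$,
\[
\|f\|_{L^p(w)}\le C_1\,\|g\|_{L^p(w)},\qquad (f,g)\in\F,
\]
the constant depending only on $p$ and $[w]_{A_\infty}$. Fixing $p$, and assuming $\rho_v^\Phi(g^pu)<\infty$ (otherwise there is nothing to prove), it then suffices, exactly as in Theorems~\ref{thm:ModAi} and~\ref{thm:PhiA1}, to establish the following claim: for every $(f,g)\in\F$ with $\rho_v^\Phi(f^pu)<\infty$ and $\rho_v^\Phi(g^pu)<\infty$ there is a weight $w=w(f,g)\in A_\infty$, whose $A_\infty$-characteristic is controlled by $K_0$ and $[v]_{A_\infty}$ alone, such that
\[
\rho_v^\Phi(f^pu)\le\|f\|_{L^p(w)}^p\quad\text{and}\quad \|g\|_{L^p(w)}^p\le C_0\big(\varepsilon\,\rho_v^\Phi(f^pu)+\varepsilon^{-r}\rho_v^\Phi(g^pu)\big),
\]
for an arbitrary $\varepsilon\in(0,1)$ and with $C_0>1$, $r>I_\Phi$ independent of $\varepsilon$. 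Granting the claim, I would combine the two displays, choose $\varepsilon$ so small that $C_1^pC_0\,\varepsilon\le\tfrac12$, and absorb $\rho_v^\Phi(f^pu)$ on the left; as in the proof of Theorem~\ref{thm:ModAi} one first runs this for the truncations $f_N:=f\,\mathbf{1}_{\{x\in B(0,N):\,f(x),u(x),v(x)\le N\}}$ (which have finite $\Phi$-modular), uses that $f<\infty$ a.e.\ (proved as there, from the extrapolated inequality), and lets $N\to\infty$ by monotone convergence to obtain \eqref{eq:PhiAi-3}.

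To prove the claim I would set $h:=\Phi(f^pu)/(f^pu)$ where $f^pu\neq0$ and $h:=0$ elsewhere, so that $\rho_v^\Phi(f^pu)=\int_{\Rn}f^pu\,h\,v\,dx$ and, by \eqref{eq:Young-2}, $\bar\Phi(h)\le\Phi(f^pu)$ pointwise; we may assume $h$ is non-zero on a set of positive measure (else $f=0$ a.e.\ and \eqref{eq:PhiAi-3} is trivial). Following Theorem~\ref{thm:PhiA1}, form the Rubio de Francia algorithm
\[
\mathcal{R}h:=\sum_{j=0}^\infty\frac{M_v^jh}{2^jK_0^j},\qquad H:=\mathcal{R}(hu)\,u^{-1},
\]
with $K_0\ge1$ the constant of \eqref{eq:PhiAi-1}. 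Using \eqref{eq:PhiAi-1}, the fact that $\bar\Phi\in\Delta_2$ (since $i_\Phi>1$), and the convexity/geometric-series device from the proof of Theorem~\ref{thm:ModAi}, one checks
\[
h\le H,\qquad \rho_v^{\bar\Phi}(H)\le2\,\rho_v^{\bar\Phi}(2h),\qquad [\mathcal{R}(hu)]_{A_1(v)}\le2K_0.
\]
Since $v\in A_\infty\cap L^1_{\loc}(\Rn)$ and $\mathcal{R}(hu)$ is a genuine weight (here one uses $v\in L^1_{\loc}$ and that $hu$ is non-trivial to see $M_v(hu)>0$ everywhere), Lemma~\ref{lem:A1Ap} \eqref{eq:vA1u} yields $w:=uHv=\mathcal{R}(hu)\,v\in A_\infty$ with $[w]_{A_\infty}\lesssim K_0[v]_{A_\infty}$, the bound being independent of $(f,g)$.

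The first inequality of the claim is then immediate from $h\le H$: $\rho_v^\Phi(f^pu)=\int_{\Rn}f^pu\,h\,v\,dx\le\int_{\Rn}f^pu\,H\,v\,dx=\|f\|_{L^p(w)}^p$. For the second, write $g^puH=(\varepsilon^{-1}g^pu)(\varepsilon H)$, apply Young's inequality \eqref{eq:stst}, and integrate against $v\,dx$:
\[
\|g\|_{L^p(w)}^p=\int_{\Rn}g^pu\,H\,v\,dx\le\rho_v^\Phi(\varepsilon^{-1}g^pu)+\rho_v^{\bar\Phi}(\varepsilon H).
\]
The first term is at most $C_\Phi\,\varepsilon^{-r}\rho_v^\Phi(g^pu)$ by the dilation bound \eqref{eq:Phits} (valid since $r>I_\Phi$), and by convexity of $\bar\Phi$, the second property above, $\bar\Phi\in\Delta_2$ and $\bar\Phi(h)\le\Phi(f^pu)$ one gets $\rho_v^{\bar\Phi}(\varepsilon H)\le\varepsilon\,\rho_v^{\bar\Phi}(H)\le2\varepsilon\,\rho_v^{\bar\Phi}(2h)\lesssim\varepsilon\,\rho_v^\Phi(f^pu)$; this proves the claim. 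Finally, \eqref{eq:PhiAi-4} follows by applying \eqref{eq:PhiAi-3} to the family $\F_q:=\{((\sum_jf_j^q)^{1/q},(\sum_jg_j^q)^{1/q}):\{(f_j,g_j)\}_j\subset\F\}$, which satisfies \eqref{eq:PhiAi-2} at exponent $q$ for every $w\in A_\infty$ (sum the extrapolated scalar inequality over $j$), just as in Theorems~\ref{thm:BFSAi} and~\ref{thm:ModAi}. I expect the main obstacle to be the claim — specifically, deducing the modular bound $\rho_v^{\bar\Phi}(H)\le2\rho_v^{\bar\Phi}(2h)$ from \eqref{eq:PhiAi-1} (this is where the hypothesis $1<i_\Phi\le I_\Phi<\infty$ is essential, guaranteeing $\Phi,\bar\Phi\in\Delta_2$ and \eqref{eq:Phits}) and verifying that $w=\mathcal{R}(hu)\,v$ is a bona fide $A_\infty$ weight with characteristic independent of $(f,g)$, so that the extrapolated estimate applies with a uniform constant $C_1$; the remaining steps are a routine transcription of the arguments for Theorems~\ref{thm:ModAi} and~\ref{thm:PhiA1}.
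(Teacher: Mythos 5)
Your proposal matches the paper's proof in substance: reduce to the one-weight $A_\infty$ extrapolation for all exponents via Theorem~\textbf{B}, build the Rubio de Francia iteration $H=\mathcal{R}(hu)u^{-1}$ from $h=\Phi(f^pu)/(f^pu)$ so that $w=\mathcal{R}(hu)v\in A_\infty$ with $[w]_{A_\infty}\le 2K_0[v]_{A_\infty}$ by Lemma~\ref{lem:A1Ap}~\eqref{eq:vA1u}, verify $h\le H$ and $\rho_v^{\bar\Phi}(H)\le 2\rho_v^{\bar\Phi}(2h)$ via the geometric-series/convexity device and \eqref{eq:PhiAi-1}, and run the $\varepsilon$-absorption using Young's inequality \eqref{eq:stst} together with \eqref{eq:Phits} and \eqref{eq:Young-2}. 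The only cosmetic difference is that the paper first rescales the hypothesis to the $L^1$ pair $(f^p,g^p)$ and invokes the claim at that level (delegating details to Theorem~\ref{thm:PhiA1}), whereas you work directly at exponent $p$; since $\|f^p\|_{L^1(w)}=\|f\|_{L^p(w)}^p$, the two presentations coincide.
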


\begin{proof} 
By Theorem \textbf{B}, we have that for every $p \in (0,\infty)$ and for every $w \in A_{\infty}$, 
\begin{align}\label{eq:PhiAi-2p}
\|f\|_{L^p(w)} \leq C \|g\|_{L^p(w)}, \quad (f,g) \in \F.  
\end{align} 
This can be rewritten as 
\begin{align}\label{eq:PhiAi-p2}
\|f^p\|_{L^1(w)} \le C^p \|g^p\|_{L^1(w)}, \quad (f,g) \in \F.  
\end{align} 

It is enough to show that if \eqref{eq:PhiAi-1} holds, then for every pair $(f, g) \in \F$ with $\rho_v^{\Phi}(fu)<\infty$ and $\rho_v^{\Phi}(gu)<\infty$, there exists a weight $w=w(f, g) \in A_{\infty}$ such that $[w]_{A_{\infty}} \le 2 K_0[v]_{A_{\infty}}$, 
\begin{align}\label{eq:PhiAi-4}
\rho_v^{\Phi}(fu) \le \|f\|_{L^1(w)} \text{ and }
\|g\|_{L^1(w)} \le C_0 \big(\varepsilon \rho_v^{\Phi}(fu) + \varepsilon^{-r} \rho_v^{\Phi}(gu)\big), 
\end{align}
where $\varepsilon \in (0, 1)$ is an arbitrary number, $C_0>1$ and $r>1$ are independent of $\varepsilon$. Then \eqref{eq:PhiAi-3} will follow from \eqref{eq:PhiAi-3} and \eqref{eq:PhiAi-p2}.

Indeed, the proof of \eqref{eq:PhiAi-4} is similar to that of \eqref{eq:PhiA1-4}. We here only present the main difference. Define 
\begin{equation*}
\mathcal{R} h :=\sum_{j=0}^{\infty}\frac{M_v^j h}{2^j K_0^j},\quad\text{and}\quad 
H := \mathcal{R}(hu) u^{-1}.
\end{equation*} 
Together with \eqref{eq:PhiAi-1}, a straigtforward calculation gives that  
\begin{align}\label{eq:PhiAi-5}
h \leq H,\quad  
\rho_v^{\bar{\Phi}}(H) \leq 2 \rho_v^{\bar{\Phi}}(2h) \quad\text{and}\quad
[\mathcal{R}h]_{A_1(v)} \leq 2K_0.
\end{align}
Since $v \in A_{\infty}$, we have $w:=uHv=\mathcal{R}(hu)v \in A_{\infty}$ with $[w]_{A_{\infty}} \le [\mathcal{R}(hu)]_{A_1(v)} [v]_{A_{\infty}} \le 2 K_0 [v]_{A_{\infty}}$.  Using the same argument as in Theorem \ref{thm:PhiA1}, one can conclude \eqref{eq:PhiAi-4}. 

Finally, \eqref{eq:PhiAi-4} is a consequence of \eqref{eq:PhiAi-3} and \eqref{eq:PhiAi-2p}.  
\end{proof}

\section{Extrapolation for commutators}\label{sec:comm}

Our next goal is to establish the extrapolation for commutators in the case of two-weight and Banach function spaces. Let us first recall the notion of commutators.  Given an operator $T$ and measurable functions $\b=(b_1,\ldots,b_k)$, we define, whenever it makes sense, the commutator by 
\begin{align*}
C_{\b}(T)(f)(x) := T\bigg(\prod_{j=1}^k (b_j(x)-b_j(\cdot))f(\cdot)\bigg)(x). 
\end{align*}
When $\b=(b, \ldots, b)$, we denote $C_{b}^k(T)$ instead of $C_{\b}(T)$. 

We say that a sublinear operator $T$ is {\tt linearizable} if there exists a Banach space $\mathcal{B}$ and a $\mathcal{B}$-valued linear operator $\mathcal{T}$ such that $Tf(x) = \|\T f(x)\|_{\mathcal{B}}$. In this way, we set 
\begin{align*}
C_{\b}(T)f(x) := \|C_{\b}(T)f(x)\|_{\mathcal{B}} 
= \bigg\|\T \Big(\prod_{j=1}^k \big(b_j(x)-b_j(\cdot)\big) f(\cdot)\Big)(x)\bigg\|_{\mathcal{B}}. 
\end{align*}

Given a weight $\nu$, we say that a locally integrable function $f \in \BMO(\nu)$ if 
\begin{equation*}
\|f\|_{\BMO(\nu)} :=\sup_{Q} \frac{1}{\nu(Q)} \int_{Q} |f(x)-f_Q| \, dx < \infty. 
\end{equation*}
If $\nu \equiv 1$, we simply write $\BMO:=\BMO(\nu)$. Define a new Orlicz-type space $\mathcal{BMO}:=\mathcal{BMO}(\Rn)$ via the norm
\begin{equation*}
\|f\|_{\mathcal{BMO}} :=\sup_{Q} \|f-f_Q\|_{\exp L, Q}.  
\end{equation*}

The John-Nirenberg inequality says that there exists a dimensional constant $C_n$ such that for every $f \in \BMO$, 
\begin{align}\label{eq:BMO}
\|f\|_{\BMO} \le \|f\|_{\mathcal{BMO}} \le C_n \|f\|_{\BMO}. 
\end{align}
Thus, \eqref{eq:BMO} implies that $(\BMO, \|\cdot\|_{\BMO})$ and $(\mathcal{BMO}, \|\cdot\|_{\mathcal{BMO}})$ are equivalent quasi-normed spaces; nevertheless, the appearance of $\mathcal{BMO}$ in the statements of our results below is to emphasize that $\|\cdot\|_{\mathcal{BMO}}$ is used.

It is well-known that if $w \in A_{\infty}$, then $\log w \in \mathcal{BMO}$. The converse can be formulated as follows. Let $b \in \mathcal{BMO}$. Then for every $1 \leq p<\infty$, 
\begin{align}\label{eq:ebAp}
\lambda \in \R: |\lambda| \leq \min\{1, p-1\} \|b\|_{\mathcal{BMO}}^{-1} 
\quad\Longrightarrow\quad [e^{\lambda b}]_{A_p} \leq 4^{|\lambda| \|b\|_{\mathcal{BMO}}}.  
\end{align}
Let us recall the sharp reverse H\"{o}lder's inequality from \cite{CGPSZ, HP12, LOP}. For every $w \in A_p$ with $1 \le p \le \infty$,  
\begin{align}\label{eq:RHp}
\bigg(\fint_{Q} w^{r_w} dx \bigg)^{\frac{1}{r_w}} \le 2 \fint_Q w \, dx, 
\end{align}
for every cube $Q$, where 
\begin{equation*}
r_w=
\begin{cases}
1+\frac{1}{2^{n+1}[w]_{A_1}}, & p=1, \\
1+\frac{1}{2^{n+1+2p}[w]_{A_p}}, &p \in (1, \infty), \\
1+\frac{1}{2^{n+11}[w]_{A_{\infty}}}, &p=\infty. 
 \end{cases}
 \end{equation*}

Now we state the main theorem of this section.  
\begin{theorem}\label{thm:TTb}
Fix $1<s_i, \theta_i<\infty$ for $i=1,2,3,4$. Let $\sigma$ and $\nu$ be weights on $\Rn$, $\X_{\sigma}$ and $\Y_{\sigma}$ be Banach function spaces over $(\Rn, \sigma)$. Let $T$ be either a linear or a linearizable operator.  Assume that for every pair of weights $(\mu, \lambda)$ with $\nu=\mu/\lambda$ such that $(\mu^{\theta_1}, \lambda^{\theta_2}, \mu^{-\theta_3}, \lambda^{-\theta_4}) \in A_{s_1} \times A_{s_2} \times A_{s_3} \times A_{s_4}$,   
\begin{align}\label{eq:TTb-1}
\|(Tf)\lambda\|_{\Y_{\sigma}} 
\le \Psi \big([\mu^{\theta_1}]_{A_{s_1}}, [\lambda^{\theta_2}]_{A_{s_2}}, [\mu^{-\theta_3}]_{A_{s_3}}, 
[\lambda^{-\theta_4}]_{A_{s_4}} \big) \|f\mu\|_{\X_{\sigma}}, 
\end{align}
where $\Psi:[1, \infty) \times [1, \infty) \times [1, \infty) \times [1, \infty) \to [1, \infty)$ is an increasing function of each variable. Then for every $k \ge 1$, for every $\b=(b_1, \ldots, b_k) \in \mathcal{BMO}^k$ and for every pair of weights $(\mu, \lambda)$ with $\nu=\mu/\lambda$ such that $(\mu^{\theta_1}, \lambda^{\theta_2}, \mu^{-\theta_3}, \lambda^{-\theta_4}) \in A_{s_1} \times A_{s_2} \times A_{s_3} \times A_{s_4}$,   
\begin{multline*}
\|(C_{\b}(T)f) \lambda\|_{\Y_{\sigma}} 
\leq \bigg(\frac{2^{n+2+2\max\{s_i, s'_i\}}\theta k}{\min\{1, s_i-1\}}\bigg)^k 
\max\big\{[\mu^{\theta_1}]_{A_{s_1}}^{r_1}, [\lambda^{\theta_2}]_{A_{s_2}}^{r_2}, 
[\mu^{-\theta_3}]_{A_{s_3}}^{r_3}, [\lambda^{-\theta_4}]_{A_{s_4}}^{r_4} \big\}^k  
\\
\times \Psi \big(2^{t_1} [\mu^{\theta_1}]_{A_{s_1}}, 2^{t_2} [\lambda^{\theta_2}]_{A_{s_2}}, 
2^{t_3} [\mu^{-\theta_3}]_{A_{s_3}}, 2^{t_4} [\lambda^{-\theta_4}]_{A_{s_4}}\big)
\prod_{j=1}^k \|b_j\|_{\mathcal{BMO}} \|f \mu\|_{\X_{\sigma}},  
\end{multline*} 
where $\theta=\max\{\theta_1,\theta_2, \theta_3, \theta_4\}$, $r_i=\max\{1, \frac{1}{s_i-1}\}$ and $t_i:=s_i+2\min\{1, s_i-1\}$. 
\end{theorem}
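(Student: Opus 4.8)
The plan is to reduce the commutator estimate to the hypothesis \eqref{eq:TTb-1} via the classical Coifman--Rochberg--Weiss conjugation trick, iterated $k$ times and optimized with the sharp reverse H\"older inequality \eqref{eq:RHp} and the exponential-$A_p$ bound \eqref{eq:ebAp}. First I would treat the case $k=1$. Given $b\in\mathcal{BMO}$ and a pair $(\mu,\lambda)$ with $\nu=\mu/\lambda$ satisfying the four $A_{s_i}$ conditions, introduce, for a complex parameter $z$, the twisted weights $\mu_z:=e^{z\,\mathrm{Re}\,b}\mu$ and $\lambda_z:=e^{z\,\mathrm{Re}\,b}\lambda$, so that $\nu=\mu_z/\lambda_z$ is unchanged. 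One checks via \eqref{eq:ebAp} that for $|z|$ below an explicit threshold $\varepsilon_0$ (depending on the $s_i$ and $\|b\|_{\mathcal{BMO}}$ only) each of $\mu_z^{\theta_1}$, $\lambda_z^{\theta_2}$, $\mu_z^{-\theta_3}$, $\lambda_z^{-\theta_4}$ lies in the corresponding $A_{s_i}$ with constant at most $2^{t_i}$ times the original; here $t_i=s_i+2\min\{1,s_i-1\}$ is exactly what comes out of combining the trivial bound $A_{s_i}\subset A_{s_i}$ monotonicity, \eqref{eq:ebAp}, and the product rule for $A_p$ constants. Hence \eqref{eq:TTb-1} applies with $(\mu_z,\lambda_z)$ in place of $(\mu,\lambda)$, uniformly for $|z|\le\varepsilon_0$, with the constant $\Psi$ evaluated at the dilated arguments $2^{t_i}[\cdot]_{A_{s_i}}$.

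Next I would set up the Cauchy integral representation. For fixed $f$ with $\|f\mu\|_{\X_\sigma}<\infty$ consider
\[
F(z):=\big(T(e^{z\,\mathrm{Re}\,b}f)\big)\,e^{-z\,\mathrm{Re}\,b}\,\lambda,
\]
a $\Y_\sigma$-valued holomorphic function of $z$ on $|z|<\varepsilon_0$ (holomorphy in the vector-valued sense follows from linearity of $T$, or of $\mathcal T$ in the linearizable case, together with dominated-convergence arguments that are routine). Then $C_b(T)f\cdot\lambda=F'(0)$ up to the usual identity $C_b(T)f=\frac{d}{dz}\big|_{z=0}e^{-zb}T(e^{zb}f)$, and the Cauchy formula on the circle $|z|=\varepsilon$, for any $0<\varepsilon<\varepsilon_0$, gives
\[
\|C_b(T)f\cdot\lambda\|_{\Y_\sigma}\le\frac{1}{\varepsilon}\sup_{|z|=\varepsilon}\|F(z)\|_{\Y_\sigma}
\le\frac{1}{\varepsilon}\,\Psi\big(2^{t_1}[\mu^{\theta_1}]_{A_{s_1}},\ldots\big)\,\sup_{|z|=\varepsilon}\|e^{z\mathrm{Re}\,b}f\cdot e^{-z\mathrm{Re}\,b}\mu\|_{\X_\sigma}.
\]
But $e^{z\mathrm{Re}\,b}f\cdot e^{-z\mathrm{Re}\,b}$ is just $f$ times a unimodular-in-modulus factor only on the imaginary axis; on the circle one instead bounds $|e^{z\mathrm{Re}\,b(x)}|\le e^{\varepsilon|\mathrm{Re}\,b(x)|}$, which is where the genuine work is: I would apply \eqref{eq:TTb-1} not to $e^{z\mathrm{Re}\,b}f$ but carefully, absorbing the extra exponential into the weight. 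The cleaner route, which the stated constants suggest, is to apply the conjugation to the \emph{weight side} only: write $C_b(T)f\cdot\lambda=\frac{1}{2\pi i}\oint_{|z|=\varepsilon}\frac{(T f)\,e^{z(\mathrm{Re}\,b-\mathrm{Re}\,b(\cdot))}\lambda}{z^2}\,dz$ is not quite right either; the correct identity is $C_b(T)f(x)=\frac{1}{2\pi i}\oint \frac{1}{z^2}e^{-z b(x)}T(e^{zb}f)(x)\,dz$, so that the inner quantity is exactly $F(z)(x)$ with $\|F(z)\|_{\Y_\sigma}$ estimated by \eqref{eq:TTb-1} applied to the pair $(e^{z\mathrm{Re}\,b}\mu,\,e^{z\mathrm{Re}\,b}\lambda)$ acting on the \emph{fixed} function $f$ --- no, on $f$ itself, since $e^{-z\mathrm{Re}\,b}\cdot T(e^{z\mathrm{Re}\,b}f)$ is $T$ conjugated, and conjugated $T$ maps $\X_\sigma(e^{z\mathrm{Re}\,b}\mu)$ to $\Y_\sigma(e^{z\mathrm{Re}\,b}\lambda)$ with the same norm. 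Thus $\|F(z)\|_{\Y_\sigma}\le\Psi(2^{t_1}[\mu^{\theta_1}]_{A_{s_1}},\ldots)\|f\mu\|_{\X_\sigma}$ for all $|z|=\varepsilon$, and optimizing $\varepsilon$ up to $\varepsilon_0$ with $\varepsilon_0^{-1}\sim 2^{n+2+2\max\{s_i,s_i'\}}/\min\{1,s_i-1\}\cdot\|b\|_{\mathcal{BMO}}$ (this last quantitative form of the $\varepsilon_0$ from \eqref{eq:RHp} and \eqref{eq:ebAp}) produces the displayed constant, with the $\max\{[\cdot]_{A_{s_i}}^{r_i}\}$ term entering because $\varepsilon_0$ also degrades like an inverse power $r_i=\max\{1,1/(s_i-1)\}$ of the largest $A_{s_i}$ constant.

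For general $k$ I would iterate: differentiating the representation $k$ times, or equivalently applying the Cauchy formula with $z^{-(k+1)}$, gives $C_b^k(T)f\cdot\lambda=\frac{k!}{2\pi i}\oint_{|z|=\varepsilon}\frac{F(z)}{z^{k+1}}\,dz$, so
\[
\|C_b^k(T)f\cdot\lambda\|_{\Y_\sigma}\le\frac{k!}{\varepsilon^k}\sup_{|z|=\varepsilon}\|F(z)\|_{\Y_\sigma}
\le\Big(\frac{ek}{\varepsilon_0}\Big)^k\Psi\big(2^{t_1}[\mu^{\theta_1}]_{A_{s_1}},\ldots\big)\|f\mu\|_{\X_\sigma},
\]
using $k!\le k^k$ and $k^k/\varepsilon^k$ optimized at $\varepsilon=\varepsilon_0$ up to the factor reflected in the $\theta k$ in the numerator (the $\theta=\max\theta_i$ arises because the threshold $\varepsilon_0$ scales inversely with $\theta$ once one passes to the powers $\mu^{\theta_i}$). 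Finally, for a genuine vector $\b=(b_1,\ldots,b_k)\in\mathcal{BMO}^k$ rather than the diagonal case, one uses the standard multi-parameter conjugation $\prod_{j}e^{-z_j b_j}T(\prod_j e^{z_j b_j}f)$ and a $k$-fold Cauchy integral over the polycircle $|z_j|=\varepsilon_j$ with $\varepsilon_j\sim\varepsilon_0/\|b_j\|_{\mathcal{BMO}}$, which yields the product $\prod_j\|b_j\|_{\mathcal{BMO}}$ and, after the same optimization, the full stated bound. The main obstacle is the bookkeeping of constants: verifying that \eqref{eq:ebAp} combined with the product rule for $A_p$-characteristics and the sharp reverse H\"older exponent $r_w$ from \eqref{eq:RHp} genuinely yields the threshold $\varepsilon_0\gtrsim \min\{1,s_i-1\}\,2^{-n-2-2\max\{s_i,s_i'\}}\,\theta^{-1}k^{-1}\max\{[\cdot]_{A_{s_i}}^{r_i}\}^{-1}\prod_j\|b_j\|_{\mathcal{BMO}}^{-1}$ with exactly the exponents $r_i=\max\{1,\frac{1}{s_i-1}\}$ and the shift $t_i=s_i+2\min\{1,s_i-1\}$; the analytic skeleton (holomorphy of $F$, Cauchy estimate, conjugation invariance of operator norms) is completely standard, following \cite{BMMST}.
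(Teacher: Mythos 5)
Your proposal follows essentially the same route as the paper: the Coifman--Rochberg--Weiss conjugation $\Phi_z f = e^{\sum z_j b_j}T(e^{-\sum z_j b_j}f)$, the $k$-fold Cauchy integral over a polycircle $\partial P_\delta$ with $\delta_j \sim \|b_j\|_{\mathcal{BMO}}^{-1}$, the sharp reverse H\"older inequality \eqref{eq:RHp} to produce the self-improvement exponents $\eta_i$, and \eqref{eq:ebAp} to control the $A_{s_i}$ constants of the twisted weights. You also correctly attribute the exponents $r_i$, the shift $t_i$, and the factor $\theta k$ to the right sources.

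One point of exposition worth tightening: throughout you write the twisted weights as $e^{z\,\mathrm{Re}\,b}\mu$ and similar, which for complex $z$ is not a positive function and cannot enter an $A_p$ condition. What the argument actually requires is to keep the \emph{complex} exponential $e^{\sum z_j b_j}$ inside the holomorphic family $\Phi_z$ and, separately, define the genuine \emph{real} weight $W=e^{\sum\Re(z_j)b_j}$ to form $U=\mu W$, $V=\lambda W$; then $\|(\Phi_z f)\lambda\|_{\Y_\sigma}=\|T(e^{-\sum z_j b_j}f)\,V\|_{\Y_\sigma}$ and $\|(e^{-\sum z_j b_j}f)U\|_{\X_\sigma}=\|f\mu\|_{\X_\sigma}$ because the BFS norm sees only the modulus. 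This is what the chain of self-corrections in your second paragraph is circling around; stated this way the argument is clean and is precisely what the paper does. With that fix your proposal matches the paper's proof both in structure and in the provenance of every constant.
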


\begin{proof}
We will borrow the idea from \cite{BMMST}, where the extrapolation for commutators was obtained in the one-weight case and in Lebesgue spaces. Let $u:=\mu_1^{\theta} \in A_{s_1}$. Then $v:=u^{1-s'_1} \in A_{s'_1}$. Due to \eqref{eq:RHp}, one has 
\begin{align}\label{eq:ru}
\bigg(\fint_{Q} u^{r_u} dx \bigg)^{\frac{1}{r_u}} \le 2 \fint_Q u \, dx \quad\text{and}\quad 
\bigg(\fint_{Q} v^{r_v} dx \bigg)^{\frac{1}{r_v}} \le 2 \fint_Q v \, dx 
\end{align}
for every cube $Q$, where 
\begin{align}\label{eq:ruru}
r_u=1+\frac{1}{2^{n+1+2s_1}[u]_{A_{s_1}}} \quad\text{and}\quad r_v=1+\frac{1}{2^{n+1+2s'_1}[v]_{A_{s'_1}}}
\end{align}
Set $\eta_1:=\min\{r_u, r_v\}$. Taking into account \eqref{eq:ru} and \eqref{eq:ruru}, we obtain 
\begin{align}\label{eq:uAs}
[\mu^{\theta_1 \eta_1}]_{A_{s_1}}^{\frac{1}{\eta_1}}
\le 2^{s_1} [\mu^{\theta_1}]_{A_{s_1}} \quad\text{with}\quad 
\eta'_1 = 1+2^{n+1+2\max\{s_1,s'_1\}}[\mu^{\theta_1}]_{A_{s_1}}^{\max\{1, \frac{1}{s_1-1}\}}. 
\end{align}
Analogously, 
\begin{align*}
{}[\lambda^{\theta_2\eta_2}]_{A_{s_2}}^{\frac{1}{\eta_2}} \le 2^{s_2} [\lambda^{\theta_2}]_{A_{s_2}} 
&\quad\text{with}\quad \eta'_2 = 1+2^{n+1+2\max\{s_2,s'_2\}}[\lambda^{\theta_2}]_{A_{s_2}}^{\max\{1, \frac{1}{s_2-1}\}}, 
\\
{}[\mu^{-\theta_3\eta_3}]_{A_{s_3}}^{\frac{1}{\eta_3}} \le 2^{s_3} [\mu^{-\theta_3}]_{A_{s_3}} 
&\quad\text{with}\quad \eta'_3=1+2^{n+1+2\max\{s_3,s'_3\}}[\lambda^{-\theta_3}]_{A_{s_3}}^{\max\{1, \frac{1}{s_3-1}\}},  
\\
{}[\lambda^{-\theta_4\eta_4}]_{A_{s_4}}^{\frac{1}{\eta_4}} \le 2^{s_4} [\lambda^{-\theta_4}]_{A_{s_4}} 
&\quad\text{with}\quad \eta'_4=1+2^{n+1+2\max\{s_4,s'_4\}}[\lambda^{-\theta_4}]_{A_{s_4}}^{\max\{1, \frac{1}{s_4-1}\}}. 
\end{align*}

To continue, we write 
\begin{align*}
\Phi_z f(x):= e^{z_1 b_1(x)+\cdots+z_k b_k(x)} T \big(e^{-z_1 b_1-\cdots-z_kb_k} f \big)(x),\quad z=(z_1\ldots,z_k) \in \C^k. 
\end{align*}
By Cauchy integral formula adapted to several complex variables, we get 
\begin{align}\label{eq:CIF}
C_{\b}(T)f(x) = \partial_{z} \Phi_z f(x) \big|_{z=0} = \frac{1}{(2\pi i)^k} \int_{\partial P_{\delta}} 
\frac{\Phi_z f(x)}{z_1^2 \cdots z_k^2} dz_1 \cdots dz_k,  
\end{align}
where $P_{\delta}=\{z=(z_1,\ldots,z_k) \in \C^k: |z_j|<\delta_j, j=1,\ldots,k\}$. 
Pick 
\begin{equation}\label{eq:del}
\delta_j := \frac{\min\{1, s_1-1, s_2-1, s_3-1, s_4-1,\}}{k \max\{\theta_1 \eta'_1, \theta_2 \eta'_2, \theta_3 \eta'_3, \theta_4 \eta'_4  \} \|b_j\|_{\mathcal{BMO}}}, \quad j=1,\ldots,k. 
\end{equation}
Fix $z \in \overline{P_{\delta}}$, denote 
\begin{align*}
U(x) :=\mu(x) W(x),\quad V(x) :=\lambda(x) W(x) \quad\text{and}\quad W(x):=e^{\Re(z_1) b_1(x)+\cdots+\Re(z_k)b_k(x)}. 
\end{align*}
Then it follows from H\"{o}lder's inequality and \eqref{eq:ebAp} that 
\begin{align*}
&\bigg(\fint_Q W^{\theta_1\eta'_1} \, dx\bigg) \bigg(\fint_Q W^{\theta_1 \eta'_1(1-s'_1)} \, dx\bigg)^{s_1-1} 
\\
&\leq \prod_{j=1}^k \bigg(\fint_Q e^{k\theta_1 \eta'_1 \Re(z_j) b_j} dx\bigg)^{\frac{1}{k}}
\bigg(\fint_Q e^{k\theta_1 \eta'_1 \Re(z_j) b_j(1-s'_1)} dx \bigg)^{\frac{s_1-1}{k}}
\\
&\leq \prod_{j=1}^k 4^{\theta_1 \eta'_1 |\Re(z_j)| \|b_j\|_{\mathcal{BMO}}} 
\leq \prod_{j=1}^k 4^{\frac{\min\{1, s_1-1\}}{k}} =4^{\min\{1, s_1-1\}}. 
\end{align*}
Together with H\"{o}lder's inequality and \eqref{eq:uAs}, this in turn gives 
\begin{align*}
&\bigg(\fint_Q U^{\theta_1} dx \bigg) \bigg(\fint_Q U^{\theta_1(1-s'_1)} \, dx\bigg)^{s_1-1} 
\\
&\qquad\qquad\leq \bigg(\fint_Q \mu^{\theta_1 \eta_1} dx\bigg)^{\frac{1}{\eta_1}} 
\bigg(\fint_{Q} W^{\theta_1 \eta'_1} dx\bigg)^{\frac{1}{\eta'_1}}  
\\
&\qquad\qquad\qquad\times \bigg(\fint_Q \mu^{\theta_1 \eta_1(1-s'_1)} dx\bigg)^{\frac{s_1-1}{\eta_1}} 
\bigg(\fint_{Q} W^{\theta_1\eta'_{\mu}(1-s'_1)} dx\bigg)^{\frac{s_1-1}{\eta'_1}} 
\\
&\qquad\qquad\leq 4^{\min\{1, s_1-1\}} [\mu^{\theta_1 \eta_1}]_{A_{s_1}}^{\frac{1}{\eta_1}} 
\leq 4^{\min\{1, s_1-1\}} 2^{s_1} [\mu^{\theta_1}]_{A_{s_1}}.   
\end{align*}
That is, 
\begin{align}\label{eq:UAs} 
[U^{\theta_1}]_{A_{s_1}} \le 4^{\min\{1, s_1-1\}} 2^{s_1} [\mu^{\theta_1}]_{A_{s_1}}. 
\end{align}
Similarly, 
\begin{align}
\label{eq:VAs} [V^{\theta_2}]_{A_{s_2}} &\le 4^{\min\{1, s_2-1\}} 2^{s_2} [\lambda^{\theta_2}]_{A_{s_2}}, 
\\
\label{eq:UAs3} [U^{-\theta_3}]_{A_{s_3}} &\le 4^{\min\{1, s_3-1\}} 2^{s_3} [\mu^{-\theta_3}]_{A_{s_3}}, 
\\
\label{eq:VAs4} [V^{-\theta_4}]_{A_{s_4}} &\le 4^{\min\{1, s_4-1\}} 2^{s_4} [\lambda^{-\theta_4}]_{A_{s_4}}. 
\end{align}
Note that $U/V=\mu/\lambda=\nu$. This together with \eqref{eq:UAs}--\eqref{eq:VAs4} and \eqref{eq:TTb-1} gives that  
\begin{align}\label{eq:fpq}
\|(\Phi_z f)\lambda\|_{\Y_{\sigma}} 
&= \|T(e^{-z_1b_1-\cdots-z_mb_m}f) V\|_{\Y_{\sigma}} 
\nonumber \\
&\le \Psi \big([U^{\theta_1}]_{A_{s_1}}, [V^{\theta_2}]_{A_{s_2}}, [U^{-\theta_3}]_{A_{s_3}}, 
[V^{-\theta_4}]_{A_{s_4}} \big) \|(W^{-1}f) U\|_{\X_{\sigma}} 
\nonumber \\
&\le \Psi \big(2^{t_1} [\mu^{\theta_1}]_{A_{s_1}}, 2^{t_2} [\lambda^{\theta_2}]_{A_{s_2}}, 
2^{t_3}[\mu^{-\theta_3}]_{A_{s_3}}, 2^{t_4} [\lambda^{-\theta_4}]_{A_{s_4}}\big) \|f \mu\|_{\X_{\sigma}}.  
\end{align}

Now fix a non-negative function $h \in \Y'_{\sigma}$ with $\|h\|_{\Y'_{\sigma}} \le 1$. Invoking \eqref{eq:CIF} and H\"{o}lder's inequality \eqref{eq:Holder}, we obtain 
\begin{align}\label{eq:CbYv}
&\int_{\Rn} |C_{\b}(T)f(x)| \lambda(x) h(x) v(x) dx 
\nonumber\\
&\le \int_{\Rn} \bigg(\frac{1}{(2\pi)^k} \int_{\partial P_{\delta}} 
\frac{|\Phi_zf(x)|}{|z_1|^2 \cdots |z_k|^2} |dz_1| \cdots |dz_k| \bigg) \lambda(x) h(x) v(x) dx 
\nonumber\\
&\le \prod_{j=1}^k \delta_j^{-2} \frac{1}{(2\pi)^k} \int_{\partial P_{\delta}} 
\bigg(\int_{\Rn} |\Phi_zf(x)| \lambda(x) h(x) v(x) \, dx \bigg)  |dz_1| \cdots |dz_k| 
\nonumber\\
&\le \prod_{j=1}^k \delta_j^{-2} \frac{1}{(2\pi)^k} \int_{\partial P_{\delta}} 
\|(\Phi_z f)\lambda\|_{\Y_{\sigma}} \|h\|_{\Y'_{\sigma}}  |dz_1| \cdots |dz_k|.  
\end{align}
Then, combining \eqref{eq:ass-X}, \eqref{eq:fpq} and \eqref{eq:CbYv}, we conclude that 
\begin{multline*}
\|(C_{\b}(T)f) \lambda\|_{\Y_{\sigma}} 
\le \prod_{j=1}^k \delta_j^{-2} \frac{1}{(2\pi)^k} \int_{\partial P_{\delta}} 
\|(\Phi_z f)\lambda\|_{\Y_{\sigma}}  |dz_1| \cdots |dz_k|.  
\\
\le \prod_{j=1}^k \delta_j^{-1} \Psi \big(2^{t_1} [\mu^{\theta_1}]_{A_{s_1}}, 2^{t_2} [\lambda^{\theta_2}]_{A_{s_2}}, 
2^{t_3}[\mu^{-\theta_3}]_{A_{s_3}}, 2^{t_4} [\lambda^{-\theta_4}]_{A_{s_4}}\big) \|f \mu\|_{\X_{\sigma}}.  
\end{multline*}
Therefore, the desired estimate follows at once this and \eqref{eq:del}. 
\end{proof}

Recall that a family $\S$ of cubes is called {\tt sparse} if for every cube $Q \in \S$, there exists $E_Q \subset Q$ such that $|E_Q | \geq \eta |Q|$ for some $0<\eta<1$ and the collection $\{E_Q\}_{Q \in \S}$ is pairwise disjoint.  Given a sparse family $\S$ and $\gamma \ge 1$, we define a sparse operator as 
\begin{align*}
\A_{\S}^{\gamma} (f)(x) := \bigg(\sum_{Q \in \S} 
\langle f \rangle_{Q}^{\gamma} \mathbf{1}_{Q}(x) \bigg)^{\frac{1}{\gamma}},\quad x \in \Rn 
\end{align*}
where $\langle f \rangle_{Q}=\frac{1}{|Q|}\int_{Q} f \, dx$. If $\gamma=1$, we denote $\A_{\S}=\A^{\gamma}_{\S}$.

\begin{corollary}\label{cor:TTb}
If for every $f, g \in C_c^{\infty}(\Rn)$ and for every $b \in L^1_{\loc}(\Rn)$, 
\begin{align}\label{eq:CbT} 
|\langle C_b(T)f, g\rangle| \lesssim \sup_{\S \text{ is sparse}} \big(\T_{\S}(b, f, g) +\T^{*}_{\S}(b, f, g) \big), 
\end{align}
where 
\begin{align*}
\T_{\S}(b, f, g) &:= \sum\limits_{Q \in \S} \langle |f| \rangle_{Q} \langle|(b-b_Q)g|\rangle_{Q} |Q|, 
\\
\T^{*}_{\S}(b, f, g)&: = \sum\limits_{Q \in \S} \langle |(b-b_Q)f| \rangle_{Q} \langle |g| \rangle_{Q} |Q|,
\end{align*} 
then for every $p \in (1, \infty)$, for every $\mu^p,\lambda^p \in A_p$ with $\nu=\mu/\lambda$ and for every $\b=(b_1, b_2, \ldots,b_k) \in \BMO(\nu) \times \BMO \times \cdots \times \BMO$, 
\begin{align}\label{eq:Bloom}
\|C_{\b}(T)\|_{L^p(\mu^p) \to L^p(\lambda^p)} 
&\lesssim \|b_1\|_{\BMO(\nu)} \prod_{j=2}^k\|b_j\|_{\BMO}  
\big([\mu^p]_{A_p} [\lambda^p]_{A_p} \big)^{\max\{1, \frac{1}{p-1}\}} 
\nonumber \\
&\qquad\qquad \times \max\big\{ [\mu^p]_{A_p}, [\lambda^p]_{A_p} \big\}^{(k-1)\max\{1, \frac{1}{p-1}\}}.  
\end{align}
\end{corollary}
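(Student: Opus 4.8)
The plan is to deduce Corollary \ref{cor:TTb} from Theorem \ref{thm:TTb} by first extracting, from the sparse-form hypothesis \eqref{eq:CbT}, a two-weight bound for the bare operator $T$ itself (the case $k=0$), and then feeding that bound into the commutator extrapolation machinery of Theorem \ref{thm:TTb} with the Banach function spaces specialized to $\X_\sigma=L^p(\mu^p)$ and $\Y_\sigma=L^p(\lambda^p)$. Concretely, taking $\b$ to be the empty tuple (or a constant function) in \eqref{eq:CbT} collapses both $\T_\S$ and $\T^*_\S$ to the ordinary sparse bilinear form $\sum_{Q\in\S}\langle|f|\rangle_Q\langle|g|\rangle_Q|Q|$, so the hypothesis yields the pointwise/bilinear sparse domination $|\langle Tf,g\rangle|\lesssim\sup_\S\A_\S(|f|,|g|)$. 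The well-known two-weight bound for sparse operators (the quantitative version of the $A_2$/$A_p$ theorem, e.g. via the standard testing argument of Lerner–Nazarov or Hytönen–Pérez–Rela) then gives, for $\mu^p,\lambda^p\in A_p$ with $\nu=\mu/\lambda$,
\[
\|(Tf)\lambda\|_{L^p} \lesssim \big([\mu^p]_{A_p}[\lambda^p]_{A_p}\big)^{\max\{1,\frac{1}{p-1}\}}\,\|f\mu\|_{L^p}.
\]
This is precisely hypothesis \eqref{eq:TTb-1} with the choices $s_1=s_2=s_3=s_4=p$, $\theta_1=\theta_2=\theta_3=\theta_4=p$, and $\Psi$ a product of powers of its arguments; one checks that the four-weight condition $(\mu^{\theta_1},\lambda^{\theta_2},\mu^{-\theta_3},\lambda^{-\theta_4})\in A_{s_1}\times A_{s_2}\times A_{s_3}\times A_{s_4}$ is just $\mu^p,\lambda^p\in A_p$ together with the equivalences $w\in A_p\Leftrightarrow w^{1-p'}\in A_{p'}$ from Lemma-style property (c), so no extra hypotheses are lost or gained.

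Next I would invoke Theorem \ref{thm:TTb} directly: with the above $\Psi(x_1,x_2,x_3,x_4)=C(x_1x_2x_3x_4)^{\max\{1,1/(p-1)\}}$ (absorbing the fixed exponents), the conclusion of Theorem \ref{thm:TTb} gives
\[
\|(C_{\b}(T)f)\lambda\|_{L^p(\lambda^p)}\lesssim \Big(\tfrac{C_{n,p}\,k}{\min\{1,p-1\}}\Big)^k
\max\big\{[\mu^p]_{A_p},[\lambda^p]_{A_p}\big\}^{k\max\{1,\frac{1}{p-1}\}}\,
\Psi\big(\cdots\big)\prod_{j=1}^k\|b_j\|_{\mathcal{BMO}}\,\|f\mu\|_{L^p(\mu^p)},
\]
and since $\Psi(2^{t}[\mu^p]_{A_p},\dots)\lesssim \big([\mu^p]_{A_p}[\lambda^p]_{A_p}\big)^{\max\{1,1/(p-1)\}}$ up to a $p$-dependent constant, collecting powers yields exactly the stated bound \eqref{eq:Bloom}, once one accounts for the bookkeeping: the hypothesis in Corollary \ref{cor:TTb} has $b_1\in\BMO(\nu)$ while $b_2,\dots,b_k\in\BMO$, which in the proof of Theorem \ref{thm:TTb} corresponds to the analytic family $\Phi_z$ being built with $e^{z_1b_1+\cdots}$ where only $b_1$ is paired against the weight ratio $\nu$; this is the mechanism that produces the $\|b_1\|_{\BMO(\nu)}\prod_{j\ge2}\|b_j\|_{\BMO}$ factor together with the asymmetric power $\big([\mu^p]_{A_p}[\lambda^p]_{A_p}\big)^{\max\{1,1/(p-1)\}}\max\{[\mu^p]_{A_p},[\lambda^p]_{A_p}\}^{(k-1)\max\{1,1/(p-1)\}}$ rather than the fully symmetric $k$-th power. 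I would note explicitly that $\|b\|_{\mathcal{BMO}}\simeq\|b\|_{\BMO}$ by John–Nirenberg \eqref{eq:BMO}, and that for a weight $\nu$ one still has $\|b\|_{\BMO(\nu)}$ controlling the relevant Bloom-type exponential integrability when $\nu\in A_2$-type conditions hold (here $\nu=\mu/\lambda$ with $\mu^p,\lambda^p\in A_p$ forces $\nu$ into an appropriate Bloom class).

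The main obstacle I expect is the passage from the sparse-form hypothesis \eqref{eq:CbT} for the \emph{commutator} to the clean bilinear sparse domination for $T$ alone, and then making sure the resulting two-weight estimate has the \emph{sharp} dependence $([\mu^p]_{A_p}[\lambda^p]_{A_p})^{\max\{1,1/(p-1)\}}$; one must be careful that setting $b$ constant is legitimate in \eqref{eq:CbT} (it is, since $C_b(T)=0$ for constant $b$ gives a trivial bound, so instead one should argue directly that the hypothesis, being stated for \emph{all} $b\in L^1_{\loc}$, in particular forces sparse domination of $T$ through a limiting/linearity argument, or simply assume — as is standard and as the applications in this paper intend — that $T$ itself enjoys the unweighted sparse bound). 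A secondary subtlety is verifying that the exponents $r_i$, $t_i$ appearing in Theorem \ref{thm:TTb} collapse correctly when all $s_i=\theta_i=p$: there $r_i=\max\{1,\frac1{p-1}\}$ and $t_i=p+2\min\{1,p-1\}$, so $2^{t_i}$ is a harmless $p$-dependent constant, and the factor $\big(\tfrac{2^{n+2+2\max\{p,p'\}}\theta k}{\min\{1,p-1\}}\big)^k$ is absorbed into the implicit constant (which legitimately depends on $n,p,k$). Once these verifications are in place the corollary is essentially immediate; the real content is entirely in Theorem \ref{thm:TTb} and in the known sparse-to-two-weight estimate, so the proof is short — I would present it as roughly: (i) specialize \eqref{eq:CbT} to recover two-weight boundedness of $T$; (ii) match parameters $s_i=\theta_i=p$ and identify $\Psi$; (iii) apply Theorem \ref{thm:TTb} and simplify, using \eqref{eq:BMO} to replace $\mathcal{BMO}$ by $\BMO$ and handling the $b_1$ term via $\BMO(\nu)$.
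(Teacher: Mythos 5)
There is a genuine structural gap in your proposal, at two linked points. First, the hypothesis \eqref{eq:CbT} is a sparse bound for the \emph{commutator} $C_b(T)$, and it cannot be specialized to give information about $T$ itself: taking $b$ constant makes $C_b(T)=0$ and \eqref{eq:CbT} reads $0\lesssim 0$. You acknowledge this and suggest either a ``limiting/linearity argument'' (which you do not supply and which does not obviously exist) or simply \emph{assuming} an unweighted sparse bound for $T$ — but that changes the statement of the corollary. Second, and more decisively, Theorem \ref{thm:TTb} requires \emph{all} symbols $b_1,\dots,b_k$ to lie in $\mathcal{BMO}$ (unweighted); its proof conjugates by $W=e^{\Re(z_1)b_1+\cdots+\Re(z_k)b_k}$ and treats every $b_j$ symmetrically, extracting a factor $\delta_j^{-1}\simeq\|b_j\|_{\mathcal{BMO}}$ from the Cauchy integral formula. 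There is no mechanism in that theorem by which one symbol can be taken in the weighted space $\BMO(\nu)$; the claim in your proposal that ``only $b_1$ is paired against the weight ratio $\nu$'' inside $\Phi_z$ is not what happens (the ratio $U/V=\mu/\lambda=\nu$ is preserved by the conjugation, but the symbols never interact with it). Consequently your route would prove a statement with all $b_j\in\BMO$ and a symmetric factor $\max\{[\mu^p]_{A_p},[\lambda^p]_{A_p}\}^{k\max\{1,1/(p-1)\}}$, not the asserted bound with $\|b_1\|_{\BMO(\nu)}$ and exponent $k-1$.

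The paper's proof supplies exactly the missing ingredient: it uses \eqref{eq:CbT} \emph{directly} to prove the two-weight Bloom bound for the first-order commutator $C_{b_1}(T)$, by estimating $\T_{\S}(b,f,gw)$ and $\T^{*}_{\S}(b,f,gw)$ via the pointwise sparse domination of oscillations $|b-b_Q|\lesssim\sum_{Q'\subset Q}\langle|b-b_{Q'}|\rangle_{Q'}\mathbf{1}_{Q'}$ from \cite[Lemma~5.1]{LOR} (which converts $\langle|(b-b_Q)g|\rangle_Q$ into $\|b\|_{\BMO(\nu)}\langle\A_{\widetilde\S}(|g|)\nu\rangle_Q$), followed by H\"older, the sharp maximal function bound \eqref{eq:M-sharp} and the sharp sparse bound \eqref{eq:AS-sharp}. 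This yields
\begin{align*}
\|C_{b_1}(T)\|_{L^p(\mu^p)\to L^p(\lambda^p)}
\lesssim \|b_1\|_{\BMO(\nu)}\big([\mu^p]_{A_p}[\lambda^p]_{A_p}\big)^{\frac{1}{p-1}}
\big([\mu^{-p'}]_{A_{p'}}[\lambda^{-p'}]_{A_{p'}}\big)^{\frac{1}{p'-1}},
\end{align*}
and only then is Theorem \ref{thm:TTb} applied, with $C_{b_1}(T)$ as the base operator, $s_1=s_2=\theta_1=\theta_2=p$, $s_3=s_4=\theta_3=\theta_4=p'$ and $\Psi(t_1,t_2,t_3,t_4)=\|b_1\|_{\BMO(\nu)}(t_1t_2)^{1/(p-1)}(t_3t_4)^{1/(p'-1)}$, to append the remaining $k-1$ symbols $b_2,\dots,b_k\in\BMO$; this is why the final bound carries the exponent $k-1$ and the $\BMO(\nu)$ norm only on $b_1$. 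To repair your argument you would need to carry out this direct sparse-form estimate for $C_{b_1}(T)$ rather than trying to reduce to $T$.
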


\begin{proof}
Fix $p \in (1, \infty)$ and $\mu^p,\lambda^p \in A_p$ with $\nu=\mu/\lambda$. Let $\S$ be a sparse family, $b:=b_1\in \BMO(\nu)$ and $b_j \in \BMO$, $j=2,\ldots,k$. It is well-known that 
\begin{align}\label{eq:AS-sharp}
\|\A_{\S}\|_{L^p(w) \to L^p(w)} \le c_{n,p} [w]_{A_p}^{\max\{1, \frac{1}{p-1}\}},\quad\forall p \in (1, \infty). 
\end{align}
Recently, Lerner et al. \cite[Lemma~5.1]{LOR} proved that for a sparse family $\S$, there exists another sparse family $\widetilde{\S} \subset \D$ containing $\S$ and such that for every $Q \in \widetilde{S}$, 
\begin{align*}
|b(x)-b_Q| \leq 2^{n+2} \sum_{Q' \in \widetilde{\S}: Q' \subset Q} 
\langle |b-b_{Q'}| \rangle_{Q'} \mathbf{1}_{Q'}(x),\quad\text{a.e. } x \in Q. 
\end{align*}
This immediately gives that 
\begin{align}\label{eq:bQAS}
\langle |(b-b_Q)g|\rangle_Q \lesssim \|b\|_{\BMO(\nu)} \langle \A_{\widetilde{\S}}(|g|) \nu \rangle_Q. 
\end{align}
Thus, using \eqref{eq:bQAS}, H\"{o}lder's inequality, \eqref{eq:M-sharp} and \eqref{eq:AS-sharp}, we deduce that 
\begin{align*}
\T_{\S}(b, f, g) &\lesssim \|b\|_{\BMO(\nu)} \sum_{Q \in \S} 
\langle |f| \rangle_{Q} \langle \A_{\widetilde{S}}(|g|) \nu \rangle_{Q}  |Q|
\\
&\lesssim \|b\|_{\BMO(\nu)} \sum_{Q \in \S} 
\Big(\inf_{Q} Mf \Big) \Big(\inf_{Q}M(\A_{\widetilde{\S}}(|g|) \nu)\Big) |E_Q| 
\\
&\le \|b\|_{\BMO(\nu)} \|Mf \cdot M(\A_{\widetilde{\S}}(|g|) \nu)\|_{L^1(\Rn)}
\\
&\le \|b\|_{\BMO(\nu)} \|Mf\|_{L^p(\mu^p)} \|M(\A_{\widetilde{\S}}(|g|) \nu)\|_{L^{p'}(\mu^{-p'})} 
\\
&\lesssim \|b\|_{\BMO(\nu)} [\mu^p]_{A_p}^{\frac{1}{p-1}} [\mu^{-p'}]_{A_{p'}}^{\frac{1}{p'-1}} 
\|f\|_{L^p(\mu^p)} \|\A_{\widetilde{\S}}(|g|)\|_{L^{p'}(\lambda^{-p'})} 
\\
&\lesssim \|b\|_{\BMO(\nu)} \big([\mu^p]_{A_p} [\lambda^p]_{A_p}\big)^{\frac{1}{p-1}} 
\big([\mu^{-p'}]_{A_{p'}} [\lambda^{-p'}]_{A_{p'}}\big)^{\frac{1}{p'-1}} \|f\|_{L^p(\mu^p)} \|g\|_{L^{p'}(\lambda^{-p'})}.  
\end{align*}
Similarly, one has 
\begin{align*}
\T^*_{\S}(b, f, g) &\lesssim \|b\|_{\BMO(\nu)} \|M(\A_{\widetilde{\S}}(|f|) \nu) \cdot Mg\|_{L^1(\Rn)}
\\
&\lesssim \|b\|_{\BMO(\nu)} [\lambda^p]_{A_p}^{\frac{1}{p-1}} 
[\mu^p]_{A_p}^{\max\{1, \frac{1}{p-1}\}} [\lambda^{-p'}]_{A_{p'}}^{\frac{1}{p'-1}} 
\|f\|_{L^p(\mu^p)} \|g\|_{L^{p'}(\lambda^{-p'})} 
\\
&\lesssim \|b\|_{\BMO(\nu)} \big([\mu^p]_{A_p} [\lambda^p]_{A_p}\big)^{\frac{1}{p-1}}   
\big([\mu^{-p'}]_{A_{p'}} [\lambda^{-p'}]_{A_{p'}}\big)^{\frac{1}{p'-1}} \|f\|_{L^p(\mu^p)} \|g\|_{L^{p'}(\lambda^{-p'})}.  
\end{align*}
Collecting the above estimates and \eqref{eq:CbT}, we conclude that 
\begin{align}
\|C_b(T)\|_{L^p(\mu^p) \to L^p(\lambda^p)} 
\lesssim \|b\|_{\BMO(\nu)} \big([\mu^p]_{A_p} [\lambda^p]_{A_p} \big)^{\frac{1}{p-1}} 
\big([\mu^{-p'}]_{A_{p'}} [\lambda^{-p'}]_{A_{p'}}\big)^{\frac{1}{p'-1}}. 
\end{align}

Consequently, \eqref{eq:Bloom} follows at once from \eqref{eq:BMO} and Theorem \ref{thm:TTb} applied to the case $\X_{\sigma}=\Y_{\sigma}=L^p(\Rn)$, $s_1=s_2=\theta_1=\theta_2=p$, $s_3=s_4=\theta_3=\theta_4=p'$, and 
\begin{align*}
\Psi(t_1, t_2, t_3, t_4) = \|b\|_{\BMO(\nu)} (t_1 t_2)^{\frac{1}{p-1}} (t_3 t_4)^{\frac{1}{p'-1}}.  
\end{align*}
The proof is complete. 
\end{proof}

\begin{theorem}\label{thm:TaTb}
For every $p,q \in (1, \infty)$, for every $\mu, \lambda \in A_p$, for every $\b=(b_1, b_2, \ldots,b_k) \in \BMO(\nu) \times \BMO \times \cdots \times \BMO$ with $\nu=(\mu/\lambda)^{1/p}$,  
\begin{align*}
\|\mathbb{T}\|_{L^p(\mu) \to L^p(\lambda)} 
&\lesssim \|b_1\|_{\BMO(\nu)} \prod_{j=2}^k\|b_j\|_{\BMO}  
\big([\mu]_{A_p} [\lambda]_{A_p} \big)^{\max\{1, \frac{1}{p-1}\}} 
\nonumber \\
&\qquad\qquad \times \max\big\{ [\mu]_{A_p}, [\lambda]_{A_p} \big\}^{(k-1)\max\{1, \frac{1}{p-1}\}},  
\end{align*}
for every operator $\mathbb{T} \in \big\{C_{\b}(T),\, |C_{\b}(T)|_q,\, \V_{\rho} \circ C_{\b}(\T),\, C_{\b}(T_{\sigma})\big\}$, where $T \in \w\text{-CZO}$ with $\w \in {\rm Dini}$. 
\end{theorem}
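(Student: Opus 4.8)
The plan is to reduce Theorem~\ref{thm:TaTb} to Corollary~\ref{cor:TTb}: I would first realize each of the four operators as a higher-order commutator $C_{\b}(\widetilde T)$ of a linear or linearizable ``base'' operator $\widetilde T$, then verify the sparse bound \eqref{eq:CbT} for the first-order commutator $C_b(\widetilde T)$, invoke Corollary~\ref{cor:TTb}, and finish by rescaling the weights. For $\mathbb{T}=C_{\b}(T)$ one takes $\widetilde T=T$. For $\mathbb{T}=|C_{\b}(T)|_q$ one takes $\widetilde T$ to be the $\ell^q$-valued extension $\T_q$ of $T$; pulling the $\ell^q$-norm outside the freezing factors yields $|C_{\b}(T)\vec f|_q(x)=\|C_{\b}(\T_q)\vec f(x)\|_{\ell^q}$. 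For $\mathbb{T}=\V_{\rho}\circ C_{\b}(\T)$ one observes that the $\rho$-variation is taken pointwise in the family index while $\prod_j(b_j(x)-b_j(\cdot))$ does not depend on that index, so $\V_{\rho}\circ C_{\b}(\T)=C_{\b}(\V_{\rho}\circ\T)$ with $\widetilde T=\V_{\rho}\circ\T$ linearizable. For $\mathbb{T}=C_{\b}(T_{\sigma})$ one takes $\widetilde T=T_{\sigma}$, the (maximally) truncated operator, again linearizable. In every case $\mathbb{T}=C_{\b}(\widetilde T)$, so it suffices to bound $C_{\b}(\widetilde T)$.

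The input required by Corollary~\ref{cor:TTb} is the pointwise sparse estimate \eqref{eq:CbT} for the \emph{first-order} commutator $C_b(\widetilde T)$, $b\in L^1_{\loc}(\Rn)$. For $\widetilde T=T\in\w\text{-CZO}$ with $\w\in{\rm Dini}$ this is precisely the sparse domination of commutators of Lerner et al. \cite{LOR} already used in the proof of Corollary~\ref{cor:TTb}; for the $\ell^q$-valued extension the same argument runs with $\ell^q$-norms in place of absolute values inside the averages, the kernel bounds being scalar; and for $\V_{\rho}\circ\T$ (with $\rho>2$) and for $T_{\sigma}$ one uses the corresponding known sparse dominations for the $\rho$-variation of, and for the maximal truncations of, Dini $\w\text{-CZO}$ operators, in their commutator form. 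Granting \eqref{eq:CbT} for $\widetilde T$, Corollary~\ref{cor:TTb} applies verbatim in the scalar case, and in the Banach-valued cases after reading $C_b(\widetilde T)f(x)$ as $\|C_b(\widetilde T)f(x)\|_{\B}$ and testing against non-negative $g$ (which is all that its proof uses); in either case it produces the bound \eqref{eq:Bloom} for $C_{\b}(\widetilde T)$.

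The last step is to rescale. I would apply \eqref{eq:Bloom} to the weight pair $(\mu^{1/p},\lambda^{1/p})$: then $(\mu^{1/p})^{p}=\mu\in A_p$, $(\lambda^{1/p})^{p}=\lambda\in A_p$, $L^p((\mu^{1/p})^{p})=L^p(\mu)$ and $L^p((\lambda^{1/p})^{p})=L^p(\lambda)$, while the auxiliary weight $\mu^{1/p}/\lambda^{1/p}=(\mu/\lambda)^{1/p}=\nu$ coincides with the one in the statement; the powers of $A_p$-constants on the right of \eqref{eq:Bloom} become $([\mu]_{A_p}[\lambda]_{A_p})^{\max\{1,\,1/(p-1)\}}\max\{[\mu]_{A_p},[\lambda]_{A_p}\}^{(k-1)\max\{1,\,1/(p-1)\}}$, which is exactly the claimed estimate. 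I expect the only genuinely non-routine ingredient to be the sparse bound \eqref{eq:CbT} for the variational and truncated commutators --- for the plain $\w\text{-CZO}$ case and its $\ell^q$-extension it is available in the literature --- while everything else amounts to unwinding the change of variables and quoting Corollary~\ref{cor:TTb}. Note that one cannot bypass this by feeding the base operators into Theorem~\ref{thm:TTb} directly, since $\widetilde T\colon L^p(\mu)\to L^p(\lambda)$ fails for general $\mu,\lambda\in A_p$; passing through the commutator's sparse bound is essential.
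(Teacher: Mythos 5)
Your proposal matches the paper's proof: the paper likewise reduces to Corollary~\ref{cor:TTb} by verifying the bilinear sparse bound \eqref{eq:CbT} for the first-order commutator of each base operator, citing \cite[Theorem~1.4]{LOR}, \cite[Theorem~1.11]{CLPR}, \cite[Proposition~3.3]{WWZ} and \cite[Proposition~4.1]{CXY} respectively, and then (implicitly) rescaling the weight pair to pass from \eqref{eq:Bloom} to the stated normalization. One small slip: $T_{\sigma}$ in this theorem is the pseudo-differential operator of \eqref{eq:def-Ta}, not a maximal truncation, though this does not affect your argument.
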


The definitions of operators above can be found in Section \ref{sec:app}. Let us see how Corollary \ref{cor:TTb} implies Theorem \ref{thm:TaTb}. Indeed, it is enough to show that each operator $\mathbb{T}$ above verifies \eqref{eq:CbT}. This will follow from the point sparse domination obtained in \cite[Theorem~1.4]{LOR}, \cite[Theorem~1.11]{CLPR}, \cite[Proposition~3.3]{WWZ} and \cite[Proposition~4.1]{CXY} respectively.  

Next, we apply Theorem \ref{thm:TTb} to establish Bloom type inequality for the fractional integral operator $I_{\alpha}$ ($0<\alpha<n$) given by 
\begin{align*}
I_{\alpha}f(x) := \int_{\Rn} \frac{f(y)}{|x-y|^{n-\alpha}} dy,\quad x \in \Rn.
\end{align*}
Let us see the first order commutator. It was proved in \cite{AMR} that 
\begin{align*}
\|C_b(I_{\alpha})\|_{L^p(\mu^p) \to L^q(\lambda^q)} \lesssim \Xi(\mu, \lambda) \|b\|_{\BMO(\nu)}, 
\end{align*}
where 
\begin{align*}
\Xi(\mu, \lambda) 
= [\mu]_{A_{p,q}}^{(1-\frac{\alpha}{n}) \max\{1, \frac{p'}{q}\}} [\lambda^q]_{A_q}^{\max\{1, \frac{1}{q-1}\}}
+ [\lambda]_{A_{p,q}}^{(1-\frac{\alpha}{n}) \max\{1, \frac{p'}{q}\}} [\mu^p]_{A_p}^{\max\{1, \frac{1}{p-1}\}}. 
\end{align*}
In view of \eqref{eq:wApq} and \eqref{eq:ppqq}, we rewrite $\Xi(\mu, \lambda)$ as 
\begin{align*}
\Xi(\mu, \lambda) = \Psi \Big([\mu^p]_{A_p}, [\lambda^q]_{A_{1+\frac{q}{p'}}}, 
[\mu^{-p'}]_{A_{1+\frac{p'}{q}}}, \lambda^{-q'}]_{A_{q'}}\Big), 
\end{align*}
where 
\begin{align}\label{eq:tttt}
\Psi(t_1, t_2, t_3, t_4) := t_1^{\max\{1, \frac{1}{p-1}\}} t_2^{(1-\frac{\alpha}{n}) \max\{1, \frac{p'}{q}\}} 
+ t_3^{(1-\frac{\alpha}{n}) \max\{1, \frac{q}{p'}\}} t_4^{\max\{1, \frac{1}{q'-1}\}}. 
\end{align}
Therefore, the result below immediately follows from Theorem \ref{thm:TTb} applied to the case $\X_{\sigma}=L^p(\Rn)$, $\Y_{\sigma}=L^q(\Rn)$, $(s_1, s_2, s_3, s_4)=(p, 1+\frac{q}{p'}, 1+\frac{p'}{q}, q')$, $(\theta_1, \theta_2, \theta_3, \theta_4)=(p, q, p', q')$, and the function $\Psi$ defined in \eqref{eq:tttt}.  

\begin{theorem}\label{thm:CbIa}
Let $0<\alpha<n$ and $1<p<q<\infty$ with $\frac{1}{p}-\frac{1}{q}=\frac{\alpha}{n}$. Then for every $\mu, \lambda \in A_{p,q}$ and for every $\b=(b_1, b_2, \ldots,b_k) \in \BMO(\nu) \times \BMO \times \cdots \times \BMO$ with $\nu=\mu/\lambda$, 
\begin{multline*}
\|C_{\b}(I_{\alpha})\|_{L^p(\mu^p) \to L^q(\lambda^q)} 
\lesssim \|b_1\|_{\BMO(\nu)} \prod_{j=2}^k\|b_j\|_{\BMO} \,  \Xi(\mu, \lambda) 
\\
\times \max\Big\{[\mu]_{A_{p, q}}^{\max\{1, \frac{p'}{q}\}}, 
[\mu^p]_{A_p}^{\max\{1, \frac{1}{p-1}\}}, 
[\lambda]_{A_{p, q}}^{\max\{1, \frac{p'}{q}\}}, 
[\lambda^{q}]_{A_q}^{\max\{1, \frac{1}{q-1}\}} \Big\}^{k-1}. 
\end{multline*}
\end{theorem}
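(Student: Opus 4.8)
The plan is to deduce the estimate from Theorem~\ref{thm:TTb}, using as the base operator the first-order commutator $S:=C_{b_1}(I_\alpha)$ rather than $I_\alpha$ itself, since the input available is the Bloom-type bound of \cite{AMR} for the first commutator. Fix $\nu=\mu/\lambda$ and $b_1\in\BMO(\nu)$ (here $\mu,\lambda$ are the weights of the statement, but what follows is uniform over all pairs of $A_{p,q}$ weights with this fixed $\nu$). The operator $S$ is linear, and the key algebraic observation is that $C_{(b_2,\dots,b_k)}(S)=C_{\b}(I_\alpha)$, so it suffices to bound the $(k-1)$-th order commutator of $S$; for $k=1$ the desired inequality is exactly the \cite{AMR} estimate (the factor $\max\{\cdots\}^{k-1}$ being $1$), so from now on $k\ge2$.

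Next I recast the \cite{AMR} bound
\[
\|C_{b_1}(I_\alpha)f\|_{L^q(\lambda^q)}\le C\,\|b_1\|_{\BMO(\nu)}\,\Xi(\mu,\lambda)\,\|f\|_{L^p(\mu^p)}
\]
in the form of hypothesis \eqref{eq:TTb-1}. Take $\X_{\sigma}=L^p(\Rn)$, $\Y_{\sigma}=L^q(\Rn)$ (so $\sigma\equiv1$ and $\|(Tf)\lambda\|_{\Y_{\sigma}}=\|Tf\|_{L^q(\lambda^q)}$, $\|f\mu\|_{\X_{\sigma}}=\|f\|_{L^p(\mu^p)}$), parameters $(s_1,s_2,s_3,s_4)=(p,\,1+\tfrac q{p'},\,1+\tfrac{p'}q,\,q')$ and $(\theta_1,\theta_2,\theta_3,\theta_4)=(p,q,p',q')$, and $\Psi$ as in \eqref{eq:tttt} (with the constant $\|b_1\|_{\BMO(\nu)}$ absorbed, which preserves monotonicity in each variable). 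The membership condition $(\mu^{\theta_1},\lambda^{\theta_2},\mu^{-\theta_3},\lambda^{-\theta_4})\in A_{s_1}\times A_{s_2}\times A_{s_3}\times A_{s_4}$, namely $\mu^p\in A_p$, $\lambda^q\in A_{1+q/p'}$, $\mu^{-p'}\in A_{1+p'/q}$, $\lambda^{-q'}\in A_{q'}$, is---by \eqref{eq:wApq}, \eqref{eq:ppqq} and the duality $[w^{1-s'}]_{A_{s'}}=[w]_{A_s}^{s'-1}$---exactly equivalent to $\mu,\lambda\in A_{p,q}$; and, as recorded just before the statement, $\Xi(\mu,\lambda)=\Psi([\mu^p]_{A_p},[\lambda^q]_{A_{1+q/p'}},[\mu^{-p'}]_{A_{1+p'/q}},[\lambda^{-q'}]_{A_{q'}})$.

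Now apply Theorem~\ref{thm:TTb} to $S$ and the tuple $(b_2,\dots,b_k)\in\mathcal{BMO}^{k-1}$. Its conclusion bounds $\|(C_{\b}(I_\alpha)f)\lambda\|_{L^q}$ by a product of: a constant depending on $n,p,q,k$ raised to the power $k-1$; the factor $\mathcal{K}^{k-1}$, where $\mathcal{K}=\max\{[\mu^p]_{A_p}^{r_1},[\lambda^q]_{A_{1+q/p'}}^{r_2},[\mu^{-p'}]_{A_{1+p'/q}}^{r_3},[\lambda^{-q'}]_{A_{q'}}^{r_4}\}$ and $r_i=\max\{1,1/(s_i-1)\}$; the value of $\Psi$ at the four points $2^{t_i}$ times the respective $A_{s_i}$-constants; the factor $\|b_1\|_{\BMO(\nu)}\prod_{j=2}^k\|b_j\|_{\mathcal{BMO}}$; and $\|f\mu\|_{L^p}$. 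It remains to simplify: since $\Psi$ is a finite sum of monomials, $\Psi(2^{t_1}[\mu^p]_{A_p},\dots)\lesssim\Psi([\mu^p]_{A_p},\dots)=\Xi(\mu,\lambda)$; by \eqref{eq:BMO} one replaces $\|\cdot\|_{\mathcal{BMO}}$ by $\|\cdot\|_{\BMO}$; and using \eqref{eq:wApq} and the duality once more, $[\lambda^q]_{A_{1+q/p'}}=[\lambda]_{A_{p,q}}$, $[\mu^{-p'}]_{A_{1+p'/q}}=[\mu]_{A_{p,q}}^{p'/q}$ and $[\lambda^{-q'}]_{A_{q'}}=[\lambda^q]_{A_q}^{1/(q-1)}$, so that (with $r_1=\max\{1,\tfrac1{p-1}\}$, $r_2=\max\{1,\tfrac{p'}q\}$, $r_3=\max\{1,\tfrac q{p'}\}$, $r_4=\max\{1,q-1\}$) the four entries of $\mathcal{K}$ become $[\mu^p]_{A_p}^{\max\{1,1/(p-1)\}}$, $[\lambda]_{A_{p,q}}^{\max\{1,p'/q\}}$, $[\mu]_{A_{p,q}}^{\max\{1,p'/q\}}$, $[\lambda^q]_{A_q}^{\max\{1,1/(q-1)\}}$; hence $\mathcal{K}^{k-1}$ is precisely the maximum appearing in the statement. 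Recalling $\|C_{\b}(I_\alpha)f\|_{L^q(\lambda^q)}=\|(C_{\b}(I_\alpha)f)\lambda\|_{L^q}$ and $\|f\|_{L^p(\mu^p)}=\|f\mu\|_{L^p}$ finishes the argument.

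The only genuinely delicate point is this closing bookkeeping: one must check that the abstract exponents $r_i=\max\{1,1/(s_i-1)\}$ delivered by Theorem~\ref{thm:TTb}, after the substitutions coming from \eqref{eq:wApq}, reproduce verbatim the exponents occurring in $\Xi$ and in the $\max$-factor of the statement---everything else being a transcription of notation. The conceptual choice to stress is that the operator fed into Theorem~\ref{thm:TTb} must be the first-order commutator $C_{b_1}(I_\alpha)$ and not $I_\alpha$, because it is the commutator---not $I_\alpha$ itself---that enjoys a two-weight bound valid for the full class $\{\mu,\lambda\in A_{p,q}:\mu/\lambda=\nu\}$ with the relevant quantitative constants.
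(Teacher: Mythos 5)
Your proposal is correct and follows essentially the same route as the paper: the paper also deduces Theorem \ref{thm:CbIa} by feeding the \cite{AMR} Bloom bound for the first-order commutator $C_{b_1}(I_\alpha)$ into Theorem \ref{thm:TTb} with $\X_\sigma=L^p(\Rn)$, $\Y_\sigma=L^q(\Rn)$, $(s_1,s_2,s_3,s_4)=(p,1+\tfrac{q}{p'},1+\tfrac{p'}{q},q')$, $(\theta_1,\theta_2,\theta_3,\theta_4)=(p,q,p',q')$ and $\Psi$ as in \eqref{eq:tttt}, the remaining $k-1$ symbols supplying the $\max\{\cdots\}^{k-1}$ factor. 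Your bookkeeping of the exponents $r_i$ via \eqref{eq:wApq} and $A_p$-duality is accurate, and your explicit identification $C_{(b_2,\dots,b_k)}(C_{b_1}(I_\alpha))=C_{\b}(I_\alpha)$ makes precise a step the paper leaves implicit.
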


\section{Applications}\label{sec:app}
The purpose of this section is to present some applications of the extrapolation theorems established above. We will see that by means of extrapolation, many known results can be extended to the general Banach function spaces. Before doing that, we recall the definitions and notation of some operators. 

Let $\w: [0, 1] \to [0, \infty)$ be a modulus of continuity, that is, $\w$ is increasing, subadditive and $\w(0)=0$. We say that $T$ is an {\tt $\w$-Calder\'{o}n-Zygmund operator} (or simply, $T \in \w\text{-CZO}$), if $T$ is $L^2$ bounded and represented as 
\begin{align*}
Tf(x) = \int_{\Rn} K(x, y) f(y) dx, \quad\forall x \not\in \supp(f), 
\end{align*}
where the kernel $K:\Rn \times \Rn \setminus \{(x, y) \in \Rn \times \Rn: x=y\} \to \C$ is a function satisfying the following conditions: 
\begin{itemize}
\item Size condition: $\displaystyle{ |K(x, y)| \lesssim \frac{1}{|x-y|^n}, \quad\forall x \neq y}$,    
\item Smoothness conditions:
\begin{align*}
|K(x, y)-K(x',y)| &\lesssim \w \bigg(\frac{|x-x'|}{|x-y|}\bigg) \frac{1}{|x-y|^n}, 
\quad\text{whenever } |x-x'| \le \frac12 |x-y|, 
\\
|K(x, y)-K(x,y')| &\lesssim \w \bigg(\frac{|y-y'|}{|x-y|}\bigg) \frac{1}{|x-y|^n}, 
\quad\text{whenever } |y-y'| \le \frac12 |x-y|. 
\end{align*} 
\end{itemize}
Throughout this section, whenever $T \in \w\text{-CZO}$, we always assume that $\w$ satisfies the Dini condition (or, $\w \in \text{Dini}$), which means that $\|\w\|_{{\rm Dini}}:=\int_{0}^{1}\w(t) \frac{dt}{t}<\infty$. An example of Dini condition is $\w(t)=t^{\alpha}$ with $\alpha>0$. In this case, we say that $T$ is a {\tt standard Calder\'{o}n-Zygmund operator}, or $T \in \alpha\text{-CZO}$. 

Given an $\w$-Calder\'{o}n-Zygmund operator $T$, we define its truncated singular integral by 
\begin{align*}
T_{\epsilon}f(x) := \int_{|x-y|>\epsilon} K(x, y) f(y) dy, 
\end{align*}
Then for $\rho>2$, the {\tt $\rho$-variation operator} for the families of operators $\T:=\{T_{\epsilon}\}_{\epsilon>0}$ and $\T_b:=C_b(\T)=\{T_{b, \epsilon}=C_b(T_{\epsilon})\}_{\epsilon>0}$ are defined as 
\begin{align}
\label{eq:VT} \V_{\rho}(\T f)(x) &:=\sup_{\{\epsilon_j\}\downarrow 0} \bigg(\sum_{j=1}^{\infty} 
|T_{\epsilon_{j+1}}f(x) - T_{\epsilon_j}f(x)|^{\rho} \bigg)^{\frac{1}{\rho}}, 
\\
\label{eq:VTb} \V_{\rho}(\T_b f)(x) &:=\sup_{\{\epsilon_j\}\downarrow 0} \bigg(\sum_{j=1}^{\infty} 
|T_{b, \epsilon_{j+1}}f(x) - T_{b, \epsilon_j}f(x)|^{\rho} \bigg)^{\frac{1}{\rho}}, 
\end{align}
where the supremum is taken over all sequences $\{\epsilon_j\}$ decreasing to zero. 

Given a symbol $\sigma$, the {\tt pseudo-differential operator} $T_{\sigma}$ is defined by
\begin{align}\label{eq:def-Ta}
T_{\sigma}f(x) = \int_{\Rn} \sigma(x,\xi) e^{2\pi i x \cdot \xi} \widehat{f}(\xi) d\xi, 
\end{align}
where the Fourier transform $\widehat{f}$ of the function $f$. Given $m \in \R$ and $\varrho,\delta \in [0, 1]$, a smooth function $\sigma$ on $\Rn \times \Rn$ belongs to {\tt H\"{o}rmander class} $S_{\varrho,\delta}^m$ if for each triple of multi-indices $\alpha$ and $\beta$ there exists a constant $C_{\alpha,\beta}$ such that 
\begin{equation*}
\big| \partial_{x}^{\alpha} \partial_{\xi}^{\beta} \sigma(x, \xi) \big| 
\leq C_{\alpha,\beta} (1+|\xi|)^{m-\rho |\beta| + \delta|\alpha|}.
\end{equation*}

Let us introduce two kinds of more singular operators. Recall that the {\tt rough singular integral} $T_{\Omega}$ is defined by 
\begin{equation}\label{def:TO}
T_{\Omega}f(x):={\rm p.v. } \int_{\Rn} \frac{\Omega(y')}{|y|^n} f(x-y) dy, 
\end{equation}
where $\Omega \in L^{\infty}(\Sn)$ and $\int_{\Sn}\Omega(\xi) d\sigma(\xi)=0$. On the other hand, the {\tt Bochner-Riesz multiplier} is defined by 
\begin{align}\label{def:BR}
\widehat{B_{\delta}f}(\xi) := (1-|\xi|^2)_{+}^{\delta} \widehat{f}(\xi).   
\end{align}

Next, we introduce a class of square functions. A function $K$ defined on $\Rn \times \Rn$ is said to be in ${\rm LP}$ if there exist $\beta>0$ and $\gamma>0$ such that the following conditions holds:
\begin{itemize}
\item Size condition:
$\displaystyle{|K(x, y)| \lesssim \frac{1}{(1+|x-y|)^{n+\beta}}, \quad\forall x,y \in \Rn}$, 
\item Smoothness conditions:
\begin{align*}
|K(x, y)-K(x',y)| &\lesssim \frac{|x-x'|^{\gamma}}{(1+|x-y|)^{n+\beta+\gamma}}, 
\quad\text{whenever } |x-x'| \le \frac12 |x-y|, 
\\
|K(x, y)-K(x,y')| &\lesssim \frac{|y-y'|^{\gamma}}{(1+|x-y|)^{n+\beta+\gamma}}, 
\quad\text{whenever } |y-y'| \le \frac12 |x-y|. 
\end{align*} 
\end{itemize}
Given a function $K \in {\rm LP}$, we always denote $\displaystyle{K_t f(x) := \frac{1}{t^n} \int_{\Rn} K \Big(\frac{x}{t}, \frac{y}{t}\Big) f(y) dy}$. Then for $\alpha \ge 0$ and $\lambda>2$, we define the square functions as 
\begin{align}
\label{def:S} S_{\alpha}(f)(x) &:= \bigg(\iint_{\Gamma_{\alpha}(x)} |K_t f(x)|^2 \frac{dydt}{t^{n+1}}\bigg)^{\frac12}, 
\\
\label{def:g} g_{\lambda}^{*}(f)(x) &:= \bigg(\iint_{\R^{n+1}_{+}} \Big(\frac{t}{t+|x-y|}\Big)^{n\lambda} |K_t f(y)|^2 \frac{dydt}{t^{n+1}}\bigg)^{\frac12}, 
\end{align}
where $\Gamma_{\alpha}(x)=\{(y, t) \in \R^{n+1}_{+}: |x-y|<\alpha t\}$.

\subsection{Local decay estimates}\label{sec:local} 
In this subsection, let us see how to use Theorem \ref{thm:Aq} to establish local decay estimates: 
\begin{align*}
\psi_t(\mathfrak{T}, \mathfrak{M}) 
:= \sup_{Q:\text{ cube } \subset \Rn} \sup_{\substack{f \in L_c^{\infty}(\Rn) \\ \supp(f) \subset Q}} 
|Q|^{-1} |\{x \in Q: |\mathfrak{T}f(x)| > t\, \mathfrak{M}f(x)\}|, \quad t>0, 
\end{align*} 
where $\mathfrak{T}$ is a singular operator and $\mathfrak{M}$ is an appropriate maximal operator. 

Let $T$ be the $\w$-Calder\'{o}n-Zygmund operator with $\w \in {\rm Dini}$. Recall that the $\rho$-variation operators $\V_{\rho} \circ \T$ and $\V_{\rho} \circ \T_b$ are defined in \eqref{eq:VT} and \eqref{eq:VTb}. Fix a cube $Q$ and $f \in L_c^{\infty}(\Rn)$ with $\supp(f) \subset Q$. With these notation in hand, we have for every $1\le q<\infty$ and for every $w \in A_q$, 
\begin{align}
\label{eq:TL1} \|Tf\|_{L^1(Q, w)} &\leq c_{n,q} [w]_{A_q} \|Mf\|_{L^1(Q, w)}, 
\\
\label{eq:VTL1} \|\V_{\rho}(\T f)\|_{L^1(Q, w)} &\leq c_{n,q} [w]_{A_q} \|Mf\|_{L^1(Q, w)}, 
\\
\label{eq:TaL1} \|T_{\sigma}f\|_{L^1(Q, w)} &\leq c_{n,q} [w]_{A_q} \|Mf\|_{L^1(Q, w)}, 
\\
\label{eq:SL1} \|S_{\alpha}f\|_{L^2(Q, w)} &\leq c_{n,q} [w]_{A_q}^{\frac12} \|Mf\|_{L^2(Q, w)}, 
\\
\label{eq:gL1} \|g^{*}_{\lambda}f\|_{L^2(Q, w)} &\leq c_{n,q} [w]_{A_q}^{\frac12} \|Mf\|_{L^2(Q, w)}, 
\\
\label{eq:TOL1} \|T_{\Omega}f\|_{L^1(Q, w)} &\leq c_{n,q} [w]_{A_q}^2 \norm{Mf}_{L^1(Q, w)}, 
\\
\label{eq:Bn} \|B_{(n-1)/2}f\|_{L^1(Q, w)} &\leq c_{n,q} [w]_{A_q}^2 \norm{Mf}_{L^1(Q, w)}, 
\\
\label{eq:VTb} \|\V_{\rho}(\T_b f)\|_{L^1(Q, w)} &\leq c_{n,q} \|b\|_{\BMO} [w]_{A_q}^2 \|M^2f\|_{L^1(Q, w)}, 
\\
\label{eq:CbTa} \|C_b(T_{\sigma})f\|_{L^1(Q, w)} &\leq c_{n,q} \|b\|_{\BMO} [w]_{A_q}^2 \|M^2f\|_{L^1(Q, w)}, 
\\
\label{eq:CbTO} \|C_b(T_{\Omega})f\|_{L^1(Q, w)} &\leq c_{n,q} \|b\|_{\BMO} [w]_{A_q}^3 \|M^2f\|_{L^1(Q, w)}, 
\\
\label{eq:TbkL1} \|C_b^k(T) f\|_{L^1(Q, w)} &\leq c_{n,q} \|b\|_{\BMO}^k [w]_{A_q}^{k+1} \|M^{k+1}f\|_{L^1(Q, w)}, 
\end{align}
where $c_{n,q}$ is independent of $Q$, $f$ and $w$. Indeed, using the techniques in \cite{CXY, OPR}, one can show \eqref{eq:TL1}--\eqref{eq:gL1}. But it needs the sharp maximal function control for these operators. This strategy is invalid for $T_{\Omega}$ and $B_{(n-1)/2}$. To circumvent this problem, we present a local version of sparse domination for all the operators above. We only give the proof of \eqref{eq:CbTO} since the other proofs are similar and simpler.  

We only consider the case $w \in A_q$ with $1<q<\infty$. Recall that $f \in L_c^{\infty}(\Rn)$ with $\supp(f) \subset Q$. Modifying the proof of \cite[Theorem~3.1]{Riv}, we obtain that for every $g \in L_c^{\infty}(\Rn)$, for every $b \in L_{\loc}^1(\Rn)$ and for every $s \in (1,\infty)$, there exists a sparse family $\S(Q) \subset \D(Q)$ such that 
\begin{align}\label{eq:bTO} 
|\langle C_b(T_{\Omega})f, g \mathbf{1}_Q\rangle| 
\lesssim s' \big(\Lambda_{Q, s}(b, f, g) +\Lambda^{*}_{Q, s}(b, f, g) \big), 
\end{align}
where 
\begin{align*}
\Lambda_{Q, s}(b, f, g) &:= \sum_{Q' \in \S(Q)} \langle |f| \rangle_{3Q'} \langle |(b-b_{3Q'})g|\rangle_{Q', s} |Q'|,  
\\
\Lambda^*_{Q, s}(b, f, g) &:= \sum_{Q' \in \S(Q)} \langle |(b-b_{3Q'})f| \rangle_{3Q'} \langle |g| \rangle_{Q', s} |Q'|.
\end{align*}  
By $w\in A_q$ and \eqref{eq:RHp}, one has $w \in RH_{r_w}$, where $r_w=1+\frac{1}{2^{n+1+2q}[w]_{A_q}}$. If we set 
$r=1+\frac{1}{2^{n+2+2q}[w]_{A_q}}$ and $s=\frac{r_w}{r}$, then $1<s<2$ and $s' \simeq [w]_{A_q} \simeq r'$. A useful inequality from \cite[Corollary~3.1.8]{G2} is that $\|b-b_{Q'}\|_{p, Q'} \le c_n p \|b\|_{\BMO}$ for all $p>1$. Using these estimates, we have 
\begin{align}\label{eq:TS-Mf}
\Lambda_{Q, s}(b, f, gw) &= \sum_{Q' \in \S(Q)} \langle |f| \rangle_{3Q'} \langle |(b-b_{3Q'})g w|\rangle_{Q',s} |Q'| 
\nonumber\\
&\le \sum_{Q' \in \S(Q)} \langle |f| \rangle_{3Q'} \langle |b-b_{3Q'}| \rangle_{Q', sr'} \langle w\rangle_{Q', sr} \|g\|_{L^{\infty}(\Rn)} |Q'| 
\nonumber\\
&\lesssim \sum_{Q' \in \S(Q)} \langle |f| \rangle_{3Q'} \langle |b-b_{3Q'}| \rangle_{3Q', sr'} \langle w\rangle_{Q', sr} \|g\|_{L^{\infty}(\Rn)} |Q'| 
\nonumber\\
&\lesssim sr' \|b\|_{\BMO} \|g\|_{L^{\infty}(\Rn)} \sum_{Q' \in \S(Q)} \langle |f| \rangle_{3Q'} w(Q') 
\nonumber\\
&\lesssim [w]_{A_q} \|b\|_{\BMO} \|g\|_{L^{\infty}(\Rn)} \sum_{Q' \in \S(Q)} 
\bigg(\fint_{Q'}(Mf)^{\frac12} \mathbf{1}_Q \, dw\bigg)^2 w(Q') 
\nonumber\\
&\lesssim [w]_{A_q}^2 \|b\|_{\BMO} \|Mf\|_{L^1(Q, w)} \|g\|_{L^{\infty}(\Rn)},   
\end{align}
where we used the Carleson embedding theorem from \cite[Theorem~4.5]{HP} and that the collection $\{w(Q')\}_{Q' \in \S(Q)}$ satisfies the Carleson packing condition with the constant $c_n [w]_{A_q}$. Likewise, one has 
\begin{align}\label{eq:TS-ML}
\Lambda^*_{Q, s}(b, f, gw) &= \sum_{Q' \in \S(Q)} \langle |(b-b_{3Q'})f| \rangle_{3Q'} \langle |gw| \rangle_{Q',s} |Q| 
\nonumber\\
&\le \sum_{Q' \in \S(Q)} \|b-b_{3Q'}\|_{\exp L, 3Q'} \|f\|_{L(\log L), 3Q'}  \langle w\rangle_{s} \|g\|_{L^{\infty}(\Rn)} |Q'| 
\nonumber\\
&\lesssim \|b\|_{\BMO} \|g\|_{L^{\infty}(\Rn)} \sum_{Q' \in \S(Q)} \|f\|_{L(\log L), 3Q'} w(Q') 
\nonumber\\
&\lesssim [w]_{A_q} \|b\|_{\BMO} \|M_{L(\log L)}f\|_{L^1(Q, w)} \|g\|_{L^{\infty}(\Rn)}.  
\end{align}
Hence, \eqref{eq:CbTO} follows at once from \eqref{eq:bTO}, \eqref{eq:TS-Mf}, \eqref{eq:TS-ML} and that $s' \simeq [w]_{A_q}$. 

Let us turn our attention to the local decay estimates. 
\begin{theorem}\label{thm:local}
Let $A(t)=t$ or $A$ be a Young function and let $E \subset \Rn$ be a measurable set. Suppose that $\X$ is a RIBFS over $(\Rn, dx)$ with $q_{\X}<\infty$. If for some $q \in (2, \infty)$ and for every $w \in A_q$, 
\begin{align}\label{eq:local-1}
\|Tf\|_{L^1(E, w)} \leq \Psi([w]_{A_q}) \|M_A f\|_{L^1(E, w)}, 
\end{align}
where $\Psi:[1, \infty) \to [1, \infty)$ is an increasing function, then for every $v \in A_1$, 
\begin{align}\label{eq:local-2}
\bigg\| \frac{Tf}{M_Af} \mathbf{1}_E \bigg\|_{\X_v} 
\leq 2 \Psi \big(c_{n,q} \|M_v\|_{\X'_v \to \X'_v} [v]_{A_1}\big) \|\mathbf{1}_E\|_{\X_v}. 
\end{align}
In particular, for every $p \in (1, \infty)$, 
\begin{align}\label{eq:local-3}
\sup_{t>0} \, t\, |\{x \in E: |Tf(x)| > t M_Af(x)\}|^{\frac1p} \le 2 \Psi(C_{n,q} \, p) |E|^{\frac1p}. 
\end{align}
\end{theorem}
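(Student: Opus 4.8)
\textbf{Proof strategy for Theorem~\ref{thm:local}.}
The plan is to deduce \eqref{eq:local-2} directly from the $A_p$ extrapolation Theorem~\ref{thm:Aq}, applied to a suitable family $\F$ of pairs. First I would fix the measurable set $E$ and the Young function $A$ (or $A(t)=t$), and define the family
\[
\F := \bigg\{ \Big(\tfrac{|Tf|}{M_Af}\mathbf{1}_E,\ \mathbf{1}_E\Big) : f \in L_c^{\infty}(\Rn),\ \supp(f)\subset Q \text{ for some cube } Q \subset \Rn \bigg\},
\]
where by convention the first entry is $0$ wherever $M_Af=0$. The point of dividing by $M_Af$ and including $\mathbf{1}_E$ is precisely to package the hypothesis \eqref{eq:local-1} into the form \eqref{eq:Aq-1} with $p_0=q$: indeed, for any $w\in A_q$ and any pair $(F,G)=\big(\tfrac{|Tf|}{M_Af}\mathbf{1}_E,\ \mathbf{1}_E\big)\in\F$, we have
\[
\|F\|_{L^1(w)} = \int_E \frac{|Tf|}{M_Af}\, w\, dx,
\]
but this is not literally \eqref{eq:local-1}; the correct reduction is to instead take the family of pairs $\big(|Tf|\mathbf{1}_E,\ (M_Af)\mathbf{1}_E\big)$ and note that \eqref{eq:local-1} says exactly $\|f\|_{L^1(w)}\le \Psi([w]_{A_q})\|g\|_{L^1(w)}$ for $(f,g)$ in that family, for every $w\in A_q$. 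Thus hypothesis \eqref{eq:Aq-1} of Theorem~\ref{thm:Aq} holds with $p_0=q$ and this $\Psi$.

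Next I would invoke Theorem~\ref{thm:Aq} with $u\equiv 1$ (legitimate since here the weight $u$ in that theorem may be taken to be the constant $1$: the condition $u^{1-q'}\in A_1$ is trivially satisfied) and with the given $v\in A_1\cap L^1_{\loc}(\Rn)$. The conclusion \eqref{eq:Aq-2} then reads
\[
\big\||Tf|\mathbf{1}_E\big\|_{\X_v} \le 4\,\Psi\big(2\|M_v\|_{\X'_v\to\X'_v}[v]_{A_1}\big)\,\big\|(M_Af)\mathbf{1}_E\big\|_{\X_v},
\]
which is not quite \eqref{eq:local-2}. To get the sharper homogeneous-looking bound \eqref{eq:local-2}, I would instead run the extrapolation on the rescaled family obtained by absorbing $M_Af$: fix $f$, set $u := (M_Af)^{-1}$ on the set where it is positive; then $\tfrac{|Tf|}{M_Af} = |Tf|\,u$ and $\mathbf{1}_E = \mathbf{1}_E$. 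By \eqref{eq:MRH} (for $A(t)=t$) or \eqref{eq:MPhiRH} (for a Young function $A$) we have $u=(M_Af)^{-1}\in RH_\infty$ with a dimensional constant, hence $u$ is a fine multiplier; but $u^{1-q'}=(M_Af)^{q'-1}$ need not be in $A_1$. The cleaner route, and the one I would actually follow, is to apply Theorem~\ref{thm:Aq} to the family $\F$ of pairs $\big(\tfrac{|Tf|}{M_Af}\mathbf{1}_E,\ \mathbf{1}_E\big)$ with $u\equiv 1$: hypothesis \eqref{eq:Aq-1} with $p_0=q$ becomes $\int_E \tfrac{|Tf|}{M_Af} w\,dx \le \Psi([w]_{A_q})\, w(E)$ for all $w\in A_q$, which follows from \eqref{eq:local-1} because $\tfrac{|Tf|}{M_Af}\le$ is controlled once one observes that for $w\in A_q$, by \eqref{eq:local-1} applied with the pair $(|Tf|,M_Af)$ one does not directly get the pointwise-normalized bound --- so in fact the honest statement is that \eqref{eq:local-1} is the hypothesis of the form \eqref{eq:Aq-1} for the pair $(|Tf|\mathbf{1}_E, (M_Af)\mathbf{1}_E)$, and Theorem~\ref{thm:Aq} with $u\equiv1$ gives $\||Tf|\mathbf{1}_E\|_{\X_v}\le 4\Psi(c_{n,q}\|M_v\|_{\X_v'\to\X_v'}[v]_{A_1})\|(M_Af)\mathbf{1}_E\|_{\X_v}$; since this must hold uniformly over $f$ supported in a fixed cube, a standard dilation/normalization argument at the level of a single cube upgrades it to the pointwise-normalized form \eqref{eq:local-2} with the constant $2$, using that $v\in A_1$ and the self-improving / homogeneity properties of the quantities involved.

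Finally, for \eqref{eq:local-3} I would specialize \eqref{eq:local-2} to $\X = L^{p}(\Rn)$ (so $q_{\X}=p<\infty$, and $p_{\X}=p$) and $v\equiv 1$; then $\X_v=L^p(\Rn)$, $\X'_v=L^{p'}(\Rn)$, and $\|M\|_{L^{p'}\to L^{p'}}\le c_n p'^{\,1/(p'-1)} = c_n\, p$ by \eqref{eq:M-sharp} with $[1]_{A_{p'}}=1$, while $[1]_{A_1}=1$. Also $\|\mathbf{1}_E\|_{L^p} = |E|^{1/p}$, and the left-hand side of \eqref{eq:local-2} dominates $\sup_{t>0} t\,|\{x\in E: |Tf(x)|>tM_Af(x)\}|^{1/p}$ by Chebyshev. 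Plugging in gives \eqref{eq:local-3} with $C_{n,q}$ absorbing $c_{n,q}$ and the constant from \eqref{eq:M-sharp}, after composing $\Psi$ with the linear factor $c_n p$. The main obstacle I anticipate is the passage from the ``multiplicative'' conclusion $\||Tf|\mathbf{1}_E\|_{\X_v}\lesssim \|(M_Af)\mathbf{1}_E\|_{\X_v}$ that Theorem~\ref{thm:Aq} delivers verbatim to the genuinely pointwise-normalized estimate \eqref{eq:local-2}: this requires working on a single cube $Q$ at a time (where $f$ is supported), exploiting that on such a cube $M_Af$ is bounded below by a constant times $\langle |f|\rangle_{Q}$ type quantities, and using the localization property (6) of Definition~\ref{def:BFS} together with the $A_1$ (hence doubling) character of $v$; keeping the constant down to $2$ is where the bookkeeping is delicate, but it is exactly parallel to the normalization step already carried out in the proof of Theorem~\ref{thm:Aq}.
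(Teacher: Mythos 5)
Your reduction to Theorem~\ref{thm:Aq} is the right idea, but you abandon the correct choice of $u$ at precisely the point where the hypothesis $q\in(2,\infty)$ is meant to be used. The paper's proof takes $u:=(M_Af)^{-1}$, so that the pair fed into Theorem~\ref{thm:Aq} is $(Tf\cdot\mathbf{1}_E,\,M_Af\cdot\mathbf{1}_E)$ and the conclusion \eqref{eq:Aq-2} is \emph{literally} \eqref{eq:local-2}, with no further normalization needed. The condition $u^{1-q'}\in A_1$ that you dismiss as possibly failing in fact holds with a uniform constant: $u^{1-q'}=(M_Af)^{q'-1}=(M_Af)^{1/(q-1)}$, and since $q>2$ the exponent $1/(q-1)$ lies in $(0,1)$, so the Coifman--Rochberg theorem \eqref{eq:CR} (or its Orlicz version \eqref{eq:CR-Phi} when $A$ is a Young function) gives $[(M_Af)^{1/(q-1)}]_{A_1}\le C_{n,q}$ independently of $f$. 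This is the entire reason the theorem assumes $q>2$, and it is the step your argument is missing.

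The fallback you propose instead, namely applying Theorem~\ref{thm:Aq} with $u\equiv1$ to obtain $\|Tf\,\mathbf{1}_E\|_{\X_v}\lesssim\|(M_Af)\,\mathbf{1}_E\|_{\X_v}$ and then upgrading to \eqref{eq:local-2} by ``a standard dilation/normalization argument,'' does not close the gap. The multiplicative estimate and the pointwise-normalized estimate \eqref{eq:local-2} are genuinely different statements: the latter is a local-decay bound comparing $|Tf|$ to $M_Af$ pointwise on $E$, and $M_Af$ is not comparable to a constant on the support cube of $f$, so no single-cube normalization converts one into the other. Once the correct $u$ is used, the remainder of your outline is fine; in particular the deduction of \eqref{eq:local-3} from \eqref{eq:local-2} with $v\equiv1$ and $\X=L^p(\Rn)$ (via Chebyshev) or, as the paper does, $\X=L^{p,\infty}(\Rn)$, using $\|M\|_{L^{p'}(\Rn)\to L^{p'}(\Rn)}\lesssim_n p$, is correct.
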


\begin{proof}
Fix $q>2$. Let $v \in A_1$ and $E \subset \Rn$ be a measurable set. Set $u:=(M_A f)^{-1}$. By \eqref{eq:CR} and \eqref{eq:CR-Phi}, one has $[u^{1-q'}]_{A_1}=[(M_A f)^{\frac{1}{q-1}}]_{A_1} \le C_{n, q}$. Then it follows from \eqref{eq:local-1} that for every $w \in A_q$, 
\begin{align*}
\|Tf\cdot \mathbf{1}_E\|_{L^1(w)} 
= \|Tf\|_{L^1(E, w)} 
\leq \Psi([w]_{A_q}) \|M_A f\|_{L^1(E, w)} 
= \|M_A f \cdot \mathbf{1}_E\|_{L^1(w)}. 
\end{align*}
This verifies \eqref{eq:Aq-1}. Hence, Theorem \ref{thm:Aq} applied to the pair $(Tf\cdot \mathbf{1}_E, M_Af\cdot \mathbf{1}_E)$ gives \eqref{eq:local-2} as desired. Furthermore, observe that for every $p \in (1, \infty)$, $\|M\|_{L^{p', \infty}(\Rn) \to L^{p', \infty}(\Rn)} \lesssim p$ (see \cite[Exercise~2.1.13]{G1} or \eqref{eq:weakM} below). Therefore, \eqref{eq:local-3} is a consequence of \eqref{eq:local-2} for the case $v \equiv 1$ and $\X_v=L^{p, \infty}(\Rn)$. 
\end{proof}

To proceed, fix $p \in (1, \infty)$ chosen later and $t>0$. In view of \eqref{eq:TbkL1},  the hypothesis \eqref{eq:local-1} with $\Psi(t)=c_{n} \|b\|_{\BMO}^k t^{k+1}$ is verified. Thus, the inequality \eqref{eq:local-3} gives that  
\begin{align}\label{eq:TbkM}
\psi_t(C_b^k(T), M^{k+1})  
\leq 2 \big(c_{n,k} \|b\|_{\BMO}^k t^{-1} p^{k+1} \big)^p. 
\end{align} 
If $t>t_0:=c_{n,k} e\|b\|_{\BMO}^k$, pick $p \in (1, \infty)$ such that $t=c_{n,k} e\|b\|_{\BMO}^k p^{k+1}$. If we denote $\alpha=(c_{n,k}e)^{-1}$, then it follows from \eqref{eq:TbkM} that 
\begin{align}\label{eq:TbkM-1}
\psi_t(C_b^k(T), M^{k+1})  \le 2 e^{-p} = 2e^{-(\alpha t/\|b\|_{\BMO}^k)^{\frac{1}{k+1}}}.  
\end{align} 
If $0<t<t_0$, then $p \in (1, \infty)$ can be chosen as an arbitrary number and by definition, 
\begin{align}\label{eq:TbkM-2}
\psi_t(C_b^k(T), M^{k+1})  \le 1=e \cdot e^{-(\alpha t_0/\|b\|_{\BMO}^k)^{\frac{1}{k+1}}}
\le e \cdot e^{-(\alpha t/\|b\|_{\BMO}^k)^{\frac{1}{k+1}}}.  
\end{align}  
Summing \eqref{eq:TbkM-1} and \eqref{eq:TbkM-2} up, we obtain 
\begin{align*}
\psi_t(C_b^k(T), M^{k+1}) \lesssim e^{-(\alpha t/\|b\|_{\BMO}^k)^{\frac{1}{k+1}}},\quad\forall k \ge 1.  
\end{align*}  
A similar argument yields that 
\begin{align*}
&\psi_t(T, M) \lesssim e^{-\alpha t},\qquad\quad  
\psi_t(S_{\alpha}, M) \lesssim e^{-\alpha t^2}, \quad 
\psi_t(B_{(n-1)/2}, M) \lesssim e^{-\alpha t^{\frac12}}, 
\\
&\psi_t(\V_{\rho}\circ T, M) \lesssim e^{-\alpha t}, \quad 
\psi_t(g_{\lambda}^{*}, M) \lesssim e^{-\alpha t^2}, \quad \, 
\psi_t(\V_{\rho}\circ \T_b, M^2) \lesssim e^{(-\alpha t/\|b\|_{\BMO})^{\frac12}}, 
\\
&\psi_t(T_{\sigma}, M) \lesssim e^{-\alpha t}, \qquad \,  \, \, 
\psi_t(T_{\Omega}, M) \lesssim e^{-\alpha t^{\frac12}},\quad 
\psi_t(C_b(T_{\sigma}), M^2) \lesssim e^{(-\alpha t/\|b\|_{\BMO})^{\frac12}}, 
\\
&\psi_t(C_b(T_{\Omega}), M^2) \lesssim e^{(-\alpha t/\|b\|_{\BMO})^{\frac13}}. 
\end{align*}

\subsection{Coifman-Fefferman inequalities}\label{sec:CF}
To simplify notation, we set 
\begin{align}\label{eq:T1T2} 
\mathbb{T}_1 \in\{T, \V_{\rho}\circ \T, T_{\sigma}, T_{\Omega}, B_{(n-1)/2}\}, \quad
\mathbb{T}_2 \in\{C_b(T), \V_{\rho}\circ \T_b, C_b(T_{\sigma}), C_b(T_{\Omega})\}, 
\end{align}
where $T$ is an $\w$-Calder\'{o}n-Zygmund operator with $\w \in \text{Dini}$.

\begin{theorem}\label{thm:TTT}
Let $\Phi(t)=t$ or $\Phi$ be a Young function with $1<i_{\Phi} \le I_{\Phi}<\infty$. Suppose that $\X$ is a RIBFS over $(\Rn, dx)$ such that $q_{\X}<\infty$. Then for every $u \in RH_{\infty}$ and for every $v \in A_{\infty}$,  
\begin{align}\label{eq:TTT} 
\|\Phi(\mathbb{T}_i f \cdot u)\|_{\X_v} \lesssim \|\Phi(M^i f \cdot u)\|_{\X_v}, \quad i=1,2. 
\end{align} 
\end{theorem}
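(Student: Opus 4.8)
The plan is to obtain \eqref{eq:TTT} as a direct application of the $A_\infty$-to-$A_\infty$ extrapolation machinery developed in Section \ref{sec:BFS}, together with the one-weight Coifman--Fefferman estimates that are already available for the operators $\mathbb{T}_1$ and $\mathbb{T}_2$. The starting point is the classical weighted inequality: for each fixed $i\in\{1,2\}$ there is some exponent $p_0\in(0,\infty)$ (one may take $p_0=1$) such that
\begin{equation*}
\|\mathbb{T}_i f\|_{L^{p_0}(w)} \le C\,\|M^i f\|_{L^{p_0}(w)}, \qquad \forall\, w\in A_\infty,
\end{equation*}
where $M^1=M$ and $M^2=M^2$. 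For $\mathbb{T}_1\in\{T,\V_\rho\circ\T, T_\sigma, T_\Omega, B_{(n-1)/2}\}$ this is the well-known Coifman--Fefferman inequality (it follows, e.g., from the pointwise sparse domination recorded in Section \ref{sec:app} together with the $A_\infty$ bound for sparse operators, or directly from the references \cite{CXY, OPR, CLPR, L10, LPRR}); for $\mathbb{T}_2\in\{C_b(T),\V_\rho\circ\T_b, C_b(T_\sigma), C_b(T_\Omega)\}$ the analogous estimate with $M^2$ on the right-hand side is the corresponding commutator version. I would set the extrapolation family to be
\begin{equation*}
\F_i := \big\{(|\mathbb{T}_i f|,\, M^i f): f\in L^\infty_c(\Rn)\big\},
\end{equation*}
so that the hypothesis \eqref{eq:AiAi-1} (equivalently \eqref{eq:ModAi-1}) is satisfied for $\F_i$ with this $p_0$.

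Next I would invoke Corollary \ref{cor:BFSMod}, which is precisely the hybrid of Theorem \ref{thm:AiAi} and Theorem \ref{thm:ModAi} tailored to this situation: given a RIBFS $\X$ over $(\Rn,dx)$ with $q_{\X}<\infty$, and given $\Phi(t)=t$ or a Young function $\Phi$ with $1<i_\Phi\le I_\Phi<\infty$, the validity of the one-weight estimate for all $w\in A_\infty$ upgrades to
\begin{equation*}
\|\Phi(Fu)\|_{\X_v} \le C\,\|\Phi(Gu)\|_{\X_v}, \qquad (F,G)\in\F,
\end{equation*}
for every $u\in RH_\infty$ and every $v\in A_\infty\cap L^1_{\loc}(\Rn)$. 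Applying this with $\F=\F_i$, $F=|\mathbb{T}_i f|$, $G=M^i f$ yields exactly \eqref{eq:TTT}. The role of the hypotheses on $\X$ and $\Phi$ is to guarantee, via Lemma \ref{lem:MwcXw}\eqref{list-4} on the space $\X^r$, that the weighted maximal operator $M_v$ has the boundedness needed to run the Rubio de Francia iteration inside the proofs of Theorems \ref{thm:AiAi} and \ref{thm:ModAi}; the condition $q_{\X}<\infty$ ensures $p_{(\X^r)'}>1$, and the Boyd-index condition $1<i_\Phi\le I_\Phi<\infty$ (i.e.\ $\Phi,\bar\Phi\in\Delta_2$) is what makes the modular Rubio de Francia algorithm converge.

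The only genuinely nontrivial point — and the step I would expect to be the main obstacle — is verifying the \emph{seed} one-weight estimate $\|\mathbb{T}_i f\|_{L^{p_0}(w)}\le C\|M^i f\|_{L^{p_0}(w)}$ for \emph{all} $w\in A_\infty$, uniformly over the various operators in the lists \eqref{eq:T1T2}. For the Calderón--Zygmund-type members this is classical, but for the rougher operators $T_\Omega$, $B_{(n-1)/2}$, and for the commutators $C_b(T_\Omega)$, $C_b(T_\sigma)$, one needs the more recent sparse-domination results; I would cite the pointwise/bilinear sparse bounds already quoted in Section \ref{sec:app} (and the local versions \eqref{eq:TL1}--\eqref{eq:TbkL1}) and deduce the global $A_\infty$ Coifman--Fefferman inequality from the $A_\infty$ boundedness of sparse operators $\A_\S^\gamma$ and $\T_\S$. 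Once that seed estimate is in hand for each operator, the passage to \eqref{eq:TTT} is purely formal via Corollary \ref{cor:BFSMod}, with no further analysis required; one simply checks in each case that $\F_i$ is an admissible extrapolation family (e.g.\ restricting to $f\in L^\infty_c$ so all quantities are finite, and then using density/monotone convergence, exactly as in the proofs of Theorems \ref{thm:Aq} and \ref{thm:ModAi}).
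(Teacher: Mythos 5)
Your proposal is correct and follows essentially the same route as the paper: reduce \eqref{eq:TTT} to the one-weight Coifman--Fefferman estimate $\|\mathbb{T}_i f\|_{L^{p_0}(w)}\lesssim\|M^if\|_{L^{p_0}(w)}$ for all $w\in A_\infty$, verify that seed estimate via the sparse dominations already discussed in Section~\ref{sec:app} (with Theorem~{\bf B} handling the passage from $p_0=1$ to all $p$), and then apply Corollary~\ref{cor:BFSMod} to upgrade to weighted BFS and modular form. The paper's proof is exactly this, stated more tersely.
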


\begin{proof}
By Corollary \ref{cor:BFSMod}, the estimate \eqref{eq:TTT} follows from the following 
\begin{align}\label{eq:TMAi} 
\|\mathbb{T}_i f\|_{L^p(w)} \lesssim \|M^if\|_{L^p(w)}, \quad\forall p \in (0, \infty),\, \forall w \in A_{\infty}, \quad i=1,2, 
\end{align}
where the implicit constants only depend on $n$, $p$ and $[w]_{A_{\infty}}$. Hence, it suffices to verify \eqref{eq:TMAi}. By means of sparse dominations aforementioned for $\mathbb{T}_1$ and $\mathbb{T}_2$, a standard argument will derive \eqref{eq:TMAi} for $p=1$. Then using Theorem {\bf B} in Section \ref{sec:BFS}, we obtain \eqref{eq:TMAi} for all $p \in (0, \infty)$.  
\end{proof}

\begin{remark}
By \eqref{eq:MRH}, we see that $u:=(Mw)^{-\mu/p} \in RH_{\infty}$ for all $1<p<\infty$ and $\mu>0$. Then taking $v \equiv 1$ and $\X=L^p(\Rn)$, we derive the result for the $\alpha$-CZO in \cite[Theorem~1.7]{L10}. If $u \equiv 1$ and $v \in A_{\infty}$,  then Theorem \ref{thm:TTT} applied to $\X=L^p(\Rn)$ and $\X=L^{p,\infty}(\Rn)$ yields the weighted inequalities in \cite[Corollary~1.2]{LPRR}. By the same reason, Theorem \ref{thm:TTT} gives the estimates that coincide with \cite[Theorems~1.9, 1.10]{CLPR} for $\w$-CZO $T$ and its commutator in the case $w \in A_{\infty}$. 
\end{remark}

We are going to present another type of Coifman-Fefferman inequalities. Note that by Coifmann and Rochberg theorem \eqref{eq:CR}, we get $v:=M_r w \in A_1$ for every weight $w$ and every $r>1$. Thus, invoking Theorem \ref{thm:AA} applied to $u=v=M_r w$, Theorem \ref{thm:Mvr} applied to $u=v=M_{A_p} w$, \eqref{eq:MPhiRH} and \eqref{eq:TMAi}, we obtain the weighted estimates below.  
\begin{theorem}
Let $\X$ be a RIBFS over $(\Rn, dx)$ with $q_{\X}<\infty$. Then for every weight $w$ and for every $r>1$,   
\begin{align}\label{eq:TfMw-1} 
\norm{\frac{\mathbb{T}_i f}{M_r w}}_{\X(M_r w)} 
\lesssim \norm{\frac{M^i f}{M_r w}}_{\X(M_rw)}, \quad i=1,2. 
\end{align} 
\end{theorem}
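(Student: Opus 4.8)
The plan is to deduce the statement from the $A_\infty$ extrapolation machinery already developed, exactly as the paragraph preceding the theorem suggests. The starting point is the scalar Coifman--Fefferman estimate \eqref{eq:TMAi}, namely $\|\mathbb{T}_i f\|_{L^{p_0}(w_0)} \lesssim \|M^i f\|_{L^{p_0}(w_0)}$ for all $p_0\in(0,\infty)$ and all $w_0\in A_\infty$, with implicit constant depending only on $n$, $p_0$ and $[w_0]_{A_\infty}$; this was established in the proof of Theorem~\ref{thm:TTT} via the known pointwise sparse domination of $\mathbb{T}_1$ and $\mathbb{T}_2$ together with Theorem~\textbf{B}. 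So I may take as given that the family $\F=\{(|\mathbb{T}_i f|,\,M^i f) : f\in L^\infty_c(\Rn)\}$ satisfies the hypothesis \eqref{eq:AA-1}/\eqref{eq:Mvr-1}-type estimate at exponent $p_0$ for all $w_0\in A_\infty$.

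Next I would fix a weight $w$ and $r>1$ and set $v := M_r w = M(w^r)^{1/r}$. By the Coifman--Rochberg theorem \eqref{eq:CR} (applied to $w^r$ and exponent $\delta=1/r\in(0,1)$) we have $v\in A_1$ with $[v]_{A_1}\le c_{n,r}$, in particular $v\in A_\infty\cap L^1_{\loc}(\Rn)$. Then I apply Theorem~\ref{thm:AA} with the choice $u=v=M_r w\in A_1$ (the hypothesis there allows ``$u\in A_1$ and $v\in A_{\infty}\cap L^1_{\loc}$''), to the family $\F$ above, obtaining
\begin{align*}
\Big\|\frac{\mathbb{T}_i f}{M_r w}\Big\|_{\X_v}=\Big\|\frac{\mathbb{T}_i f}{u}\Big\|_{\X_v}\le C\,\Big\|\frac{M^i f}{u}\Big\|_{\X_v}=C\,\Big\|\frac{M^i f}{M_r w}\Big\|_{\X(M_r w)},
\end{align*}
which is precisely \eqref{eq:TfMw-1}, since $\X_v=\X(v)=\X(M_r w)$ because $\X$ is a RIBFS. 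The hypothesis $q_{\X}<\infty$ of the present theorem is exactly what Theorem~\ref{thm:AA} requires, so nothing else is needed. (One checks the qualitative density point as in the proofs above: it suffices to prove the inequality for $f\in L^\infty_c$, and by a truncation/monotone-convergence argument the bound passes to general $f$; alternatively the statement is understood for the dense class on which $\mathbb{T}_i f$ is a priori finite.)

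For the claim about $\mathbb{T}_2$ one uses the same scheme: \eqref{eq:TMAi} holds for $i=2$ with $M^2$ in place of $M$, so the pair $(|\mathbb{T}_2 f|,\,M^2 f)$ again satisfies the $A_\infty$ hypothesis of Theorem~\ref{thm:AA}, and the conclusion follows verbatim. The remark before the theorem also mentions deriving the variant via Theorem~\ref{thm:Mvr} with $u=v=M_{A_p}w$; if one wants that route, note $M_{A_p}w\in A_1$ by \eqref{eq:CR-Phi}, take $s>0$ and $r>1$ so that $u^{1/s}v^{-1/(sr')}=(M_{A_p}w)^{1/s-1/(sr')}\in RH_\infty$ (a power of an $A_1$ weight, hence in $RH_\infty$ by Lemma~\ref{lem:A1Ap}\eqref{eq:A1RH}--\eqref{eq:RHs}), and verify \eqref{eq:Mvr-1}, i.e.\ boundedness of $M'_{v^{1/r}}$ on $\X'_v$, which holds by Lemma~\ref{lem:MwcXw}\eqref{list-4} combined with the $RH_\infty$/$A_\infty$ bookkeeping; then Theorem~\ref{thm:Mvr} yields \eqref{eq:TfMw-1} directly without rescaling. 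Either way, the ``main obstacle'' is not in the present theorem at all: the genuine content was \eqref{eq:TMAi}, whose $p_0=1$ case rests on the pointwise sparse domination results for $T$, $\V_\rho\circ\T$, $T_\sigma$, $T_\Omega$, $B_{(n-1)/2}$ and their commutators quoted from the literature; once that is in hand, the passage to weighted RIBFS norms is a mechanical application of the extrapolation theorems of Section~\ref{sec:BFS}. I would therefore spend most of the written proof making the reduction to Theorem~\ref{thm:AA} (or Theorem~\ref{thm:Mvr}) explicit and checking the weight classes of $M_r w$ and $M_{A_p}w$, and simply cite \eqref{eq:TMAi} for the scalar input.
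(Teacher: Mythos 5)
Your proposal is correct and follows essentially the same route as the paper: the paper's own justification is precisely to note that $M_r w=(M(w^r))^{1/r}\in A_1$ by the Coifman--Rochberg estimate \eqref{eq:CR} and then to apply Theorem \ref{thm:AA} with $u=v=M_rw$ to the pairs $(|\mathbb{T}_i f|, M^i f)$, whose $A_\infty$ hypothesis is supplied by \eqref{eq:TMAi}. Your additional remarks on the alternative route via Theorem \ref{thm:Mvr} match the paper's treatment of the subsequent theorem and are consistent with it.
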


\begin{theorem}
Let $A$ be a Young function and $w$ be a weight. Write $A_p(t)=A(t^{1/p})$ for $0<p<\infty$. Suppose that $\X_v$ be a BFS over $(\Rn, v\,dx)$ with $v=M_{A_p} w$. If there exists $r>1$ such that $M'_{v^{1/r}}$ is bounded on $\X'_v$, then 
\begin{align}\label{eq:TfMw-2} 
\norm{\frac{\mathbb{T}_i f}{v}}_{\X_v} \lesssim \norm{\frac{M^if}{v}}_{\X_v}, \quad i=1,2. 
\end{align} 
\end{theorem}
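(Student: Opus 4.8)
The plan is to deduce this final theorem from the general two-weight extrapolation results already established, namely Theorem~\ref{thm:Mvr} (together with its close cousin Theorem~\ref{thm:AA}), combined with the known single-exponent Coifman--Fefferman bounds \eqref{eq:TMAi} for the operators $\mathbb{T}_1$ and $\mathbb{T}_2$. The starting point is the observation, recorded in the paragraph preceding the statement, that by the Coifman--Rochberg-type estimate \eqref{eq:CR-Phi} (or \eqref{eq:MPhiRH}) applied to the Orlicz maximal function, the weight $v := M_{A_p} w$ lies in $A_1$ for \emph{every} weight $w$; more precisely $[v^\delta]_{A_1} \le c_{n,\delta}$ for $\delta \in (0,1)$, so in particular $v\in A_1\subset A_\infty$. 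Hence for the source family $\F$ we take
\[
\F := \big\{ (f,g) : f = \mathbb{T}_i h,\ g = M^i h,\ h \in L_c^\infty(\Rn) \big\},
\]
and the input hypothesis ``$\|f\|_{L^{p_0}(w_0)} \le C\|g\|_{L^{p_0}(w_0)}$ for all $w_0\in A_\infty$'' is exactly \eqref{eq:TMAi}, valid with $p_0 = 1$ (and then for all $p_0\in(0,\infty)$ by Theorem~\textbf{B}); the implicit constants depend only on $n$, $p_0$ and $[w_0]_{A_\infty}$, which is what the extrapolation machinery requires.

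Next I would invoke Theorem~\ref{thm:Mvr} with the role of its ``$u$'' and ``$v$'' both played by $v = M_{A_p} w$. The hypotheses to check are: $v\in L^1_{\loc}(\Rn)$ (clear, since $M_{A_p}w$ is $A_1$ hence locally integrable once it is finite a.e., and we may assume $M_{A_p}w\not\equiv\infty$, otherwise the statement is vacuous); $\X_v$ is a BFS over $(\Rn, v\,dx)$ (assumed); the existence of $r>1$ with $M'_{v^{1/r}}$ bounded on $\X'_v$ with constant $K_0$ (this is precisely the standing assumption of the theorem, matching \eqref{eq:Mvr-1}); and finally the algebraic condition ``$u^{1/s} v^{-1/(s r')}\in A_1$ for some $s>0$.'' With $u=v$ this reads $v^{1/s - 1/(sr')} = v^{1/(sr)} \in A_1$, and since $v = M_{A_p}w$, the estimate $[ (M_{A_p}w)^{1/(sr)} ]_{A_1}\le c_{n}$ from \eqref{eq:CR-Phi} holds for any choice of $s>0$ (as long as $1/(sr)<1$, which we arrange by taking $s\ge1$). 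So all hypotheses of Theorem~\ref{thm:Mvr} are met, and its conclusion gives
\[
\Big\|\frac{\mathbb{T}_i f}{v}\Big\|_{\X_v} \le C \Big\|\frac{M^i f}{v}\Big\|_{\X_v},\qquad i=1,2,
\]
for all $f\in L_c^\infty(\Rn)$, which is \eqref{eq:TfMw-2}. A density/monotone-convergence remark extends this from $L_c^\infty$ to the natural domain, exactly as in the proofs of Theorems~\ref{thm:AA} and \ref{thm:Mvr}.

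The one genuinely delicate point — and the step I expect to be the main obstacle — is bookkeeping the two distinct roles of the exponent: the ``$p$'' inside $A_p(t)=A(t^{1/p})$ used to define $v = M_{A_p}w$ versus the ``$r$'' coming from the boundedness of $M'_{v^{1/r}}$ and the ``$s$'' from the $A_1$ factorization inside Theorem~\ref{thm:Mvr}. One must be careful that the $r$ supplied by the hypothesis of the present theorem is the same $r$ fed into Theorem~\ref{thm:Mvr}, and that no circularity arises: the weight $v$ is fixed first (depending only on $w$, $A$, $p$), the boundedness of $M'_{v^{1/r}}$ on $\X'_v$ is then assumed, and only afterwards do we check the cheap condition $v^{1/(sr)}\in A_1$. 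Since that last condition holds for \emph{all} $s>0$ by Coifman--Rochberg for Orlicz maximal operators, there is real slack, so the verification goes through; one should simply state explicitly which $s$ is chosen (e.g. $s=1$) to avoid ambiguity. A secondary, purely cosmetic obstacle is that \eqref{eq:TMAi} was proved for $\mathbb{T}_1,\mathbb{T}_2$ as in \eqref{eq:T1T2} via sparse domination; I would just cite \eqref{eq:TMAi} as already established in the proof of Theorem~\ref{thm:TTT} and not reprove it. Assembling these pieces yields \eqref{eq:TfMw-2}, completing the proof.
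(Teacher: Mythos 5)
Your proposal correctly identifies and executes the paper's argument: the theorem is exactly Theorem~\ref{thm:Mvr} applied with $u=v=M_{A_p}w$, the algebraic hypothesis $u^{1/s}v^{-1/(sr')}=v^{1/(sr)}\in A_1$ is verified (for any $s$ with $sr>1$, e.g.\ $s=1$) via the Orlicz Coifman--Rochberg estimate \eqref{eq:CR-Phi}, and the source family inequality over $A_\infty$ weights is supplied by \eqref{eq:TMAi}. One small slip worth repairing: the inequality $[v^\delta]_{A_1}\le c_{n,\delta}$ for $\delta\in(0,1)$ does \emph{not} imply $v=M_{A_p}w\in A_1$ itself (Coifman--Rochberg is strictly a $\delta<1$ statement, and $M_{A_p}w$ can fail to lie in $A_1$ or even in $L^1_{\loc}$), but this claim is never actually used in your argument---what Theorem~\ref{thm:Mvr} needs is precisely $v^{1/(sr)}\in A_1$ with $1/(sr)<1$, which you do establish correctly, so the proof stands.
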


We here mention that \eqref{eq:TfMw-1} and \eqref{eq:TfMw-2} are key ingredients leading to the sharp $A_1$ inequalities as follows: 
\begin{align}
\label{eq:A1-sharp1} \|Tf\|_{L^p(w)} &\le c_{n,p,T} [w]_{A_1}^{\beta} \|f\|_{L^p(w)}, \quad 1<p<\infty, 
\\
\label{eq:A1-sharp2} \|Tf\|_{L^{1,\infty}(w)} &\le c_{n,T} [w]_{A_1}^{\gamma} \log(e+[w]_{A_{\infty}}) \|f\|_{L^1(w)}. 
\end{align}
Such estimates originated from \cite{LOP1} and were extensively extended to other singular operators and commutators. 

\begin{list}{\textup{(\theenumi)}}{\usecounter{enumi}\leftmargin=1cm \labelwidth=1cm \itemsep=0.2cm 
			\topsep=.2cm \renewcommand{\theenumi}{\arabic{enumi}}}
\item If $\X=L^p(\Rn)$ with $p>1$ and $\mathbb{T}_1$ is a $\alpha$-CZO, \eqref{eq:TfMw-1} was given in \cite[Lemma~2.1]{LOP1}. In this case, the estimate \eqref{eq:A1-sharp1} with $\beta=1$ is sharp with respect to $[w]_{A_1}$. The inequality \eqref{eq:A1-sharp2} holds for $\gamma=1$ as well. 
\item If $\X=L^{p'}(\Rn)$ with $p>1$ and $\mathbb{T}_1$ is a $\alpha$-CZO, \eqref{eq:TfMw-1} for $T^{*}$ was obtained in the proof of \cite[Theorem~1.3]{Ort}. Also, \eqref{eq:A1-sharp1} holds for $C_b(T)$ and $\beta=2$. 
\item If $\X=L^{p'}(\Rn)$ with $p>1$ and $\mathbb{T}_1=T_{\Omega}$, \eqref{eq:TfMw-1} was established in the proof of \cite[Theorem~1.1]{Riv}. A more accurate bound is obtained and leads to \eqref{eq:A1-sharp1} with $\beta=2$.
\item If $\X_v=L^{p'}(v)$ with $p>1$ and $\mathbb{T}_1 \in \{T_{\Omega}, B_{(n-1)/2}\}$, \eqref{eq:TfMw-2} was got in \cite[~p.2546]{LPRR}. A refined endpoint inequality implies \eqref{eq:A1-sharp2} with $\gamma=2$. 
\item The inequality \eqref{eq:TfMw-2} also holds for the sparse dyadic operator $\A_{\S}$. A particular case $\X_v=L^{p'}(v)$ was shown in \cite[Lemma~4.3]{HP}, which can be used to show \eqref{eq:A1-sharp2} with $\gamma=1$ for the maximal singular integral $T_{\#}$. 
\end{list}

\subsection{Muckenhoupt-Wheeden conjecture} 
We will present some estimates concerning \eqref{eq:mwdd}. Recall the operators $\mathbb{T}_1$ and $\mathbb{T}_2$ in \eqref{eq:T1T2}. Thanks to \eqref{eq:TMAi}, Theorem \ref{thm:M3w} applied to $\X=L^{1,\infty}(\Rn)$ and the pair $(\mathbb{T}_1f, f)$ gives that 
\begin{align*}
\norm{\frac{\mathbb{T}_1f}{M^3 w}}_{L^{1, \infty}(Mw)} 
\lesssim \norm{\frac{f}{Mw}}_{L^{1, \infty}(Mw)} 
\le \norm{\frac{f}{Mw}}_{L^1(Mw)} =\|f\|_{L^1(\Rn)}.  
\end{align*}
Likewise, treating $\mathbb{T}_2$, we conclude the following weighted weak-type inequalities. 

\begin{theorem}\label{thm:MW} 
For every weight $w$, we have 
\begin{align}\label{eq:MW-T} 
\norm{\frac{\mathbb{T}_i f}{M^3w}}_{L^{1,\infty}(Mw)}  \lesssim \|M^{i-1}f\|_{L^1(\Rn)}, \quad i=1,2. 
\end{align} 
In particular, for every $w \in A_1$, 
\begin{align}
\norm{\frac{\mathbb{T}_i (fw)}{w}}_{L^{1,\infty}(w)}  \lesssim \|M^{i-1}f\|_{L^1(w)},\quad i=1,2.  
\end{align} 
\end{theorem}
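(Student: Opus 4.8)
The plan is to derive Theorem~\ref{thm:MW} directly from Theorem~\ref{thm:M3w} applied to the appropriate families $\F$, together with the local (hence global, via Theorem~\textbf{B}) sparse-domination estimates \eqref{eq:TMAi}. First I would recall that by the sparse domination available for each $\mathbb{T}_1 \in \{T, \V_{\rho}\circ\T, T_{\sigma}, T_{\Omega}, B_{(n-1)/2}\}$ (and the iterated version for $\mathbb{T}_2$), one has the pointwise/testing estimate yielding, for every $w_0 \in A_{\infty}$ and every $p_0 \in (0,\infty)$,
\begin{align*}
\|\mathbb{T}_1 f\|_{L^{p_0}(w_0)} \lesssim \|Mf\|_{L^{p_0}(w_0)}, \qquad
\|\mathbb{T}_2 f\|_{L^{p_0}(w_0)} \lesssim \|M^2 f\|_{L^{p_0}(w_0)},
\end{align*}
with implicit constants depending only on $n$, $p_0$ and $[w_0]_{A_{\infty}}$; this is precisely \eqref{eq:TMAi}. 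Setting $g = M^{i-1}f$, these say that the families $\F_i = \{(\mathbb{T}_i f, M^{i-1}f)\}$ satisfy the hypothesis \eqref{eq:M3w-1} of Theorem~\ref{thm:M3w} with $\mathfrak{T}=\mathbb{T}_i$ controlled by $M(M^{i-1}f) = M^i f \gtrsim M g$ — more carefully, one checks \eqref{eq:M3w-1} in the form $\|\mathbb{T}_i f\|_{L^{p_0}(w_0)} \le C\|M g\|_{L^{p_0}(w_0)}$ with $g = M^{i-1}f$, since $M^i f = M(M^{i-1}f) = Mg$.

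Next I would invoke Theorem~\ref{thm:M3w} with $\X = L^{1,\infty}(\Rn)$, which is a RIBFS with $q_{\X} = p_{\X} = 1 < \infty$ by Example~\ref{ex:Lpq}. The conclusion \eqref{eq:M3w-2} gives, for every weight $w$,
\begin{align*}
\norm{\frac{\mathbb{T}_i f}{M^3 w}}_{L^{1,\infty}(Mw)}
\le C \norm{\frac{M^{i-1}f}{Mw}}_{L^{1,\infty}(Mw)}.
\end{align*}
To finish \eqref{eq:MW-T} it then suffices to bound the right-hand side by $\|M^{i-1}f\|_{L^1(\Rn)}$. This is a direct change of variables: for any $h$,
\begin{align*}
\norm{\frac{h}{Mw}}_{L^{1,\infty}(Mw)}
= \sup_{\lambda>0} \lambda \, (Mw)\Big(\Big\{x : \tfrac{h(x)}{Mw(x)} > \lambda\Big\}\Big)
\le \int_{\Rn} \frac{|h|}{Mw}\, Mw\, dx = \|h\|_{L^1(\Rn)},
\end{align*}
using Chebyshev's inequality for the measure $Mw\,dx$. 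Applying this with $h = M^{i-1}f$ yields \eqref{eq:MW-T}. The specialization to $w \in A_1$ follows because then $Mw \simeq w$ with $[w]_{A_1}$-dependent constants, so $M^3 w \lesssim [w]_{A_1}^3 w$, and replacing $f$ by $fw$ together with $M^{i-1}(fw) \le [w]_{A_1}^{i-1} w \, M^{i-1}f$ — or more simply observing $M^{i-1}(fw)/w$ pointwise controls via the $A_1$ condition — gives $\norm{\mathbb{T}_i(fw)/w}_{L^{1,\infty}(w)} \lesssim \|M^{i-1}f\|_{L^1(w)}$ after absorbing the $[w]_{A_1}$-powers into the implicit constant.

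The only genuine content, and the step I expect to require the most care, is verifying \eqref{eq:TMAi} uniformly across the list of operators — that is, assembling the sparse domination results for $T_{\Omega}$, $B_{(n-1)/2}$, the variation operators $\V_{\rho}\circ\T$, $\V_{\rho}\circ\T_b$, $T_{\sigma}$, and their commutators, and checking that each produces an $L^{p_0}(w_0)$ bound by $M$ or $M^2$ with constant polynomial in $[w_0]_{A_{\infty}}$. These are exactly the sparse estimates cited in the proof of Theorem~\ref{thm:TTT} (from \cite{LOR, CLPR, WWZ, CXY} and the rough/Bochner--Riesz sparse bounds), so once \eqref{eq:TMAi} is granted the argument above is essentially a two-line bookkeeping of Theorem~\ref{thm:M3w} plus the elementary weak-type change of variables; no new estimate beyond those already invoked in Section~\ref{sec:CF} is needed.
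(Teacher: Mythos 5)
Your derivation of \eqref{eq:MW-T} is exactly the paper's: the Coifman--Fefferman bounds \eqref{eq:TMAi} verify the hypothesis \eqref{eq:M3w-1} for the pairs $(\mathbb{T}_i f, M^{i-1}f)$, Theorem \ref{thm:M3w} with $\X=L^{1,\infty}(\Rn)$ gives $\norm{\mathbb{T}_i f/M^3w}_{L^{1,\infty}(Mw)}\lesssim\norm{M^{i-1}f/Mw}_{L^{1,\infty}(Mw)}$, and the Chebyshev estimate $\norm{h/Mw}_{L^{1,\infty}(Mw)}\le\|h\|_{L^1(\Rn)}$ finishes. One point you assert more strongly than the paper does: Example \ref{ex:Lpq} states that $L^{p,q}$ is a RIBFS only for $1<p<\infty$ (or $p=q=\infty$); the space $L^{1,\infty}$ is not a Banach function space (it fails property (6) of Definition \ref{def:BFS} and has trivial associate space), so citing that example to justify $q_{\X}=1<\infty$ is a misquote. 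The paper makes the same application of Theorem \ref{thm:M3w} without comment, so this is a shared caveat rather than a defect of your argument alone, but you should not attribute the claim to Example \ref{ex:Lpq}.

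The genuine gap is in your derivation of the second display for $i=2$. Substituting $f\mapsto fw$ into \eqref{eq:MW-T} places $\|M(fw)\|_{L^1(\Rn)}$ on the right-hand side, which is $+\infty$ whenever $fw\not\equiv 0$, so that route is vacuous; and the pointwise inequality $M(fw)\le [w]_{A_1}\,w\,Mf$ that you invoke to repair it is false --- the $A_1$ condition controls $\fint_Q w$ by $\essinf_Q w$, not by the value $w(x)$ at a point far from where $f$ lives (already $w(x)=|x|^{-\alpha}$ and $f=\mathbf{1}_{[1,2]}$ defeat it for large $x$). Only the $i=1$ case of the ``in particular'' follows by the direct substitution you describe, using $M^3w\le [w]_{A_1}^3 w\le [w]_{A_1}^3 Mw$ and $\|fw\|_{L^1(\Rn)}=\|f\|_{L^1(w)}$. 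For $i=2$ one must stop at the intermediate inequality $\norm{\mathbb{T}_2(fw)/M^3w}_{L^{1,\infty}(Mw)}\lesssim\norm{M(fw)/Mw}_{L^{1,\infty}(Mw)}$ coming from \eqref{eq:M3w-2}, and then estimate the right-hand side via \eqref{eq:MMM}, i.e. $M(fw)\lesssim (M_w^c f)\,Mw$ together with $Mw\le[w]_{A_1}w$ and the weak $(1,1)$ bound \eqref{eq:M11} for $M_w^c$, rather than pass through $\|M(fw)\|_{L^1(\Rn)}$.
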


The first estimate in \eqref{eq:MW-T} for the standard Calder\'{o}n-Zygmund operator $T$ was proved in \cite{LOP2}. 
Let us next pay attention to the mixed weak-type estimates: 
\begin{align}\label{eq:MWS}
\norm{\frac{T(fv)}{v}}_{L^{1,\infty}(uv)} \lesssim \|f\|_{L^1(uv)}.  
\end{align}

Now let $\X_{uv}$ be a BFS satisfying the hypotheses of Theorem \ref{thm:Saw}. Considering \eqref{eq:TMAi}, we utilize Theorem \ref{thm:Saw} to obtain 
\begin{align}\label{eq:TiMiX}
\norm{\frac{\mathbb{T}_i (fv)}{v}}_{\X_{uv}}  \lesssim \norm{\frac{M^i (fv)}{v}}_{\X_{uv}}, \quad i=1,2. 
\end{align} 
Let us go back to the particular case $\X_{uv}=L^{1,\infty}(uv)$. Let $u \in A_1$ and $v \in A_{\infty}$. The condition $v \in A_{\infty}$ gives that $v \in A_r$ for some $r>1$, which in turn yields that $v=v_1v_2^{1-r}$ for some $v_1, v_2 \in A_1$. By Lemma \ref{lem:A1Ap} \eqref{eq:A1A1}, there exists $\varepsilon_0=\varepsilon_0([u]_{A_1}) \in (0, 1)$ such that $uv_2^{\varepsilon} \in A_1$ for every $\varepsilon \in (0, \varepsilon_0)$. Then, picking $p_0=2(r-1)/\varepsilon_0+1$, we have $u^{1-p_0} v =v_1(uv_2^{\frac{r-1}{p_0-1}})^{1-p_0} \in A_{p_0}$. By \eqref{eq:M-sharp},  $M$ is bounded on $L^{p_0}(u^{1-p_0}v)$, which is equivalent to that $M'_u$ is bounded on $L^{p_0}(uv)$ with a constant $C_0$. On the other hand, the condition $u \in A_1$ implies that $M'_u$ is bounded on $L^{\infty}(uv)$ with the constant $C_1=[u]_{A_1}$. Thus, the interpolation theorem \cite[Proposition~A.1]{CMP05} gives that 
\begin{align*}
\|M'_u f\|_{L^{q,1}(uv)} \le 2^{1/q} (C_0(1/p_0-1/q)^{-1}+C_1) \|f\|_{L^{q,1}(uv)}, \quad \forall q>p_0.  
\end{align*}
If we set $q_0:=2p_0$, then 
\begin{align}\label{eq:MK}
\|M'_u f\|_{L^{q,1}(uv)} \le 4p_0 (C_0+C_1) \|f\|_{L^{q,1}(uv)} =:K_0 \|f\|_{L^{q,1}(uv)}, \quad \forall q \ge q_0,   
\end{align}
which verifies \eqref{eq:Saw-1}. Furthermore, as aforementioned, $M'_v$ is bounded from $L^1(uv)$ to $L^{1, \infty}(uv)$. Hence, this and \eqref{eq:TiMiX} imply the following. 

\begin{theorem}\label{thm:SawL1} 
For every $u \in A_1$ and $v \in A_{\infty}$ with $uv \in L^1_{\loc}(\Rn)$, 
\begin{align}
\norm{\frac{\mathbb{T}_1 (fv)}{v}}_{L^{1,\infty}(uv)}  \lesssim \|f\|_{L^1(uv)}. 
\end{align} 
\end{theorem}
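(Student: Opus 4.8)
\textbf{Proof proposal for Theorem \ref{thm:SawL1}.}

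The plan is to deduce Theorem \ref{thm:SawL1} directly from \eqref{eq:TiMiX} (the case $i=1$) applied to the specific Banach function space $\X_{uv} = L^{1,\infty}(uv)$, once we check that this space satisfies all the hypotheses of Theorem \ref{thm:Saw}. First I would recall that by \eqref{eq:TMAi} the hypothesis \eqref{eq:Saw-2} holds for the family $\F$ consisting of pairs $(\mathbb{T}_1 f, f)$ with $f \in C_c^\infty(\Rn)$ (indeed for every $w \in A_\infty$ and every $p_0 \in (0,\infty)$, in particular $p_0 = 1$), so the only thing to verify is the maximal-operator condition \eqref{eq:Saw-1}: namely that there exist $q_0 > 1$ and $K_0 > 0$ such that $M'_u$ is bounded on $(\X^{q'}_{uv})' = (L^{q'}(uv)^*$-type space. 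Here one uses the identification $(L^{1,\infty}(uv))^{q'} = L^{q',\infty}(uv)$ and its associate space $L^{q,1}(uv)$, so that the requirement becomes boundedness of $M'_u$ on $L^{q,1}(uv)$ for all $q \ge q_0$.

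The heart of the argument — and the step I expect to be the main obstacle — is establishing \eqref{eq:MK}, the boundedness of $M'_u$ on the Lorentz space $L^{q,1}(uv)$ for all large $q$, starting from only $u \in A_1$ and $v \in A_\infty$. The strategy is: write $v \in A_r$ for some $r > 1$ (possible since $v \in A_\infty$), factor $v = v_1 v_2^{1-r}$ with $v_1, v_2 \in A_1$ by the Jones factorization theorem, then invoke Lemma \ref{lem:A1Ap} \eqref{eq:A1A1} to find $\varepsilon_0 = \varepsilon_0([u]_{A_1}) \in (0,1)$ with $u v_2^\varepsilon \in A_1$ for $\varepsilon \in (0,\varepsilon_0)$. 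Choosing $p_0 = 2(r-1)/\varepsilon_0 + 1$ one checks $u^{1-p_0} v = v_1 (u v_2^{(r-1)/(p_0-1)})^{1-p_0} \in A_{p_0}$, so by \eqref{eq:M-sharp} the operator $M$ is bounded on $L^{p_0}(u^{1-p_0}v)$, which is exactly boundedness of $M'_u$ on $L^{p_0}(uv)$ with an explicit constant $C_0$. Separately, $u \in A_1$ gives $M'_u$ bounded on $L^\infty(uv)$ with constant $C_1 = [u]_{A_1}$. Interpolating between these two endpoints via \cite[Proposition~A.1]{CMP05} yields boundedness on $L^{q,1}(uv)$ for every $q > p_0$, and setting $q_0 := 2p_0$ produces a uniform bound $K_0$ for all $q \ge q_0$, which is precisely \eqref{eq:Saw-1} with $\X_{uv} = L^{1,\infty}(uv)$ (noting $L^{1,\infty}(uv)^{q'} = L^{q',\infty}(uv)$ and $(L^{q',\infty}(uv))' = L^{q,1}(uv)$, both BFS for $q' > 1$, i.e.\ $q < \infty$; the condition $uv \in L^1_{\loc}(\Rn)$ ensures these are genuine BFS over $(\Rn, uv\,dx)$).

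With \eqref{eq:Saw-1} in hand, Theorem \ref{thm:Saw} applies and yields $\|(\mathbb{T}_1 f)/v\|_{L^{1,\infty}(uv)} \lesssim \|f\|_{L^{1,\infty}(uv)}$ for $(f,g) = (\mathbb{T}_1 f, f) \in \F$; this is the content of \eqref{eq:TiMiX} with $i=1$ and $\X_{uv} = L^{1,\infty}(uv)$. Finally I would upgrade the right-hand side from the Lorentz quasinorm to the $L^1$ norm by the trivial embedding $\|f\|_{L^{1,\infty}(uv)} \le \|f\|_{L^1(uv)}$, which gives exactly
\[
\norm{\frac{\mathbb{T}_1 (fv)}{v}}_{L^{1,\infty}(uv)} \lesssim \|f\|_{L^1(uv)},
\]
after the standard substitution $f \mapsto fv$ inside the family (so that $\mathbb{T}_1 f$ becomes $\mathbb{T}_1(fv)$ and the norm $\|f\|_{L^1(\Rn)}$ becomes $\|fv\|_{L^1(\Rn)}/\dots$); one then extends from $f \in C_c^\infty$ to general $f \in L^1(uv)$ by density and Fatou. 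The only subtlety worth double-checking is that the family $\F$ used in \eqref{eq:TiMiX} is set up with the pair $(\mathbb{T}_1 f, f)$ precisely so that the conclusion reads with $\mathbb{T}_1(fv)$ after rescaling; this bookkeeping is routine given the way $\F$ was introduced before \eqref{eq:TiMiX}.
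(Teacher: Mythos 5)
Your verification of the key hypothesis \eqref{eq:Saw-1} is correct and matches the paper: you identify $(L^{1,\infty}(uv))^{q'} = L^{q',\infty}(uv)$ and its associate space $L^{q,1}(uv)$ correctly, and the factorization $v=v_1 v_2^{1-r}$, the appeal to Lemma \ref{lem:A1Ap} \eqref{eq:A1A1}, the choice $p_0 = 2(r-1)/\varepsilon_0+1$, and the Lorentz-space interpolation of $M'_u$ between $L^{p_0}(uv)$ and $L^\infty(uv)$ are exactly as in the paper.

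However, there is a genuine gap in the last step. You apply Theorem \ref{thm:Saw} to the family of pairs $(\mathbb{T}_1 h, h)$ and assert that \eqref{eq:TiMiX} reads $\|\mathbb{T}_1(fv)/v\|_{L^{1,\infty}(uv)} \lesssim \|f\|_{L^{1,\infty}(uv)}$. This is wrong on two counts. First, the family $\{(\mathbb{T}_1 h, h)\}$ does \emph{not} satisfy the $A_\infty$ hypothesis \eqref{eq:Saw-2}: the inequality $\|\mathbb{T}_1 h\|_{L^{p_0}(w)} \lesssim \|h\|_{L^{p_0}(w)}$ requires $w \in A_{p_0}$, not merely $w \in A_\infty$. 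What \eqref{eq:TMAi} gives you is the Coifman--Fefferman bound $\|\mathbb{T}_1 h\|_{L^{p}(w)} \lesssim \|Mh\|_{L^{p}(w)}$ for $w \in A_\infty$, so the family must be $\{(\mathbb{T}_1 h, Mh)\}$. Consequently, the correct conclusion of \eqref{eq:TiMiX} with $i=1$ and $\X_{uv}=L^{1,\infty}(uv)$ (after substituting $h = fv$) is
\[
\Big\|\frac{\mathbb{T}_1(fv)}{v}\Big\|_{L^{1,\infty}(uv)} \lesssim \Big\|\frac{M(fv)}{v}\Big\|_{L^{1,\infty}(uv)} = \|M'_v f\|_{L^{1,\infty}(uv)},
\]
with the maximal function on the right, not $f$ itself. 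Your proposed ``trivial embedding'' $\|f\|_{L^{1,\infty}(uv)} \le \|f\|_{L^1(uv)}$ therefore applies to the wrong quantity and does not close the argument.

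The missing ingredient is the mixed weak-type estimate $\|M'_v f\|_{L^{1,\infty}(uv)} \lesssim \|f\|_{L^1(uv)}$ for $u\in A_1$, $v\in A_\infty$, which is precisely the Sawyer conjecture for the Hardy--Littlewood maximal operator resolved (in this generality) by Li, Ombrosi and P\'erez in \cite{LiOP}. This is what the paper invokes as the final step (``as aforementioned, $M'_v$ is bounded from $L^1(uv)$ to $L^{1,\infty}(uv)$''). It is a deep result, not an elementary embedding, and without it your argument does not yield the theorem.
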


Next, we consider Sawyer conjecture in the case $\X_{uv}=L^{p,\infty}(uv)$ with $p>1$.  
\begin{theorem}\label{thm:SawLp} 
For every $u \in A_1$ and $v \in A_{\infty}$ with $uv \in L^1_{\loc}(\Rn)$, there exists $\epsilon>0$ small enough such that 
\begin{align}\label{eq:SawLp} 
\norm{\frac{\mathbb{T}_i (fv)}{v}}_{L^{1+\epsilon,\infty}(uv)}  \lesssim \|f\|_{L^{1+\epsilon, \infty}(uv)}, \quad i=1,2.  
\end{align} 
\end{theorem}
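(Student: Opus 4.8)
The plan is to deduce Theorem~\ref{thm:SawLp} from Theorem~\ref{thm:Saw} (and its accompanying remark allowing the conclusion on $\X^{1+\epsilon}$) by verifying the structural hypotheses of that theorem for the choice $\X_{uv}=L^{1,\infty}(uv)$, together with the pointwise sparse/maximal-function control \eqref{eq:TiMiX} of $\mathbb T_i$ by $M^i$. Concretely, one notes that $L^{1+\epsilon,\infty}(uv)=\bigl(L^{1,\infty}(uv)\bigr)^{1+\epsilon}$ up to the usual renorming, so establishing \eqref{eq:SawLp} amounts to applying the version of Theorem~\ref{thm:Saw} that yields \eqref{eq:Saw-4} with $\X_{uv}=L^{1,\infty}(uv)$ and with $\F$ taken to be the family $\{(\mathbb T_i(fv)/v,\ M^i(fv)/v)\}$; the required one-weight $A_\infty$ input \eqref{eq:Saw-2} is exactly \eqref{eq:TMAi}, which has already been recorded for all $p\in(0,\infty)$ and all $w\in A_\infty$.

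First I would fix $u\in A_1$ and $v\in A_\infty$ with $uv\in L^1_{\loc}(\Rn)$, and recall that $v\in A_\infty$ forces $v\in A_r$ for some $r>1$, hence $v=v_1v_2^{1-r}$ with $v_1,v_2\in A_1$ by the factorization theorem. Then I would reproduce the estimate \eqref{eq:MK} already derived in the text: using Lemma~\ref{lem:A1Ap}\eqref{eq:A1A1} to pick $\varepsilon_0=\varepsilon_0([u]_{A_1})$, setting $p_0=2(r-1)/\varepsilon_0+1$ so that $u^{1-p_0}v\in A_{p_0}$, invoking \eqref{eq:M-sharp} and the duality $M'_u$ on $L^{p_0}(uv)\leftrightarrow M$ on $L^{p_0}(u^{1-p_0}v)$, boundedness of $M'_u$ on $L^\infty(uv)$ from $u\in A_1$, and the interpolation result \cite[Proposition~A.1]{CMP05}, one gets that $M'_u$ is bounded on $L^{q,1}(uv)$ for all $q\ge q_0:=2p_0$ with a uniform constant $K_0$. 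Since $\bigl(L^{1,\infty}(uv)^{q'}\bigr)'=\bigl(L^{q',\infty}(uv)\bigr)'=L^{q,1}(uv)$ for $q'>1$, this is precisely the hypothesis \eqref{eq:Saw-1} of Theorem~\ref{thm:Saw} for $\X_{uv}=L^{1,\infty}(uv)$. Note also $L^{1,\infty}(uv)^{1/p_0}$ is a quasi-Banach space and the scale $\X^{q'}$ appearing in \eqref{eq:Saw-1} is a genuine BFS once $q'>1$, so the hypotheses are legitimately met.

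Next I would apply Theorem~\ref{thm:Saw} in the refined form indicated in its Remark: there exist $r>1$ and $\epsilon>0$ with $0<\tfrac1{r'}<\min\{\varepsilon_0,\tfrac1{q_0}\}$ and $r(1+\epsilon)<q_0'$, and for these parameters one concludes \eqref{eq:Saw-4}, i.e. $\|f/v\|_{\X_{uv}^{1+\epsilon}}\lesssim \|g/v\|_{\X_{uv}^{1+\epsilon}}$ whenever \eqref{eq:Saw-2} holds for the family $\F$. Taking $\F=\{(\mathbb T_i(fv),\ M^i(fv))\}$ (so that $f/v$ and $g/v$ become $\mathbb T_i(fv)/v$ and $M^i(fv)/v$), the input \eqref{eq:Saw-2} is \eqref{eq:TMAi}, already established via the sparse domination of $\mathbb T_1$ and $\mathbb T_2$ together with Theorem~\textbf{B}. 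Unravelling $\X_{uv}^{1+\epsilon}=L^{1+\epsilon,\infty}(uv)$ gives \eqref{eq:SawLp} for $i=1,2$; for the commutator case $i=2$ one uses that $M'_u$ boundedness on the required scale is insensitive to the power $M^2$ versus $M$, so the same $\epsilon$ works.

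The main obstacle I anticipate is purely bookkeeping rather than conceptual: one must check that $L^{1,\infty}(uv)$, which is only a quasi-Banach space, is handled correctly by the machinery of Theorem~\ref{thm:Saw}, i.e. that the relevant rescaled spaces $\X^{q'}$ and $\X^{1+\epsilon}$ that enter the proof are Banach function spaces (true once the exponents exceed $1$) and that the associate-space identifications $\bigl(L^{p,\infty}\bigr)'=L^{p',1}$ hold with the norms in the form used by Lemma~\ref{lem:M-uv} and by the Rubio de Francia algorithm there. One also has to be careful that the constant $K_0$ in \eqref{eq:MK} is uniform over the range $q\ge q_0$, which is exactly what the interpolation bound provides; and that the auxiliary splitting $v=v_1v_2$ with $v_1\in A_1$, $v_2\in RH_\infty$ needed inside the proof of Theorem~\ref{thm:Saw} is available, which it is since $v\in A_\infty$. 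Modulo these routine verifications the result follows directly.
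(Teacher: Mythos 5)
There is a genuine gap. You correctly set up the framework: apply the refined form of Theorem~\ref{thm:Saw} (giving \eqref{eq:Saw-4}) with $\X_{uv}=L^{1,\infty}(uv)$, verify \eqref{eq:Saw-1} via the $L^{q,1}(uv)$ boundedness of $M'_u$ as in \eqref{eq:MK}, and take the family $\F=\{(\mathbb T_i(fv),\,M^i(fv))\}$ with input \eqref{eq:TMAi}. But what this yields after unravelling is
\begin{align*}
\norm{\frac{\mathbb{T}_i(fv)}{v}}_{L^{1+\epsilon,\infty}(uv)} \lesssim \norm{\frac{M^i(fv)}{v}}_{L^{1+\epsilon,\infty}(uv)} = \norm{(M'_v)^i f}_{L^{1+\epsilon,\infty}(uv)},
\end{align*}
which is \emph{not} \eqref{eq:SawLp}: the right-hand side still carries the iterated weighted maximal operator $(M'_v)^i$ rather than $f$ alone. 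Asserting that ``unravelling $\X_{uv}^{1+\epsilon}=L^{1+\epsilon,\infty}(uv)$ gives \eqref{eq:SawLp}'' silently assumes that $(M'_v)^i$ is bounded on $L^{1+\epsilon,\infty}(uv)$, and that is exactly the nontrivial missing ingredient.

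The paper devotes the bulk of the proof of Theorem~\ref{thm:SawLp} to establishing precisely this step, namely the weak-type maximal estimate \eqref{eq:weakM}:
\begin{align*}
\|M'_v f\|_{L^{p,\infty}(uv)} \lesssim p' \|f\|_{L^{p,\infty}(uv)},\quad \forall p\in(1,\infty),
\end{align*}
proved by a truncation $u_N=u\mathbf{1}_{B(0,N)}$, the restriction trick \eqref{eq:MtMt}, the weak $(1,1)$ boundedness of $M'_v$ from $L^1(uv)$ to $L^{1,\infty}(uv)$, and Kolmogorov's inequality \eqref{eq:Kolm}. Note that this is a statement about $M'_v$ acting on Lorentz spaces over the measure $uv\,dx$ and is \emph{not} interchangeable with the $L^{q,1}(uv)$-boundedness of $M'_u$ that you correctly used to verify \eqref{eq:Saw-1}; the two maximal operators ($M'_u$ vs. $M'_v$) play different roles. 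Without supplying this final bound, the argument does not reach \eqref{eq:SawLp}, and your concluding sentence about the commutator case ($i=2$) inherits the same gap, since $M^2(fv)/v=(M'_v)^2 f$ likewise requires two applications of \eqref{eq:weakM}.
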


\begin{proof}
Let $u \in A_1$ and $v \in A_{\infty}$. We begin with showing that 
\begin{align}\label{eq:weakM}
\|M'_v f\|_{L^{p, \infty}(uv)} \lesssim p' \|f\|_{L^{p, \infty}(uv)}, \quad \forall p \in(1, \infty). 
\end{align} 
For each $N \ge 1$, denote $u_N:=u \mathbf{1}_{B(0, N)}$. Then $[u_N]_{A_1} \le [u]_{A_1}$ for every $N \ge 1$. For any $\lambda>0$, we set 
\begin{equation*}
f_\lambda(x)=\begin{cases}
f(x), &M'_v f(x)>\lambda, \\
0,    &M'_v f(x)\le \lambda
\end{cases}.
\end{equation*}
We claim that 
\begin{equation}\label{eq:MtMt}
\{x\in\Rn: M'_vf(x)>\lambda \} \subset \{x\in\Rn: M'_v(f_\lambda)(x)>\lambda\}. 
\end{equation}
Indeed, fix $x \in \Rn$ such that $M'_vf(x)>\lambda$. Then by definition, there exists a cube $Q_x \ni x$ such that $\fint_{Q_x} |f|v\, dy>\lambda v(x)$. This implies $Q_x \subset \{x\in\Rn: M'_vf(x)>\lambda \}$, from which we have $f_\lambda=f$ on $Q_x$ and hence $M'_v(f_\lambda)(x)>\lambda$. Thus, \eqref{eq:MtMt} holds. Recall that $M'_v$ is bounded from $L^1(uv)$ to $L^{1, \infty}(uv)$ for all $u \in A_1$ and $v \in A_{\infty}$. Hence, this and \eqref{eq:MtMt} imply 
\begin{align}\label{eq:fuv}
&u_N v(\{x\in\Rn: M'_vf(x)>\lambda\})
\le u_N v(\{x\in\Rn: M'_v(f_\lambda)(x)>\lambda\}) 
\nonumber \\
&\qquad\lesssim \frac{1}{\lambda} \int_{\Rn} |f_\lambda| u_N v\, dx 
=\frac{1}{\lambda} \int_{\{x\in\Rn: M'_vf(x)>\lambda\}} |f| u_N v\, dx.
\end{align}
Recall that for any $0<r<p$ and for any measurable set $E$ with $w(E)<\infty$, 
\begin{equation}\label{eq:Kolm}
\int_{E}|f|^r w\, dx \le \frac{p}{p-r} w(E)^{1-\frac{r}{p}} \|f\|_{L^{p,\infty}(w)}^r. 
\end{equation}
The fact $uv \in L^1_{\loc}(\Rn)$ implies that $u_N v(\{x\in\Rn: M'_vf(x)>\lambda\}) \le uv(B(0, N))<\infty$. Thus, by \eqref{eq:fuv} and \eqref{eq:Kolm}, 
\begin{align*}
u_N v(\{x\in\Rn: M'_vf(x)>\lambda\})
\lesssim p' \lambda^{-1} \|f\|_{L^{p,\infty}(u_N v)} \, u_N v(\{x\in\Rn: M'_vf(x)>\lambda\})^{1-\frac1p},
\end{align*}
and so we get
\begin{equation*}
\lambda \, u_N v(\{x\in\Rn: M'_vf(x)>\lambda\})^{\frac1p} \lesssim p' \|f\|_{L^{p,\infty}(uv)}. 
\end{equation*}
Accordingly, it follows from the monotone convergence theorem that \eqref{eq:weakM} holds. Applying \eqref{eq:Saw-4}, \eqref{eq:TMAi} and \eqref{eq:weakM}, we deduce that 
\begin{align*}
\norm{\frac{\mathbb{T}_i (fv)}{v}}_{L^{1+\epsilon, \infty}(uv)} 
\lesssim \norm{\frac{M^i (fv)}{v}}_{L^{1+\epsilon, \infty}(uv)}
=\norm{(M'_v)^i f}_{L^{1+\epsilon, \infty}(uv)} 
\lesssim \|f\|_{L^{1+\epsilon, \infty}(uv)}.  
\end{align*} 
This shows \eqref{eq:SawLp} and completes the proof.  
\end{proof}

\subsection{Littlewood-Paley operators}\label{sec:LPO}
Given $\rho \in \C$, $\Omega \in L^1(\Sn)$, a radial function $h$ on $\Rn$, and $\lambda>1$, we define the parametric Marcinkiewicz integrals 
\begin{align*}
\mu^{\rho}_{\Omega,h,S}(f)(x) &:= \bigg(\iint_{\Gamma(x)} 
|\Theta^{\rho}_{\Omega,h}f(y, t)|^2 \frac{dy dt}{t^{n+1}}\bigg)^{\frac12}, 
\\
\mu^{*,\rho}_{\Omega,h,\lambda}(f)(x) &:= \bigg(\iint_{\R^{n+1}_{+}} 
\Big(\frac{t}{t+|x-y|}\Big)^{n\lambda} |\Theta^{\rho}_{\Omega,h}f(y, t)|^2 \frac{dy dt}{t^{n+1}}\bigg)^{\frac12}, 
\end{align*}
where $\Gamma(x) := \{(y, t) \in \R^{n+1}_{+}: |x-y|<t\}$ and 
\begin{align*}
\Theta^{\rho}_{\Omega,h}f(x, t) = \frac{1}{t^{\rho}} \int_{|x-y| \leq t} 
\frac{\Omega(x-y)h(x-y)}{|x-y|^{n-\rho}}  f(y) dy.  
\end{align*}

If $h \equiv 1$, the operator $\mu^{\rho}_{\Omega,h,S}$ was introduced by H\"{o}rmander \cite{H1} in the higher dimension. If $\rho=1$ and $h \equiv 1$, it is the usual Marcinkiewicz integral corresponding to the Littlewood-Paley $g$-function introduced by Stein \cite{S1}, where $L^p$-boundedness ($1<p\leq 2$) of $\mu^{1}_{\Omega,1,S}$ was established for $\Omega \in \Lip_{\alpha}(\Sn)$ ($0<\alpha:=\Re(\rho) \leq 1$). This result was extended to the case $\rho>0$ and $1<p<\infty$ in \cite{H1}. 

For $1 \leq q \leq \infty$, denote 
\begin{align*}
\ell^{\infty}(L^q)(\R_{+}) := \bigg\{h \in L_{\loc}^1(\R_{+}): \sup_{j \in \Z} 
\bigg(\int_{2^j}^{2^{j+1}} |h(r)| \frac{dr}{r} \bigg)^{\frac1q}<\infty \bigg\}. 
\end{align*}
When $q=\infty$, $\ell^{\infty}(L^q)(\R_{+})$ is understood as $L^{\infty}(\R_{+})$.

In \cite[Lemma~3]{DLY}, Ding et al. showed that if $\lambda>1$, $\rho \in \C$ with $\Re(\rho)>0$, $h \in \ell^{\infty}(L^q)(\R_{+})$ with $q \in(1,\infty]$ and $\Omega \in L\log^{+}L(\Sn)$ is homogeneous of degree zero on $\Rn$ and $\int_{\Sn} \Omega \ d\sigma=0$, then for every weight $w$,  
\begin{align}\label{eq:Mar}
\|\mu^{*,\rho}_{\Omega,h,\lambda}(f)\|_{L^2(w)} \leq C_{n,\rho,\lambda} \|f\|_{L^2(Mw)}. 
\end{align}
Observe that for any $\lambda>1$, 
\begin{align}\label{eq:S-lambda}
\mu^{\rho}_{\Omega,h,S}(f)(x) \leq 2^{n\lambda} \mu^{*,\rho}_{\Omega,h,\lambda}(f)(x),\quad x \in \Rn. 
\end{align}

In view of Theorem \ref{thm:wMw}, \eqref{eq:Mar} and \eqref{eq:S-lambda} give the following conclusion.  

\begin{theorem}\label{thm:Mar} 
Let $\lambda>1$, $\rho \in \C$ with $\Re(\rho)>0$, a radial function $h \in \ell^{\infty}(L^q)(\R_{+})$ with $q \in(1,\infty]$. Let $\Omega \in L\log^{+}L(\Sn)$ be homogeneous of degree zero on $\Rn$ and $\int_{\Sn} \Omega \ d\sigma=0$. Suppose that $\X$ be a RIBFS over $(\Rn, dx)$ with $q_{\X}<\infty$ such that $\X^{\frac12}$ is also a BFS. Then for every weight $w$,  
\begin{align}\label{eq:Mar-1} 
\|\mu^{\rho}_{\Omega,h,S}(f)\|_{\X_w} &\leq C \|f (Mw/w)^{\frac12}\|_{\X_w}, 
\\
\label{eq:Mar-2} \|\mu^{*,\rho}_{\Omega,h,\lambda}(f)\|_{\X_w} &\leq C \|f (Mw/w)^{\frac12}\|_{\X_w}. 
\end{align}
\end{theorem}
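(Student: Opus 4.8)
The plan is to deduce Theorem~\ref{thm:Mar} directly from Theorem~\ref{thm:wMw} by packaging the weighted $L^2$ estimate \eqref{eq:Mar} into the hypothesis \eqref{eq:hyp-M}. Concretely, I would take $p_0=2$ and set up the family
\begin{align*}
\F := \big\{(f_1, f_2) = \big(\mu^{*,\rho}_{\Omega,h,\lambda}(g),\, |g|\big) : g \in L^\infty_c(\Rn)\big\},
\end{align*}
so that \eqref{eq:Mar} reads exactly $\|f_1\|_{L^2(w_0)} \le C \|f_2\|_{L^2(Mw_0)}$ for every weight $w_0$, which is \eqref{eq:hyp-M} with $p_0=2$. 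The hypotheses of Theorem~\ref{thm:wMw} are met: $\X$ is a RIBFS over $(\Rn,dx)$ with $q_{\X}<\infty$, and $\X^{1/2}=\X^{1/p_0}$ is assumed to be a BFS. Applying \eqref{eq:con-M} then gives, for every weight $w$,
\begin{align*}
\|\mu^{*,\rho}_{\Omega,h,\lambda}(g)\|_{\X_w} \le C \big\| |g|\, (Mw/w)^{1/2}\big\|_{\X_w},
\end{align*}
which is precisely \eqref{eq:Mar-2}. Then \eqref{eq:Mar-1} follows immediately from \eqref{eq:Mar-2} together with the pointwise bound \eqref{eq:S-lambda}, namely $\mu^{\rho}_{\Omega,h,S}(g)(x) \le 2^{n\lambda}\mu^{*,\rho}_{\Omega,h,\lambda}(g)(x)$, combined with the lattice property (4) in Definition~\ref{def:BFS} (monotonicity of the norm on $\X_w$) and positive homogeneity.

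A small point to address carefully is the density/limiting argument: \eqref{eq:Mar} is stated for $f\in L^\infty_c(\Rn)$ (or at least for $f$ in a class where $\mu^{*,\rho}_{\Omega,h,\lambda}(f)$ makes sense and is finite a.e.), while the final conclusion should hold for general $f\in\X_w$. I would handle this exactly as in the body of Theorem~\ref{thm:wMw}: the extrapolation machinery only needs the abstract inequality on pairs $(f,g)\in\F$, and once \eqref{eq:con-M} is in hand one extends to arbitrary $f$ by monotone approximation $f_N\nearrow f$ using property (5) of Definition~\ref{def:BFS}, exactly as done for $f_N$ in the proofs of Theorems~\ref{thm:Aq} and \ref{thm:ModAi}. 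Alternatively one simply restricts the statement to $f$ for which the right-hand side is finite, which is the usual convention for these sparse-domination-type square function bounds.

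I do not expect a genuine obstacle here — the theorem is an application, and the real work was already done in establishing Theorem~\ref{thm:wMw}. The only thing requiring a sentence of justification is checking that the hypothesis \eqref{eq:hyp-M} is literally what \cite[Lemma~3]{DLY} provides (with $p_0=2$) and that $\X^{1/2}$ being a BFS is exactly the structural hypothesis invoked by Theorem~\ref{thm:wMw}; both are immediate from the statements. If one also wanted the vector-valued analogue, it would follow from \eqref{eq:vec-M} with $q=2$ in the same way, but since Theorem~\ref{thm:Mar} as stated is scalar, the two displayed inequalities \eqref{eq:Mar-1}–\eqref{eq:Mar-2} complete the proof.
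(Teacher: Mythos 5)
Your proposal matches the paper's argument: the authors also obtain the theorem by applying Theorem~\ref{thm:wMw} with $p_0=2$, feeding in the weighted $L^2$ estimate \eqref{eq:Mar} as hypothesis \eqref{eq:hyp-M}, reading off \eqref{eq:Mar-2} from \eqref{eq:con-M}, and then deducing \eqref{eq:Mar-1} via the pointwise majorization \eqref{eq:S-lambda}. Your additional remarks about the approximation step and verifying the structural hypothesis on $\X^{1/2}$ are consistent with how the extrapolation theorems in the paper are set up.
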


\begin{remark}
Theorem $\ref{thm:Mar}$ covers several known results in Lebesgue space. Let $\X=L^p(\Rn)$ and $2\leq p<\infty$. Then we see that both $\X$ and $\X^{\frac12}$ are BFS and $q_{\X}=p$. Hence, Theorem $\ref{thm:Mar}$ contains the unweighted inequalities in Theorems $1$ and $2$ and the weighted $L^2$ inequality for $A_1$ weight in Corollary 1 in \cite{DLY}. Moreover, \eqref{eq:Mar-1} and \eqref{eq:Mar-2} extend the Fefferman-Stein inequalities in Theorem $2$ and Corollary $2$ in \cite{XDY} to the general Banach function spaces. 
\end{remark}

Now let us turn to another type of square functions. Given $\alpha, \beta \in \R$ and $\lambda>0$, we define the square functions by  
\begin{align*}
g_{\alpha,\beta}(f)(x) &:= \bigg(\iint_{\Gamma_{\alpha}(x)} 
|f*\phi_t(y)|^2 \frac{dy}{t^{(1-\alpha)n+2\beta}} \frac{dt}{t} \bigg)^{\frac12}, 
\\
g^{*}_{\alpha,\beta,\lambda}(f)(x) &:= \bigg(\int_{t^{\alpha} \leq 1} \int_{\Rn} 
\Big(\frac{t^{1-\alpha}}{t^{1-\alpha}+|x-y|} \Big)^{n\lambda}
|f*\phi_t(y)|^2 \frac{dy}{t^{(1-\alpha)n+2\beta}} \frac{dt}{t} \bigg)^{\frac12}, 
\end{align*}
where $\phi_t(x)=\frac{1}{t^n}\phi(\frac{x}{t})$, $\phi$ is a smooth function with suitable compact Fourier support away from the origin, and $\Gamma_{\alpha}(x) := \{(y, t) \in \Rn \times \R_{+}: 0<t^{\alpha} \leq 1, |y-x| \leq t^{1-\alpha}\}$. A new type of fractional maximal operator is given by 
\begin{align}\label{eq:Mab-def}
\mathcal{M}_{\alpha, \beta}f(x) := \sup_{(y, r) \in \Gamma_{\alpha}(x)} 
\frac{1}{|B(y, r)|^{1-2\beta/n}} \int_{B(y, r)} |f(z)| dz. 
\end{align}
Observe that $g^{*}_{\alpha,\beta,\lambda}$ is a pointwise majorant of $g_{\alpha,\beta}$. If $\alpha=\beta=0$, then 
$\mathcal{M}_{\alpha, \beta}$ is the Hardy-Littlewood maximal function, $g_{\alpha,\beta}$ and $g^{*}_{\alpha,\beta,\lambda}$ are the generalization of Littlewood-Paley operators with Poisson kernels studied in \cite{S2}. 

Recently, Beltran and Bennett \cite{BB} proved that for every $\alpha, \beta \in \R$ and every weight $w$, 
\begin{align}\label{eq:f-g(f)}
\int_{\Rn} |f(x)|^2 w(x) dx \leq C \int_{\Rn} g_{\alpha,\beta}f(x)^2 \mathcal{M}_{\alpha, \beta} M^4w(x) dx, 
\end{align}
for all functions $f$ such that $\supp(\widehat{f}) \subset \{\xi \in \Rn: |\xi|^{\alpha} \geq 1\}$. Conversely, they obtained that 
for all $\alpha \in \R$, $\lambda>1$, and for every weight $w$, 
\begin{align}\label{eq:g(f)-f}
\int_{\Rn} g^{*}_{\alpha,0,\lambda}f(x)^2 w(x) dx \leq C \int_{\Rn} |f(x)|^2 M^2 w(x) dx. 
\end{align}

By Theorem \ref{thm:uvw}, \eqref{eq:MkMk} and \eqref{eq:g(f)-f}, we get the two-weight inequalities on the weighted Banach function spaces as follows. 

\begin{theorem}
Let $u$, $v$, $w_1$ and $w_2$ be weights on $\Rn$. Suppose that $\X_u$ and $\X_v$ are respectively BFS over $(\Rn, u\, dx)$ and $(\Rn, v\,dx)$ such that $\Y_u=\X_u^{\frac12}$ and $\Y_v=\X_v^{\frac12}$ are BFS.  Assume that 
\begin{align}
\|(M_{L\log L}f) w_2^{-2}v^{-1}\|_{\Y'_v} \le C \|fw_1^{-2}u^{-1}\|_{\Y_u'}, \quad\forall f \in \M.  
\end{align}
Then for all $\alpha \in \R$ and $\lambda>1$, 
\begin{align}
\|(g^{*}_{\alpha,0,\lambda}f) w_1\|_{\X_u} \leq C \|f w_2\|_{\X_v}. 
\end{align} 
\end{theorem}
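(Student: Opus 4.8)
The plan is to apply Theorem~\ref{thm:uvw} directly, with the role of the pair $(f,g)\in\F$ played by the pair $\big(g^{*}_{\alpha,0,\lambda}f,\, f\big)$ for all admissible $f$, the exponent $p_0=2$, the spaces $\X_u,\X_v$ as in the statement, and the Young function $\Phi=L\log L$, i.e.\ $\Phi(t)\simeq t\log(e+t)$. First I would verify the hypothesis \eqref{eq:uvw-2} of Theorem~\ref{thm:uvw}: it asks that for every weight $w_0$,
\begin{equation*}
\|g^{*}_{\alpha,0,\lambda}f\|_{L^{2}(w_0)}\le C\|f\|_{L^{2}(M_{\Phi}w_0)},
\end{equation*}
which is exactly \eqref{eq:g(f)-f} once we observe, via \eqref{eq:MkMk} with $k=2$, that $M^{2}w_0(x)\simeq M_{\Phi}w_0(x)$ pointwise for $\Phi(t)=t\log(e+t)$. (Strictly, \eqref{eq:g(f)-f} was stated for $f$ with $\operatorname{supp}(\widehat f)\subset\{|\xi|^{\alpha}\ge1\}$, matching the band-limited setting in which $g^{*}_{\alpha,0,\lambda}$ is defined; the conclusion is understood in the same class, exactly as in \eqref{eq:f-g(f)}--\eqref{eq:g(f)-f}.) The second hypothesis \eqref{eq:uvw-1} with $p_0=2$ is precisely the displayed assumption
$\|(M_{L\log L}h)\,w_2^{-2}v^{-1}\|_{\Y'_v}\le C\|h\,w_1^{-2}u^{-1}\|_{\Y'_u}$
of the theorem, since $\Y_u=\X_u^{1/2}$ and $\Y_v=\X_v^{1/2}$ are assumed to be BFS. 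With both hypotheses in hand, Theorem~\ref{thm:uvw} yields \eqref{eq:uvw-3}, namely $\|(g^{*}_{\alpha,0,\lambda}f)\,w_1\|_{\X_u}\le C\|f\,w_2\|_{\X_v}$, which is the claim.

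The one point that needs a word of care is that Theorem~\ref{thm:uvw} is stated for a family $\F$ of pairs of nonnegative measurable functions, whereas here the "pair'' is produced by applying the sublinear operator $g^{*}_{\alpha,0,\lambda}$ to band-limited $f$; so I would simply set
\begin{equation*}
\F:=\big\{(|g^{*}_{\alpha,0,\lambda}f|,\,|f|):\ f\in L^{2}(\Rn),\ \operatorname{supp}(\widehat f)\subset\{|\xi|^{\alpha}\ge1\}\big\},
\end{equation*}
and check that \eqref{eq:g(f)-f} rewritten through $M^2\simeq M_{L\log L}$ says exactly \eqref{eq:uvw-2} for this $\F$ with $p_0=2$. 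Then the conclusion of Theorem~\ref{thm:uvw} applied to this $\F$ is the assertion. No rescaling is needed because $p_0=2$ is fixed throughout, matching the quadratic nature of the square function estimate.

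I do not anticipate a genuine obstacle: the proof is a bookkeeping application of Theorem~\ref{thm:uvw} together with the two external ingredients \eqref{eq:MkMk} (to identify $M^2$ with $M_{L\log L}$ up to constants) and \eqref{eq:g(f)-f} (the Fefferman--Stein-type bound of Beltran--Bennett \cite{BB}). If anything deserves attention it is only the matching of constants and the verification that the band-limited class of $f$ on which \eqref{eq:g(f)-f} holds is preserved by the extrapolation machinery; since the iteration algorithm in the proof of Theorem~\ref{thm:uvw} acts on the auxiliary function $h\in\Y'_v$ and never on $f$ itself, the restriction on $\operatorname{supp}(\widehat f)$ is untouched, and the argument goes through verbatim.

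\begin{proof}
Set $p_0=2$ and $\Phi(t)=t\log(e+t)$, so that $\Phi$ is a Young function. By \eqref{eq:MkMk} with $k=2$ we have $M_{\Phi}w_0(x)\simeq M^{2}w_0(x)$ for every weight $w_0$ and every $x\in\Rn$. Hence the Beltran--Bennett inequality \eqref{eq:g(f)-f} can be rewritten as
\begin{equation*}
\|g^{*}_{\alpha,0,\lambda}f\|_{L^{2}(w_0)}\le C\,\|f\|_{L^{2}(M_{\Phi}w_0)}
\end{equation*}
for every weight $w_0$ and every $f$ with $\operatorname{supp}(\widehat f)\subset\{\xi\in\Rn:|\xi|^{\alpha}\ge1\}$. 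Let
\begin{equation*}
\F:=\Big\{\big(|g^{*}_{\alpha,0,\lambda}f|,\,|f|\big):\ f\in L^{2}(\Rn),\ \operatorname{supp}(\widehat f)\subset\{|\xi|^{\alpha}\ge1\}\Big\}.
\end{equation*}
Then the previous display says that \eqref{eq:uvw-2} holds for $\F$ with exponent $p_0=2$. Moreover $\Y_u=\X_u^{1/2}$ and $\Y_v=\X_v^{1/2}$ are BFS by hypothesis, and the assumed estimate
\begin{equation*}
\|(M_{L\log L}f)\,w_2^{-2}v^{-1}\|_{\Y'_v}\le C\,\|f\,w_1^{-2}u^{-1}\|_{\Y'_u},\qquad\forall f\in\M,
\end{equation*}
is precisely \eqref{eq:uvw-1} for $p_0=2$ and $\Phi=L\log L$. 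Therefore Theorem~\ref{thm:uvw} applies and yields \eqref{eq:uvw-3}, which in the present notation reads
\begin{equation*}
\|(g^{*}_{\alpha,0,\lambda}f)\,w_1\|_{\X_u}\le C\,\|f\,w_2\|_{\X_v}
\end{equation*}
for all such $f$. This is the desired conclusion.
\end{proof}
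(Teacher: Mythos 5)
Your proof is correct and takes the same route the paper takes: the authors dispatch this theorem in a single sentence by citing Theorem~\ref{thm:uvw}, \eqref{eq:MkMk}, and \eqref{eq:g(f)-f}, exactly the three ingredients you assemble. Two minor slips worth correcting: the identification $M^{2}w_0\simeq M_{L\log L}w_0$ comes from \eqref{eq:MkMk} with $k=1$ (not $k=2$), since there $M^{k+1}\simeq M_{\Phi_k}$ with $\Phi_k(t)=t\log(e+t)^k$; and the band-limited restriction $\supp\widehat{f}\subset\{|\xi|^{\alpha}\ge1\}$ in the paper is imposed only on \eqref{eq:f-g(f)}, not on \eqref{eq:g(f)-f}, so your parenthetical caveat introduces an unnecessary restriction and the conclusion in fact holds for all $f$ as the theorem asserts.
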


\subsection{Fourier integral operators}\label{sec:FIO}
For a function $\m$ on $\Rn$, the Fourier multiplier $T_{\m}$ is defined by 
\begin{align*}
T_{\m} f(x) := \int_{\Rn} \m(\xi) \widehat{f}(\xi) e^{-2\pi i x \cdot \xi} d\xi, 
\end{align*}
for all functions $f \in \S(\Rn)$. 
For each $\alpha, \beta \in \R$, let $\mathscr{C}(\alpha, \beta)$ be the class of functions $\m:\R \to \C$ for which 
$\supp(\m) \subset \{\xi: |\xi|^{\alpha} \geq 1 \}$,  $\sup_{\xi \in \Rn} |\xi|^{\beta} |\m(\xi)| < \infty$,  and 
\begin{align*}
\sup_{r^{\alpha} \geq 1} \sup_{\substack{I \subset [r, 2r] \\ \ell(I)=r^{1-\alpha}}} 
r^{\beta} \int_{\pm I} |\m'(\xi)| d\xi < \infty. 
\end{align*}
Let $\mathscr{D}(\alpha, \beta)$ be the collection of all functions $\m:\Rn \to \C$ such that 
\begin{align*}
\sup_{B} \dist(0, B)^{\beta+(1-\alpha)\theta} |B|^{-\frac12} \|\m\Psi_B\|_{\dot{H}^{\theta}} < \infty,  
\end{align*}
for all $0 \leq \theta \leq \sigma$ and some $\sigma>n/2$, uniformly over normalized bump functions $\Psi_B$ adapted to an $\alpha$-subdyadic ball $B$. Here, $\dot{H}^{\theta}$ denotes the usual homogeneous Sobolev spaces of order $\theta$, and $\Psi$ is a suitable smooth function with compact support away from the origin. By the $\alpha$-subdyadic ball, we mean that a Euclidean ball $B \subset \Rn$ satisfies that $\dist(0, B)^{\alpha} \geq 1$ and $r(B) \simeq \dist(0, B)^{1-\alpha}$. 

Bennett \cite{Ben} showed that for every $\m \in \mathscr{C}(\alpha, \beta)$ with $\alpha, \beta \in \R$ and for every weight $w$, 
\begin{align}\label{eq:Tm}
\int_{\R} |T_{\m} f|^2 w\, dx \leq C \int_{\R} |f|^2 M^6 \mathcal{M}_{\alpha, \beta}M^4w\, dx, 
\end{align}
where $\mathcal{M}_{\alpha, \beta}$ is defined in \eqref{eq:Mab-def}. By H\"{o}lder's inequality, one has 
\begin{align}\label{eq:Mab-M}
\mathcal{M}_{\alpha, \beta}w(x) &\leq \sup_{x \in B(y, r^{1-\alpha})} r^{2\beta} 
\bigg(\fint_{B(y, r)}w^s dz \bigg)^{\frac1s}
\nonumber \\
&\leq \sup_{x \in B(y, r^{1-\alpha})} r^{2\beta-\frac{\alpha}{s}} 
\bigg(\fint_{B(y, r^{1-\alpha})} w^s dz \bigg)^{\frac1s}
\leq M(w^s)(x)^{\frac1s}, 
\end{align}
provided $\alpha=2s\beta$ and $s \geq 1$. Therefore, from \eqref{eq:Tm} and \eqref{eq:Mab-M}, we obtain that for every $\m \in \mathscr{C}(\alpha, \alpha/2)$ and for every weight $w$
\begin{align}\label{eq:Tm-1}
\int_{\R} |T_{\m} f|^2 w\, dx \leq C \int_{\R} |f|^2 M^{11}w\, dx. 
\end{align}

In addition, it was shown in \cite[Theorem~1]{BB} that for every $\m \in \mathscr{D}(\alpha, \beta)$ with $\alpha, \beta \in \R$,  
\begin{align}\label{eq:g(Tmf)-g(f)}
g_{\alpha, \beta}(T_{\m} f)(x) \lesssim g^{*}_{\alpha,0,\lambda}(f)(x), \quad \lambda=2\sigma/n>1. 
\end{align}
Then invoking \eqref{eq:f-g(f)}, \eqref{eq:g(Tmf)-g(f)}, \eqref{eq:g(f)-f} and \eqref{eq:Mab-M}, we conclude that for every weight $w$, 
\begin{align}\label{eq:Tm-2}
\int_{\Rn} |T_{\m} f|^2 w\, dx &\leq C \int_{\Rn} |f|^2 M^2 \mathcal{M}_{\alpha, \alpha/2}M^4w\, dx
\leq C \int_{\Rn} |f|^2 M^7 w\, dx, 
\end{align}
for any $\m \in \mathscr{D}(\alpha, \alpha/2)$ supported in $\{\xi \in \Rn: |\xi|^{\alpha} \geq 1\}$.  

Recall the pseudo-differential operator $T_{\sigma}$ defined in \eqref{eq:def-Ta}. It was proved in \cite{Bel} that if $\sigma \in S^m_{\varrho,\delta}$ with $m \in \R$, $0 \leq \delta \leq \varrho \leq 1$ and $\delta < 1$, then for any weight $w$, 
\begin{equation}\label{eq:Ta-M}
\int_{\Rn} |T_{\sigma} f|^2 w \ dx 
\lesssim \int_{\Rn}|f|^2 M^2 \mathcal{M}_{\varrho, m} M^5 w\, dx, 
\end{equation}
where 
\begin{equation*}
\mathcal{M}_{\varrho, m} w(x) := \sup _{(y, r) \in \Lambda_{\varrho}(x)} \frac{w(B(y,r))}{|B(y, r)|^{1+2m/n}},  
\end{equation*}
and $\Lambda_{\varrho}(x) := \{(y, r) \in \Rn \times(0,1) :|y-x| \leq r^{\varrho} \}$. Observe that $B(y,r) \subset B(x,2r^{\varrho})$ for any $(y, r) \in \Lambda_{\varrho}(x)$. Thus, picking $m \in \R$ and $\varrho \in [0, 1]$ such that $\varrho=1+2m/n$, we get 
\begin{equation}\label{eq:MwMw}
\mathcal{M}_{\varrho, m} w(x)  
\leq \sup _{(y, r) \in \Lambda_{\varrho}(x)} \frac{|B(x,2r^{\varrho})|}{|B(y, r)|^{1+2m/n}} 
\frac{w(B(x,2r^{\varrho}))}{|B(x,2r^{\varrho})|} \leq C Mw(x). 
\end{equation}
Gathering \eqref{eq:Ta-M} and \eqref{eq:MwMw}, we conclude that 
\begin{equation}\label{eq:Ta}
\int_{\Rn} |T_{\sigma} f|^2 w\, dx \leq C \int_{\Rn} |f|^2 M^8 w\, dx. 
\end{equation}
As a consequence, combining \eqref{eq:Tm-1}, \eqref{eq:Tm-2}, \eqref{eq:Ta} and Theorem \ref{thm:ABC}, we conclude the following estimates.  

\begin{theorem}
Let $v$ be a weight on $\Rn$, $\X_v$ be a BFS over $(\Rn, v\,dx)$ such that $\X_v^{\frac12}$ is also a BFS. Assume that there exist Young functions $A$ and $B$ such that $A^{-1}(t) B^{-1}(t) \lesssim \Phi^{-1}(t)$, and that $M'_{B,v}$ is bounded on $(\X_v^{\frac12})'$. Then for every weight $u$,  
\begin{equation}\label{eq:Tfw-fMAw}
\|(\mathbf{T} f) u\|_{\X_v}  \leq C \|f M_A(u^2)^{\frac12}\|_{\X_v}, 
\end{equation}  
provided that the pair $(\mathbf{T}, \Phi)$ satisfies one of the following:  
\begin{enumerate}
\item $(\mathbf{T}, \Phi)=(T_{\m}, t \log(e+t)^{10})$, where $\m \in \mathscr{C}(\alpha, \alpha/2)$ with $\alpha \in \R$; 
\item $(\mathbf{T}, \Phi)=(T_{\m}, t \log(e+t)^{6})$, where $\m \in \mathscr{D}(\alpha, \alpha/2)$ supported in $\{\xi: |\xi|^{\alpha} \geq 1\}$ with $\alpha \in \R$; 
\item $(\mathbf{T}, \Phi)=(T_{\sigma}, t \log(e+t)^{7})$, where $\sigma \in S^m_{\varrho,\delta}$ with $m=-n(1-\varrho)/2$, $0 \leq \delta \leq \varrho \leq 1$ and $\delta < 1$.  
\end{enumerate}
\end{theorem}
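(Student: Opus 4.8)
The plan is to deduce the three cases uniformly from the master two-weight inequality for the Fefferman–Stein type pair $(\mathbf{T}, M^N)$ encoded in Theorem~\ref{thm:ABC}(i), combined with the pointwise bounds \eqref{eq:Tm-1}, \eqref{eq:Tm-2} and \eqref{eq:Ta} already derived above. The key observation is that each of those three displays has exactly the form required by the hypothesis \eqref{eq:ABC-1} of Theorem~\ref{thm:ABC} with $p_0=2$: writing $\mathfrak{F}=\{(\mathbf{T}f, f): f\in\S(\Rn)\text{ suitable}\}$, the inequality $\int_{\Rn}|\mathbf{T}f|^2 w_0\,dx \le C\int_{\Rn}|f|^2 M^N w_0\,dx$ for every weight $w_0$ is precisely \eqref{eq:ABC-1} for the exponent $p_0=2$ and the Young function $\Phi=\Phi_{N-1}$, because by \eqref{eq:MkMk} one has $M^N w_0 \simeq M_{\Phi_{N-1}}w_0$ with $\Phi_{N-1}(t)=t\log(e+t)^{N-1}$. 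The three cases correspond to $N=11$, $N=7$ and $N=8$ respectively, hence to $\Phi=t\log(e+t)^{10}$, $t\log(e+t)^{6}$ and $t\log(e+t)^{7}$, which is exactly the list in the statement.

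First I would fix the case and record the relevant pointwise inequality: for $(\mathbf{T},\Phi)=(T_{\m},t\log(e+t)^{10})$ use \eqref{eq:Tm-1} together with $M^{11}w_0\simeq M_{\Phi}w_0$; for $(\mathbf{T},\Phi)=(T_{\m},t\log(e+t)^{6})$ use \eqref{eq:Tm-2} (valid for $\m\in\mathscr{D}(\alpha,\alpha/2)$ supported in $\{|\xi|^\alpha\ge 1\}$) together with $M^{7}w_0\simeq M_{\Phi}w_0$; and for $(\mathbf{T},\Phi)=(T_\sigma,t\log(e+t)^{7})$ use \eqref{eq:Ta} together with $M^{8}w_0\simeq M_{\Phi}w_0$. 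In each case this yields \eqref{eq:ABC-1} for $\F=\{(\mathbf{T}f,f)\}$, $p_0=2$, and the stated $\Phi$. Next I would verify that $A,B,\Phi$ as in the statement satisfy $A^{-1}(t)B^{-1}(t)\lesssim\Phi^{-1}(t)$ — this is assumed as a hypothesis of the theorem — and that $\Y_v=\X_v^{1/2}$ is a BFS and $M'_{B,v}$ is bounded on $\Y_v'=(\X_v^{1/2})'$, which is the remaining hypothesis \eqref{eq:ABC-2}. With all hypotheses of Theorem~\ref{thm:ABC}(i) in place with $p_0=2$, its conclusion \eqref{eq:ABC-3} reads
\[
\|(\mathbf{T}f)u\|_{\X_v} \le C \|f\,M_A(u^2)^{1/2}\|_{\X_v},\qquad f\in\F,
\]
which is exactly \eqref{eq:Tfw-fMAw}. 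Finally I would note that the class of admissible $f$ (Schwartz functions with the relevant Fourier support restriction, in cases (2) and (3)) is dense in the spaces in question, so the estimate extends by the usual limiting argument embodied in the proof of Theorem~\ref{thm:ABC} via property (5) of Definition~\ref{def:BFS}.

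The only genuinely delicate point is bookkeeping the exponents in the iterated maximal operators: one must match the power $N$ of $M$ appearing in \eqref{eq:Tm-1}, \eqref{eq:Tm-2}, \eqref{eq:Ta} with the logarithmic exponent $N-1$ of $\Phi$ in the statement, using \eqref{eq:MkMk} to pass between $M^N$ and $M_{\Phi_{N-1}}$, and then checking that this $\Phi$ is indeed the one for which the hypothesis $A^{-1}(t)B^{-1}(t)\lesssim\Phi^{-1}(t)$ is being imposed. Beyond this the argument is a direct invocation of Theorem~\ref{thm:ABC}(i), so I expect no real obstacle — the substance of the result is already contained in the pointwise square-function estimates of Bennett and of Beltran–Bennett quoted above, and the role of the extrapolation theorem is purely to transfer them from $L^2(w)$ with iterated-maximal companion weight to the general weighted Banach function space $\X_v$ with the sharp Orlicz-maximal companion $M_A(u^2)^{1/2}$.
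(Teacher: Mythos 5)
Your proposal is correct and follows exactly the paper's route: the theorem is obtained by feeding the $L^2$ Fefferman--Stein type bounds \eqref{eq:Tm-1}, \eqref{eq:Tm-2}, \eqref{eq:Ta} (with $M^{11}$, $M^{7}$, $M^{8}$ rewritten via \eqref{eq:MkMk} as $M_{\Phi}$ for $\Phi(t)=t\log(e+t)^{10}, t\log(e+t)^{6}, t\log(e+t)^{7}$) into Theorem~\ref{thm:ABC}(i) with $p_0=2$, whose remaining hypotheses are precisely what the statement assumes. Your exponent bookkeeping matches the paper's, so there is nothing to add.
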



\end{document}